\documentclass[a4paper,11pt,leqno]{amsart}
\usepackage{xcolor,array,a4wide}
\usepackage{cite}
\usepackage{hyperref}\usepackage{enumitem}
\usepackage{amsmath,amssymb,amsthm}
\usepackage{mathrsfs,mathtools}
\hypersetup{linktocpage,colorlinks, citecolor={magenta}}
\usepackage[english]{babel}
\usepackage[utf8]{inputenc}

\usepackage{accents}

\usepackage{xspace}
\usepackage{bm}				
\usepackage{bbm,upgreek}		
\usepackage[e]{esvect}		
\usepackage{esint}			
\usepackage{etoolbox}			
\usepackage{calc}				
\usepackage{eso-pic}			
\usepackage{datetime}			

\usepackage{lmodern}
\numberwithin{equation}{section}

\newtheorem{theo}{Theorem}
\newtheorem{prop}{Proposition}[section]
\newtheorem{lem}[prop]{Lemma}

\newtheorem{remark}[prop]{Remark}
\newtheorem{assum}[prop]{Assumption}

\theoremstyle{plain}

\theoremstyle{definition}
\newtheorem{defi}[prop]{Definition}

\def \t0{\rightarrow 0} 

\def \be{\begin{equation}}
	\def \ee{\end{equation}}

\def \hal{\frac{1}{2}}

\def \div{\mathrm{div} \,} 
\def \1{\mathbf{1}} 

\def \ep{\varepsilon}
\def\F{\mathsf{F}}
\def\mr{\mathbb{R}}

\def\U{\mathsf{U}}

\def\G{\mathsf{G}}
\def\R{\mathsf{R}}
\def\M{\mathsf{M}}

\def\A{\mathsf{A}}

\def\indic{\mathbf{1}}

\def\nab{\nabla}

\def\({\left(}
\def\){\right)}

\def\Xint#1{\mathchoice
	{\XXint\displaystyle\textstyle{#1}}%
	{\XXint\textstyle\scriptstyle{#1}}%
	{\XXint\scriptstyle\scriptscriptstyle{#1}}%
	{\XXint\scriptscriptstyle\scriptscriptstyle{#1}}%
	\!\int}
\def\XXint#1#2#3{{\setbox0=\hbox{$#1{#2#3}{\int}$}
		\vcenter{\hbox{$#2#3$}}\kern-.5\wd0}}
\def\dashint{\Xint-}

\def \XN{X_N}

\def\indic{\mathbf{1}}

\def\pex{\Phi^\eta_{x_i}}

\def\xg{\langle x\rangle^\gamma}

\makeatletter
\def\namedlabel#1#2{\begingroup
	#2%
	\def\@currentlabel{#2}%
	\phantomsection\label{#1}\endgroup
}
\makeatother

\begin{document}
	\title[Mean-field limits via multiscale mollification]{Singular mean-field limits via a multiscale mollification metric }
	
	\author[Q. H. Nguyen]{Quoc Hung Nguyen}
\address{State Key Laboratory of Mathematical Sciences\\ Academy of Mathematics
	and Systems Science, Chinese Academy of Sciences, Beijing, 100190, China.}
\email{qhnguyen@amss.ac.cn}
\thanks{}
\author[S. Serfaty]{Sylvia Serfaty}
\address{Sorbonne Universit\'e,  CNRS,  Laboratoire Jacques-Louis Lions (LJLL), F-75005 Paris,
		France \\ \& Institut Universitaire de France \&
		Courant Institute of Mathematical Sciences, New York University.}
	
	\email{serfaty@cims.nyu.edu}
	\maketitle

\begin{abstract}
	We consider a general class of first order ODE systems for the evolution of $N$ interacting particles (in Euclidean space $\mathbb{R}^d$)  in a mean-field  regime. The class of interactions treated includes singular interactions of inverse power type up to power $d+1$, attractive or repulsive, and not necessarily deriving from a potential -- unlike, for instance, the modulated energy method. We introduce a new method to prove  quantitative convergence of the discrete system to solutions of the mean-field equation. It relies on studying the evolution of a metric encoding a multiscale control of the difference between the  empirical measure and its limit, via mollification by heat kernels. 
	
		We prove that the desired convergence holds (i) up to the maximal time of existence of the smooth solution to the limiting equation if the singularity is sub-coulombic in any dimension, or coulombic in dimensions 1 and 2 (where, to do so, we introduce a notion of weak solution to the ODE system), or (ii) for short time in the case of Coulomb singularity in dimension 3 and above and  (iii) up to a short $N$-dependent timescale for super-coulombic  interactions in all dimensions. The latter two results are demonstrated to be optimal as we prove that  collisions  occur within the same timescale for a class of attractive interactions.
	\end{abstract}

\section{Introduction}
\subsection{Problem and context}
We are interested in first order  evolutions over $(\mathbb{R}^d)^N$ of the form
\begin{equation}\label{ode}
	\left\{ \begin{array}{ll}
		\dot{x}_i= \displaystyle \frac1N\sum_{\substack{j=1\\j \neq i}}^N \F(x_i,x_j) &
		\\
		x_i(0)=x_i^0, &
	\end{array}\right.
\end{equation}
where $\F$ is a possibly force field which can be singular, under assumptions below which include the Coulomb case and interactions even more singular than Coulomb. We prove that as $N\to \infty$, solutions to \eqref{ode} converge to solutions of the {\it mean-field equation}
\be\label{mflimit}
\partial_t \mu= -\operatorname{div} (( \F *\mu )\mu),\ee
where we denote
\be \label{defconvol}
\F*\varphi(x) := \int_{\mr^d} \F(x,y) \, d\varphi(y).\ee
The convergence is in the sense of convergence of the time-dependent empirical measure
\be\label{empmeasure}
\mu_N^t := \frac{1}{N} \sum_{i=1}^N \delta_{x_i^t}
\ee
to $\mu$ solving \eqref{mflimit}, in the sense of weak convergence of probability measures.

As is now standard, see for instance \cite{CD2021}, such a convergence implies {\it propagation of chaos} for the system \eqref{ode}: if the initial positions of the particles $x_i^0$ follow an iid law $\mu_0$, then the law at later times is well approximated by the tensorized state $\mu^t \otimes \dots \otimes \mu^t$ in the sense of the $k$-point marginals converging to $(\mu^t)^{\otimes k}$ for each fixed $k$.

We note that the form we are considering is quite general: $\F$ is not assumed to be symmetric, moreover taking $\F(x,y)= F(x,y)+ \frac{N}{N-1}V(x)$, where $F$ is a given force field, allows to include the case of dynamics with both pairwise force $F$ and external potential $V$. For that reason, we want to allow $\F$ to grow at infinity (at most linearly, because otherwise the system \eqref{ode} can blow up at any positive time, as seen at the level of the simple ODE $\dot{x}=|x|^\alpha x,~\alpha>0$). More precise assumptions will be given below in Section~\ref{sec:assump}.

We present here a new proof of convergence valid for a broad range of singular force fields $\F$ that include forces blowing up as $\frac{1}{|x-y|^s}$ when $x-y \to 0$, with $s< d+1$, where $d$ is the space dimension. Note that the Coulomb interaction corresponds to $s=d-1$. We will call the situation with $s<d-1$ the sub-Coulomb case, $s=d-1$ the Coulomb case, and $s>d-1$ the super-Coulomb case.
The case $s \ge d+1$  corresponds to a {\it hypersingular} interaction and is expected to  lead to a different  limit, hydrodynamic instead of mean-field limit. Indeed, in that regime  the interaction energy $\sum_{i\neq j} 
|x_i-x_j|^{-(s-1)}$ diverges for typical configurations, the leading contribution to the dynamics comes only for the nearby particles, and the correct scaling should use the factor $N^{-(s-1)/d}$ rather than $N^{-1}$. There are very few convergence results in that setting, and all are restricted to dimension 1, such as  \cite{oelschlager}.
  Here we do not prove convergence results in that regime, only small-time closeness of the empirical measure to the initial  data.

When force fields are regular, say Lipschitz, classical arguments dating back to the seminal studies  \cite{Mckean1967,Dobrushin1979,BH1977,Sznitman1991} (often in probabilistic, second-order, settings) allow to prove mean-field convergence.
In recent years, there has been much progress on the topic in going beyond the Lipschitz case, with new quantitative proofs proposed, with the first quantitative result for singular interactions in \cite{HJ2007}.  Most proofs rely on identifying a good metric in which to measure convergence of the empirical measure to the expected limit, for which one needs to show that  the time derivative of the metric along the evolution remains bounded by the metric itself, allowing to conclude by Gronwall's lemma.

For interactions that are regular or not too singular, Wasserstein distances have proven efficient metrics \cite{Hauray2009,HM2014,CFP2012,CC2021}, see also the reviews \cite{Golse2016ln,Golse2022ln,Jabin2014}, which include discussions of second-order dynamics. 

Another important progress has been  the introduction of the  modulated energy method in \cite{Duerinckx2016,Serfaty2020}, with follow-ups in \cite{NRS2021,Rosen2022,Rosen2022b,HcRS2024,HCRScx}  (see also \cite{rosenzweig2026commutator} for a recent survey). The method provides another metric  to metrize convergence, allowing to treat Coulomb and super-Coulomb interactions (here $s \in [d-1,d+1)$), provided they are {\it repulsive} and derive from a potential. The modulated energy  metric  is based on the interaction itself. It is akin to a negative Sobolev norm, and is a metric thanks to the repulsive nature of the interaction. 
The control of its derivative in time by itself is accomplished by a functional inequality of {\it commutator estimate} type proven in various versions in  \cite{Serfaty2020,NRS2021,HcRS2024,RS2026}. 
 The  method has also allowed to prove convergence of (second order) Newtonian dynamics  in the monokinetic case \cite[Appendix]{Serfaty2020}.

When noise is present in the form of added independent diffusions, the 
relative entropy method \cite{JW2018}  applies to forces that are not too singular (more precisely in $W^{-1,\infty}$), while the modulated free energy method of \cite{BJW2020}, incorporating the modulated energy method,  allows to again treat repulsive Coulomb and super-Coulomb interactions, as well as attractive sub-logarithmic interactions \cite{BJW2020,CdCRS2023,CdCRS2023a}. See also a method via Wasserstein gradient flows which functions when $d=1$ \cite{BO2019}.

Another recent direction is based on hierarchy methods. For bounded or
sufficiently regular interactions, sharp rates of convergence have been
obtained through relative entropy estimates along the BBGKY hierarchy
\cite{Lacker2023,LlF2023}. Related hierarchy-based methods have also led to
higher-order corrections and cluster expansions, using for instance
$\chi^2$- or $L^2$-based metrics of convergence \cite{hCR2023}. More recently,
weighted estimates for hierarchies of cumulants, and related estimates for
hierarchies of marginals, have been used to prove propagation of chaos for
some singular interactions, including certain Riesz-type interactions
\cite{BDJ24,BJS25,DJ25}. These methods are particularly well adapted to
positive-temperature or diffusive particle systems. They are quite different
from the approach developed here: our setting is deterministic and
zero-temperature, and our estimates give quantitative convergence of the
empirical measure itself. In particular, we do not rely on the BBGKY or
cumulant hierarchy, nor on an entropy structure.

Again, let us emphasize that while the modulated energy (resp.~modulated free energy) method is inherently energy-based, we make here no direct assumption that the system derives from an energy ($\F$ need not derive from a gradient) nor that the interaction is repulsive, it may on the contrary be attractive.  We do make assumptions of almost anti-symmetry of $\F$, modelled on the situation where $\F$ is a gradient, however our assumptions allow for quite large perturbations around this model case. 
 Note that all the versions of the modulated energy method made various (and different) structural assumptions on $\F$ and on its Fourier transform,  related to the repulsive nature of the interaction, which were always more rigid than those we make here -- in particular we need no assumption on the Fourier transform.
This allows us to treat more general singular kernels than all the previously known proofs mentioned above.
\subsection{The multiscale mollification metric and the proof method}

The present paper introduces a new metric to measure convergence, based on a multiscale mollification by the heat kernel.

We use the heat kernel to mollify the Diracs at a scale $\eta$ which depends on $N$ (think of $\eta$ as a negative power of $N$), and to estimate directly the distance of the mollified measures to the limit in both $L^\infty$ and in a weak sense.


Let us first define the natural microscale
\begin{equation}\label{defetastar}
	\eta_*:=N^{-\frac{1}{d}}.
\end{equation}

For $\eta \ge \eta_*$, let us consider   $\Phi^\eta(x)$ the standard heat kernel
\be \label{heat}\Phi^\eta(x) := \frac1{\eta^d} \Phi\Big( \frac{x}{\eta}\Big), \qquad \Phi(x):= \frac{1}{(2\pi)^{\frac{d}2}} e^{- \frac{|x|^2}{2}}\ee
and let
\be \label{heattrans}
\Phi^\eta_{x_i}(x):= \Phi^\eta(x-x_i).\ee
Using the heat kernel allows us to repeatedly use the semi-group property
\be\label{semigroup}
\Phi^{\eta}\star \Phi^{\eta'} = \Phi^{\sqrt{\eta^2+\eta'^2}}, \ee
where $\star$ denotes the usual convolution.

Given a configuration $\XN:= (x_1, \dots , x_N)$ of points $x_i \in \mathbb{R}^d$, we then define the empirical measure 
\be 
\mu_N[\XN]:=\frac1N \sum_{i=1}^N \delta_{x_i}\ee and the 
mollified empirical measure
\be \label{fneta}
\mu_{N, \eta}[\XN]:= \frac1N\sum_{i=1}^N \Phi^\eta_{x_i}= \mu_N[\XN]\star \Phi^\eta.\ee
For the sake of lightness, we will drop the $[\XN]$ dependence in the notation.

By \eqref{semigroup},
we have
\be \label{semigroupmu}
\mu_{N,\eta'}=\Phi^{\sqrt{\eta'^2-\eta^2}} \star \mu_{N,\eta}, \qquad \text{for} \ \eta'\ge \eta.
\ee
To show that $\mu_N \to \mu$, where $\mu$ solves \eqref{mflimit},
it will suffice to show that $\mu_{N,\eta}-\mu$ converges weakly to zero, which will be done quantitatively  via the  multiscale metrics that we next introduce.

Throughout the paper, we use the notation
\be\langle x\rangle = \sqrt{1+|x|^2}.\ee
Let  $\gamma>d+2$ be fixed. The main multiscale distance from $\mu_N$ to any function $f$ is based on the following weighted $L^\infty$ distance from $\mu_{N,\eta}$ to $f$:  
\be\label{defM}
\M(\XN, f, \eta):=\sup_{x \in \mr^d} \Big( \langle x\rangle^\gamma\left|\mu_{N, \eta}[\XN](x) -f(x)\right|\Big)= \left\|\langle \cdot \rangle^\gamma \Big( \frac{1}{N} \sum_{i=1}^N \delta_{x_i} \star \Phi^\eta - f\Big) \right\|_{L^\infty}.
\ee
The metric $\M(\XN, f, \eta)$ measures the closeness of the empirical measure $\mu_N$ to $f$, at the scale $\eta$.
The weight $ \langle x\rangle^\gamma$ is only added to effectively confine the system and prevent the supremum from being achieved at infinity.
We will place assumptions of decay of $\mu$ that guarantee that the maximum is achieved (note that $\mu_{N,\eta}$ decays exponentially at infinity).

 We will be able to show that $\M(\XN, \mu, \eta)$ becomes (quantitatively) small when $\eta>\eta_*$, which is the best that one may hope for: empirical measures smoothed at scales $\eta \le \eta_*$ can never  approach a smooth distribution in $L^\infty$ distance. On the other hand, if $f$ is regular enough, we can find  $\XN$, a configuration of $N$ points, whose empirical measure approximates $f$ in such a way that for any $\eta\geq \eta_*$, we have
\begin{equation}\label{examf}
	\M(X_N, f, \eta)=\sup_{x \in \mr^d} \Bigg( \langle x\rangle^\gamma\left| \frac1N\sum_{i=1}^N \Phi^\eta_{x_i}(x) -f(x)\right|\Bigg)\leq C\Big(\frac{1}{N^{1/d}\eta}+\eta^2\Big).
\end{equation}
More generally, by Morrey's characterization of H\"older spaces (with $\|f\|_{C^\alpha} \sim \sup_{\eta}\eta^{-\alpha}\|f\star \Phi^\eta-f\|_{L^\infty}$), if $\M(\XN, f, \eta) \le C \eta^\alpha$ for $\alpha\in (0,2)$, then this means roughly that ``$\mu_N $ is $C^\alpha$-close to $f$ at lengthscale $\eta$."  

To optimize on the convergence rate, we will also consider the weaker metric
\begin{align}\nonumber
	\mathsf{W}(\XN, f, \eta):&=\sup_{\|(\phi,\nab \phi)\|_{L^1(\mathbb{R}^d)} \leq 1}\left|\int_{\mathbb{R}^d} \langle x\rangle^{\gamma-1}\left(\mu_{N, \eta}[\XN](x) -f(x)\right)\phi(x)\, dx\right|\\&= \left\|\langle \cdot \rangle^{\gamma-1} \Big( \frac{1}{N} \sum_{i=1}^N \delta_{x_i} \star \Phi^\eta - f\Big) \right\|_{(W^{1,1})^*}.	\label{defM-1}
\end{align}
An optimization of error terms will lead us to a control of $\mathsf{W}$ at scale $\eta_*^{1/2}$.
\smallskip

When differentiating the metric $\M$ along the flow, i.e.~computing $\partial_t \M(\XN^t, \mu^t, \eta)$ (where $\XN^t$ solves \eqref{ode} and $\mu^t$ solves \eqref{mflimit}), it is {\it not} possible to control this derivative by $ \M(\XN^t, \mu^t, \eta)$: the closeness over small scales deteriorates in time.
 However, the use of the heat kernel allows to take advantage of its semi-group property to use cancellations and control information at one scale by information at smaller scales. This leads us to 
 controls of $ 
\partial_t \M(\XN^t, \mu^t, \eta)$ that involve the smaller scale metric $ \M(\XN^t, \mu^t, \eta/2).$
In order to hope to close the estimate, we thus define a {\it multiscale metric}
summing the $\M$'s at dyadically larger scales.

Let $\sigma\in [\frac{1}{2},1]$ (we will take $\sigma= \hal$ in the proof). Given $m$ integer, we consider the  multiscale metric  defined as
\be\label{defMN}
\M_m(\XN,f):= \sum_{n=0}^{\mathbf{n}-m} 2^{\sigma n} \M(\XN,f,2^{n+m}\eta_*).
\ee
It encodes the metrics at scale $2^m \eta_*$ to  $ 2^{\mathbf{n}}\eta_*$, with smaller scales discounted.  This, as well as the use of the heat kernel to encode regularity, is also reminiscent of the quantities used in  stochastic homogenization in \cite{armstrongkuusi}   where similar multiscale norms encoding regularity at all scales above a microscale are used (the ``minimal radius" playing the role of $\eta_*$).

In our proof, the integer $m$ will be chosen independent of $N$, and $\mathbf{n}$ will be chosen so that $2^{\mathbf{n}}\eta_*=\eta_*^{1/3}$.
The proof will proceed by first showing that $\M_0(\XN^t, \mu^t)$ can be controlled on an interval of time $[0, \tau]$, for $\tau>0$ independent of $N$. Then we will discard the smallest scale, and using that 
\be \label{bobvious}\M_m \le  2^{-\sigma m}\M_0, \ee
get a better control of $\M_1$ at time $\tau$, then iterate the argument to obtain that $\M_1$ is controlled over $[\tau, 2\tau]$, etc. This will allow to control $\M_m$ on $[0,T]$ for any given $T>0$ independent of $N$, with $m=T/\tau$. 

An estimate for $\M_0$ on $[0,T]$ can then be retrieved from the estimate on $\M_m$ from the observation that  for every integer $m$ satisfying $0\leq m\leq\mathbf n$,
\be \label{retourMM}\M_0 \lesssim 2^{dm}\M_m +2^{dm}\A(f),\ee where $\A(f)$ is a norm defined in \eqref{defA}, 
see Lemma  \ref{lem2.3} for a proof of this relation. 
Since by definition of $\M_0$, for $\eta \in [\eta_*, \eta_*^{1/3}]$, 
\be\label{Mmeta} \M(\XN, \mu, \eta)\le \(\frac{\eta_*}{\eta}\)^\sigma \M_0(\XN, \mu),
\ee a control of $\M_0$ by, say, a constant, 
implies that $\M(\XN, \mu, \eta)$ tends to $0$ algebraically in $\eta_*/\eta$, yielding the convergence of $\mu_N$ to $\mu$.
Inserting this estimate back into the Gronwall argument then allows to reduce to a linear equation and to 
 almost obtain the optimal rate $\eta_*/\eta$.

Our task will thus be to control the growth of $\M_0(\XN^t, \mu^t)$ by Gronwall on some (possibly small) interval $[0, \tau]$. In the case $s \le 1$  where $|x-y|\F(x,y)$ remains bounded when $x$ approaches $y$, then this is possible directly. This is borderline for the force to be in the Besov space $B_{\infty,\infty}^{-1}$, and  analogous to the assumption $F \in W^{-1,\infty}$ in \cite{JW2018}.

	Otherwise, in the general sub-Coulomb case $s <d-1$, we are still able to conclude by using an additional ingredient consisting  in coupling 
 the time evolution of $\M_m(\XN^t,\mu^t)$ with that of a
microscale interaction quantity similar to
$$H(t)= \frac1N \sup_{i\in [1,N]} \sum_{j=1, j\neq i}^N \frac{\indic_{|x_i-x_j|\le \eta_*}}{|x_i-x_j|^{d-\theta}},$$
for some appropriate $\theta\ge 0$. 

Achieving this coupled control requires propagating time estimates for the metric for the whole range of parameters $\eta$ at once. 
Note that these controls borrow from the regularity of the solution to \eqref{mflimit} and require  the initial data to be  sufficiently ``well prepared", by which we mean that $ \M(\XN^0, \mu^0, \eta)$ are small and  the particles do not accumulate at small scales.

In the $ d \ge 3 $ Coulomb case, respectively in the super-Coulomb cases, we prove that collisions may occur in finite, respectively short time, which shows that our convergence results cannot be improved: the Gronwall argument above cannot be iterated, and we only obtain convergence for finite, resp. short time, consistent with the possibility of breakdown of classical solutions to the  ODE system.

\subsection{Assumptions on $\F$}\label{sec:assump}
The assumptions below can be read keeping in mind the model case where $\F$ is, up to an additive smooth term, antisymmetric in $x$ and $y$ and derives from a potential, for instance $\F(x,y)=\mathbb{M} \nab |x-y|^{-s+1}$ for some constant matrix $\mathbb{M}$, or more generally where $\F(x,y)=\mathbb{M}(x,y) \nab |x-y|^{-s+1}$ with $\mathbb{M}$ smooth. They include that case and are meant to generalize it.
First, without loss of generality, we may assume that the singularity power $s$ is not too small, more precisely we will assume that $s\geq (d-\frac{5}{4})_+$ (more regular cases can of course be covered by that one). 
We note that the case $s=1$ encompasses the logarithmic case $\F(x,y)=\mathbb{M}\nab \log |x-y|$.\\
We assume that $\F = \F^{\mathrm{sing}} + \F^{\mathrm{reg}}$, where $\F^{\mathrm{reg}}$ corresponds to the regular part of $\F$ and may grow linearly at infinity, while $\F^{\mathrm{sing}}$ is the singular part that blows-up as $|x-y|^{-s}$ near $x=y$. 

If $\mu$ is smooth and sufficiently decaying, then the convolution $
\F^{\mathrm{sing}}*\mu$
is well defined without using any cancellation whenever $s<d$. However, if $\F^{\mathrm{sing}}$ enjoys a cancellation property of the form
\[
\left|
\int_{|x-y|=r}\F^{\mathrm{sing}}(x,y)dy
\right|
\lesssim 1+r^{-s+1},
\qquad 0<r\leq 1,
\]
then the convolution $\F^{\mathrm{sing}}*\mu$ is well defined in the larger range $s<d+1$. This condition is satisfied, for instance, by kernels of the form
\[
\F^{\mathrm{sing}}(x,y)
=
\mathbb{M}\nabla_y |x-y|^{-s+1}
+
O(|x-y|^{-s+1}),
\qquad |x-y|\leq 1.
\]
In this paper, we impose such a cancellation assumption.

When $s>d-1$, the limiting equation \eqref{mflimit} may in general be ill-posed. One way to see this is to consider an $N$-dependent regularized interaction  and the equation
\begin{equation} \label{120}
	\partial_t \mu= -\operatorname{div} (( \F_{\eta_*} *\mu )\mu),
\end{equation}
 where $\F_{\eta_*}$ is defined by $\F_\eta (x,y):= (\F(x, \cdot)\star\Phi^\eta)(y)$.
For each fixed cutoff parameter $\eta_*$, this equation admits a smooth solution $\mu=\mu^{\eta_*}$ on a time interval $[0,T_{\eta_*}]$, where $T_{\eta_*}$ may depend on $\eta_*$.  Formally, for $s<d+1$, one may write
\begin{equation}
	\partial_t \mu= -\operatorname{div} (( \F_{\eta_*} *\mu )\mu)=-\operatorname{div} (( \F*\mu )\mu)+O(\eta_*^2).
\end{equation}
Thus the regularized dynamics may be viewed as an approximation of the singular limiting equation as long as the singular convolution is meaningful.

For $s\geq d+1$, however, even with the above cancellation, the quantity $\F^{\mathrm{sing}}*\mu$ is no longer well defined in general. This motivates the introduction of the cutoff in the definition \eqref{deF}, namely
\begin{equation*}
	\F^{\eta_*}(x,y)=	\F^{\mathrm{reg}}(x,y)+	\F^{\mathrm{sing}}(x,y)(1-\chi)(\frac{|x-y|}{\eta_{*}}).
\end{equation*}
Thus the interaction is removed only at distances of order $\eta_*$. This cutoff will be used throughout the proof. In this strongly singular regime, the limiting equation is instead frozen at the initial datum $\mu^0$.\\

The assumptions (regular part, singular part, almost antisymmetry, almost isotropy) are phrased as follows:
that there exist $s\geq (d-\frac{5}{4})_+$ and $0<s'<\min \{s+1, d\}$, such that 
 for all $m_1,m_2=0,1,2,3$, the following hold :
\begin{align}	\label{assumpFre}
	&\text{for all } \ x\neq y, \qquad |\nab_x^{m_1}\nab_y^{m_2} \F^{\mathrm{reg}}(x,y)| \le C\Big( 1+(|x|+|y|) \mathbf{1}_{m_1=m_2=0}\Big);\\
	\label{assumpFsing}
	&\text{for all } \ x\neq y, \qquad |\nab_x^{m_1}\nab_y^{m_2} \F^{\mathrm{sing}}(x,y)| \le C\Big( \frac{1}{|x-y|^{s+m_1+m_2}}+1\Big);
\end{align}

\begin{align}
	\label{assumpF2}
	& \text{for all } \ x\neq y \qquad | \F^{\mathrm{sing}}(x,y)+ \F^{\mathrm{sing}}(y,x) |\le C\Big( \frac{1}{|x-y|^{s-1}}+1\Big);\\
\label{assumpF2G}  &\text{for } s<d+1, \ \text{for } m=1,2,3,4\ \text{and for any function} \ f,\\ \notag
&
\left|\nab_x^{m}\F^{\mathrm{sing}}*f(x)-\int_{\mathbb{R}^d} \F^{\mathrm{sing}}(x,y)\nab^{m}f(y)\right|  \leq C\int_{\mathbb{R}^d}\left( 1+\frac{1}{|x-y|^{s'}}\right)\sum_{k=0}^{m-1}|\nab^k f|(y) dy;
\end{align}
\begin{align}\nonumber&\text{for } s\ge d+1,m=1,2,3,4,~~\varepsilon\in [\frac{1}{10}\eta_{*},1)\\& \nonumber\Bigg|\nab_x^{m}\left(\int_{\mathbb{R}^d} \F^{\mathrm{sing}}(x,y)(1-\chi)\(\frac{|x-y|}{\ep}\)f(y)dy\right)\\
	& \qquad \qquad \qquad \qquad\qquad \qquad\nonumber-\int_{\mathbb{R}^d} \F^{\mathrm{sing}}(x,y)(1-\chi)\(\frac{|x-y|}{\ep}\)\nab^{m}f(y)\Bigg|\\  &  \qquad \qquad \qquad \qquad\qquad   \leq C\int_{\mathbb{R}^d}\left( 1+\frac{1}{|x-y|^{s-1}}\right)\mathbf{1}_{|x-y|\geq \frac{\ep}{2}}\sum_{k=0}^{m-1}|\nab^k f|(y) dy,\label{assumpF>=d+1}
\end{align}
where $\chi$ is a smooth cut-off equal $1$ in $[0,1]$ and vanishing outside $[0,2]$;\footnote{For $s<d+1$, the relevant integration-by-parts convention is encoded by \eqref{assumpF2G}. For $s\geq d+1$, the corresponding cutoff commutator is encoded by \eqref{assumpF>=d+1}. The spherical cancellation used near the diagonal is given by \eqref{assumpF2'}.}
\begin{align}
	\label{assumpF3c}
	&\text{for all } \ x\neq y+z,m=0,1,2, \qquad | \nab_y^m\F^{\mathrm{sing}}(x,y+z)+(-1)^m \nab_x^m\F^{\mathrm{sing}}(y,x-z) |\\ \notag &\quad\quad\quad\quad\quad\quad\quad\quad\quad\quad\quad\quad\quad\quad\quad\quad\quad\quad\quad\quad\quad\quad\quad\quad\quad \le C\Big( \frac{1}{|x-y-z|^{s-1+m}}+1\Big);
\end{align}
\begin{align}
	\label{assumpF2'}
	&\text{for all } x \in \mathbb{R}^d, \quad \left|\nab_x^m \dashint_{|z|=r}\F^{\mathrm{sing}}(x,x+z)d\mathcal H^{d-1}(z)\right| \le C\Big( r^{-s+1-m}+1  \Big)\quad \text{for } m=0,1,2.
\end{align}
With the gradient-flow Coulomb case in mind, we also assume
\begin{align} \label{coudet}
	\text{if} \  s=d-1, \quad 	\|\operatorname{div}_x \F^{\mathrm{sing}}*\phi \|_{L^\infty}\lesssim \|\langle\cdot\rangle^{\gamma_0} \phi\|_{L^\infty}~~\forall \phi\in W^{2,\infty}(\mathbb{R}^d),
\end{align}
for $\gamma_0\in (d,d+1)$. Here $\operatorname{div}_x(\F^{\mathrm{sing}}*\phi)$ is understood
distributionally, initially for
$\phi\in C_c^\infty(\mathbb R^d)$, and then by extension to the
weighted class appearing on the right-hand side.
\begin{remark}
We note that, unlike \cite{JW2018}, we do not require the product $|x-y|\F(x,y)$ to be bounded.
In that paper, the additional assumption 
$\div \F\in W^{-1,\infty}$ is placed. By contrast, in \cite{BJW2020}, the modulated free energy method can treat the two-dimensional logarithmic   case for which  $\div \F =\delta_0$.
\end{remark}

\subsection{Main results}
In all the paper we use the notation $A\lesssim B$ to denote $A\le C B$ for some constant $C>0$ independent of $N$, and $A\sim B$ if $A\lesssim B$ and $A \gtrsim  B$.

Before presenting the main results, we need to discuss solutions to \eqref{ode} and \eqref{mflimit}.
First, let us  define, for any function $f$,
\be\label{defA} \A(f):= \sum_{m=0}^{3} \|\langle x\rangle^{\gamma+1}\nab^m f\|_{L^\infty}.\ee 

In Appendix~\ref{app:existence}, we will show  by a fixed point argument that for $s \le d-1$, if $\mu_0$ satisfies $\A(\mu_0)+\mathbf{1}_{s\ge d}\|\langle x\rangle^{\gamma+1}\nab^4 \mu_0\|_{L^\infty}<\infty$, then \eqref{mflimit} has a unique regular solution at least for short time.
Our only assumption on the solution will be that $\A(\mu^t) +\mathbf{1}_{s\ge d}\|\langle x\rangle^{\gamma+1}\nab^4 \mu_0\|_{L^\infty}$ is bounded over the time interval we are interested in.\\

When $s> d-1$, the singularity of $\F$ makes \eqref{mflimit} ill-posed in general as mentioned above.
We have to work with either the solution of a regularized equation for $s\in (d-1,d)$, or the constant function $\mu^0$ for $s \ge d$.

Most of our results concern classical solutions of the particle system. We show that the convergence estimates developed in this paper, measured in the $\M$-norm, can also be used to construct such solutions. More precisely, we apply these estimates to a regularized version of \eqref{ode} and then pass to the limit in the regularization parameter. This yields the existence of classical solutions to \eqref{ode}
\begin{itemize}
	\item globally in time if $s<d-1$;
	\item up to the first collision time otherwise.
\end{itemize}
We also show that collisions may occur on the timescale $\eta_*^{s-d+1}$, see Theorems~\ref{cou} and~\ref{supercou}. To the best of our knowledge, this provides the first use of a mean-field-limit argument to prove the existence of classical solutions for a singular interacting particle system.

In the one and two-dimensional Coulomb cases, even though collisions may occur, it is still possible to continue giving a meaning to the dynamics beyond them, via a notion of weak solutions at the level of empirical measures that we introduce in Appendix~\ref{app1-weak}  and for which we prove existence of weak solutions, but not uniqueness. These are essentially solutions once tested against smooth test-functions.
We will be able in Theorem~\ref{th1-Coulomb} to provide a result in the one and two-dimensional Coulomb cases, which applies to such weak solutions of \eqref{ode} and retrieves convergence for arbitrary fixed times.
\begin{theo}[The  Coulomb and sub-Coulomb cases in dimensions  $1$ and $2$]\label{th1-Coulomb}
	Assume that $d=1 $ or $d=2$, and that the force field $\F$ satisfies
	\eqref{assumpFre}--\eqref{coudet} for $s=d-1$  \footnote{This of course allows to treat $s\le d-1$ as well.}. Let $\mu_N^t$ be a weak particle continuation of the ODE system	\eqref{ode} on $[0,T]$, in the sense of	Definition~\ref{def-weak-particle-solution}, obtained through the
	vanishing-regularization procedure.
		Let $\mu$ be a regular solution of \eqref{mflimit} on a time interval $[0,T]$ such that
	\[
	\A(\mu^t)<\infty \quad \text{for all } t\in[0,T],
	\]
	where $\A$ is defined in \eqref{defA} and let $\A_T:= \sup_{t\in [0,T]} \A(\mu^t) <\infty$.
	Assume that at $t=0$, we have
	\begin{align}\label{Cou-initial}
		\sup_{\eta\in [ \eta_*, \eta_*^{1/3}]}\eta \M( X_N^0, \mu^0,\eta)\leq C\eta_{*}
	\end{align}
Then there exist
$
N_0=N_0(d,s,C,T,\A_T)
$ and  $C'=C'(d,s,C)$
such that, for all $N\ge N_0$, the following conclusions hold. For every
$\eta\in[\eta_*,\eta_*^{1/3}]$
and every $t\in[0,T]$, we have
	\begin{align}
		\label{conclth1-cou} \M( X_N^t, \mu^t,\eta) \leq \frac{\eta_*}{\eta}  \exp\left[C'(1+\A_T)\sqrt{t \log \frac{\eta}{\eta_*}}+C'(1+\A_T)^2(1+T)\right]
	\end{align}
	and
		\begin{align}
		\label{conclth1-wn-cou} \mathsf{W}( X_N^t, \mu^t,\eta_*^{1/2}) &\le  \exp \big(C'(T+1)\A_T\big) \mathsf{W}\Big( X_N^0, \mu^0,\frac{1}{2}\eta_*^{1/2}\Big)\\&\quad+\eta_*(\log N) \exp\left[C'(1+\A_T)\sqrt{t \log N}+C'(1+\A_T)^2(1+T)\right].\nonumber
	\end{align}
	
	In particular, for every weak particle continuation selected by the
vanishing-regularization procedure, we have
\[
\mu_N^t \rightharpoonup \mu^t
\qquad
\text{as } N\to\infty,
\]
in the sense of probability measures, uniformly for $t\in[0,T]$ in the above
quantitative sense 	\end{theo}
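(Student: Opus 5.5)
We first reduce to the regularized particle system. Since $\mu_N^t$ is obtained by vanishing regularization, the plan is to prove \eqref{conclth1-cou}--\eqref{conclth1-wn-cou} for classical solutions of \eqref{ode}, with $\F$ replaced by a smooth truncation $\F^\delta$ at scale $\delta\ll\eta_*$. The bounds must be uniform in $\delta$. We then pass to the limit $\delta\to0$. This limit is harmless because $\M(\cdot,\mu^t,\eta)$ and $\mathsf W$ are lower semicontinuous under weak convergence of empirical measures at fixed $\eta\ge\eta_*$, since $\Phi^\eta$ is smooth.

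For the regularized flow, we compute $\partial_t(\mu_{N,\eta}-\mu)$ at a maximum point $x_0$ of $\langle x\rangle^\gamma|\mu_{N,\eta}-\mu|$. We have
\[
\partial_t\mu_{N,\eta}(x)=\frac1{N^2}\sum_{i\ne j}\F(x_i,x_j)\cdot\nabla\Phi^\eta_{x_i}(x)\cdot(-1).
\]
We symmetrize in $(i,j)$ using \eqref{assumpF2}, \eqref{assumpF3c}, and insert the semigroup identity \eqref{semigroupmu}. This rewrites the quadratic term as $-\div\big((\F*\mu_{N,\eta/\sqrt2})\,\mu_{N,\eta/\sqrt2}\big)\star\Phi^{\eta/\sqrt2}$, up to commutator errors. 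Subtracting the limit equation leaves three kinds of terms:
\begin{itemize}[label={}]
\item (a) a transport term $(\F*\mu)\cdot\nabla(\mu_{N,\eta}-\mu)$, which vanishes at the maximum to leading order;
\item (b) linear terms of the form $\div\F*\mu$ and $\nabla\F*\mu$ times $(\mu_{N,\eta}-\mu)$, bounded by $\A(\mu^t)\M(\eta)$ thanks to \eqref{coudet} and \eqref{assumpF2G};
\item (c) quadratic and commutator terms controlled by smaller-scale quantities.
\end{itemize}
The key Coulomb-specific step is that in $d=1,2$, with $s=d-1$, the singular near-diagonal interactions at scale $\lesssim\eta$ cost only $\log(\eta/\eta_*)$ factors. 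Indeed, $|x-y|^{-(d-1)}$ integrated against a density bounded by $\M(\eta/2)+\|\mu\|_\infty$ on a ball of radius $\eta$ gives $\eta$, and combined with $|\nabla\Phi^\eta|\sim\eta^{-1}$ this yields $O(1)$ per dyadic scale. This is where $d\le2$ should enter, since it controls the particles that are not resolved. The resulting inequality has the schematic form
\[
\partial_t\M(\eta)\le C(1+\A_T)\M(\eta)+C\,\M(\eta/2)\,\frac{\M(\eta/2)}{\eta^{-1}\ldots},
\]
or, after linearization, $\partial_t a_n\le C(1+\A_T)(a_n+a_{n-1})$ with $a_n=\M(2^n\eta_*)$.

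We then run the scheme of \eqref{defMN}--\eqref{retourMM}. First we close a Gronwall estimate for $\M_0$ on $[0,\tau]$. Next we drop scales and iterate as in \eqref{bobvious} to reach time $T$, and recover $\M_0$ via Lemma~\ref{lem2.3}. This gives the a priori bound $\M(\eta)\lesssim(\eta_*/\eta)^{1/2}$, which makes the quadratic terms effectively linear.

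With the linear system $\partial_t b_n\le C(1+\A)(b_n+b_{n-1})$ and $b_n(0)\le C2^{-n}$ from \eqref{Cou-initial}, we solve explicitly by generating functions. The solution is a Poisson-type convolution,
\[
b_n(t)\lesssim e^{Ct}\sum_k\frac{(Ct)^k}{k!}\,2^{-(n-k)}.
\]
Optimizing $k$ around $\sqrt{tn}$ produces the bound $2^{-n}\exp\big(C\sqrt{t\,n}\big)$, which is exactly \eqref{conclth1-cou} with $n=\log_2(\eta/\eta_*)$.

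For \eqref{conclth1-wn-cou}, we test the same evolution against $W^{1,1}$ functions at scale $\eta_*^{1/2}$. The linear part propagates $\mathsf W$ with growth $e^{C\A_T T}$, using the transport structure and duality. The nonlinear remainder is bounded by $\sum_n\M(2^{-n}\eta_*^{1/2})^2$-type terms, each of size $\eta_*$ up to the exponential factor, and summing the $\log N$ scales gives the factor $\eta_*\log N$.

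The main obstacle is step (c): showing that the commutator and quadratic terms are controlled by $\M(\eta/2)$ with only $O(1)$ loss per scale, uniformly in $\delta$. The difficulty is the particles at distance $<\eta_*$, where no density control is available. Our first attempt would be to use the symmetrization \eqref{assumpF2}, which gains a power $|x_i-x_j|$ and reduces the singularity to $|x-y|^{-(d-2)}$. In $d\le2$ this is locally bounded or logarithmic, so close pairs contribute $\le C\log N/N$ without any microscale control. We expect this to be precisely why the theorem is restricted to $d=1,2$.
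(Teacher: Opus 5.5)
Your overall architecture matches the paper: regularize and prove bounds uniform in the regularization, pass to the limit by lower semicontinuity, control the dyadic family $\M(2^n\eta_*)$ under a bootstrap, drop fine scales to reach time $T$, and use dual transport for $\mathsf W$. You also correctly see that $s=d-1\le 1$ is what removes the need for a microscale quantity like $H$. But the proposal misses the actual Coulomb difficulty, which sits in the commutator term \eqref{defr3}. In your decomposition this is the commutator between $\Phi^{\eta/\sqrt2}\star$ and transport by $\F*\mu_{N,\eta/\sqrt2}$, which you must move past to use the maximum-point condition. Bounding it requires $\|\nabla(\F_{\kappa_0\eta}*\tilde\mu_{N,\eta/2})\|_{L^\infty}$, the gradient of the force generated by the \emph{error}; your item (b) only treats $\nabla\F*\mu$. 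For $s=d-1$ this kernel is of Calder\'on--Zygmund type, and the direct bound is only $|\log\eta|\,\M(\eta/2)$ (see \eqref{le231}, \eqref{boundR3}). Since $|\log\eta|\sim\log N$ on $[\eta_*,\eta_*^{1/3}]$, this loss at every dyadic step becomes a factor of order $N^{Ct}$ in the cascade. That destroys convergence beyond times of order one, and the quadratic Gr\"onwall step for $\M_0$ then only closes on times $\sim 1/\log N$. The paper's fix is Step 7 of the proof of Proposition~\ref{pro23}. It expands the commutator to second order and uses the Hermite identity \eqref{z15'} to rewrite the first-order part as $-\operatorname{div}(\F_{\kappa_0\eta}*\tilde\mu_{N,\eta/2})\,\mu_{N,\eta}-\eta^2\,\nabla\F_{\kappa_0\eta}*\tilde\mu_{N,\eta/2}:\nabla^2\mu_{N,\eta}$. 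The divergence is controlled by \eqref{coudet} with no logarithm; this is where \eqref{coudet} is really needed, for the error field rather than for $\operatorname{div}\F*\mu$. In the second term, the log-lossy factor now multiplies $\eta^2|\nabla^2\mu_{N,\eta}|\lesssim\M(\eta/2)+\eta^2\A(\mu)$ by \eqref{d2m}. Finally, $\|\nabla\F_{\kappa_0\eta}*\tilde\mu_{N,\eta/2}\|_{L^\infty}$ is split dyadically as in \eqref{z11} into $\sum_k\M(2^k\eta)+(\log N)\,\M(\eta_*^{1/3})+\dots$, which the bootstrap \eqref{cou-bootstrap} makes $O(1+\A_T)$. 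Nothing in your plan produces this product of two small factors.

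Your treatment of the short-range part has a related defect. Estimating near-diagonal interactions against a density bounded by $\M(\eta/2)+\|\mu\|_{L^\infty}$ gives a forcing of size $\U(\eta/2)\U(\eta)$ per scale, not $\M\cdot\U$. That is of order one, so it yields no convergence at all; this is the failure described in Remark~\ref{rem:symmetrization}. Smallness comes from the dyadic telescoping \eqref{x2} of $\F_{<\eta}-\F_{<\eta_0}$ combined with the semigroup property. The shell at scale $2^k\eta_0$ is then tested against $\tilde\mu_{N,2^{k-1}\eta_0}$, giving $2^{k-n}\M(2^{k-1}\eta_0)$ by \eqref{le234}, while the pieces involving $\mu$ are $O(\eta^2)$ by \eqref{proF3}--\eqref{proF4}. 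Every smaller scale then contributes comparably. With $b_n=2^n\M(2^n\eta_*)$, after the bootstrap and a time rescaling, one gets the lower-triangular system $\partial_t b_n\lesssim 1+\sum_{k\le n}b_k$, and Lemma~\ref{Le1} applied to this system produces $\exp(2\sqrt{nt})$. Your nearest-neighbour system $\partial_t a_n\le C(a_n+a_{n-1})$ is not what the estimates give. Moreover, your Poisson sum for it is in fact $\le 2^{-n}e^{3Ct}$, so there is no $\sqrt{tn}$ to optimize and that step is inconsistent. Lastly, close pairs are not bounded by $C\log N/N$ unconditionally. After symmetrization their kernel is bounded when $s\le 1$, and Lemma~\ref{lem:empsum} with $\alpha\le 0$ bounds them by $\eta_0^2\eta^{-2}\U(\eta_0)\U(\eta)$, which still uses density control at scale $\eta_0$.
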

\begin{remark}
Since the mean-field equation has a unique regular
solution on $[0,T]$, the possible non-uniqueness of the weak particle
continuation for fixed $N$ disappears in the mean-field limit.
\end{remark}

Thanks to this result, we can treat the two-dimensional attractive log case  (which is included in the $s=1$ case) all the way to the blow-up time of the limiting PDE, without needing any noise. This is in contrast with \cite{BJW2020,CdCRS2023a} which crucially require positive temperature.

The factor $\eta_*/\eta$ in the convergence rate is essentially optimal. The logarithmic factor and exponential in time factor are the price for the multiscale Gronwall cascades from fine to coarse scales: each scale adds a constant, leading to the $\sqrt{\log (\eta/\eta_*)}$ factor.

\begin{theo}[The sub-Coulomb case, $d\geq 3$]\label{th1}
	Assume that the force field $\F$ satisfies
	\eqref{assumpFre}--\eqref{assumpF2'} for $
	\Bigl(d-\frac54\Bigr)_+ \le s<d-1.
	$
	Let $\mu$ be a regular solution of \eqref{mflimit} on a time interval $[0,T]$,
	such that
	\[
	\A(\mu^t)<\infty \quad \text{for all } t\in[0,T],
	\]
	where $\A$ is defined in \eqref{defA} and let $\A_T:= \sup_{t\in [0,T]} \A(\mu^t) <\infty$.
		Assume moreover that
	 the initial particle configuration satisfies
	\be
	\label{condt02} \frac{1}{N}\sup_{i\in [1,N]}\sum_{ j=1,j\neq i}^N\frac{\mathbf{1}_{|x_i^0-x_j^0|\leq 2\eta_*}}{|x_i^0-x_j^0|^{d-\frac{\theta}{2}}}\le C,\quad \theta:=\frac{( d-1-s)_+}{3d}>0.
	\ee
	Assume that at $t=0$, we have
	\begin{align}\label{Cou-initial0sub}
		\sup_{\eta\in [ \eta_*, \eta_*^{1/3}]}\eta \M( X_N^0, \mu^0,\eta)\leq C\eta_{*}.
	\end{align}
	Then there exists
	$
	N_0=N_0(d,s,C,T,\A_T)
	$
	such that, for all $N\ge N_0$, the particle system \eqref{ode} admits a unique classical solution $X_N(t)$ on $[0,T]$.  Moreover, there exists $C'=C'(d,s,C)$ such that  for every
	$\eta\in[\eta_*,\eta_*^{1/3}]$ 	and every $t\in[0,T]$, we have
	\begin{align}
		\label{conclth1} \M( X_N^t, \mu^t,\eta) \leq \frac{\eta_*}{\eta} \exp(C'\A_T^2(T+1)),
	\end{align}
	\begin{align}
		\label{conclth1-wn}\mathsf{W}( X_N^t, \mu^t,\eta_*^{1/2}) \leq  \exp (C'\A_T^2(T+1))\left( \mathsf{W}\Big( X_N^0, \mu^0,\frac{1}{2}\eta_*^{1/2}\Big)+ \eta_*\right),
	\end{align}
	and
	\begin{align}	\label{conclth1-con}
	\frac{1}{N}\sup_{i\in [1,N]}\sum_{ j=1,j\neq i}^N\frac{\mathbf{1}_{|x_i^t-x_j^t|\leq 2\eta_*}}{|x_i^t-x_j^t|^{d-\theta}}\le 1.
	\end{align}
	In particular, $\mu_N^t\to\mu^t$ as $N\to\infty$ in the sense of probability measures, uniformly for all $t\in[0,T]$.
	\end{theo}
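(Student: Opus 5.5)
We would prove Theorem~\ref{th1} by a continuity (bootstrap) argument coupling the multiscale metric $\M_m(\XN^t,\mu^t)$ of \eqref{defMN} (with $\sigma=\hal$, $2^{\mathbf n}\eta_*=\eta_*^{1/3}$) with the microscale quantity
\[
H_\theta(t):=\frac1N\sup_{i}\sum_{j\ne i}\frac{\mathbf 1_{|x_i^t-x_j^t|\le 2\eta_*}}{|x_i^t-x_j^t|^{d-\theta}} .
\]
We first work with the regularized dynamics (interaction $\F^{\eta'}$ cut off at a scale $\eta'\ll\eta_*$), which has global smooth solutions. All bounds will be uniform in $\eta'$, and we pass to the limit at the end. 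The uniform bound \eqref{conclth1-con} on $H_\theta$ rules out collisions: since $\theta<d$, $H_\theta\le1$ forces $|x_i-x_j|\ge N^{-1/(d-\theta)}$. It also makes the velocity field Lipschitz with $N$-dependent constants, which yields existence and uniqueness of the classical solution on $[0,T]$.

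\textbf{Step 1: differential inequality for $\M(\XN^t,\mu^t,\eta)$.} Differentiate $\langle x\rangle^\gamma(\mu_{N,\eta}-\mu)(x)$ along \eqref{ode} and \eqref{mflimit}. We write $\partial_t\mu_{N,\eta}=-\frac1N\sum_i\nabla\Phi^\eta_{x_i}\cdot\dot x_i$ and use the semigroup \eqref{semigroup} to split $\Phi^\eta=\Phi^{\eta/\sqrt2}\star\Phi^{\eta/\sqrt2}$. This lets us replace $\F(x_i,x_j)$ by $\F(x,y)$ integrated against $\mu_{N,\eta/\sqrt2}$, at the cost of commutator terms. Those commutators are handled with the near-antisymmetry \eqref{assumpF2}, \eqref{assumpF3c}, the spherical cancellation \eqref{assumpF2'}, and the derivative bounds \eqref{assumpF2G}. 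The target estimate has the form
\[
\partial_t\M(\eta)\lesssim (1+\A(\mu))\Big(\M(\eta)+\M(\eta/2)+\eta^2\Big)+\eta^{-1}\,\mathcal E_{\rm micro}(\eta),
\]
where $\mathcal E_{\rm micro}$ collects the contributions of pairs at distance $\lesssim\eta_*$. Using $s<d-1$, we bound it by $\eta_*^{d-s-1+\theta}\eta^{-?}H_\theta$, so that it is $\ll \eta_*/\eta$ as long as $H_\theta\le1$. Multiplying by $2^{\sigma n}$ and summing, the $\M(\eta/2)$ term is absorbed because $2^{-\sigma}<1$, up to the smallest scale $2^m\eta_*$. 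That boundary term is controlled crudely by $(2^m\eta_*)^{-d}$-type bounds together with \eqref{retourMM}. This gives $\partial_t\M_m\le C(1+\A_T)\M_m+C 2^{-m}\cdot(\text{lower order})$ on a short interval $[0,\tau]$, with $\tau$ depending only on $\A_T$.

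\textbf{Step 2: propagation of $H_\theta$.} We compute $\frac{d}{dt}|x_i-x_j|^{-(d-\theta)}$. The relative velocity $\dot x_i-\dot x_j$ splits into a near part (particles within $4\eta_*$), estimated by $H_{\theta/2}$-type sums and the Lipschitz-like bound $|\F(x_i,x_k)-\F(x_j,x_k)|$, and a far part, which equals $\F*\mu_{N,\eta_*}$ up to errors controlled by $\M_0$. The far part is Lipschitz with constant $\lesssim 1+\A_T+\eta_*^{-1}\M(\eta_*)\cdot\eta_*$. Because $s<d-1$, the near part carries the gain $\eta_*^{d-1-s}$, and with $\theta=(d-1-s)/(3d)$ the interpolation between exponents $d-\theta/2$ (initial, \eqref{condt02}) and $d-\theta$ closes: $H_\theta$ grows by at most a factor $e^{Ct}$ times $\eta_*^{\theta/2}$ relative to the initial quantity. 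This gives $H_\theta\le1$ for $N\ge N_0$.

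\textbf{Step 3: iteration and conclusions.} We run the bootstrap on $[k\tau,(k+1)\tau]$ for $\M_k$, using \eqref{bobvious} to gain the factor $2^{-\sigma}$ that absorbs the growth $e^{C\tau}$. After $T/\tau$ steps, \eqref{retourMM} returns us to $\M_0$, and \eqref{Mmeta} yields a rate $(\eta_*/\eta)^{\sigma}$. Reinserting this bound, the equation for $\M(\eta)$ becomes effectively linear in $\eta\M(\eta)$ with source $O(\eta_*)$ (from the initial condition \eqref{Cou-initial0sub} and the $\eta^2$, micro errors), and Gronwall gives \eqref{conclth1}. For \eqref{conclth1-wn} we test against $\phi\in W^{1,1}$ at scale $\eta_*^{1/2}$. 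One derivative moves onto $\phi$, and the errors become $\eta\cdot\M(\eta/2)\lesssim\eta_*$, so Gronwall yields the estimate.

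The main obstacle is Step 2 and its coupling with Step 1. The microscale error in $\partial_t\M$ requires $H_\theta$, while the Lipschitz control in $\partial_t H_\theta$ requires $\M$ at the smallest scale. The exponent bookkeeping, in particular why $\theta/2\to\theta$ and why $s\ge(d-\frac54)_+$ suffices, is where we expect the difficulty to concentrate.
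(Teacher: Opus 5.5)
Your architecture matches the paper's: a bootstrap coupling the dyadic metrics with $H$; smallness $H(0)\lesssim\eta_*^{\theta/2}$ extracted from the exponent $d-\frac{\theta}{2}$ in \eqref{condt02}; a Lipschitz bound on relative velocities in which close pairs cost $\eta_*^{d-1-s-\theta}H$ and far pairs cost $\U(\eta_*)$; the scale-shifting iteration \eqref{re:Mmn} over time steps of length $\sim(1+\A_T)^{-2}$; and a final linear Gronwall for the rate $\eta_*/\eta$. The paper does not regularize; it works on the maximal collision-free interval of Lemma~\ref{lem:local-classical-ode}, but either route is fine.

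\textbf{The gap is in Step~1.} Your target inequality bounds $\partial_t\M(\eta)$ by $\M(\eta)$, $\M(\eta/2)$, $\eta^2$ and an error from pairs at distance $\lesssim\eta_*$. When you replace $\frac1N\sum_j\F(x_i,x_j)$ by a field smoothed at scale $\eta$, the remainder $\frac1{N^2}\sum_{i\neq j}\F_{<\eta}(x_i,x_j)\cdot\nabla\Phi^\eta_{x_i}(x)$ contains every pair at distance $r\in[\eta_*,\eta]$. These pairs are neither microscopic nor part of the smooth field. Estimated without using the density at scale $r$, a shell $|x_i-x_j|\sim r$ costs $\eta^{-1}r^{d-s}\U(r)\U(\eta)$, so the total is only $O(\eta^{d-1-s})$. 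This is far above the target $\eta_*/\eta$. Moreover, since $d-1-s\le\frac14$, the weights of $\M_m$ turn it into $\sum_n 2^{n/2}(2^n\eta_*)^{d-1-s}\sim\eta_*^{-(1-(d-1-s))/3}\to\infty$. The near-antisymmetry \eqref{assumpF2}, \eqref{assumpF3c} does not rescue this: after symmetrization the term is bounded but not small (Remark~\ref{rem:symmetrization}).

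\textbf{The missing idea} is to subtract the mean-field part of these intermediate interactions scale by scale, which makes $\partial_t\M(\eta)$ depend on $\M$ at \emph{all} finer scales. The paper writes $\F_{<\eta}=\F_{<\eta_0}+(\F_{\eta_0}-\F_{\eta})$ with $\eta_0=2^m\eta_*$, and telescopes $\F_{\eta_0}-\F_\eta=\sum_{k<n}(\F_{2^k\eta_0}-\F_{2^{k+1}\eta_0})$. It then uses the semigroup property so that the $k$-th piece acts on $\mu_{N,2^{k-1}\eta_0}=\tilde\mu_{N,2^{k-1}\eta_0}+\mu$.
\begin{itemize}
\item The $\mu$-parts are mean-field terms of size $O(\eta^2)$ by \eqref{proF3}--\eqref{proF4}.
\item By \eqref{le234}, the fluctuation parts cost $\sum_{k<n}(2^k\eta_0)^{d-1-s}2^{k-n}\M(2^{k-1}\eta_0)\U(\eta)$; see \eqref{R13} and \eqref{q20}.
\item Only $\F_{<\eta_0}$ is left to $H$ (Lemma~\ref{lemR1}). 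Even there the bound involves $\U(2^k\eta_*)$ for all $2^k\eta_*\le\eta_0$, which is affordable via \eqref{Mee} only because it carries the factor $2^{-2n}\eta_*^{d\theta/3}$.
\end{itemize}
It is this full downward coupling, not just the $\M(\eta/2)$ from the commutator $\R^3$, that the weights $2^{n/2}$ must absorb, through $\sum_n 2^{n/2}\sum_{k\le n}2^{k-n}\M(2^{k+m}\eta_*)\lesssim\M_m$. With it, your Steps~2--3 go through essentially as in the paper.

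\textbf{A smaller point on \eqref{conclth1-wn}.} A direct Gronwall in $(W^{1,1})^*$ does not close, because the linear terms $\operatorname{div}((\F*\omega)\mu+(\F*\mu)\omega)$ lose a derivative in that norm. The paper instead argues by duality with the backward adjoint equation \eqref{eq:dual}, whose $W^{1,1}$ bound is Lemma~\ref{transesle}. Only the remainders, including $\operatorname{div}((\F*\omega)\omega)$, are then treated as $O(\eta_*)$ sources, as in Lemma~\ref{Le-dual1}.
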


Let us now turn to the case $s= d-1$, which now only needs to be considered for $d \ge 3$. 

\begin{theo}[The Coulomb case $s=d-1$, $d\geq 3$]\label{th2}
	Assume $d\ge 3$, and that the force field $\F$ satisfies
	\eqref{assumpFre}--\eqref{coudet} for $s=d-1$. 
	Let $\mu$ be a regular solution of \eqref{mflimit} on a time interval $[0,T]$ such that
	\[
	\A(\mu^t)<\infty \quad \text{for all } t\in[0,T],
	\] 
	where $\A$ is defined in \eqref{defA} and let $\A_T:= \sup_{t\in [0,T]} \A(\mu^t) <\infty$.
	
	Assume that at $t=0$, we have
	\begin{align} \label{condt01}
		&\qquad \inf_{i \neq j} |x_i^0-x_j^0|\ge \frac1C \eta_* ,\\
		&\label{Cou-initial-high}
			\sup_{\eta\in [ \eta_*, \eta_*^{1/3}]}\eta \M( X_N^0, \mu^0,\eta)\leq C\eta_{*},
	\end{align}
	for some $C>0$.
	Then there exist a time $T_0=T_0(d,C,\A_T)\in(0,T]$ and an integer
	$
	N_0=N_0(d,C,T_0,\A_{T})
	$ such that, for all $N\ge N_0$, the particle system \eqref{ode} admits a unique
	classical solution $X_N(t)$ on $[0,T_0]$.
Moreover, there exists a constant
$C'=C'(d,C,s)$
such that, for every $t\in[0,T_0]$ and every
$\eta\in[\eta_*,\eta_*^{1/3}]$,
	\begin{align}&
		\label{conclth2} \M( X_N^t, \mu^t,\eta) \le\frac{\eta_*}{\eta}\exp\Big(C'\A_{T}\sqrt{t\log(\eta/\eta_*)}+C'\A_{T}^2(1+T)\Big),\\&
		\label{conclth2-wn} \mathsf{W}\Big( X_N^t, \mu^t,\eta_*^{1/2}\Big) \le  \exp \big(C'\A_T(1+T)\big) \mathsf{W}\Big( X_N^0, \mu^0,\frac{1}{2}\eta_*^{1/2}\Big)\\& \qquad\qquad \qquad\qquad\quad + \eta_*(\log N)\exp(C'\A_{T}\sqrt{t(\log N)}+C'\A_{T}^2(1+T)),\nonumber
	\end{align}
	and
	\begin{align}
		\inf_{i \neq j} |x_i^t-x_j^t|\gtrsim\eta_*.
	\end{align}
 Thus $\mu_N^t\to \mu^t$ as $N \to \infty$ in the sense of probability measures for all $t \in [0,T_0]$.
\end{theo}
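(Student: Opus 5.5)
The plan is to run the multiscale Gronwall scheme of the introduction on the coupled quantity
\[
\mathcal{E}(t):=\M_0(\XN^t,\mu^t)+\lambda\,\frac{\eta_*}{\min_{i\neq j}|x_i^t-x_j^t|},
\]
for a suitable constant $\lambda$, on a short time interval $[0,T_0]$. By \eqref{condt01} and \eqref{Cou-initial-high} we have $\mathcal E(0)\lesssim C$. Indeed, \eqref{Cou-initial-high} gives $\M(\XN^0,\mu^0,2^n\eta_*)\le C2^{-n}$, so with $\sigma=\frac12$ the sum defining $\M_0$ converges.

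Existence and uniqueness of a classical solution on $[0,T_0]$ will follow once we show that $\min_{i\neq j}|x_i-x_j|$ stays $\gtrsim\eta_*$. On that event the vector field in \eqref{ode} is locally Lipschitz, so a continuity/bootstrap argument suffices. (Alternatively, one can regularize $\F$ and pass to the limit, as announced before the theorems.)

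\textbf{Step 1: evolution of $\M(\XN,\mu,\eta)$ at a fixed scale.} I would differentiate $\mu_{N,\eta}-\mu$ in time. Writing $\dot x_i=\F*\mu_N(x_i)$ with the self-interaction removed, the difference $\partial_t(\mu_{N,\eta}-\mu)$ splits as follows.
\begin{itemize}
\item A transport term $-\operatorname{div}\big((\F*\mu)(\mu_{N,\eta}-\mu)\big)$. This is handled by a maximum-principle-type argument at the point where the weighted sup is attained, and it costs $\A(\mu)$.
\item A commutator between $\Phi^\eta$ and the velocity field. Using the semigroup property \eqref{semigroupmu} and the almost antisymmetry and cancellation hypotheses \eqref{assumpF2}, \eqref{assumpF3c}, \eqref{assumpF2'}, \eqref{assumpF2G}, I would express it in terms of $\mu_{N,\eta/2}-\mu$. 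This gives a bound of the form $\M(\eta/2)$ times a logarithmic factor. The logarithm is exactly what the Coulomb assumption \eqref{coudet} produces, since $\operatorname{div}\F$ is a bounded operator against weighted $L^\infty$ data.
\item A pair term from close particles, $\frac1{N^2}\sum_{|x_i-x_j|\lesssim\eta}$, where the field is evaluated at interparticle distance. For the sum over $|x_i-x_j|\le\eta$ I would use the separation $|x_i-x_j|\gtrsim\eta_*$ together with the bound on particle counts in balls of radius $r\ge\eta_*$ furnished by $\M(\XN,\mu,r)$, namely $\#\{j:|x_j-x|\le r\}\lesssim Nr^d(\|\mu\|_\infty+\M(r))$. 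Dyadic summation of $|x_i-x_j|^{-(d-1)}$ then gives $\lesssim(1+\M)$ up to a $\log$, because $s=d-1<d$.
\end{itemize}
Putting these together, the target inequality is
\[
\partial_t\M(\eta)\lesssim(1+\A)\big(\M(\eta)+\M(\eta/2)+\eta^2\big)+\text{(contributions from } \eta\to\eta_*).
\]

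\textbf{Step 2: summation with weights $2^{\sigma n}$.} Summing the Step 1 inequality over $n$ gives $\partial_t\M_m\lesssim(1+\A)(\M_m+2^\sigma\M_{m-1})+\dots$. The smallest scale $\eta=\eta_*$ is closed using the separation control.

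\textbf{Step 3: the separation control.} For a pair $i\neq j$, write
\[
\frac{d}{dt}(x_i-x_j)=\frac1N\big(\F(x_i,x_j)-\F(x_j,x_i)\big)+\frac1N\sum_{k\neq i,j}\big(\F(x_i,x_k)-\F(x_j,x_k)\big).
\]
The direct term is $O(N^{-1}|x_i-x_j|^{-(d-1)})=O(\eta_*^{d}|x_i-x_j|^{1-d})$. With $|x_i-x_j|\gtrsim\eta_*$ this is $O(\eta_*)$ per unit time, so it can only close the gap over a time of order one, which is precisely why $T_0$ is short. The many-body term is bounded by $|x_i-x_j|$ times a local Lipschitz norm of the field, which the counts from $\M$ make $O(\log N)$. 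Gronwall on $\log|x_i-x_j|$ then gives $|x_i-x_j|\ge \eta_*e^{-Ct\log}$, which is insufficient. The fix is to exploit instead that $\M$ controls the field generated by the $\eta_*$-mollified measure, whose gradient is bounded by $1+\M_0$. Only the particles within distance $\eta_*$ contribute a singular part, and there are $O(1)$ of them.

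\textbf{Step 4: closing.} The coupled ODE system gives $\mathcal E(t)\le 2\mathcal E(0)$ for $t\le T_0(d,C,\A_T)$. Feeding this bound back into the linear inequality, and iterating over scales as in \eqref{bobvious} and \eqref{retourMM}, recovers the $\frac{\eta_*}{\eta}$ rate with the $\exp(\sqrt{t\log(\eta/\eta_*)})$ factor in \eqref{conclth2}; this factor arises from the cascade in the logarithmic Coulomb commutator. Finally, \eqref{conclth2-wn} follows by testing the same evolution against $W^{1,1}$ functions at scale $\eta_*^{1/2}$, using \eqref{conclth2} for the error terms.

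\textbf{Main obstacle.} I expect the main difficulty to be Step 3, the minimal-distance propagation in the Coulomb case: getting a $\lesssim1$ rather than $\log N$ Lipschitz bound on the field near particles, using only the control from $\M$.
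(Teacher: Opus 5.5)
There is a genuine gap in Step~1, in your treatment of the pair term at intermediate distances $\eta_*\lesssim|x_i-x_j|\lesssim\eta$. Bounding $\frac1{N^2}\sum_{|x_i-x_j|\lesssim\eta}\F(x_i,x_j)\cdot\nabla\Phi^\eta_{x_i}(x)$ in absolute value with particle counts gives exactly what you wrote, $(1+\M)\,\U(\eta)$: the pairs with $|x_i-x_j|\sim\eta$ alone contribute $\eta^{-1}\cdot\eta\cdot\U(\eta)$. Symmetrizing does not help in this range (this is the point of Remark~\ref{rem:symmetrization}). An $O(1)$ source in $\partial_t\M(\eta)$ at every scale contradicts your own target inequality. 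It only yields $\M(t,\eta)\lesssim\M(0,\eta)+t$, which destroys the $\eta_*/\eta$ decay. After weighting by $2^{n/2}$ it also makes $\partial_t\M_0$ of size $\eta_*^{-1/3}$, so $\mathcal E$ cannot be closed on an $N$-independent time.

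The missing idea is that these interactions must not be estimated pointwise, because the absolute value destroys the cancellation $\F_{<\eta}*\mu=O(\eta^2\A(\mu))$ for smooth $\mu$ (see \eqref{proF3}). The paper proceeds as follows (see \eqref{x2}):
\begin{itemize}
\item It writes $\F_{<\eta}-\F_{<\eta_*}=\F_{\eta_*}-\F_\eta$ as a dyadic telescoping sum.
\item Using the semigroup property, it turns each piece applied to $\mu_N$ into a kernel difference convolved with $\mu_{N,2^{k-1}\eta_*}$; that difference has weighted $L^1$ size $O(2^k\eta_*)$ by \eqref{le234}.
\item It then splits $\mu_{N,2^{k-1}\eta_*}=\tilde\mu_{N,2^{k-1}\eta_*}+\mu$.
\end{itemize}
The $\mu$-parts are $O(\eta^2)$ by \eqref{proF3}--\eqref{proF4}. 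The $\tilde\mu$-parts give the cascade $\sum_{k\le n}2^{k-n}\M(2^{k-1}\eta_*)\,\U(\eta)$. It is this full cascade, through Lemma~\ref{Le1}, that produces the factor $\exp(C\sqrt{t\log(\eta/\eta_*)})$, not the Coulomb commutator. Only pairs at distance $\lesssim\eta_*$ (the term $\R^1$) are estimated crudely. There, symmetrization with \eqref{assumpF2} gives the gain $(\eta_*/\eta)^2$ of Lemma~\ref{lemR1}, and this is where the separation control enters.

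There is also a second, fixable inconsistency. You say the commutator costs $\M(\eta/2)$ times $|\log\eta|$, but your target inequality drops the logarithm. At $\eta\sim\eta_*$ this factor is $\log N$ while $\M(\eta_*)\sim1$, which does not close on an $N$-independent time. You need the dyadic device you invoke in Step~3 (the paper's \eqref{z11}) already here. After a second-order Taylor expansion of the commutator, with \eqref{coudet} handling the divergence part, $|\log\eta|\,\M(\eta/2)$ is replaced by $\big(\sum_k\M(2^k\eta)+\log N\,\M(\eta_*^{1/3})+\ldots\big)\big(\M(\eta/2)+\eta^2\A(\mu)\big)$. This is quadratic in $\M_0$ and hence closable for short times.

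Your Step~3, with the minimal distance playing the role of the paper's $H(t)$, is a legitimate variant of Lemma~\ref{lem31}. The direct term gives the $O(1)$ lifespan, and the many-body term is controlled by exactly that dyadic bound.
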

Here again, we are able to leverage on the short-time existence result for the limiting PDE to deduce the existence of classical solutions to the ODE system (see Appendix \ref{app1}).
Also, let us emphasize that $T_0$ is independent of $N$, but may be smaller than $T$, contrarily to the previous theorem. The convergence rate we obtain is stronger than that provided by the modulated energy method,  and  the $L^\infty$-based metric is stronger than the negative Sobolev-type metric of the modulated energy method, but this also requires a stronger assumption on the initial data.
\begin{remark}[Almost $C^2$-type approximation]
	At the scale $\eta=\eta_*^{1/3}$, the estimates
	\eqref{conclth1-cou}, \eqref{conclth1}, and \eqref{conclth2}
	give
	\[
\sup_{x\in\mathbb R^d}\langle x\rangle^\gamma
\bigl|\mu_{N,\eta}^t(x)-\mu^t(x)\bigr|
\;\le\;
C(\A,T)\,\eta^2
\exp\bigl(C\sqrt{t\log\eta}\bigr).
\]
	Thus, up to the subpolynomial correction $\exp\bigl(C\sqrt{t\log\eta}\bigr),$
	the mollified empirical measure converges to $\mu^t$ with the
	quadratic rate $\eta^2$. In this sense, the estimate may be viewed
	as an almost $C^2$-type approximation. This terminology refers only
	to the approximation rate and does not assert convergence in the
	$C^2$ topology.
\end{remark}
\subsection{Main results in the super-Coulomb regime}

We now discuss the super-Coulomb regime $s>d-1$.  In that regime, blow-up is possible in the case of an attractive interaction.
Formally, the mean-field limit of \eqref{ode}
is still \eqref{mflimit}, however in the super-Coulomb range, the singularity of $\F$ makes \eqref{mflimit}
ill-posed in general. 
For this reason, as discussed in \eqref{120},  in the regime $s\in (d-1,d+1)$ we will have to  work with the regularized mean-field equation
\begin{equation}
\label{mflimitsup}\partial_t \mu= - \div ((\F_{\eta_*} * \mu)\mu).\end{equation}
 We will prove (see Appendix \ref{app:existence})  that  for $s\in (d-1,d+1)$, \eqref{mflimitsup} is locally well-posed on the natural timescale $T \sim \eta_*^{s+1-d}$.

\begin{lem}[Local well-posedness for the regularized equation]\label{Le-localwp} Let $s\in (d-1,d+1)$ and $d\ge 2$. 	Assume that the force field $\F$ satisfies
	\eqref{assumpFre}--\eqref{assumpF2'},  and let the initial datum satisfy $\A(\mu^0)<\infty.$
	There exist constants $T_0>0$ and $C>0$ such that the equation \eqref{mflimitsup}
	admits a unique solution on the time interval $[0,T_0\eta_*^{s+1-d}]$.
	Moreover, the solution satisfies the a priori bound
	\[ \A(\mu^t) \le C \A(\mu^0),
	\qquad
	t\in[0,T_0\eta_*^{s+1-d}].
	\]
\end{lem}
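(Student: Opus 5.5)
The plan is a standard Picard/a priori estimate argument for the transport equation $\partial_t\mu+\div(u\mu)=0$ with velocity $u=\F_{\eta_*}*\mu$, where the point is to quantify how the regularized field depends on $\eta_*$.

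\textbf{Step 1: the basic field estimate.} For any $f$ with $\A(f)<\infty$ (and with $\|\langle x\rangle^{\gamma+1}\nab^4 f\|_{L^\infty}$ finite if needed), I aim to show
\[
\sum_{m=0}^{4}\big\|\langle x\rangle^{-1}\mathbf 1_{m=0}\,\nab^m(\F_{\eta_*}*f)\big\|_{L^\infty}\lesssim \eta_*^{d-1-s}\,\A(f)+\A(f).
\]
The regular part $\F^{\mathrm{reg}}$ is handled directly by \eqref{assumpFre}. It gives at most linear growth in $x$, which the weight $\langle x\rangle^{\gamma+1}$ absorbs, since $\gamma>d+2$ makes $\langle x\rangle^{-\gamma-1}$ integrable.

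For the singular part I move derivatives onto $f$ using \eqref{assumpF2G}, with the mollification in $y$ by $\Phi^{\eta_*}$ commuting with these operations. Then I split $\int \F^{\mathrm{sing}}(x,y)\,(\Phi^{\eta_*}\star\nab^m f)(y)\,dy$ into $|x-y|\ge 1$ and $|x-y|<1$.

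On the region $|x-y|<1$, I write the integral in polar coordinates and subtract $g(x)$, where $g=\Phi^{\eta_*}\star\nab^m f$. The spherical average of the kernel is controlled by \eqref{assumpF2'} as $r^{-s+1}$, so the constant term contributes $\int_0^1 r^{d-1}r^{-s+1}\,dr$, which is finite since $s<d+1$.

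The remainder is bounded by $|g(y)-g(x)|\le \min(2\|g\|_\infty,\ r\|\nab g\|_\infty)$. For this I use $\|\nab(\Phi^{\eta_*}\star h)\|_\infty\lesssim \eta_*^{-1}\|h\|_\infty$, applied at the top order of derivatives. The bound becomes
\[
\int_0^1 r^{d-1-s}\min(1,r/\eta_*)\,dr\lesssim 1+\eta_*^{d-1-s},
\]
which is $\lesssim\eta_*^{d-1-s}$ when $s>d-1$ (for $s=d$ one gets a log, still dominated). For lower orders, $\nab g$ is controlled by $\A$ directly and the loss does not occur.

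\textbf{Step 2: a priori bound.} Along characteristics $\dot X=u(t,X)$, I have $\frac{d}{dt}\nab^m\mu(X)$ equal to a sum of terms $\nab^{k}u\,\nab^{m+1-k}\mu$. The top-order term $u\cdot\nab^{m+1}\mu$ is absorbed by the transport. The weight is handled via $|u|\lesssim \langle x\rangle$ (regular part) together with boundedness of the singular part, which gives $\frac{d}{dt}\langle X\rangle^{\gamma+1}\lesssim\langle X\rangle^{\gamma+1}$.

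This leads to
\[
\frac{d}{dt}\A(\mu^t)\lesssim (1+\eta_*^{d-1-s})\,\A(\mu^t)^2 .
\]
The term requiring $\nab^4 u$ when $m=3$ is handled by Step 1, which covers $m$ up to $4$ using $\nab^4(\Phi^{\eta_*}\star f)$ with one derivative placed on the heat kernel. Comparing with the Riccati ODE $y'=Ky^2$, I get $\A(\mu^t)\le 2\A(\mu^0)$ for
\[
t\le T_0\,\eta_*^{s+1-d}, \qquad T_0\sim \frac{1}{\A(\mu^0)} .
\]
The exponent works out because $\eta_*^{d-1-s}=\eta_*^{-(s+1-d)}$.

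\textbf{Step 3: existence and uniqueness.} I will build the solution by the iteration $\partial_t\mu_{k+1}+\div\big((\F_{\eta_*}*\mu_k)\mu_{k+1}\big)=0$. Step 2 gives uniform $\A$ bounds on the common interval.

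For contraction, I estimate $\delta=\mu_{k+1}-\mu_k$ in the weighted norm $\|\langle x\rangle^{\gamma+1}\cdot\|_{L^\infty}$, not in $\A$, to avoid losing derivatives. The difference of fields $\F_{\eta_*}*(\mu_k-\mu_{k-1})$ is controlled by Step 1 with $m=0,1$. Since $\eta_*$ is fixed for given $N$, the step-dependent constants are acceptable on the interval above, with $T_0$ possibly reduced.

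Passing to the limit by compactness (Arzel\`a--Ascoli together with the uniform $\A$ bounds) gives a solution. The same difference estimate applied to two solutions yields uniqueness.

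\textbf{Main obstacle.} The main obstacle is Step 1 at top order: showing that the kernel's singularity costs exactly $\eta_*^{d-1-s}$ and no worse. This relies on the spherical cancellation \eqref{assumpF2'}, since without it the range $s\ge d$ diverges. It also requires care that $\nab_x$ falling on the $x$-dependence of the kernel is covered by \eqref{assumpF2G}.
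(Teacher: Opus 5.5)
Your overall scheme matches the paper's: a Riccati-type bound on $\A$ on the timescale $\eta_*^{s+1-d}$, followed by a Picard iteration. The paper rescales time, observes that $G_{\eta_*}=\eta_*^{s+1-d}\F_{\eta_*}$ obeys $\eta_*$-uniform mapping bounds, and reruns the fixed point of Lemma~\ref{1stpde}.

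\textbf{The gap is the order-four case of your Step~1.} You need it for the term $\nab^3(\div u)\,\mu$ when propagating $\nab^3\mu$. Putting the fourth derivative on the heat kernel gives $g=\nab\Phi^{\eta_*}\star\nab^3 f$ with $|g(y)|\lesssim\eta_*^{-1}\A(f)\langle y\rangle^{-\gamma-1}$. This factor $\eta_*^{-1}$ is paid wherever the kernel carries $O(1)$ mass:
\begin{itemize}
\item in the subtracted constant term $g(x)\int_{|z|<1}\F^{\mathrm{sing}}(x,x+z)\,dz$;
\item in the annulus $\eta_*<|x-y|<1$, where $\int_{\eta_*}^1 r^{d-1-s}\,dr=O(1)$ when $s<d$;
\item on the region $|x-y|\ge 1$.
\end{itemize}
Your decomposition therefore yields $\eta_*^{-1}+\eta_*^{d-1-s}$ (and $\eta_*^{-1}|\log\eta_*|$ at $s=d$), not $\eta_*^{d-1-s}$. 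The bound you claim holds only for $s\in(d,d+1)$.

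For $s\in(d-1,d)$, which is precisely the range where the paper uses the regularized equation, Step~2 then only gives $\frac{d}{dt}\A\lesssim\eta_*^{-1}\A^2$. That is a lifespan of order $\eta_*/\A(\mu^0)$, shorter than $T_0\eta_*^{s+1-d}$ by the factor $\eta_*^{d-s}\to 0$. So the obstruction is not the near-diagonal singularity you flag as the main obstacle. It is the outer region, where a derivative on the mollifier costs $\eta_*^{-1}$.

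\textbf{The missing idea: at top order, let the extra derivative fall on the kernel away from the diagonal, not on the mollifier.} With $k=\nab^3 f$, write
\[
\int\F(x,y)\,(\nab\Phi^{\eta_*}\star k)(y)\,dy=-\int\nab_y\F_{\eta_*}(x,y)\,k(y)\,dy.
\]
Then use the pointwise bound $|\nab_y\F_{\eta_*}(x,y)|\lesssim(\eta_*+|x-y|)^{-s-1}+1$. It is proved exactly like \eqref{proF0}, whose proof controls precisely integrals of the form $\int\F^{\mathrm{sing}}(x,y+z)\,\nab^m\Phi^{\eta_*}(z)\,dz$. The cancellation \eqref{assumpF2'} enters only for $|x-y|\lesssim\eta_*$ when $s\ge d$.

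Integrating against $\langle y\rangle^{-\gamma}$ gives exactly $\eta_*^{d-1-s}$, as in \eqref{le231}: in the outer region the derivative now costs $|x-y|^{-1}$ rather than $\eta_*^{-1}$. This kernel bound is what the $\eta_*$-uniform estimate on $G_{\eta_*}$ in the paper rests on. With it, $\eta_*^{d-1-s}\cdot\eta_*^{s+1-d}=1$ and your Steps~2--3 close as intended. For the contraction, use kernel bounds in terms of the weighted $L^\infty$ norm of the difference rather than Step~1 as stated in terms of $\A$; since $\eta_*$ is fixed, those constants are harmless.
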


\begin{remark}
	The ill-posedness discussed above is specific to the attractive super-Coulomb
	interaction. In contrast, the limiting equation \eqref{mflimit}
	is locally well-posed when the interaction kernel is repulsive or has sufficient
	cancellation. For instance, local well-posedness holds if
	\begin{equation}\label{eq:F_supercoulomb-gs2}
		\F(x,y)=\frac{\mathbb M(x-y)}{|x-y|^{s+1}},
	\end{equation}
	where $s<d+1$ and $\mathbb M$ is a constant skew-symmetric matrix, i.e.\ $\mathbb M\xi\cdot\xi=0$ for all $\xi\in\mathbb R^d$.
	In these cases, the leading singularity is divergence-free and does not produce
	concentration, which allows one to establish local well-posedness of the limiting equation.
\end{remark}
In the regime $s\ge d+1$, the term $\F_{\eta_*}*\mu$ is no longer even integrable, and we will need to use a cutoff of the interaction. 

\begin{theo}[Short-time mean-field limit in the super-Coulomb regime $s>d-1$]\label{th3'} Assume that $s>d-1$ and that the force field $\F$ satisfies \eqref{assumpFre}--\eqref{assumpF2'}. 
	Let $\mu$ be 
	\begin{itemize}
	\item 
	a solution of the regularized mean-field equation \eqref{mflimitsup}
	on the time interval $[0,\eta_*^{s+1-d}T]$ for some $T<\infty$ in the case  $s\in(d-1,d)$,
	\item 
$\mu^t\equiv\mu^0$ with $\tilde{\A}(\mu^0):= \A(\mu^0)+\|\langle x\rangle^{\gamma+1}\nab^4 \mu_0\|_{L^\infty}<\infty$ in the case $s\ge d$.
	\end{itemize}
Set
$
\A'_T
:=
\sup_{0\le t\le \eta_*^{s+1-d}T}\A(\mu^t)
+
\|\langle x\rangle^{\gamma+1}\nabla^4\mu^0\|_{L^\infty}\mathbf 1_{s\ge d}.
$
		Assume that the initial configuration satisfies 
\be\inf_{i \neq j}	|x_i^0-x_j^0|\ge \frac1C \eta_* \ee
and
\be			\sup_{\eta\in [ \eta_*, \eta_*^{1/3}]}\eta \M( X_N^0, \mu^0,\eta)\leq C\eta_{*} ,\ee
	for some $C>0$. 
Then there exist a rescaled time $T_0=T_0(d,s,C,\A'_T)\in(0,T]$ and an integer
$N_0=N_0(d,s,C,T_0,\A'_{T_0})$
such that, for all $N\ge N_0$, the particle system \eqref{ode} admits a unique
classical solution on the time interval
$
[0,\eta_*^{s+1-d}T_0].
$
Moreover,  there exists $C'=C'(d,s,C)$ such that for every $t\in[0,T_0]$ and every
$\eta\in[\eta_*,\eta_*^{1/3}]$, we have
	\begin{equation}\label{M_bound_supercoulomb}
		\M\!\left(
		X_N^{\eta_*^{s+1-d}t},\,
		\mu^{\eta_*^{s+1-d}t},\,
		\eta
		\right)
		\le
		 \frac{\eta_*}{\eta}
		\exp\Bigl(
		C'(\tilde \A_{T_*}^2 t+1)
		\Bigr),
	\end{equation}
	\begin{equation}\label{W_bound_supercoulomb}
		\mathsf{W}\left(
		X_N^{\eta_*^{s+1-d}t},\,
		\mu^{\eta_*^{s+1-d}t},\,
		\eta_*^{1/2}
		\right)
		\le
	\left(	\mathsf{W}\left(
		X_N^{0},\mu^{0},\frac{1}{2}\eta_*^{1/2}
		\right)
		+
		\eta_*\right)
		\exp \Bigl(
		C'(\tilde\A_{T_*}^2t+1)
		\Bigr)\,,
	\end{equation}
	and
	\begin{align}
		\inf_{i \neq j} |x_i^{\eta_*^{s+1-d}t}-x_j^{\eta_*^{s+1-d}t}|\gtrsim\eta_*.
	\end{align}
	\end{theo}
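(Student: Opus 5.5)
The plan is to reduce Theorem~\ref{th3'} to the Coulomb-type argument behind Theorem~\ref{th2}, after rescaling time by $\eta_*^{s+1-d}$. Set $\tau=\eta_*^{-(s+1-d)}t$ and $y_i(\tau)=x_i(\eta_*^{s+1-d}\tau)$. The rescaled system has force $\eta_*^{s+1-d}\F$. At the microscale $|x-y|\sim\eta_*$ this force has size $\eta_*^{s+1-d}\eta_*^{-s}=\eta_*^{1-d}$, which is exactly the size of a Coulomb force ($s=d-1$) at distance $\eta_*$. So on the rescaled time interval $[0,T_0]$ the super-Coulomb singularity behaves, at all scales $\ge\eta_*$, no worse than Coulomb, and at larger scales $\eta$ it is strictly better, with an extra factor $(\eta_*/\eta)^{s+1-d}$.

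For $s\ge d$ I would work with the cutoff kernel $\F^{\eta_*}$ from \eqref{deF}. Assumption \eqref{assumpF>=d+1} replaces \eqref{assumpF2G} in the commutator estimates. The limit is frozen, $\mu^t\equiv\mu^0$: the rescaled drift $\eta_*^{s+1-d}\F^{\eta_*}*\mu^0$ is $O(\eta_*^{s+1-d}\cdot\eta_*^{d-s-1})=O(1)$ only at the level of differences, and I would absorb it into the error, which uses the fourth derivative in $\tilde\A$. For $s\in(d-1,d)$ I would use the regularized solution of \eqref{mflimitsup} given by Lemma~\ref{Le-localwp}.

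The argument is a bootstrap on two quantities simultaneously: the multiscale metric $\M_0(Y_N^\tau,\mu^\tau)$ from \eqref{defMN}, and the minimal distance $\inf_{i\ne j}|y_i-y_j|\ge c\,\eta_*$. Assume both on $[0,\tau_1]$ with constants $2C_0$ and $c/2$.

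First, differentiate $\M(Y_N,\mu,\eta)$ at each scale. Using the semigroup property \eqref{semigroupmu}, split the interaction into far-field and near-field parts relative to $\eta$. The far-field part is controlled by $\A(\mu)$ times $\M$ at scale $\eta/2$. The near-field part is symmetrized using \eqref{assumpF2}, \eqref{assumpF3c} and \eqref{assumpF2'}, which gains one power of $|x_i-x_j|$. The separation hypothesis then bounds it by $\eta_*^{s+1-d}\sum_{|y_i-y_j|\lesssim\eta}|y_i-y_j|^{-s+1}/N$. Counting particles at distance $r\in[\eta_*,\eta]$ via $\M$ at scale $r$ gives a dyadic sum, which stays bounded for $s<d+1$ after the $\eta_*^{s+1-d}$ prefactor. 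This yields
\[
\partial_\tau\M(\eta)\lesssim(1+\tilde\A)\bigl(\M(\eta)+\M(\eta/2)+\eta_*/\eta\bigr).
\]
Summing with weights $2^{\sigma n}$ gives $\partial_\tau\M_0\lesssim(1+\tilde\A)(\M_0+1)$. Gronwall then closes the first bootstrap for $\tau\le T_0$ small, and \eqref{Mmeta} gives \eqref{M_bound_supercoulomb}.

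Second, propagate the separation. For a pair at distance $\sim\eta_*$, the relative velocity in rescaled time is $\eta_*^{s+1-d}|\F|\lesssim\eta_*^{1-d}$ from the pair itself. The contribution of other particles is controlled by the density bound $\mu_{N,\eta_*}\lesssim 1$, which follows from $\M$ and $\A$. Hence $\frac{d}{d\tau}|y_i-y_j|\ge-C\eta_*^{1-d}\cdot\eta_*^{d}/\eta_*=-C\eta_*$ in the worst case, and the separation degrades by at most $C\eta_*\tau$. For $T_0$ small this preserves the bound $c\eta_*/2$, which also gives classical existence and uniqueness, since the force is Lipschitz away from collisions.

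The $\mathsf W$ estimate \eqref{W_bound_supercoulomb} follows by running the same computation against $W^{1,1}$ test functions at the single scale $\eta_*^{1/2}$, using the now-available $\M$ bounds for the error terms.

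The main obstacle is the near-field symmetrization for $s\ge d$. There the counting sum $\sum_j|y_i-y_j|^{1-s}$ near $\eta_*$ is dominated by the closest neighbours, so the separation lower bound must enter quantitatively, and the cutoff commutator \eqref{assumpF>=d+1} must be balanced exactly against the prefactor $\eta_*^{s+1-d}$. I would first try to bound the nearest-shell contribution directly by $\eta_*^{s+1-d}\eta_*^{1-s}\cdot\#\{j:|y_i-y_j|\le2\eta_*\}/N$. The count is $O(1)$ by separation, so this term is $O(\eta_*^{2-d}\cdot\eta_*^{d})\cdot\eta_*^{-1}$, which is acceptable relative to $\eta_*/\eta$.
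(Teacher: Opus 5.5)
There is a genuine gap in the per-scale inequality $\partial_\tau\M(\eta)\lesssim(1+\tilde\A)(\M(\eta)+\M(\eta/2)+\eta_*/\eta)$. It does not follow from your near-field treatment when $d-1<s<d$, which is exactly the case where $\mu$ solves \eqref{mflimitsup}.

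You bound the pairs with $\eta_*\lesssim|y_i-y_j|\lesssim\eta$ in absolute value after symmetrization. Keep the factor $\eta^{-2}$ coming from $|\nabla\Phi^\eta_{y_i}-\nabla\Phi^\eta_{y_j}|\lesssim|y_i-y_j|\eta^{-2}\Phi^{2\eta}_{y_i}$. Then that bound is
\[
\eta_*^{s+1-d}\,\eta^{-2}\,\frac1N\sum_{j}|y_i-y_j|^{1-s}\mathbf 1_{|y_i-y_j|\lesssim\eta}\;\sim\;\eta_*^{s+1-d}\,\eta^{d-1-s}=\Bigl(\frac{\eta_*}{\eta}\Bigr)^{s+1-d},
\]
times $\U(\eta)^2$. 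This is the true size of the absolute sum for \emph{any} configuration close to a smooth density, since each particle has $\sim Nr^d$ neighbours within distance $r$. So the $\A(\mu)$ part of $\U(r)$ in your count cannot be removed.

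The term is bounded, as you say, but for $s<d$ it exceeds $\eta_*/\eta$ by the factor $(\eta/\eta_*)^{d-s}$, a positive power of $N$ at $\eta=\eta_*^{1/3}$. Integrating it gives at best $\M(\eta)\lesssim(\eta_*/\eta)^{s+1-d}$. When $s+1-d<\sigma=\frac12$, the weighted sum $\sum_n2^{\sigma n}2^{-n(s+1-d)}$ entering $\partial_\tau\M_0$ grows like a power of $N$, so even your bootstrap on $\M_0$ fails. For $s\ge d$ this crude count is harmless, since then $(\eta_*/\eta)^{s+1-d}\le\eta_*/\eta$.

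The missing idea is that intermediate-scale interactions must not be estimated in absolute value. Their mean-field part is small, $\F^{\eta_*}_{<\eta}*\mu=O(\eta^2)$ by \eqref{proF3}, and this cancellation has to be transferred to the empirical sum. The paper does this as follows:
\begin{itemize}
\item It telescopes $\F^{\eta_*}_{<\eta}-\F^{\eta_*}_{<\eta_*}$ dyadically and uses the semigroup property, so that the $k$-th shell acts on $\mu_{N,2^{k-1}\eta_*}=\mu+\tilde\mu_{N,2^{k-1}\eta_*}$.
\item The $\tilde\mu$ part ($\R^{2,1}$) costs only $\M(2^{k-1}\eta_*)$, via \eqref{le234}.
\item The $\mu$ part ($\R^{2,2}$--$\R^{2,4}$) gains $\eta^2$.
\item Only the shell below $\eta_*$ ($\R^1$) is estimated in absolute value, using separation.
\end{itemize}
After rescaling this yields the cascade
\[
\partial_\tau\M(2^n\eta_*)\lesssim\sum_{k\le n+1}2^{k(d-s)-n}\M(2^{k-1}\eta_*)\,\U(2^n\eta_*)+(\eta_*+2^{-2n})(\cdots).
\]
It couples all finer scales, and it is consistent with $\M(2^k\eta_*)\lesssim e^{C\tau}2^{-k}$ precisely because $\sum_k2^{k(d-s-1)}<\infty$ for $s>d-1$.

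Three smaller points.
\begin{itemize}
\item Even granting your inequality, \eqref{Mmeta} only turns a bound on $\M_0$ into $\M(\eta)\lesssim(\eta_*/\eta)^{\sigma}$. Reaching the rate $\eta_*/\eta$ in \eqref{M_bound_supercoulomb} needs a second, per-scale continuation argument once the $\M_0$ bootstrap controls the coefficients, as in \eqref{q37}.
\item The $\mathsf W$ bound is not obtained by ``the same computation''. The linear terms $\operatorname{div}((\F*\omega)\mu+(\F*\mu)\omega)$ lose a derivative in $(W^{1,1})^*$. The paper closes this by duality with the backward linearized problem (Lemma~\ref{Le-dual2}).
\item Your direct propagation of the minimal distance is a reasonable, simpler substitute for the paper's route through $H$ and Lemma~\ref{lem31}. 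In rescaled time the pair itself contributes $\eta_*^{s+1-d}N^{-1}\eta_*^{-s}=\eta_*$ (your displayed arithmetic should read this way). The other particles contribute $O(|y_i-y_j|\,\U(\eta_*))$, through $\frac1N\sum_k|y_i-y_k|^{-s-1}\lesssim\eta_*^{d-s-1}\U(\eta_*)$.
\end{itemize}
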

	This result will be shown to be sharp in the next section.
\subsection{Collision results}\label{sec:blowup}

We now show that collisions can occur for the attractive Coulomb and super-Coulomb cases. Remarkably, the mean-field convergence is maintained up to the first collision time, meaning that the discrete dynamics faithfully tracks the continuum solution even as particles collide.

We denote $x_{ij}:= x_i - x_j$.
The following theorem shows that Coulomb interactions may lead to particles collisions, even though weak 
 particle continuation solutions still exist in dimension $2$, as seen in Appendix~\ref{app1-weak}
(see also  Theorem \ref{th1-Coulomb}).\\\\
We assume that the singular part of the interaction kernel is attractive in the following sense: there exist
constants $c_0\geq 2$ and $\varepsilon_0>0$ such that
\begin{align}\label{att}
	c_0^{-1}|x-y|^{-s+1}
	\leq
	-\F^{\mathrm{sing}}(x,y)\cdot (x-y)
	\leq
	c_0 |x-y|^{-s+1},
\end{align}
for all $x,y\in\mathbb{R}^d$ satisfying $0<|x-y|\leq \varepsilon_0$.

\begin{assum}[Well-prepared colliding initial data]
	\label{ass:well-prepared-colliding}
	Let $\eta_*=N^{-1/d}$. Fix $c'\in[2,4]$ and set
	\begin{equation}\label{wp-mu0}
		\mu_0(x)=\Phi^{c'^2}(x).
	\end{equation}
	We assume that the initial configuration
	$X_N^0=(x_1^0,\ldots,x_N^0)$ is chosen as follows.
	
	First, we fix the background particles $(x_j^0)_{j=2,\ldots,N}$ with
	\begin{equation}\label{wp-background-grid}
		x_2^0=0,
		\qquad
		(x_j^0)_{j=2,\ldots,N}\subset (3\eta_*)\mathbb Z^d .
	\end{equation}
	Moreover, after removing the particle $x_2^0$, the remaining background
	particles are well prepared for $\mu_0$ in the sense that
	\begin{equation}\label{wp-background-empirical}
		\sup_{x\in\mathbb R^d}
		\langle x\rangle^{3d}
		\left|
		\frac1N\sum_{j=3}^N\Phi_{x_j^0}^{\eta}(x)
		-\mu_0(x)
		\right|
		\le
		C\frac{\eta_*}{\eta},
		\qquad
		\eta\in [N^{-1/d},N^{-1/(3d)}],
	\end{equation}
	where $C$ is independent of $N$ and $\eta$.
	
	Finally, we place the first particle close to $x_2^0$ by setting
	\begin{equation}\label{wp-close-pair}
		x_1^0=\varepsilon\eta_* e_1,
		\qquad
		0<\varepsilon\ll1.
	\end{equation}
	The parameter $\varepsilon$ is fixed sufficiently small, independent of
	$N$.
\end{assum}

\begin{theo}[Collisions for the Coulomb case]\label{cou}
	Let $d\ge 2$ and $s=d-1$. Assume that the force field $\F$ satisfies
	\eqref{assumpFre}--\eqref{coudet} and \eqref{att}. Let the initial data
	$X_N^0$ satisfy Assumption~\ref{ass:well-prepared-colliding}.
	Then, for sufficiently large $N$, the ODE system \eqref{ode} leads to a
	collision in finite time.
	
	More precisely, there exists a collision time $T^*>0$, independent of
	$N$, such that
	\begin{equation}\label{cou-collision-time}
		T^*
		\sim
		\left(
		\frac{\inf_{i\neq j}|x_{ij}^0|}{\eta_*}
		\right)^d
		\sim
		\varepsilon^d,
	\end{equation}
	and, for all $0\le t<T^*$,
	\begin{equation}\label{cou-collision-rate}
		\inf_{i\neq j}|x_{ij}^t|
		\sim
		\eta_*
		\left(1-\frac{t}{T^*}\right)^{1/d}.
	\end{equation}
	In particular,
	\begin{equation}
		\lim_{t\uparrow T^*}\inf_{i\neq j}|x_{ij}^t|=0.
	\end{equation}
	Moreover, the convergence stated in Theorem~\ref{th2}
	(resp. Theorem~\ref{th1-Coulomb} when $d=2$) holds on $[0,T^*)$.
\end{theo}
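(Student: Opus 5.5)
The strategy is to study the evolution of the close pair $x_{12}=x_1-x_2$ and show that it obeys, up to controlled errors, the scalar ODE
\[
\frac{d}{dt}|x_{12}|^2 = \frac{2}{N}\big(\F(x_1,x_2)-\F(x_2,x_1)\big)\cdot x_{12} + \frac{2}{N}\sum_{j\ge 3}\big(\F(x_1,x_j)-\F(x_2,x_j)\big)\cdot x_{12}.
\]
The first term is the self-interaction of the pair. By \eqref{att} together with the almost antisymmetry \eqref{assumpF2}, it equals $-\frac{4}{N}\,\F^{\mathrm{sing}}(x_1,x_2)\cdot x_{12}$ up to $O(N^{-1}|x_{12}|^{-s+2})$. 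Since $s=d-1$, it is therefore comparable to $-N^{-1}|x_{12}|^{-d+2}$. Writing $r=|x_{12}|$ and $\rho=r/\eta_*$, and using $N^{-1}=\eta_*^d$, we get the comparison
\[
\frac{d}{dt}\rho^d \sim -c\,\rho^{d-2}\,\rho^{2-d}=-c .
\]
So $\rho^d$ decreases linearly, at a rate bounded above and below. This gives $T^*\sim \rho(0)^d=\varepsilon^d$ and $\rho(t)^d\sim \varepsilon^d(1-t/T^*)$, which is \eqref{cou-collision-rate}.

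The main obstacle is bounding the second term, the relative force exerted by the background on the pair. It must be shown to be $o(\eta_*^{d-2}r^{2-d})\cdot r^2/\eta_*^d$, i.e.\ $\lesssim r\cdot|\text{difference of fields}|$ with difference of fields $\ll r^{1-d}\eta_*^d$. I would bound it by $r\sup_{z\in[x_1,x_2]}|\nab_x \frac1N\sum_{j\ge3}\F(z,x_j)|$ and split the sum into near and far parts. For the near part, $|x_j-z|\le 2\eta_*$, the grid structure \eqref{wp-background-grid} leaves only $O(1)$ particles at distance $\gtrsim\eta_*$ from $0$, each contributing $N^{-1}\eta_*^{-d}=O(1)$. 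For the far part, I would replace $\mu_N$ by $\mu_{N,\eta}$ and control the result with $\M$, using \eqref{wp-background-empirical} at $t=0$ and the conclusion of Theorem~\ref{th2} on $[0,\min(T^*,T_0))$. This requires adapting Theorem~\ref{th2} to the single close pair, which violates \eqref{condt01}. I would handle this by running the proof of Theorem~\ref{th2} on the $N-2$ background particles plus the pair, treating the pair's contribution $2/N$ as an explicit error: a mass-$2/N$ perturbation contributes $O(\eta_*^d\eta^{-d})\le \eta_*/\eta$ to $\M$ at scales $\eta\ge\eta_*$. The gradient of the smooth field $\F*\mu$ is $O(\A_T)$, and the multiscale remainder bounded via \eqref{Mmeta} is $O(\sum_n 2^{-n\sigma}\cdot\ldots)=O(1)$. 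The net relative field is therefore $O(r)$, giving a contribution $O(r^2)$ to $\frac{d}{dt}r^2$. Compared with the main term $\sim \eta_*^d r^{2-d}$, this is small as long as $r\lesssim\eta_*$, since then $r^d\le\eta_*^d$ with an $\varepsilon^d$ gain.

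A bootstrap closes the argument. On the maximal interval where (a) $|x_{12}|\le 2\varepsilon\eta_*$, (b) all other pairwise distances are $\ge \eta_*$, and (c) the bounds of Theorem~\ref{th2} hold with doubled constants, each of these is propagated strictly. Condition (a) follows from the monotone decrease just shown. Condition (b) holds because on the time $T^*\sim\varepsilon^d\ll 1$ particles move at most $O(T^*)$ times a velocity bounded by $O(1)$: the near-field velocity is $O(1)$ from the grid, and the far field is bounded by $\M$. The one exception is the pair itself, whose mutual velocity is large but internal. Condition (c) holds by the Gronwall argument of Theorem~\ref{th2}, which also shows $T^*\le T_0$ once $\varepsilon$ is fixed small. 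Hence the collision occurs at $T^*$, the stated asymptotics hold, and the convergence of Theorem~\ref{th2} (respectively Theorem~\ref{th1-Coulomb} when $d=2$) holds on $[0,T^*)$.
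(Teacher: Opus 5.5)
Your overall plan is the paper's: a comparison ODE for $N|x_{12}|^d$ driven by \eqref{att} and \eqref{assumpF2}, a Lipschitz bound on the background field along $[x_1,x_2]$ from multiscale control of $x_3,\dots,x_N$, Theorem~\ref{th2} applied to the background, and a bootstrap closed by taking $\varepsilon$ small so that $T^*<T_0$. (There is a minor sign slip: the pair term is $+\frac4N\F^{\mathrm{sing}}(x_1,x_2)\cdot x_{12}$, which is negative by \eqref{att}.) The genuine gap is in how you propagate condition (b).

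You bound displacements by $O(T^*)\cdot O(1)=O(\varepsilon^d)$. But $\varepsilon$ is fixed independently of $N$, while the separations you must preserve are of order $\eta_*=N^{-1/d}$. For large $N$, $\varepsilon^d\gg\eta_*$, so this bound says nothing about whether a background particle comes within $2\eta_*$ of the pair, or whether two background particles approach each other. Worse, $x_1$ and $x_2$ individually move with speed $\sim N^{-1}|x_{12}|^{1-d}$, which is unbounded as $t\uparrow T^*$. Calling this motion ``internal'' does not bound their displacement relative to the background.

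What is needed, and what the paper does, is a relative, Lipschitz-type velocity estimate: for $k=1,2$ and $i\ge 3$,
\[
|\dot x_k-\dot x_i|\lesssim \frac{1}{N|x_{12}|^{d-1}}+|x_k-x_i|+\eta_* .
\]
You get this by applying your Lipschitz bound on the background field to the segment $[x_k,x_i]$ instead of $[x_1,x_2]$; this is \eqref{a9'} in Lemma~\ref{lem:background-force-coulomb}. You also need $N^{-1}|x_k-x_i|^{1-d}\lesssim\eta_*$ for the direct pair--background forces. Your own comparison gives $N^{-1}|x_{12}|^{1-d}\lesssim -\frac{d}{dt}|x_{12}|$, hence
\[
\int_0^{T^*}N^{-1}|x_{12}|^{1-d}\,dt\lesssim |x_{12}(0)|=\varepsilon\eta_* .
\]
Gr\"onwall then yields $|x_k-x_i|(t)\ge\bigl((3-\varepsilon)e^{-CT^*}-CT^*-C\varepsilon\bigr)\eta_*\ge\frac83\eta_*$. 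Background--background separation should be taken from the $H$-control inside Theorem~\ref{th2} (Lemma~\ref{lem31}, which again rests on a Lipschitz bound for relative velocities), not from an absolute speed bound.

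A related point: when you run Theorem~\ref{th2} on $x_3,\dots,x_N$, the pair enters in two ways. It is a mass-$2/N$ perturbation of $\mu_{N,\eta}$, which your static comparison handles. It is also an external force $\frac1N\bigl(\F(x_i,x_1)+\F(x_i,x_2)\bigr)$ on every background particle. It is the pair--background separation bootstrap that makes this force $O(\eta_*)$, as required in Remarks~\ref{re-for} and~\ref{re-for2}. So this step too depends on propagating (b) by the relative-velocity argument above.
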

\begin{theo}[Collisions for the super-Coulomb case]\label{supercou}
	Let $s\in(d-1,\infty)$ and set $\eta_*=N^{-1/d}$.
	Assume that the force field $\F$ satisfies
	\eqref{assumpFre}--\eqref{assumpF2'} and \eqref{att}.
	Let the initial data $X_N^0=(x_1^0,\ldots,x_N^0)$ satisfy
	Assumption~\ref{ass:well-prepared-colliding}.
	
	Then, for $\varepsilon>0$ sufficiently small and $N$ sufficiently large,
	the ODE system \eqref{ode} leads to a collision in finite time.
	More precisely, there exists a rescaled collision time $T^*>0$,
	independent of $N$, such that
	\begin{equation}\label{supercou-rescaled-time}
		T^*
		\sim
		\left(
		\frac{\inf_{i\neq j}|x_{ij}^0|}{\eta_*}
		\right)^{s+1}
		\sim
		\varepsilon^{s+1}.
	\end{equation}
	The corresponding physical collision time $T_N$ satisfies
	\begin{equation}\label{supercou-physical-time}
		T_N=\eta_*^{s+1-d}T^*.
	\end{equation}
	Moreover, for all $0\le \tau<T^*$,
	\begin{equation}\label{supercou-collision-rate}
		\inf_{i\neq j}
		\left|
		x_{ij}^{\,\eta_*^{s+1-d}\tau}
		\right|
		\sim
		\eta_*
		\left(
		1-\frac{\tau}{T^*}
		\right)^{1/(s+1)}.
	\end{equation}
	In particular,
	\begin{equation}
		\lim_{\tau\uparrow T^*}
		\inf_{i\neq j}
		\left|
		x_{ij}^{\,\eta_*^{s+1-d}\tau}
		\right|
		=0.
	\end{equation}
	Finally, the convergence stated in Theorem~\ref{th3'} holds on the
	rescaled time interval $[0,T^*)$, equivalently on the physical time
	interval $[0,T_N)$.
\end{theo}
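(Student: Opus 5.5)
We track the closest pair $x_{12}=x_1-x_2$ and show that it contracts according to an explicit ODE, while all other particles stay frozen at scale $\eta_*$ on the relevant timescale. Rescale time by $t=\eta_*^{s+1-d}\tau$. Then
\[
\frac{d}{d\tau}x_{12}=\eta_*^{s+1-d}\frac1N\Big(\F(x_1,x_2)-\F(x_2,x_1)\Big)+\eta_*^{s+1-d}\frac1N\sum_{j\ge3}\big(\F(x_1,x_j)-\F(x_2,x_j)\big).
\]
Write $x_{12}=\eta_*\rho e$, with $N^{-1}=\eta_*^d$. By \eqref{assumpF2} and \eqref{att}, the pair term, projected on $x_{12}$, gives
\[
\frac{d}{d\tau}\rho^2=-2\eta_*^{s+1-d}\eta_*^{d}\eta_*^{-2}\,\big|\F^{\mathrm{sing}}\cdot x_{12}\big|\cdot 2(1+O(\rho\eta_*))\sim-\rho^{-s+1}.
\]
Up to the error discussed next, this yields $\frac{d}{d\tau}\rho^{s+1}\sim-1$. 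Hence $\rho(\tau)\sim\varepsilon(1-\tau/T^*)^{1/(s+1)}$ with $T^*\sim\varepsilon^{s+1}$, which is \eqref{supercou-rescaled-time} and \eqref{supercou-collision-rate}. The nonzero lower bound in \eqref{att} is essential, so that $T^*$ is squeezed between two constants times $\varepsilon^{s+1}$.

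\textbf{Step 1: background forces are small.} The remaining work is a bootstrap on $[0,T^*)$ with the following hypotheses:
\begin{itemize}
\item $|x_j^t-x_j^0|\le\eta_*/10$ for $j\ge3$;
\item $|x_1-x_2|\le 2\varepsilon\eta_*$.
\end{itemize}
Under these hypotheses the background stays a perturbed $3\eta_*$-grid at distance $\ge2\eta_*$ from both $x_1$ and $x_2$. Grid particles within distance $O(\eta_*)$ of $x_1,x_2$ then contribute at most $\eta_*^{s+1-d}N^{-1}\eta_*^{-s}=O(\eta_*)$ to the rescaled velocity, since the rescaled time interval has length $O(\varepsilon^{s+1})$. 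For the far field:
\begin{itemize}
\item If $s<d$, the sum $\frac1N\sum_j|x-x_j|^{-s}$ is bounded via \eqref{wp-background-empirical}, using the comparison of the empirical sum with $\int|x-y|^{-s}\mu_0$. Multiplied by $\eta_*^{s+1-d}$, this is small.
\item If $s\ge d$, the sum over the grid is dominated by the nearest shells and is $\lesssim \eta_*^{-s}N^{-1}$. Multiplied by $\eta_*^{s+1-d}$, this gives $O(\eta_*)$ per unit rescaled time.
\end{itemize}
So all non-pair velocities are $O(\eta_*)$ in rescaled time, uniformly for $\tau\le T^*\lesssim\varepsilon^{s+1}$, with an implied constant independent of $\varepsilon$. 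For the difference $\F(x_1,x_j)-\F(x_2,x_j)$ we use the mean value theorem and \eqref{assumpFsing} with one derivative, which gains a factor $|x_{12}|\le 2\varepsilon\eta_*$.

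\textbf{Step 2: closing the bootstrap.} A background particle moves at most $C\eta_*\varepsilon^{s+1}\ll\eta_*/10$. The pair term is $\sim\rho^{-s}$ in units of $\eta_*$ per rescaled time. The perturbation of $\frac{d}{d\tau}\rho$ is $O(1)$, which is negligible against $\rho^{-s}\ge(2\varepsilon)^{-s}$ once $\varepsilon$ is small. This is where $\varepsilon\ll1$ is used, and it also gives monotone decrease of $\rho$, which closes the bootstrap. Projected on the direction $e_1$ the motion stays nearly radial, but only $|x_{12}|$ matters for the argument. We also check that no other pair becomes closer: grid pairs stay at distance $\ge 2\eta_*$, and $x_1,x_2$ stay at distance $\ge 2\eta_*$ from the grid. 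Therefore $\inf_{i\ne j}|x_{ij}|=|x_{12}|$. The case $s=d-1$ is identical with no rescaling, giving $T^*\sim\varepsilon^d$, which is Theorem \ref{cou}.

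\textbf{Step 3: convergence up to $T^*$.} The convergence claim comes from rerunning the proof of Theorem \ref{th3'}. The only place that theorem used the separation $\inf|x_{ij}|\gtrsim\eta_*$ is to bound near-field interactions. Here exactly one pair violates it, and a single pair contributes $N^{-1}\Phi^\eta$-sized terms to $\M$, which are absorbed since $N^{-1}\eta^{-d}\le\eta_*/\eta$ for $\eta\ge\eta_*$. The initial hypotheses hold by \eqref{wp-background-empirical}: adding the two particles near $0$ costs $O(N^{-1}\eta^{-d})$, and the weight $\langle x\rangle^{3d}$ dominates $\langle x\rangle^\gamma$.

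\textbf{Main obstacle.} I expect the hardest step to be the far-field bound for $s\ge d$. There the lattice sum must be shown to be dominated by the $O(1)$ nearest neighbours. This relies on the grid structure in \eqref{wp-background-grid} and on the uniform $\eta_*/10$ displacement bound, rather than on \eqref{wp-background-empirical}.
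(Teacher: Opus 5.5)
The gap is your bootstrap hypothesis $|x_j^t-x_j^0|\le\eta_*/10$ for $j\ge3$, together with the claim that every non-pair velocity is $O(\eta_*)$ in rescaled time. Both fail for $d-1<s\le d$, which the theorem covers.

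For $d-1<s<d$, your own far-field bound only gives $\frac1N\sum_j|x-x_j|^{-s}=O(1)$. Moreover $\F^{\mathrm{reg}}$ may be of order one: a constant external field $\F^{\mathrm{reg}}\equiv e_1$ is allowed by \eqref{assumpFre}. So physical velocities are of order one, and rescaled velocities are of order $\eta_*^{s+1-d}$, which is small but much larger than $\eta_*$. Over the rescaled interval of length $T^*\sim\varepsilon^{s+1}$, with $\varepsilon$ fixed and $N\to\infty$, every particle (the pair included) is displaced by about $\eta_*^{s+1-d}\varepsilon^{s+1}\gg\eta_*/10$.

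At $s=d$ the absolute lattice sum $\frac1N\sum_j|x-x_j|^{-d}$ is of order $|\log\eta_*|$ and is not dominated by the nearest shells. Your bound then gives a displacement of order $\eta_*|\log\eta_*|\varepsilon^{s+1}$, again too large. Hence the ``frozen perturbed $3\eta_*$-grid'' picture, on which your separation claims and lattice-sum bounds rest, is never established in this range. For the same reason, your remark that the case $s=d-1$ is identical fails: there displacements are of order $\varepsilon^d\gg\eta_*$.

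What you should bootstrap is relative distances, as the paper does: $|x_{12}|\le2\varepsilon\eta_*$, $|x_{ki}|\ge2\eta_*$ for $k=1,2$ and $i\ge3$, plus a background separation $\gtrsim\eta_*$.
\begin{itemize}
\item Since $s+1>d$, separation alone gives by packing $\frac1N\sum_j(\eta_*+|x-x_j|)^{-(s+1)}\lesssim\eta_*^{d-s-1}$.
\item Combined with the two-point bound $|\F(x,y)-\F(x',y)|\lesssim|x-x'|(1+|x-y|^{-s-1}+|x'-y|^{-s-1})$, every relative rescaled velocity is $\lesssim|x_{ki}|+\eta_*$.
\item The one exception is the pair's internal force $\eta_*^{s+1}|x_{12}|^{-s}$, whose integral up to $T^*$ is $\lesssim\varepsilon\eta_*$ by the monotone collapse.
\item Gr\"onwall on $[0,T^*]$ then closes all separation bounds, and your pair ODE, the estimate $T^*\sim\varepsilon^{s+1}$ and the collapse rate go through.
\end{itemize}
The paper instead obtains the background separation and the density bound behind \eqref{a13}--\eqref{a15} by applying Theorem~\ref{th3'} to $(x_3,\dots,x_N)$. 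There the pair's forces $V_N^1,V_N^2$ are treated as external perturbations (Remarks~\ref{re-for} and~\ref{re-for2}). Your packing route, once corrected, would make the collision part elementary.

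Likewise, your Step~3 cannot simply rerun the proof of Theorem~\ref{th3'} on the full system. That bootstrap keeps $H$ bounded, while here $H(t)\ge N^{-1}|x_{12}|^{-d}\to\infty$. Instead, apply the theorem to the background subsystem, then add the two particles back at the cost $N^{-1}\eta^{-d}\le\eta_*/\eta$ that you computed.
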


Note that  these results show that Theorems \ref{th2} and \ref{th3'} are sharp and that we provide here the collision rate $ \eta_*(1-t/T^*)^{1/(s+1)}$, where the balance is determined by the singular pairwise force in $r^{-s}$ and the background force in $O(1)$.
\subsection{Further comments about the case with noise}

	Consider the interacting particle system with independent Brownian noise
	\begin{equation}\label{qq}
		\left\{
		\begin{array}{ll}
			\displaystyle
			dx_i(t)
			=
			\frac1N
			\sum_{\substack{j=1\\ j\neq i}}^N
			\F(x_i(t),x_j(t))\,dt
			+
			\sqrt{2\sigma}\,dW_t^i,
			\\[0.5em]
			x_i(0)=x_i^0,
		\end{array}
		\right.
	\end{equation}
	where $(W_t^i)_{i=1}^N$ are independent standard Brownian motions and
	$\sigma>0$. At the level of laws, the joint density
	$f_N=f_N(t,x_1,\ldots,x_N)$ formally solves the Liouville--Fokker--Planck (or forward Kolmogorov)
	equation
	\[
	\partial_t f_N
	+
	\sum_{i=1}^N
	\nabla_{x_i}\cdot
	\left(
	f_N
	\frac1N
	\sum_{\substack{j=1\\ j\neq i}}^N
	\F(x_i,x_j)
	\right)
	=
	\sigma
	\sum_{i=1}^N
	\Delta_{x_i} f_N .
	\]
	Thus the addition of Brownian noise replaces the singular ODE dynamics
	by a parabolic equation with singular drift on the joint density $f_N$.
	
	The natural scaling of the linear parabolic equation
	\[
	\partial_t u+b\cdot\nabla u-\sigma\Delta u=0
	\]
	suggests that the critical spatial regularity for the drift is
	$B^{-1}_{\infty,\infty}$. Equivalently, one may measure this regularity
	through the heat semigroup by
	\[
	\|b\|_{B^{-1}_{\infty,\infty}}
	\sim
	\sup_{\tau>0}
	\tau^{1/2}
	\|e^{\tau\Delta}b\|_{L^\infty}.
	\]
	For the attractive singular interaction
	\[
	\F(x,y)\sim
	-\frac{x-y}{|x-y|^{s+1}},
	\]
	the singularity has size $|\F(x,y)|\sim |x-y|^{-s}$. Hence, for each
	fixed $y$, the condition
	$
	\F(\cdot,y)\in B^{-1}_{\infty,\infty}(\mathbb{R}^d)
	$
	is  exactly consistent with the threshold $
	s\leq 1 .
	$
	Indeed, near the singularity, the heat regularization satisfies
$	\|e^{\tau\Delta}|\cdot|^{-s}\|_{L^\infty}
	\sim
	\tau^{-s/2},$
for any $ \tau\in (0,1),$
	and therefore
	$\tau^{1/2}
	\|e^{\tau\Delta}|\cdot|^{-s}\|_{L^\infty}
	\sim
	\tau^{(1-s)/2}.$
	This quantity remains bounded as $\tau\to0$ if and only if $s\leq1$.
	
	This scaling suggests that Brownian noise may be compatible with  the
	critical singularity
	\[
	|\F^{\mathrm{sing}}(x,y)|
	\lesssim
	\frac1{|x-y|}+1,
	\qquad |x-y|\leq1 .
	\]
	This includes the two-dimensional Coulomb force, corresponding to
	$s=1$. For more singular attractive interactions, namely $s>1$, the
	drift is supercritical with respect to the parabolic scaling. Therefore,
	the fixed-particle scaling alone does not suggest  a general
	well-posedness theory for the $N$-particle Liouville--Fokker--Planck
	equation without additional structure, renormalization, or stronger
	assumptions.\\
	
	This criticality is consistent with several well-posedness theories for
	parabolic equations and stochastic differential equations with rough drifts.
	We refer, for instance, to Krylov-R\"ockner
	\cite{KrylovRockner2005} for SDEs with singular drifts under
	$L_t^qL_x^p$ assumptions, to Flandoli-Gubinelli-Priola
	\cite{FlandoliGubinelliPriola2010} and
	Beck-Flandoli-Gubinelli-Maurelli
	\cite{BeckFlandoliGubinelliMaurelli2014} for regularization by noise and
	stochastic transport/continuity equations with critical drift, and to
	Qian-Xi \cite{QianXi2019} for parabolic equations with singular
	divergence-free drifts in the critical space $BMO^{-1}$.\\
	
	Related questions for stochastic signed Coulomb systems have recently been
	studied by van Meurs--Peletier--Slangen
	\cite{vanMeursPeletierSlangen2025} and by van Meurs--Tardy
	\cite{vanMeursTardy2026}. These works show that collisions are a central
	feature of two-dimensional signed Coulomb dynamics. In particular, in the
	signed case, well-posedness beyond collisions requires a prescribed
	continuation mechanism, such as removing colliding pairs of opposite sign.
	
	These results support the relevance of the threshold $s=1$. However, the
	critical case $s=1$ lies at the borderline of the existing rough-drift
	well-posedness theories. The cited results do not directly yield a full
	well-posedness theory for the particle system \eqref{qq} at this critical
	singularity. It would therefore be interesting to develop a well-posedness
	theory for \eqref{qq} in the critical case $s=1$, possibly exploiting the
	special structure of the interacting-particle drift or an appropriate
	collision/selection mechanism.
\medskip

{\bf Acknowledgements.} The research of Quoc-Hung Nguyen was supported by the 
CAS Project for Young Scientists in Basic Research, Grant No.~YSBR-031; and the NSFC under Grant Nos.~1251101538 and 12595282. The research of Sylvia Serfaty  was supported by NSF Grant DMS-2247846, the Simons Foundation through the Simons Investigator program, the Institut Universitaire de France and the Fondation Sciences Mathématiques de Paris.

\section{Preliminary results and kernel estimates} \label{sec:prelim}

This section presents preliminaries on the kernel and their regularizations.
In \S\ref{sec:gaussest} we collect the Gaussian integral estimates and scale-comparison inequalities for the metric $\M$ that will be used throughout.
In \S\ref{sec:kernelest} we establish the key pointwise and integral bounds on the mollified force kernel  and its truncation, distinguishing systematically between the sub-Coulomb ($s<d-1$), Coulomb ($s=d-1$), and super-Coulomb ($s>d-1$) regimes.

\subsection{Kernel regularization and truncation}
Let us first recall some notation. We recall that we denote by $\star$ the usual convolution.
Given $\eta>0$, we split $\F$ into a mollified (smooth) part $\F_\eta$ and a remainder $\F_{<\eta}$ carrying the short-range singularity:
\be \label{defFeta}\F_\eta(x,y):=(\F(x,\cdot)\star \Phi^\eta)(y), \quad\F_{<\eta}(x,y):=\F(x,y)-\F_\eta(x,y).\ee
Intuitively, $\F_\eta$ is the force field seen at resolution $\eta$, and $\F_{<\eta}$ captures the scales below $\eta$.
We also define 
\begin{equation}\label{deF}
	\F^{\eta_*}(x,y):=\F^{\mathrm{reg}}(x,y)+\mathbf{1}_{s< d+1}\F^{\mathrm{sing}}(x,y)+\mathbf{1}_{s\geq d+1}\F^{\mathrm{sing}}(x,y)\big(1-\chi\big)\left(\frac{|x-y|}{\eta_*}\right).
\end{equation}
where, throughout the paper, $\chi$ denotes a fixed smooth nonnegative cutoff function supported in $[0,2]$ and equal to $1$ on $[0,1]$. In particular, $\F^{\eta_*}\equiv \F$ when $s<d+1$; the cutoff only modifies the kernel in the strongly singular regime $s\ge d+1$.\\
With this definition, the singularity of $\F$ at $x=y$ is removed at scale $\eta_*$
when $s\ge d+1$, while the kernel is left unchanged in the regime $s<d+1$. When
$d\leq s<d+1$, the corresponding singular convolution is understood
through the cancellation assumptions rather than by absolute
integrability.

\subsection{Gaussian estimates}\label{sec:gaussest}

Let us first gather a few preliminaries and computations that we will use repeatedly to control the error terms.
The following lemma says that Gaussian averages of polynomial weights $\langle x\rangle^b$ are equivalent to the weight at the center.
\begin{lem}\label{lem:gauss} For any $b\in \mr$  and $\varepsilon\in (0,\hal]$, we have
	\begin{equation}\label{z0}
		\int_{\mr^d}\langle x+\varepsilon z\rangle^b \Phi(z)\,dz\sim \langle x\rangle^b,
	\end{equation}
	where $\Phi$ denotes the Gaussian kernel.
	In particular,
		\begin{equation}\label{z0'}
		\int_{\mr^d}|z|^n\langle x+\varepsilon z\rangle^b \Phi^\eta(z)\,dz\sim \eta^n \langle x\rangle^b
	\end{equation}
	for any $b\in \mathbb{R},n\geq 0$ and $\varepsilon, \eta\in (0,1]$.
\end{lem}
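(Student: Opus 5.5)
The plan is to prove \eqref{z0} via Peetre's inequality for the Japanese bracket, and then deduce \eqref{z0'} by inserting the factor $|z|^n$ and rescaling. The key tool is the elementary bound
\[
\langle x+y\rangle \le \sqrt{2}\,\langle x\rangle\langle y\rangle ,
\]
which follows from $1+|x+y|^2\le 1+2|x|^2+2|y|^2\le 2(1+|x|^2)(1+|y|^2)$. Applying it with the roles of $x$ and $x+y$ exchanged gives $\langle x\rangle\le\sqrt2\langle x+y\rangle\langle y\rangle$.

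\emph{Proof of \eqref{z0}.} Combining the two forms of Peetre's inequality yields, for every $b\in\mr$,
\[
2^{-|b|/2}\langle x\rangle^b\langle \varepsilon z\rangle^{-|b|}\le \langle x+\varepsilon z\rangle^b\le 2^{|b|/2}\langle x\rangle^b\langle\varepsilon z\rangle^{|b|}.
\]
\begin{itemize}
\item \emph{Upper bound.} Since $\varepsilon\le 1$, we have $\langle\varepsilon z\rangle\le\langle z\rangle$. Integrating against $\Phi$ gives the bound $C_b\langle x\rangle^b\int\langle z\rangle^{|b|}\Phi(z)\,dz$, and this Gaussian moment is finite.
\item \emph{Lower bound.} Restrict the integral to $|z|\le1$. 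There $\langle \varepsilon z\rangle^{-|b|}\ge 2^{-|b|/2}$, so the integral is at least $c_b\langle x\rangle^b\int_{|z|\le1}\Phi>0$.
\end{itemize}
The constants depend only on $b$ and $d$. The restriction $\varepsilon\le\tfrac12$ is not even needed; $\varepsilon\le1$ suffices.

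\emph{Proof of \eqref{z0'}.} Change variables $z=\eta w$, so the integral becomes
\[
\eta^n\int|w|^n\langle x+\varepsilon\eta w\rangle^b\Phi(w)\,dw .
\]
Set $\varepsilon'=\varepsilon\eta\in(0,1]$. The upper bound follows exactly as before, now using the finiteness of the moment $\int|w|^n\langle w\rangle^{|b|}\Phi(w)\,dw$. For the lower bound, restrict to the annulus $\tfrac12\le|w|\le1$. There $|w|^n\ge 2^{-n}$ and $\langle x+\varepsilon' w\rangle^b\ge 2^{-|b|}\langle x\rangle^b$, and the annulus carries positive Gaussian mass. This gives the bound $\gtrsim\eta^n\langle x\rangle^b$.

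The only slightly delicate point is the lower bound when $n>0$: one has to avoid the origin, where $|w|^n$ vanishes, which is why the annulus is used instead of the ball. Apart from that, the argument is routine. All implicit constants depend only on $b$, $n$ and $d$, and are uniform in $x$, $\varepsilon$ and $\eta$.
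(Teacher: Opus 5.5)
Your proof is correct, but it is organized differently from the paper's. The paper never invokes Peetre's inequality. Instead it splits the $z$-integral according to the size of $|z|$ relative to $|x|$.
\begin{itemize}
\item For the lower bound it restricts to $|z|\le |x|+1$, where $\varepsilon\le\frac12$ gives $\langle x+\varepsilon z\rangle\sim\langle x\rangle$, and then further to $|z|\le 1$. This half is essentially your argument.
\item For the upper bound it separates the region $|x|>2\langle z\rangle$, where again $\langle x+\varepsilon z\rangle\sim\langle x\rangle$, from the region $|x|\le 2\langle z\rangle$. On the latter it trades part of the Gaussian decay in $z$ for a factor $e^{-c|x|^2}$, which is $\lesssim\langle x\rangle^b$ for every $b$.
\item For \eqref{z0'} it just says ``scaling and reducing to $|z|\ge\frac12\eta$'', which is exactly your annulus.
\end{itemize}

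Your pointwise two-sided bound $2^{-|b|/2}\langle x\rangle^b\langle\varepsilon z\rangle^{-|b|}\le\langle x+\varepsilon z\rangle^b\le 2^{|b|/2}\langle x\rangle^b\langle\varepsilon z\rangle^{|b|}$ replaces this case analysis with one inequality valid for both signs of $b$. The upper bound then reduces to finiteness of a Gaussian moment, the constants are explicit, and everything holds uniformly for $\varepsilon\le 1$.

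That uniformity is a real convenience. After rescaling, \eqref{z0'} involves the parameter $\varepsilon\eta$, which can be as large as $1$. So \eqref{z0'} is not literally a consequence of \eqref{z0} as stated for $\varepsilon\le\frac12$. The paper's splitting can be adjusted to cover this, but your argument needs no adjustment.

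The paper's decomposition does show slightly more: the part of the integral with $|z|\gtrsim|x|$ is exponentially small in $|x|$. Only the two-sided comparison is used later in the paper.
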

\begin{proof}  We easily obtain a lower bound by observing that 
	\begin{equation*}
		\int_{\mr^d}\langle x+\varepsilon z\rangle^b \Phi(z)\,dz\gtrsim 	\int_{\mr^d}\mathbf{1}_{|x|+1\geq |z|}\langle x\rangle^b \Phi(z)\,dz\ge \int_{\mr^d}\mathbf{1}_{1\geq |z|}\langle x\rangle^b \Phi(z)\,dz
		\sim \langle x\rangle^b.	\end{equation*}
	On the other hand, we have
	\begin{align*}
		\int_{\mr^d}\langle x+\varepsilon z\rangle^b \Phi(z)\,dz&\lesssim \langle x\rangle^b+\int_{\mr^d} \mathbf{1}_{|x|\leq 2 \langle z\rangle} \langle x+\varepsilon z\rangle^b\exp\!\Big(-\frac{|z|^2}{16}\Big) \exp\!\Big(-\frac{|x|^2}{16}\Big)\,dz
		\\&	\lesssim \langle x\rangle^b+\int_{\mr^d}  \exp\!\Big(-\frac{|z|^2}{32}\Big) \exp\!\Big(-\frac{|x|^2}{32}\Big)\,dz,
	\end{align*}
which implies the upper bound. The relation \eqref{z0'} then follows by scaling and reducing to $|z|\ge \hal \eta$.
\end{proof}

The next lemma provides Hermite-type identities on the Gaussian.
\begin{lem}\label{lem:hermite}
The polynomial $|x_i-x|^m$ can be absorbed into the Gaussian at the cost of doubling the lengthscale:
	\be\label{prod2exp}
	\frac{|x_i-x|^m}{\eta^m}\Phi^{\eta}(x_i-x)\lesssim \Phi^{2\eta}(x_i-x),\ee
	Moreover, products of the form $(z-x)^{\otimes m}\partial^j\Phi_z^\eta(x)$ can be expressed as linear combinations of derivatives $\nabla^n\Phi_z^\eta(x)$. First, we have
	\begin{align}\label{z15}
		&	(z-x)_{j}\partial_{x_{k}}\Phi^\eta_z(x)=\eta^2\partial_{x_{j}}\partial_{x_{k}}\Phi^\eta_z(x)+\mathbf{1}_{j=k}\Phi^\eta_z(x).
	\end{align}
	In particular, for any matrix $A=(A_{jk})_{1\leq j,k\leq d}$,
	\begin{equation}\label{z15'}
		\sum_{j,k}	(z-x)_{j}A_{jk}\partial_{x_{k}}\Phi^\eta_z(x)=\eta^2 A:\nab^2\Phi^\eta_z(x)+\mathrm{Tr}(A)\Phi^\eta_z(x).
	\end{equation}
Let $I_d$ be the $d\times d$ identity matrix. For a tensor $\mathcal{T}$ of order $m$, define its normalized symmetrization by $ \operatorname{Sym}(\mathcal{T})_{i_1\ldots i_m} := \frac1{m!} \sum_{\sigma\in\mathfrak S_m} \mathcal{T}_{i_{\sigma(1)}\ldots i_{\sigma(m)}}.$ Then
\begin{align}\label{z15a}
	&(z-x)^{\otimes 2}\,\Phi_z^\eta(x)
	= \eta^4 \,\nabla_x^{\otimes 2}\Phi_z^\eta(x)
	+ \eta^2 I_d\,\Phi_z^\eta(x),\\&\label{z15b}
	(z-x)^{\otimes 3}\,\Phi_z^\eta(x)
	= \eta^6 \,\nabla_x^{\otimes 3}\Phi_z^\eta(x)
	+ 3\,\eta^4\,\operatorname{Sym}\bigl(I_d \otimes \nabla_x\Phi_z^\eta(x)\bigr),
\end{align}
	and more generally,	\begin{align}\label{z15''}
		(z-x)^{\otimes m} \Phi^\eta_z(x)=\eta^{2m}\nab^m\Phi^\eta_z(x)+\sum_{n=0}^{m-1}\eta^{m+n}A_{m-n}\otimes\nab^{n}\Phi^\eta_z(x),
	\end{align}
		where $A_{m-n}$ is a constant tensor of order $m-n$.
	\end{lem}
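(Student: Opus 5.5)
The plan is to reduce every identity to one elementary formula for the first derivative of the Gaussian, and then run an induction on $m$. Throughout, write $y=z-x$. The two basic facts are
\[
\Phi^\eta_z(x)=\eta^{-d}\Phi\big(\tfrac{x-z}{\eta}\big),
\qquad
\partial_{x_k}\Phi^\eta_z(x)=\frac{(z-x)_k}{\eta^2}\,\Phi^\eta_z(x).
\]
Equivalently, the second fact says $y_k\Phi^\eta_z=\eta^2\partial_{x_k}\Phi^\eta_z$, i.e.\ one power of $y$ can be traded for one derivative at the cost of a factor $\eta^2$.

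\textbf{The bound \eqref{prod2exp}.} This is a scalar pointwise estimate, and I will check it directly. Set $r=|y|/\eta$. Since $r^m e^{-r^2/2}\le C_m e^{-r^2/8}$ for all $r\ge0$, we get
\[
\frac{|y|^m}{\eta^m}\Phi^\eta(y)\le C_m (2\pi)^{-d/2}\eta^{-d}e^{-|y|^2/(8\eta^2)}=C_m2^d\,\Phi^{2\eta}(y).
\]

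\textbf{The identity \eqref{z15}.} Differentiate the formula $\partial_{x_k}\Phi^\eta_z=\eta^{-2}y_k\Phi^\eta_z$ once more in $x_j$, using $\partial_{x_j}y_k=-\delta_{jk}$:
\[
\partial_{x_j}\partial_{x_k}\Phi^\eta_z=-\eta^{-2}\delta_{jk}\Phi^\eta_z+\eta^{-4}y_jy_k\Phi^\eta_z .
\]
Now write $y_j\partial_{x_k}\Phi^\eta_z=\eta^{-2}y_jy_k\Phi^\eta_z$ and substitute the display above; this gives \eqref{z15}. The identity \eqref{z15'} follows by contracting \eqref{z15} against $A_{jk}$ and summing. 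Multiplying \eqref{z15} by $\eta^2$ and again using $y_k\Phi^\eta_z=\eta^2\partial_k\Phi^\eta_z$ gives \eqref{z15a}.

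\textbf{The general formula \eqref{z15''}.} The key commutation identity comes from applying $\partial^\alpha$ to $y_j\Phi^\eta_z=\eta^2\partial_j\Phi^\eta_z$ and using the Leibniz rule (note that $\partial^\alpha y_j$ has only the term $-\alpha_j$ at first order):
\[
y_j\,\partial^\alpha\Phi^\eta_z=\eta^2\,\partial_j\partial^\alpha\Phi^\eta_z+\alpha_j\,\partial^{\alpha-e_j}\Phi^\eta_z .
\]
With this in hand I run an induction on $m$. Assume \eqref{z15''} holds at order $m$, and multiply it tensorially by $y$. Each term $\eta^{m+n}A_{m-n}\otimes\nabla^n\Phi^\eta_z$ then produces two pieces:
\begin{itemize}
\item a piece $\eta^{m+n+2}\nabla^{n+1}\Phi^\eta_z=\eta^{(m+1)+(n+1)}\nabla^{n+1}\Phi^\eta_z$;
\item a piece $\eta^{m+n}\,\tilde A\otimes\nabla^{n-1}\Phi^\eta_z=\eta^{(m+1)+(n-1)}\tilde A\otimes\nabla^{n-1}\Phi^\eta_z$, where $\tilde A$ is a constant tensor built from $A_{m-n}$ and $I_d$.
\end{itemize}
The powers of $\eta$ are therefore consistent with the claimed form at order $m+1$. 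The leading term $\eta^{2m}\nabla^m\Phi^\eta_z$ becomes $\eta^{2(m+1)}\nabla^{m+1}\Phi^\eta_z$ plus a lower-order piece.

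\textbf{The identity \eqref{z15b}.} This is the case $m=3$ of the induction, with the constant tensors computed explicitly. Tensoring \eqref{z15a} with $y$ gives
\[
\eta^4\big(\eta^2\nabla^3\Phi^\eta_z+(\text{two }I_d\otimes\nabla\Phi^\eta_z\text{ terms})\big)+\eta^4 I_d\otimes\nabla\Phi^\eta_z .
\]
The three resulting $I_d\otimes\nabla\Phi^\eta_z$ terms occupy the three distinct index placements. Since $y^{\otimes3}$ is symmetric, their sum equals $3\operatorname{Sym}(I_d\otimes\nabla\Phi^\eta_z)$, which is \eqref{z15b}.

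The only mildly delicate step is this index bookkeeping in \eqref{z15b}. It is handled by the remark just used: $y^{\otimes3}$ is a symmetric tensor, so the right-hand side must equal its own symmetrization.
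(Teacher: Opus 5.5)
Your proposal is correct and follows the same route as the paper. You prove \eqref{prod2exp} by absorbing the factor $r^m$, with $r=|x_i-x|/\eta$, into part of the Gaussian exponent, and the identities \eqref{z15}--\eqref{z15''} by direct computation from $\partial_{x_k}\Phi^\eta_z=\eta^{-2}(z-x)_k\Phi^\eta_z$; the paper leaves that computation implicit, and you spell it out via the commutation identity, induction on $m$, and the symmetry argument for \eqref{z15b} (the only cosmetic slip is that the factor $A_{m-n}\otimes$ should be kept in your first induction bullet, where it already has the correct order $(m+1)-(n+1)$).
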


\begin{proof}
The first point comes from writing that for a constant $\lambda>0$ and any $m\ge 1$,
	$$\Big(\frac{|x_i-x|}{\sqrt{\lambda}\,\eta}\Big)^m\lesssim \exp\!\Big( \frac{|x_i-x|^2}{\lambda\eta^2}\Big),$$ thus
$$	\frac{|x_i-x|^m}{\eta^m}\Phi^{\eta}(x_i-x)\lesssim \frac{1}{\eta^d} \exp\!\Big( |x_i-x|^2\Big( - \frac{1}{4\eta^2}+ \frac{1}{\lambda\eta^2}\Big)\Big) \lesssim \Phi^{2\eta}(x_i-x),$$
	 after picking $\lambda$ appropriately.
	The identities \eqref{z15}--\eqref{z15''} are direct computations.

\end{proof}

\subsection{Comparison of scales}

We recall that $\mu_{N,\eta}$ is defined by \eqref{fneta}, and we denote
\be \tilde \mu_{N,\eta}:=\mu_{N,\eta}-\mu.\ee
With the semi-group property \eqref{semigroup}, we may relate the scales $\eta$ and $\eta/2$ by writing
\be \label{defc0}\mu_{N,\eta}= \Phi^{\sqrt{\eta^2 - \eta^2/4}} \star \mu_{N, \eta/2} = \Phi^{\kappa_0 \eta} \star \mu_{N, \eta/2}, \quad \kappa_0=\frac{\sqrt{3}}{2}.\ee
This allows us to express $\mu_{N,\eta}$ as a further smoothing of $\mu_{N,\eta/2}$, which is essential for separating the error $\tilde\mu_{N,\eta/2}$ from the smooth background $\mu$ in the estimates below.
Also, for any $\eta_1, \eta_2$,
\be
\label{eta1eta2}
\Phi^{\eta_1}\star \mu_{N,\eta_2}=\Phi^{\eta_2}\star \mu_{N,\eta_1}.\ee

We will repeatedly use the triangle inequality 
\be\label{triangleU}
  \M(\XN, 0, \eta)\le \M(\XN, \mu, \eta)+C \A(\mu),
\ee
which follows from the definition~\eqref{defM}.

\begin{lem}\label{lem2.3}
For $m=0,1$, and for any function $f$, we have
	\begin{equation}\label{Dphi}
		\big\|\xg \nabla^m (f-f\star \Phi^{\eta})\big\|_{L^\infty}
		\;\lesssim\; \eta^2\A(f),
	\end{equation}
	where $\A$ is as in \eqref{defA}.
		For any integer $m\ge 0$, letting $\mu_{N,\eta}$ be as in \eqref{fneta} and $\M$ as in \eqref{defM}, we have
	\be
	\label{d2m}
\|\xg \nab^m  \mu_{N,\eta}\|_{L^\infty} \lesssim \eta^{-m} \M(X_N, \mu,\eta/2)+ \|\xg \nab^m \mu\|_{L^\infty}.
	\ee
	The metric $\M$ is essentially monotone in $\eta$: if $0<\eta\le \eta'\leq 1$, then
	\begin{equation}\label{Mee}
		\M(X_N, 0,\eta')\lesssim  \M(X_N, 0,\eta)\leq\Big( \frac{\eta'}{\eta}\Big)^d \M(X_N, 0,\eta'),
	\end{equation}
	and
	\be \label{Meta1eta2}
	\M(X_N, \mu, \eta') \lesssim \M(X_N, \mu, \eta)+(\eta')^2 \A(\mu).\ee
Finally, \eqref{retourMM} holds.

\end{lem}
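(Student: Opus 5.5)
The plan is to prove the five assertions of Lemma~\ref{lem2.3} in order. The one tool used throughout is Peetre's inequality $\langle x\rangle^{b}\lesssim \langle x-y\rangle^{b}\langle y\rangle^{|b|}$: convolving with a Gaussian of scale at most $1$ can only worsen polynomial weights by a factor of $\langle y\rangle^{\gamma+1}$, and the Gaussian absorbs that factor. Lemma~\ref{lem:gauss} then gives the required equivalence of weighted averages.

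\emph{Proof of \eqref{Dphi}.} Write
\[
f\star\Phi^\eta(x)-f(x)=\int \big(f(x-y)-f(x)\big)\Phi^\eta(y)\,dy .
\]
Taylor expand $f(x-y)$ to second order around $x$. The linear term integrates to zero because $\Phi^\eta$ is even. The remainder is bounded by
\[
|y|^2\int_0^1|\nabla^2 f(x-\theta y)|\,d\theta\le |y|^2\,\A(f)\,\langle x-\theta y\rangle^{-\gamma-1}.
\]
By Peetre, $\langle x\rangle^{\gamma}\langle x-\theta y\rangle^{-\gamma-1}\lesssim \langle y\rangle^{\gamma}$. Finally, \eqref{z0'} gives $\int|y|^2\langle y\rangle^{\gamma}\Phi^\eta(y)\,dy\lesssim\eta^2$ for $\eta\le 1$. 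The case $m=1$ is identical after applying this argument to $\nabla f$, which uses $\nabla^3 f$; this is why $\A$ contains derivatives up to order $3$.

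\emph{Proof of \eqref{d2m}.} By \eqref{defc0},
\[
\nabla^m\mu_{N,\eta}=\nabla^m\Phi^{\kappa_0\eta}\star(\mu_{N,\eta/2}-\mu)+\Phi^{\kappa_0\eta}\star\nabla^m\mu .
\]
For the first term, use $|\nabla^m\Phi^{\kappa_0\eta}(z)|\lesssim \eta^{-m}\Phi^{2\kappa_0\eta}(z)$, which follows from \eqref{prod2exp} since $\nabla^m\Phi^\eta$ is a polynomial in $z/\eta$ times $\eta^{-m}\Phi^\eta$. Then bound $|\mu_{N,\eta/2}-\mu|(y)\le \langle y\rangle^{-\gamma}\M(X_N,\mu,\eta/2)$ and apply Peetre together with Lemma~\ref{lem:gauss}. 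The second term is handled by the same weight transfer.

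\emph{Proof of \eqref{Mee}.} For the first inequality, write $\mu_{N,\eta'}=\Phi^{\sqrt{\eta'^2-\eta^2}}\star\mu_{N,\eta}$, a convolution of nonnegative functions at scale $\le 1$, and use the same weight transfer. For the second, use the pointwise bound $\Phi^\eta(z)\le (\eta'/\eta)^d\Phi^{\eta'}(z)$, which holds because $e^{-|z|^2/2\eta^2}\le e^{-|z|^2/2\eta'^2}$. Summing over $i$ (all terms are nonnegative) gives the claim.

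\emph{Proof of \eqref{Meta1eta2}.} Write, with $\delta=\sqrt{\eta'^2-\eta^2}\le\eta'$,
\[
\mu_{N,\eta'}-\mu=\Phi^{\delta}\star(\mu_{N,\eta}-\mu)+(\Phi^\delta\star\mu-\mu).
\]
The first term is $\lesssim \langle x\rangle^{-\gamma}\M(X_N,\mu,\eta)$ by the same weight transfer, and the second is controlled by \eqref{Dphi}.

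\emph{Proof of \eqref{retourMM}.} Split the sum defining $\M_0$ into $n\ge m$ and $n<m$. The terms with $n\ge m$ equal exactly $2^{\sigma m}\M_m$. For each $n<m$, combine \eqref{triangleU} with the second inequality of \eqref{Mee} applied to the pair $2^n\eta_*\le 2^m\eta_*$ (both scales are at most $\eta_*^{1/3}\le1$):
\[
\M(X_N,\mu,2^n\eta_*)\le 2^{(m-n)d}\big(\M(X_N,\mu,2^m\eta_*)+C\A(\mu)\big)+C\A(\mu).
\]
Since $\M(X_N,\mu,2^m\eta_*)\le\M_m$, we are left with $\sum_{n<m}2^{\sigma n}2^{(m-n)d}$. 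This sum is $\lesssim 2^{dm}$ when $\sigma<d$, in particular for $\sigma=\tfrac12$. In the borderline case $\sigma=d=1$ it is only $m2^{m}$, so the statement should be read with the $\sigma=\tfrac12$ convention used in the paper. Together with $2^{\sigma m}\le 2^{dm}$, this gives $\M_0\lesssim 2^{dm}(\M_m+\A(\mu))$.

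The only mildly delicate step is the uniform weight transfer for Gaussian convolutions at scales up to $1$, as used in \eqref{Dphi} and \eqref{d2m}. It needs the weights $\langle\cdot\rangle^{\gamma+1}$ in $\A$ and Peetre's inequality.
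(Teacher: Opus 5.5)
Your proposal is correct and follows essentially the same route as the paper: a second-order Taylor expansion with the odd linear term vanishing for \eqref{Dphi}, and the splitting $\mu_{N,\eta}=\Phi^{\kappa_0\eta}\star\mu_{N,\eta/2}$ with the Gaussian derivative bound \eqref{prod2exp} for \eqref{d2m}. The same holds for \eqref{Mee}--\eqref{Meta1eta2} (semigroup identity and pointwise Gaussian comparison) and for \eqref{retourMM} (the dyadic comparison summed against $2^{\sigma n}$), except that you use Peetre's inequality for the weight transfer where the paper appeals to Lemma~\ref{lem:gauss} directly. Your remark that $\sum_{n<m}2^{\sigma n}2^{d(m-n)}$ is $\lesssim 2^{dm}$ only when $\sigma<d$ (it equals $m2^m$ when $\sigma=d=1$) correctly sharpens a point the paper passes over, and it is harmless since the paper works with $\sigma=\tfrac12$.
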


\begin{proof}

By Taylor expansion of $f$, we have 
	\begin{align}\notag
		f(x)-f\star \Phi^\eta(x)
		&=\int \big(f(x)-f(x+z)\big)\Phi^\eta(z)\,dz
		\\ \label{summu}
		&= - \int z\cdot\nab f(x)\Phi^\eta(z)\,dz-\int_{0}^{1}(1-\tau) \int z^{\otimes 2}:\nab^2f (x+\tau z)\Phi^\eta(z)\, dz\, d\tau.
	\end{align}
The first term vanishes by the oddness of $z\Phi^\eta(z)$.
Therefore,
	\begin{equation}
		|f(x)-f\star \Phi^\eta(x)| \le \int_{0}^{1}	\int |z|^2\langle x+\tau z\rangle^{-\gamma}\Phi^\eta(z)\, dz\, d\tau\, \|\xg \nab^2f\|_{L^\infty}\overset{\eqref{z0'}}{\lesssim} \langle x\rangle^{-\gamma}\eta^2 \|\xg \nab^2f\|_{L^\infty}.
	\end{equation}
The result for $m=0$ follows. The case $m=1$ is similar. \\
Using \eqref{defc0}, \eqref{semigroup}, we may write
	\begin{align}\notag
		\Big|\frac{1}{N}\sum_{i=1}^N      \nab^m  \Phi_{x_i}^{\eta}(x)\Big|=	| \nab^m \mu_{N,\eta} (x)|&=	| \nab^m \Phi^{\kappa_0\eta}\star \mu_{N,\eta/2}(x) |\\ \notag &\leq
			|\nab^m\Phi^{\kappa_0\eta}\star \tilde\mu_{N,\eta/2}(x) |+ 	|\nab^m\Phi^{\kappa_0\eta}\star \mu(x) |.\end{align}
			Thus,
				\begin{multline*}\xg\Big|\frac{1}{N}\sum_{i=1}^N      \nab^m  \Phi_{x_i}^{\eta}(x)\Big| \lesssim \M(X_N, \mu,\eta/2) \int_{\mr^d} |\nab_x^m\Phi^{\kappa_0\eta}|(x-y) \frac{\xg}{\langle y\rangle^\gamma}\,dy \\
		+\|\xg \nab^m \mu\|_{L^\infty} \int_{\mr^d} \Phi^{\kappa_0\eta}(x-y) \frac{\xg}{\langle y\rangle^\gamma}\,dy.\end{multline*}
	Using the Gaussian derivative bound  \eqref{prod2exp},  we obtain
		\begin{multline*}\xg\Big|\frac{1}{N}\sum_{i=1}^N      \nab^m  \Phi_{x_i}^{\eta}(x)\Big|\lesssim (\eta^{-m}\M(X_N, \mu,\eta/2)+\|\xg \nab^m \mu\|_{L^\infty} ) \int_{\mr^d} \Phi^{2\kappa_0\eta}(x-y) \frac{\xg}{\langle y\rangle^\gamma}\,dy,\end{multline*}
		The desired estimate \eqref{d2m} follows from \eqref{z0}.

For the left-hand inequality in \eqref{Mee}, by the semi-group property we have
\be \label{semigroup2}
\M(X_N^t, 0, \eta') \lesssim \M(X_N^t, 0, \eta)\quad \text{if} \ \eta'\ge \eta.\ee
Indeed, by \eqref{semigroupmu},
\begin{align}\notag
	 \xg\mu_{N, \eta'}(x)&= \xg\Phi^{\sqrt{\eta'^2-\eta^2}}\star\mu_{N, \eta}(x)\\&\leq \M(X_N, 0, \eta)\,\xg(\Phi^{\sqrt{\eta'^2-\eta^2}}\star\langle \cdot\rangle^{-\gamma})(x)\overset{\eqref{z0}}{\sim}   \M(X_N, 0, \eta).\notag
\end{align}
		The right-hand inequality in \eqref{Mee} follows from the fact that by monotonicity of the exponential, if $\eta \le \eta'$,  we have
		$$ e^{-|x|^2/(4\eta^2) } \le e^{-|x|^2/(4(\eta')^2)},$$
		thus
		$$\Phi^{\eta}(x)\le \Big(\frac{\eta'}{\eta}\Big)^d  \Phi^{\eta'}(x),$$
and $\mu_{N,\eta}\le (\eta'/\eta)^d \mu_{N,\eta'}$.

The relation \eqref{Meta1eta2} is proven by observing that, by \eqref{semigroup}, we have
	$$\mu_{N,\eta'}-\mu = \Phi^{\sqrt{{\eta'}^2-\eta^2}} \star (\mu_{N,\eta}-\mu) + \Phi^{\sqrt{{\eta'}^2-\eta^2}}\star \mu - \mu,$$
	 and using \eqref{Dphi}.	
	
	We finish with the proof of \eqref{retourMM}, i.e.~we need to prove that $ \M_0 \lesssim 2^{dm}\M_m +2^{dm}\A(f)$.
	Note that 
	$$\M_m(\XN,f)= \sum_{n=0}^{\mathbf{n}-m} 2^{\sigma n} \M(\XN,f,2^{n+m}\eta_*)=2^{-\sigma m}\sum_{n=m}^{\mathbf{n}} 2^{\sigma n} \M(\XN,f,2^{n}\eta_*).$$
For $m, m'$ integers with  $m'< m$, using \eqref{Mee} we have
\begin{align*}
\M(\XN,f,2^{m'}\eta_*)&\lesssim \M(\XN,0,2^{m'}\eta_*)+\|\langle\cdot\rangle^\gamma f\|_{L^\infty}\\&\lesssim 2^{d(m-m')}\M(\XN,0,2^{m}\eta_*)+\|\langle\cdot\rangle^\gamma f\|_{L^\infty}
\\&\lesssim 2^{d(m-m')}\M(\XN,f,2^{m}\eta_*)+2^{d(m-m')}\|\langle\cdot\rangle^\gamma f\|_{L^\infty}\\
& \lesssim 2^{d(m-m')}\M_m(\XN,f)+2^{d(m-m')}\|\langle\cdot\rangle^\gamma f\|_{L^\infty}.
\end{align*}
Multiplying by $2^{\sigma m'}$ and summing over $m'\le m-1$, we find 
$$\M_0(\XN, f) \lesssim 2^{\sigma m} \M_m(\XN, f) +  \sum_{m'=0}^{m-1} 2^{\sigma m'} 2^{d(m-m')}\M_m(\XN,f)+2^{\sigma m'+d(m-m')}\|\langle\cdot\rangle^\gamma f\|_{L^\infty}.$$
The relation \eqref{retourMM} follows.

\end{proof}

\begin{remark}
From \eqref{prod2exp} we easily get $\xg |\nab ^m \mu_{N,\eta}|\le C \eta^{-m} \M(X_N^t, 0, \eta/2)+ C \|\xg \nab^m \mu\|_{L^\infty}$; however, this is not sufficient because we need a control by the small quantity $\M(X_N^t, \mu^t,\eta/2)$.
\end{remark}

\subsection{Pointwise and integral kernel estimates}\label{sec:kernelest}

We now turn to the key analytic estimates on the mollified force kernel $\F_\eta^{\eta_*}$ and its truncation $\F_{<\eta}^{\eta_*}$. Throughout this subsection, the bounds are stated uniformly in the singularity parameter $s$, with indicator functions $\mathbf{1}_{s\leq d-1}$, $\mathbf{1}_{s=d+1}$, etc.\ selecting the relevant contributions in each regime. On a first reading, the reader may wish to focus on the sub-Coulomb case $s < d-1$, where many terms vanish and the estimates take their simplest form.

We  will use a compact notation for the  recurring prefactors 
\begin{align}
	&\label{defCeta}
	\mathcal C(\eta,\eta_*):= \mathbf{1}_{s= d+1}\!\log\frac{2\eta}{\eta_{*}}+\mathbf{1}_{s>d+1}\eta_*^{-s+d+1},\\&
	\label{defCeta'}	\mathcal C(\eta_*):=
		\mathbf{1}_{s<d+1}
		+ \mathbf{1}_{s=d+1}|\log\eta_*|
		+ \mathbf{1}_{s>d+1}\eta_*^{-s+d+1}.
\end{align}

\begin{lem}\label{lem2.4}
	Assume \eqref{assumpFre}--\eqref{assumpF2'}. Let $g$ satisfy $	\A(g)+
	\big\|\xg \nabla^4 g\big\|_{L^\infty}	\mathbf{1}_{s\geq d}<\infty.$ Then, for any $\eta\in[\eta_*,1]$, and any $m=0,1,2,3$, the following estimates hold.

	\begin{enumerate}[label=(\roman*)]
		\item \textbf{Pointwise truncated and regularized kernel bounds.} For all $x,y\in\mathbb R^d$, and $m \le 3$
		\begin{align}\label{proF0}
			\big|\nabla_x^m \F_{\eta}^{\eta_*}(x,y)\big|
			&\lesssim
			\frac{
				1
				+ \mathcal C(\eta, \eta_*)
			}{
				(\eta+|x-y|)^{\min\{s,d+1\}+m}
			}  \\
			&\quad
			+ \big(1+(|x|+|y|)\mathbf{1}_{m=0}\big), \nonumber
		\end{align}
		\item \textbf{Truncated kernel estimate.}
		\begin{align}\label{proF1}
			\big|\F_{<\eta}^{\eta_*}(x,y)\big|
			&\lesssim
			\eta^2
			+\frac{
				1
				+ \mathcal C(\eta, \eta_*)
			}{
				|x-y|^{\min\{s,d+1\}}
			}
			\min\!\Big(1,\frac{\eta}{|x-y|}\Big)^2,
		\end{align}

		\item \textbf{Symmetrized bound.}
		\begin{equation}\label{proF2}
			\big|
			\F_{<\eta}^{\eta_*}(x,y)
			+\F_{<\eta}^{\eta_*}(y,x)
			\big|
			\lesssim
	\eta^2
			+\frac{	1
				+ \mathcal C(\eta, \eta_*)}{|x-y|^{\min\{s-1,d\}}}
			\min\!\Big(1,\frac{\eta}{|x-y|}\Big)^2,
		\end{equation}
		\item \textbf{Convolution estimate.}
		\begin{align}\label{proF3}
			\big|
			\F_{<\eta}^{\eta_*}\ast g(x)
			\big|
			&\lesssim
			\eta^2\mathcal C(\eta_*)
			\A(g),
		\end{align}
		\item \textbf{Gradient convolution estimate.}
		\begin{align}\label{proF4}
			\big|
			\nabla \F_{<\eta}^{\eta_*}\ast g(x)
			\big|
			&\lesssim \eta^2\mathcal C(\eta_*) 	(	\A(g)+
			\big\|\xg \nabla^4 g\big\|_{L^\infty}	\mathbf{1}_{s\geq d}).
		\end{align} 
	\end{enumerate}
\end{lem}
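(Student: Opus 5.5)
The plan is to split $\F^{\eta_*}=\F^{\mathrm{reg}}+\F^{\mathrm{sing},\eta_*}$, where $\F^{\mathrm{sing},\eta_*}$ is the (possibly cut off) singular part. The regular part is handled by Taylor expansion in $y$. Since $\nabla_y^2\F^{\mathrm{reg}}$ is bounded by \eqref{assumpFre} and $\int z\Phi^\eta(z)\,dz=0$, we get $|\F^{\mathrm{reg}}_{<\eta}|\lesssim\eta^2$ and $|\nabla_x^m\F^{\mathrm{reg}}_\eta|\lesssim 1+(|x|+|y|)\mathbf 1_{m=0}$. This accounts for the $\eta^2$ and the linear-growth terms in \eqref{proF0}--\eqref{proF4}, using $\A(g)$ to integrate the resulting $\eta^2$ against $g$.

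\textbf{Pointwise bounds (i)--(iii).} Write
\[
\F^{\mathrm{sing}}_\eta(x,y)=\int \F^{\mathrm{sing},\eta_*}(x,y+z)\Phi^\eta(z)\,dz .
\]
In the far region $|x-y|\ge 2\eta$ we split the $z$-integral. For $|z|\le |x-y|/2$ we Taylor expand to second order in $z$: the first-order term vanishes by oddness of $\Phi^\eta$, and the second-order term contributes $\eta^2|x-y|^{-s-2}$ via \eqref{assumpFsing}. For $|z|>|x-y|/2$ the Gaussian tail is $O(e^{-c|x-y|^2/\eta^2})$ times the local integral. This gives \eqref{proF1} and \eqref{proF0} there, and the same expansion applied to $\nabla_x^m$ handles $m\le3$.

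In the near region $|x-y|\le 2\eta$ we bound $|\F_\eta|\lesssim \eta^{-d}\int_{|w|\lesssim\eta}|\F^{\mathrm{sing},\eta_*}(x,x+w)|\,dw$ plus tails. For $s<d$ this is $\eta^{-s}$ directly. For $d\le s$ we instead write it in polar coordinates around $x$ and use the spherical cancellation \eqref{assumpF2'}: the spherical mean $\dashint_{|w|=r}$ is only $O(r^{-s+1})$, and the Gaussian's variation in the radial/angular directions costs a factor $r/\eta$. Concretely, we expand $\Phi^\eta(x+w-y)$ around $w=0$ to first order, so the zeroth order is paired with the spherical mean and the first order gains $r/\eta$. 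This yields
\[
\eta^{-d}\int_{\eta_*}^{\eta} r^{d-1}\,r^{-s+1}\,\frac{r}{\eta}\,dr\ \lesssim\ \eta^{-\min(s,d+1)}\bigl(1+\mathcal C(\eta,\eta_*)\bigr).
\]
The lower limit $\eta_*$ comes from the cutoff when $s\ge d+1$ and produces the $\log$ at $s=d+1$ and $\eta_*^{-s+d+1}$ beyond. Derivatives in $x$ fall on the Gaussian (after the change of variables $y+z=x+w$) and give $\eta^{-m}$, with the $m$-derivative version of \eqref{assumpF2'}. \eqref{proF1} for $|x-y|\le2\eta$ then follows from $|\F_{<\eta}|\le|\F|+|\F_\eta|$ together with \eqref{assumpFsing}, capped appropriately by the cutoff at $\eta_*$ when $s\ge d+1$. \eqref{proF2} is obtained identically, except that we first replace $\F^{\mathrm{sing}}(y,x+z')$ by $-\F^{\mathrm{sing}}(x,y-z')$ using \eqref{assumpF3c} with $m=0$. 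This turns the symmetrized truncated kernel into a kernel of singularity $s-1$, to which the same argument applies; \eqref{assumpF2} covers the unmollified part.

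\textbf{Convolution bounds (iv)--(v).} We have $\F_{<\eta}*g(x)=\F*g-\F_\eta*g$. By Fubini and the symmetry of the Gaussian, $\F_\eta*g=\F*(\Phi^\eta\star g)$, so $\F_{<\eta}*g=\F*(g-\Phi^\eta\star g)$. For $s<d$ we conclude from \eqref{Dphi}, using local integrability of $|x-y|^{-s}$ against $\langle y\rangle^{-\gamma}$ and the linear growth of the regular part against the decay. For $d\le s<d+1$ the convolution is not absolutely convergent. Here we write $g-\Phi^\eta\star g=\int_0^1\!\int (1-\tau)z^{\otimes2}:\nabla^2 g(\cdot+\tau z)\Phi^\eta\,dz\,d\tau$ and use the cancellation \eqref{assumpF2'}: the spherical average around $x$ reduces the singularity by one power, making it integrable against a $C^1$ function. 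This requires $\nabla^3$ of the function, so $\nabla^3 g$ appears, and for the gradient estimate also $\nabla^4g$, whence the extra term $\mathbf 1_{s\ge d}\|\xg\nabla^4 g\|$ in \eqref{proF4}.

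For \eqref{proF4} we move the $x$-derivative onto $g$ via \eqref{assumpF2G}, or \eqref{assumpF>=d+1} when $s\ge d+1$. This produces $\F_{<\eta}*\nabla g$ plus a commutator with the milder kernel $|x-y|^{-s'}$ (resp.\ $|x-y|^{-s+1}$ restricted to $|x-y|\gtrsim\eta_*$), again applied to $g-\Phi^\eta\star g=O(\eta^2)$. For $s\ge d+1$ the cutoff region contributes $\int_{|x-y|\ge\eta_*}|x-y|^{-s+1}\eta^2\,dy$, which gives $\mathcal C(\eta_*)$.

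I expect the main obstacle to be the near-diagonal analysis for $s\ge d$: extracting the spherical cancellation in the mollified kernel with the correct $\log$/power dependence on $\eta_*$. The delicate point is that the Gaussian is centered at $y$ rather than $x$, so one must expand it carefully in order to pair it with the spherical means of \eqref{assumpF2'}.
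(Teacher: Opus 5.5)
Your treatment of the regular part, of the near-field cancellation for $m=0$, and of (iv)--(v) is sound and is essentially the paper's argument. There, $\F_{<\eta}*g=\F*(g-\Phi^\eta\star g)$ is written as a double integral, the localized spherical cancellation costs $\nabla^3 g$, and the derivative transfer costs $\nabla^4 g$.

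The genuine gap is in \eqref{proF0} for $m\ge 1$. After the substitution $y+z=x+w$, the $x$-derivatives do not only fall on $\Phi^\eta(x+w-y)$. Unless $\F$ is translation invariant, they also hit $x\mapsto\F^{\mathrm{sing},\eta_*}(x,x+w)$, and the $m$-derivative version of \eqref{assumpF2'} cannot control those terms:
\begin{itemize}
\item It only bounds the spherical means of $\nabla_x^k[\F^{\mathrm{sing}}(x,x+w)]$ by $\rho^{1-s-k}$. The zeroth-order Gaussian term then produces $\eta^{-d}\int_0^\eta\rho^{d-s-k}\,d\rho$, which diverges as soon as $s+k\ge d+1$. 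This already happens for $k=1$ when $d\le s<d+1$, and for $k=2$ in the Coulomb case $s=d-1$.
\item The first-order term, where only the pointwise bound $|w|^{-s-k}$ is available, diverges in the same range.
\item For $s>d+1$ the cutoff makes it finite, but it is of size $\eta^{-d-1}\eta_*^{d+1-s-k}$. This exceeds the target $\eta^{-d-1-k}\eta_*^{d+1-s}$ by $(\eta/\eta_*)^k$.
\item \eqref{assumpF2'} is not assumed for $m=3$ at all.
\end{itemize}
For the same reason, your remark that for $s<d$ the bound is $\eta^{-s}$ directly only covers $s+m<d$.

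The missing ingredient is the derivative-transfer hypothesis \eqref{assumpF2G}, and \eqref{assumpF>=d+1} with $\ep=\eta_*$ when $s\ge d+1$. Apply it with test function $\Phi^\eta(\cdot-y)$, or a cut-off version of it, to get
\[
\nabla_x^m\F^{\mathrm{sing},\eta_*}_\eta(x,y)=\int\F^{\mathrm{sing},\eta_*}(x,y+z)\,\nabla^m\Phi^\eta(z)\,dz+E,
\qquad
|E|\lesssim\int\Big(1+\frac{1}{|x-y-z|^{s'}}\Big)\sum_{k<m}|\nabla^k\Phi^\eta(z)|\,dz ,
\]
with $|x-y-z|^{-s'}$ replaced by $(|x-y-z|+\eta_*)^{1-s}$ when $s\ge d+1$. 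All derivatives now sit on the Gaussian, the error kernel is integrable because $s'<d$, and afterwards only the $m=0$ spherical cancellation is needed. This is how the paper proves \eqref{proF0} in both regimes $|x-y|\le 10\eta$ and $|x-y|>10\eta$. You invoke \eqref{assumpF2G} in (v), but it is already indispensable in (i).

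A second gap is in the far field. The piece $|z|>|x-y|/2$ contains the singular point $z=x-y$. Bounding it by ``Gaussian tail times the local integral'' requires the local integral of $|\nabla_x^m\F^{\mathrm{sing},\eta_*}|$ to converge, i.e.\ $s+m<d$. Moreover, $|x-y|$ may be as small as $2\eta$, where $e^{-c|x-y|^2/\eta^2}$ is of order one and absorbs nothing. For instance, for $s>d+1$ and $m=0$ the naive bound $\eta^{-d}\eta_*^{d-s}$ exceeds the target by a factor $\eta/\eta_*$. So for $s\ge d$, already at $m=0$ and in both \eqref{proF0} and \eqref{proF1}, you must run the cancellation argument at $z=x-y$ as well. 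Transfer derivatives onto the cut-off Gaussian, expand it around $z=x-y$, and pair the constant term with the spherical means from \eqref{assumpF2'}. This is what the paper's terms $I_2'$ and $I_4$ do via the coarea formula.

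Two smaller corrections:
\begin{itemize}
\item Your displayed integrand $r^{d-1}r^{-s+1}\frac{r}{\eta}$ is off by one power of $r$; as written it gives $\eta^{1-s}$ and puts the logarithm at $s=d+2$. The correct bookkeeping pairs $\eta^{-d}$ with the spherical mean $r^{1-s}$, and $\eta^{-d-1}r$ with the pointwise bound $r^{-s}$. Both lead to $\int_{\eta_*}^{\eta}r^{d-s}\,dr$, which is where $\log(2\eta/\eta_*)$ and $\eta_*^{d+1-s}$ come from.
\item In (iii), the far-field Taylor expansion of $z\mapsto\F^{\mathrm{sing}}(x,y+z)+\F^{\mathrm{sing}}(y,x-z)$ needs \eqref{assumpF3c} with $m=2$, not only $m=0$.
\end{itemize}
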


\begin{proof}
For the whole proof, we let $r:=|x-y|$, and again let $\chi$ be a cutoff function equal to $1$ in $[0,1]$ and vanishing outside $[0,2]$.

We decompose $$	\F^{\eta_*}=\F^{\mathrm{reg}}+\left(\mathbf{1}_{s< d+1}\F^{\mathrm{sing}}+\mathbf{1}_{s\geq d+1}\F^{\mathrm{sing}}\big(1-\chi\big)\left(\frac{|x-y|}{\eta_*}\right)\right):=\F^{\mathrm{reg}}+\F^{\mathrm{sing},\eta_*}$$ as in Section~\ref{sec:assump} and treat each part separately. Using \eqref{assumpFsing}--\eqref{assumpFre} and \eqref{Dphi},  it is straightforward to verify that, for $m=0,1,2,3$,
\begin{align}\label{q2}
	\big|\nabla_x^m \F_{\eta}^{\mathrm{reg}}(x,y)\big|
	&\lesssim 1+(|x|+|y|)\mathbf{1}_{m=0}, \\
	\label{q3}
	\big|\nabla_x^m \F_{<\eta}^{\mathrm{reg}}(x,y)\big|
	&\lesssim \eta^2 .
\end{align}

\noindent
\textbf{Step 1: Proof of \eqref{proF0}.}\\Assume first that $r>10\eta$. We decompose the convolution defining $\F_\eta^{\mathrm{sing}}$ into two regions:
\begin{align*}
	\nabla_x^m \F_{\eta}^{\mathrm{sing},\eta_*}(x,y)
	&=\int \nabla_x^m\F^{\mathrm{sing},\eta_*}(x,y+z)\Phi^\eta(z)
	\chi\!\left(\frac{2|z|}{r}\right)\,dz \\
	&\quad+\int \nabla_x^m\F^{\mathrm{sing},\eta_*}(x,y+z)\Phi^\eta(z)
	\left(1-\chi\right)\!\left(\frac{2|z|}{r}\right)\,dz \\
	&=: I_1+I_2 .
\end{align*}
By \eqref{assumpFsing}, we have
\begin{align*}
	|I_1|
	&\lesssim \int_{|z|<r/2}
	\left(\frac{\mathbf{1}_{s<d+1}}{|x-y-z|^{s+m}}+\frac{\mathbf{1}_{s\geq d+1}}{(|x-y-z|+\eta_{*})^{s+m}}+1\right)\Phi^\eta(z)\,dz  \\
	&\lesssim \frac{1}{(r+\mathbf{1}_{s\geq d+1}\eta_{*})^{s+m}}+1,
\end{align*}
since $|x-y-z|\gtrsim r$ on the support of the integrand.

We next estimate $I_2$. Set
\[
\Psi(z):=\Phi^\eta(z)\left(1-\chi\right)\!\left(\frac{2|z|}{r}\right).
\]
Using \eqref{assumpF2G} and \eqref{assumpF>=d+1} together with integration by parts, we obtain
\begin{align*}
	|I_2|
	&\lesssim 
	\left|\int
	\left(\nabla_x^m+(-1)^{m+1}\nabla_y^m\right)
	\F^{\mathrm{sing},\eta_*}(x,y+z)\Psi(z)\,dz\right| \\
	&\quad+
	\left|\int
	\F^{\mathrm{sing},\eta_*}(x,y+z)\nabla_z^m\Psi(z)\,dz\right|  \\
	&\lesssim
	\mathbf{1}_{m\geq 1} 
	\int_{|z|\geq r/2}
	\left(1+\frac{\mathbf{1}_{s<d+1}}{|x-y-z|^{s'}}+\frac{\mathbf{1}_{s\geq d+1}}{(|x-y-z|+\eta_{*})^{s-1}}\right)
	\sum_{n=0}^{m-1}|\nabla_z^n\Psi(z)|dz \\
	&\quad+
	\mathbf{1}_{s<d}
	\int_{|z|\geq r/2}
	\left(1+\frac{1}{|x-y-z|^{s}}\right)|\nabla_z^m\Psi(z)|dz \\
	&\quad+
	\mathbf{1}_{s\geq d}
	\int_{|z|\geq r/2}
	\left(1+\frac{\mathbf{1}_{s<d+1}}{r^{s}}+\frac{\mathbf{1}_{s\geq d+1}}{(r+\eta_{*})^s}\right)|\nabla_z^m \Psi(z)|dz \\
	&\quad+
	\mathbf{1}_{s\geq d} I_2',
\end{align*}
where $0<s'<\min\{s+1,d\}$ and
\begin{align*}
	I_2'
	:=
	\left|
	\int \F^{\mathrm{sing},\eta_*}(x,y+z)
	\nabla_z^m\Psi(z)
	\chi\!\left(\frac{|x-y-z|}{2r}\right)\,dz
	\right|.
\end{align*}
The term $I_2'$ isolates the contribution near the singular set $z=x-y$. When $s\ge d$, the singularity is not locally integrable by absolute-value estimates alone, and we must therefore use the cancellation assumption on $\F^{\mathrm{sing},\eta_*}$ to control this term.\\
Since
\begin{align}\label{esgau1}
	|\nabla_z^n\Psi(z)|
	\lesssim 
	e^{-\frac{|z|^2}{4\eta^2}}
	\frac{1}{\eta^d}
	\left(\eta^{-n}+r^{-n}\right)\mathbf{1}_{|z|\geq r/2}\lesssim e^{-\frac{|z|^2}{8\eta^2}}
	r^{-d-n}\mathbf{1}_{|z|\geq r/2},
\end{align}
we deduce
\begin{align*}
	|I_2|
	&\lesssim 
	\mathbf{1}_{m\geq 1}
	\int_{|z|\geq r/2}
	\left(1+\frac{\mathbf{1}_{s<d+1}}{|x-y-z|^{s'}}+\frac{\mathbf{1}_{s\geq d+1}}{(|x-y-z|+\eta_{*})^{s-1}}\right)
e^{-\frac{|z|^2}{8\eta^2}}
r^{-d-m+1}dz \\
	&\quad+
	\mathbf{1}_{s<d}
	\int_{|z|\geq r/2}
	\left(1+\frac{1}{|x-y-z|^{s}}\right)
e^{-\frac{|z|^2}{8\eta^2}}
r^{-d-m}dz \\
	&\quad+
	\mathbf{1}_{s\geq d}	\int_{|z|\geq r/2}
		\left(1+\frac{\mathbf{1}_{s<d+1}}{r^{s}}+\frac{\mathbf{1}_{s\geq d+1}}{(r+\eta_{*})^s}\right)
	e^{-\frac{|z|^2}{8\eta^2}}
	r^{-d-m}dz \\
	&\quad+
	\mathbf{1}_{s\geq d}I_2' \\&
		\lesssim \frac{\mathbf{1}_{s<d+1}}{r^{s+m}}+\left(\eta_{*}^{d+1-s}\mathbf{1}_{s>d+1}+\log(\frac{2\eta}{\eta_{*}})\mathbf{1}_{s=d+1}\right)r^{-d-m+1}+ \frac{\mathbf{1}_{s\geq d+1}}{(r+\eta_{*})^sr^{m}}+1
	+\mathbf{1}_{s\geq d}I_2',
\end{align*}
where we used the fact that 
\begin{align*}
		\int_{ r/2\leq |z|\leq 2r} \frac{\mathbf{1}_{s\geq d+1}}{(|x-y-z|+\eta_{*})^{s-1}}e^{-\frac{|z|^2}{8\eta^2}}dz\lesssim \eta_{*}^{d+1-s}\mathbf{1}_{s>d+1}+\log(\frac{2\eta}{\eta_{*}})\mathbf{1}_{s=d+1}.
\end{align*}
It remains to estimate $I_2'$ in the case $s\geq d$. Using the coarea formula,
we write
\begin{align*}
	I_2'
	&=
	\left|
	\int_0^{2r}
	\int_{|x-y-z|=\rho}
	\F^{\mathrm{sing},\eta_*}(x,y+z)
	\nabla_z^m\Psi(z)
	\,d\mathcal{H}^{d-1}(z)
	\chi\!\left(\frac{\rho}{2r}\right)\,d\rho
	\right|.
\end{align*}
By the cancellation property \eqref{assumpF2'}, for every $\rho>0$,
\begin{align*}
	\left|
	\int_{|x-y-z|=\rho}
	\F^{\mathrm{sing},\eta_*}(x,y+z)
	\,d\mathcal{H}^{d-1}(z)
	\right|
	\lesssim \rho^{d-1}(\rho^{1-s}+1)(\mathbf{1}_{s<d+1}+\mathbf{1}_{\rho\geq \frac{\eta_{*}}{2}}\mathbf{1}_{s\geq d+1}).
\end{align*}
Hence,
\begin{align*}
	I_2'
	&\lesssim
	\int_0^{2r}
	\int_{|x-y-z|=\rho}
	|\F^{\mathrm{sing},\eta_*}(x,y+z)|
	\left|
	\nabla_z^m\Psi(z)-\nabla_z^m\Psi(x-y)
	\right|
	\,d\mathcal{H}^{d-1}(z)\,d\rho \\
	&\quad+
	\int_0^{2r}
\rho^{d-1}(\rho^{1-s}+1)(\mathbf{1}_{s<d+1}+\mathbf{1}_{\rho\geq \frac{\eta_{*}}{2}}\mathbf{1}_{s\geq d+1})
	|\nabla_z^m\Psi(x-y)|\,d\rho.
\end{align*}
By \eqref{esgau1}, we have
\begin{align*}
	\left|
	\nabla_z^m\Psi(z)-\nabla_z^m\Psi(x-y)
	\right|
	\lesssim
	\frac{	|x-y-z|}{r^{d+m+1}} e^{-\frac{r^2}{32\eta^2}}
\end{align*}
Consequently,
\begin{align*}
	I_2'
	&\lesssim
	e^{-\frac{r^2}{32\eta^2}}
	\int_0^{2r}
	\left[
	1+
	\rho^{-s}
	\left(
	\mathbf{1}_{s<d+1}
	+
	\mathbf{1}_{\rho\geq \eta_*/2}\mathbf{1}_{s\geq d+1}
	\right)
	\right]
	\frac{\rho^d}{r^{d+m+1}}
	d\rho \\
	&\quad+
	e^{-\frac{r^2}{32\eta^2}}
	\int_0^{2r}
(\rho^{d-s}+	\rho^{d-1})
	\left(
	\mathbf{1}_{s<d+1}
	+
	\mathbf{1}_{\rho\geq \eta_*/2}\mathbf{1}_{s\geq d+1}
	\right)
	r^{-d-m}\,d\rho.
\end{align*}
Evaluating these one-dimensional integrals gives
\begin{align*}
	I_2'
	&\lesssim 	e^{-\frac{r^2}{32\eta^2}}
	\left[
	r^{-m}+
	r^{-m-s}
	\mathbf{1}_{s<d+1}
	+
	r^{-d-m-1}(\mathbf{1}_{s= d+1}\log(2+\frac{r}{\eta_{*}})+
	\mathbf{1}_{s> d+1}\mathbf{1}_{r\geq \frac{\eta_{*}}{4}}\eta_*^{d+1-s})
	\right]
	\\
	&+
	e^{-\frac{r^2}{32\eta^2}}
	\left(r^{-m}+ r^{-m+1-s}
	\mathbf{1}_{s<d+1}
	+		r^{-d-m}(\mathbf{1}_{s= d+1}\log(2+\frac{r}{\eta_{*}})+
	\mathbf{1}_{s> d+1}\mathbf{1}_{r\geq \frac{\eta_{*}}{4}}\eta_*^{d+1-s})
	\right).
\end{align*}
Using also
\[
\log\!\left(2+\frac{r}{\eta_*}\right)
e^{-\frac{r^2}{100\eta^2}}
\lesssim
\log\!\left(2+\frac{\eta}{\eta_*}\right),
\]
we infer
\begin{align*}
	I_2'\lesssim r^{-m}+r^{-s-m}\mathbf{1}_{s<d+1}+ \log(\frac{2\eta}{\eta_{*}}) r^{-d-m-1}\mathbf{1}_{s=d+1}+ \eta_{*}^{d+1-s}r^{-d-m-1}\mathbf{1}_{r\geq \frac{\eta_{*}}{4}} \mathbf{1}_{s>d+1}.
\end{align*}
Combining the estimates for $I_1$, $I_2$, and $I_2'$, we conclude that, whenever
$r>10\eta$,
\begin{equation}\label{q4}
	|\nabla_x^m\F_{\eta}^{\mathrm{sing},\eta_*}(x,y)|
	\lesssim
	\frac{1+\mathcal{C}(\eta,\eta_{*})}{r^{\min\{s,d+1\}+m}}+1 .
\end{equation}
We now consider the complementary case $r\leq 10\eta$. Arguing as before, using the almost anti-symmetry assumption together with integration by parts, we obtain
\begin{align*}
	|\nabla_x^m\F_{\eta}^{\mathrm{sing},\eta_*}(x,y)|
	&\leq
	\left|
	\int
	\left(\nabla_x^m+(-1)^{m+1}\nabla_y^m\right)
	\F^{\mathrm{sing},\eta_*}(x,y+z)\Phi^\eta(z)\,dz
	\right| \\
	&\quad+
	\left|
	\int
	\F^{\mathrm{sing},\eta_*}(x,y+z)\nabla_z^m\Phi^\eta(z)\,dz
	\right|  \\
	&\lesssim
	\mathbf{1}_{m\geq 1}
	\int
	\left(1+\frac{\mathbf{1}_{s<d+1}}{|x-y-z|^{s'}}+\frac{\mathbf{1}_{s\geq d+1}}{(|x-y-z|+\eta_{*})^{s-1}}\right)
	\sum_{n=0}^{m-1}|\nabla_z^n\Phi^\eta(z)|\,dz \\
	&\quad+
	\mathbf{1}_{s<d}
	\int
	\left(1+\frac{1}{|x-y-z|^{s}}\right)
	|\nabla_z^m\Phi^\eta(z)|\,dz \\
	&\quad+
	\int
	\left(1+\frac{1}{\eta^{s}}\right)
	|\nabla_z^m\Phi^\eta(z)|\,dz
	+
	\mathbf{1}_{s\geq d} I_3 ,
\end{align*}
where $0<s'<\min\{s+1,d\}$ and
\begin{align*}
	I_3
	:=
	\left|
	\int
	\F^{\mathrm{sing},\eta_{*}}(x,y+z)
	\nabla_z^m\Phi^\eta(z)
	\chi\!\left(\frac{|x-y-z|}{2\eta}\right)\,dz
	\right|.
\end{align*}
Using the standard bounds on the derivatives of the mollifier, we infer
\begin{align}\nonumber
	|\nabla_x^m\F_{\eta}^{\mathrm{sing},\eta_*}(x,y)|
	&\lesssim
\mathbf{1}_{m\geq 1}	\int
	\left(1+\frac{\mathbf{1}_{s<d+1}}{|x-y-z|^{s'}}+\frac{\mathbf{1}_{s\geq d+1}}{(|x-y-z|+\eta_{*})^{s-1}}\right) 	\frac{1}{\eta^{d+m-1}}
	e^{-\frac{|z|^2}{4\eta^2}}dz\\& 
	+	\int
	\left(1+\frac{\mathbf{1}_{s<d}}{|x-y-z|^{s}}+\frac{1}{\eta^{s}}\right) 	\frac{1}{\eta^{d+m}}
	e^{-\frac{|z|^2}{4\eta^2}}dz +
	\mathbf{1}_{s\geq d} I_3 . \label{q7-pre}
\end{align}
Since $s'<d$ in the corresponding singular integral, while $r\leq 10\eta$, the Gaussian convolution gives
\begin{align}
	|\nabla_x^m\F_{\eta}^{\mathrm{sing},\eta_*}(x,y)|
	\lesssim
\frac{1+\mathcal{C}(\eta,\eta_{*})}{\eta^{\min\{s,d+1\}+m}}+1
	+
	\mathbf{1}_{s\geq d} I_3 .
	\label{q7}
\end{align}

It remains to estimate $I_3$ in the case 	$s\geq d$. As in the estimate of $I_2'$, using the coarea formula and the cancellation assumption \eqref{assumpF2'}, we get
\begin{align*}
	I_3
	&\lesssim
	\int_0^{2\eta}
\int_{|x-y-z|=\rho}
\left(1+\rho^{-s}(\mathbf{1}_{s<d+1}+\mathbf{1}_{\rho\geq \frac{\eta_{*}}{2}}\mathbf{1}_{s\geq d+1})\right)
\left|
\nabla_z^m\Phi^\eta(z)-\nabla_z^m\Phi^\eta(x-y)
\right|
\,d\mathcal{H}^{d-1}(z)\,d\rho \\
&\quad+
\int_0^{2\eta}
\rho^{d-1}(\rho^{1-s}+1)(\mathbf{1}_{s<d+1}+\mathbf{1}_{\rho\geq \frac{\eta_{*}}{2}}\mathbf{1}_{s\geq d+1})
|\nabla_z^m\Phi^\eta(x-y)|\,d\rho .
\end{align*}
Moreover, the smoothness and scaling of $\Phi^\eta$ imply
\begin{align*}
	\left|
	\nabla_z^m\Phi^\eta(z)
	-
	\nabla_z^m\Phi^\eta(x-y)
	\right|
	&\lesssim |x-y-z|\eta^{-d-m-1}, \\
	|\nabla_z^m\Phi^\eta(x-y)|
	&\lesssim \eta^{-d-m}.
\end{align*}
Therefore,
\begin{align*}
	I_3
	&\lesssim
	\int_0^{2\eta}
	\left(1+\rho^{-s}(\mathbf{1}_{s<d+1}+\mathbf{1}_{\rho\geq \frac{\eta_{*}}{2}}\mathbf{1}_{s\geq d+1})\right)\rho^d
\eta^{-d-m-1}d\rho \\
	&\quad+
	\int_0^{2\eta}
	\rho^{d-1}(\rho^{1-s}+1)(\mathbf{1}_{s<d+1}+\mathbf{1}_{\rho\geq \frac{\eta_{*}}{2}}\mathbf{1}_{s\geq d+1})
 \eta^{-d-m}d\rho\\&\lesssim  \frac{1+\mathcal{C}(\eta,\eta_{*})}{\eta^{\min\{s,d+1\}+m}}+\eta^{-m}.
\end{align*}
Substituting this bound into \eqref{q7}, we obtain
\begin{align}\label{q8'}
	|\nabla_x^m\F_{\eta}^{\mathrm{sing},\eta_*}(x,y)|
	\lesssim
\frac{1+\mathcal{C}(\eta,\eta_{*})}{\eta^{\min\{s,d+1\}+m}}+1.
\end{align}
Since $r\leq 10\eta$, this is consistent with the desired estimate at scale
$\max\{r,\eta\}$. Combining this case with \eqref{q2} and the estimate for
$r>10\eta$ proves \eqref{proF0}.\\

\noindent
\textbf{Step 2: Proof of \eqref{proF1}.}

We distinguish two regimes according to the relative size of $r$ and $\eta$.

\medskip
\noindent\emph{The near-field regime: $r\leq 10\eta$.}
By \eqref{q4}, \eqref{q8'} with $m=0$, and the pointwise assumption
\eqref{assumpFsing}, we directly obtain
\begin{equation}\label{z7}
	\left|\F_{<\eta}^{\mathrm{sing},\eta_*}(x,y)\right|
	\lesssim
	\frac{1+\mathcal C(\eta,\eta_*)}{r^{\min\{s,d+1\}}}+1 .
\end{equation}

\medskip
\noindent\emph{The far-field regime: $r>10\eta$.}
Arguing as in \eqref{summu}, Taylor's formula gives
\begin{align}\label{Fsingtaylor}
	\F_{<\eta}^{\mathrm{sing},\eta_*}(x,y)
	&=
	\F^{\mathrm{sing},\eta_*}(x,y)
	-
	\F_{\eta}^{\mathrm{sing},\eta_*}(x,y) \nonumber \\
	&=
	-\int_0^1(1-\tau)
	\int z^{\otimes 2}:
	\nabla_z^2\F^{\mathrm{sing},\eta_*}(x,y+\tau z)
	\Phi^\eta(z)\,dz\,d\tau .
\end{align}
Let $\chi$ be the cutoff used above. Splitting the integral according to the
region where $|x-y-\tau z|$ is comparable to $r$, and integrating by parts on
the complementary part, we obtain
\begin{align}\label{z8}
	\F_{<\eta}^{\mathrm{sing},\eta_*}(x,y)
	&=
	-\int_0^1(1-\tau)
	\int z^{\otimes 2}:
	\nabla_z^2\F^{\mathrm{sing},\eta_*}(x,y+\tau z)
	\Phi^\eta(z)
	\left(1-\chi\right)
	\left(\frac{4|x-y-\tau z|}{r}\right)
	\,dz\,d\tau
	\nonumber \\
	&\quad
	-\int_0^1(1-\tau)\tau^{-2}
	\int
	\F^{\mathrm{sing},\eta_*}(x,y+\tau z)
	\nabla_z^2
	\left[
	z^{\otimes 2}\Phi^\eta(z)
	\chi\left(\frac{4|x-y-\tau z|}{r}\right)
	\right]
	\,dz\,d\tau .
\end{align}
We shall use the elementary bounds
\begin{align}\label{cutoff-bound-1}
	&\big|\nab_z^2\big(z^{\otimes 2}\Phi^\eta(z)\chi\big(\tfrac{4|x-y-\tau z|}{r}\big)\big)\big|\lesssim \Big(1+\frac{|z|^4}{\eta^4}+|z|^2\frac{\tau^2}{r^2}\Big)\Phi^\eta(z)  \mathbf{1}_{|x-y-\tau z|\leq r/2},\\&
	\big|\nab_z^2\big[z^{\otimes 2}\Phi^\eta(z)\chi\big(\tfrac{4|x-y-\tau z|}{r}\big)\big]-\nab_z^2\big[z^{\otimes 2}\Phi^\eta(z)\big]\chi\big(\tfrac{4|x-y-\tau z|}{r}\big)\big|\lesssim \Big(1+\frac{|z|^4}{\eta^4}\Big)\Phi^\eta(z)  \mathbf{1}_{r/8\leq |x-y-\tau z|\leq r/2},\label{cutoff-bound-2}
\end{align}
Using \eqref{assumpFsing}, \eqref{cutoff-bound-1}, and
\eqref{cutoff-bound-2}, we infer that
\begin{align}
	\left|\F_{<\eta}^{\mathrm{sing},\eta_*}(x,y)\right|
	&\lesssim
	\int_0^1
	\int
	|z|^2
	\left(
	1+
	\frac{1}{
		\left(
		|x-y-\tau z|
		+
		\eta_* \mathbf 1_{\{s\geq d+1\}}
		\right)^{s+2}}
	\right)
	\Phi^\eta(z)
	\mathbf 1_{\{|x-y-\tau z|>r/8\}}
	\,dz\,d\tau
	\nonumber \\
	&\quad
	+
	\mathbf 1_{\{s<d\}}
	\int_0^1\tau^{-2}
	\int
	\left(
	1+\frac{1}{|x-y-\tau z|^s}
	\right)
	\left(
	1+\frac{|z|^4}{\eta^4}
	+\frac{|z|^2\tau^2}{r^2}
	\right)
	\Phi^\eta(z)
	\mathbf 1_{\{|x-y-\tau z|\leq r/2\}}
	\,dz\,d\tau
	\nonumber \\
	&\quad
	+
	\int_0^1\tau^{-2}
	\int
	\left(
	1+\frac{1}{r^s}
	\right)
	\left(
	1+\frac{|z|^4}{\eta^4}
	+\frac{|z|^2\tau^2}{r^2}
	\right)
	\Phi^\eta(z)
	\mathbf 1_{\{r/8\leq |x-y-\tau z|\leq r/2\}}
	\,dz\,d\tau
	\nonumber \\
	&\quad
	+
	\mathbf 1_{\{s\geq d\}} I_4,
	\label{pre-I4}
\end{align}
where
\begin{align}\label{def-I4}
	I_4
	:=
	\left|
	\int_0^1(1-\tau)\tau^{-2}
	\int
	\F^{\mathrm{sing},\eta_*}(x,y+\tau z)
	\nabla_z^2\left(z^{\otimes 2}\Phi^\eta(z)\right)
	\chi\left(\frac{4|x-y-\tau z|}{r}\right)
	\,dz\,d\tau
	\right|.
\end{align}

We first estimate the three explicit terms in \eqref{pre-I4}. For the first
one, we split the domain into the two regions
\[
|x-y-\tau z|\geq 4r
\qquad\text{and}\qquad
r/8\leq |x-y-\tau z|\leq 4r .
\]
In the first region, $\tau |z|\geq 3r$, and hence
\[
|x-y-\tau z|
\geq \tau |z|-r
\geq \frac12(\tau |z|+r).
\]
In the second region, $\tau |z|\leq 5r$, and therefore
\[
|x-y-\tau z|
\geq \frac r8
\geq \frac1{64}(r+\tau |z|).
\]
Consequently,
\begin{align}
	&\int_0^1
	\int
	|z|^2
	\left(
	1+
	\frac{1}{
		\left(
		|x-y-\tau z|
		+
		\eta_* \mathbf 1_{\{s\geq d+1\}}
		\right)^{s+2}}
	\right)
	\Phi^\eta(z)
	\mathbf 1_{\{|x-y-\tau z|>r/8\}}
	\,dz\,d\tau
	\nonumber \\
	&\qquad\lesssim
	\int_0^1
	\int
	|z|^2
	\left(
	1+
	\frac{1}{
		\left(
		r+\tau |z|+\eta_* \mathbf 1_{\{s\geq d+1\}}
		\right)^{s+2}}
	\right)
	\Phi^\eta(z)\,dz\,d\tau
	\nonumber \\
	&\qquad\lesssim
	\eta^2
	\left(
	1+\frac1{r^{s+2}}
	\right).
	\label{first-term-estimate}
\end{align}

For the second term, the constraint $|x-y-\tau z|\leq r/2$ implies
$\tau\geq r/(2|z|)$. Hence, for $s<d$,
\begin{align}
	&\int_0^1\tau^{-2}
	\int
	\left(
	1+\frac1{|x-y-\tau z|^s}
	\right)
	\left(
	1+\frac{|z|^4}{\eta^4}
	+\frac{|z|^2\tau^2}{r^2}
	\right)
	\Phi^\eta(z)
	\mathbf 1_{\{|x-y-\tau z|\leq r/2\}}
	\,dz\,d\tau
	\nonumber \\
	&\qquad\lesssim
	\int_0^1
	\int
	\frac{|z|^2}{r^2}
	\left(
	1+\frac1{|x-y-\tau z|^s}
	\right)
	\left(
	1+\frac{|z|^4}{\eta^4}
	\right)
	\Phi^\eta(z)
	\mathbf 1_{\{|x-y-\tau z|\leq r/2\}}
	\,dz\,d\tau
	\nonumber \\
	&\qquad\lesssim
	\frac{\eta^2}{r^2}
	\left(
	1+
	\int_0^1
	\int
	\frac1{|x-y-\tau z|^s}
	\frac1{\eta^d}
	e^{-\frac{|z|^2}{4\eta^2}}
	\,dz\,d\tau
	\right)
	\nonumber \\
	&\qquad\lesssim
	\frac{\eta^2}{r^2}
	\left(
	1+
	\int_0^1
	\big(|\cdot|^{-s}*\Phi^{\tau\eta}\big)(x-y)
	\,d\tau
	\right)
	\nonumber \\
	&\qquad\lesssim
	\frac{\eta^2}{r^2}
	\left(
	1+\frac1{r^s}
	\right).
	\label{second-term-estimate}
\end{align}
Similarly, the third term is bounded by
\begin{align}
	&\left(1+\frac1{r^s}\right)
	\int_0^1
	\int 
	\frac{|z|^2}{r^2}
	\left(
	1+\frac{|z|^4}{\eta^4}
	\right)
	\Phi^\eta(z)
	\mathbf 1_{\{r/8\leq |x-y-\tau z|\leq r/2\}}
	\,dz\,d\tau
	\nonumber \\
	&\qquad\lesssim
	\frac{\eta^2}{r^2}
	\left(
	1+\frac1{r^s}
	\right).
	\label{third-term-estimate}
\end{align}
Combining \eqref{pre-I4}--\eqref{third-term-estimate}, we obtain
\begin{equation}\label{z9}
	\left|\F_{<\eta}^{\mathrm{sing},\eta_*}(x,y)\right|
	\lesssim
	\eta^2
	\left(
	\frac{1+\mathcal C(\eta,\eta_*)}{r^{\min\{s,d+1\}+2}}+1
	\right)
	+
	\mathbf 1_{\{s\geq d\}} I_4 .
\end{equation}

It remains to estimate $I_4$. We keep the original variable $z$ and apply the coarea formula directly to the function$z\longmapsto |x-y-\tau z|.$
Since $ \left| \nabla_z|x-y-\tau z| \right| =\tau,$ we have \begin{align} I_4 &\leq \int_0^1 (1-\tau)\tau^{-3} \int_0^{r/2} \left| \int_{|x-y-\tau z|=\rho} \F^{\mathrm{sing},\eta_*}(x,y+\tau z)  \nabla_z^2 \left( z^{\otimes2}\Phi^\eta(z) \right)d\mathcal H^{d-1}(z) \right| \,d\rho\,d\tau, \label{I4-direct-coarea} \end{align}
As in the estimate of $I_2'$, we use the
cancellation assumption \eqref{assumpF2'}. For every $\rho>0$,
\begin{align}\label{sphere-cancel}
	\left|
	\int_{|x-y-\tau z|=\rho}
	\F^{\mathrm{sing},\eta_*}(x,y+\tau z)
	\,d\mathcal H^{d-1}(z)
	\right|
	\lesssim
	\tau^{-d+1}\rho^{d-1}(\rho^{1-s}+1)
	\left(
	\mathbf 1_{\{s<d+1\}}
	+
	\mathbf 1_{\{\rho\geq \eta_*/2\}}\mathbf 1_{\{s\geq d+1\}}
	\right).
\end{align}
Moreover,
\begin{align}
	\left|
	\left.
	\nabla_z^2
	\left(z^{\otimes 2}\Phi^\eta(z)\right)
	\right|_{z=(x-y)/\tau}
	\right|
	&\lesssim
	\eta^{-d}
	\exp\left(-\frac{r^2}{4\eta^2\tau^2}\right),
	\label{gaussian-bound-1}
	\\
	\mathbf 1_{\{\tau\geq r/(2|z|)\}}
	\left|
	\nabla_z^2
	\left(z^{\otimes 2}\Phi^\eta(z)\right)
	-
	\left.
	\nabla_z^2
	\left(z^{\otimes 2}\Phi^\eta(z)\right)
	\right|_{z=(x-y)/\tau}
	\right|
	&\lesssim
	\tau^{-1}
	|x-y-\tau z|
	\eta^{-d-1}
	\exp\left(-\frac{r^2}{16\eta^2\tau^2}\right).
	\label{gaussian-bound-2}
\end{align}
Since
\[
\int_{|x-y-\tau z|=\rho} d\mathcal H^{d-1}(z)
\sim
\rho^{d-1}\tau^{-d+1},
\]
the coarea formula, \eqref{sphere-cancel}, \eqref{gaussian-bound-1}, and
\eqref{gaussian-bound-2} yield
\begin{align}
	I_4
	&\lesssim
	\int_0^1\tau^{-d-2}
	\int_0^{r/2}
	\rho^d
	\left(
	1+
	\rho^{-s}
	\left(
	\mathbf 1_{\{s<d+1\}}
	+
	\mathbf 1_{\{\rho\geq \eta_*/2\}}\mathbf 1_{\{s\geq d+1\}}
	\right)
	\right)
	\eta^{-d-1}
	e^{-\frac{r^2}{16\eta^2\tau^2}}
	\,d\rho\,d\tau
	\nonumber \\
	&\quad
	+
	\int_0^1\tau^{-d-1}
	\int_0^{r/2}
	\rho^{d-1}(\rho^{1-s}+1)
	\left(
	\mathbf 1_{\{s<d+1\}}
	+
	\mathbf 1_{\{\rho\geq \eta_*/2\}}\mathbf 1_{\{s\geq d+1\}}
	\right)
	\eta^{-d}
	e^{-\frac{r^2}{16\eta^2\tau^2}}
	\,d\rho\,d\tau .
	\label{I4-coarea-estimate}
\end{align}
Therefore,
\begin{align}
	I_4
	&\lesssim
	\left(
	r^{d+1}
	+
	r^{d+1-s}\mathbf 1_{\{s<d+1\}}
	+
	\log\left(2+\frac r{\eta_*}\right)\mathbf 1_{\{s=d+1\}}
	+
	\eta_*^{d+1-s}\mathbf 1_{\{s>d+1\}}
	\right)
	\eta^{-d-1}
	\int_0^1\tau^{-d-2}
	e^{-\frac{r^2}{16\eta^2\tau^2}}
	\,d\tau
	\nonumber \\
	&\quad
	+
	\left(
	r^d
	+
	r^{d+1-s}\mathbf 1_{\{s<d+1\}}
	+
	\log\left(2+\frac r{\eta_*}\right)\mathbf 1_{\{s=d+1\}}
	+
	\eta_*^{d+1-s}\mathbf 1_{\{s>d+1\}}
	\right)
	\eta^{-d}
	\int_0^1\tau^{-d-1}
	e^{-\frac{r^2}{16\eta^2\tau^2}}
	\,d\tau .
	\label{I4-before-gaussian-time}
\end{align}
For every $m>0$,
\begin{equation}\label{time-gaussian}
	\int_0^1 \tau^{-m-1}
	\exp\left(-\frac{r^2}{16\eta^2\tau^2}\right)
	\,d\tau
	\lesssim
	\frac{\eta^m}{r^m}
	\exp\left(-\frac{r^2}{32\eta^2}\right).
\end{equation}
Applying \eqref{time-gaussian} in \eqref{I4-before-gaussian-time}, we get
\begin{align}
	I_4
	&\lesssim
	\left(
	1
	+
	r^{-s}\mathbf 1_{\{s<d+1\}}
	+
	r^{-d-1}
	\log\left(2+\frac r{\eta_*}\right)\mathbf 1_{\{s=d+1\}}
	+
	\eta_*^{d+1-s}r^{-d-1}\mathbf 1_{\{s>d+1\}}
	\right)
	\exp\left(-\frac{r^2}{32\eta^2}\right)
	\nonumber \\
	&\lesssim
	\left(
	\frac{1+\mathcal C(\eta,\eta_*)}{r^{\min\{s,d+1\}}}
	+
	1
	\right)
	\exp\left(-\frac{r^2}{32\eta^2}\right).
	\label{I4-final}
\end{align}
Since we are in the regime $r>10\eta$, the exponentially small factor in
\eqref{I4-final} is harmless and is absorbed by the right-hand side of
\eqref{z9}. Thus,
\begin{equation}\label{far-field-final}
	\left|\F_{<\eta}^{\mathrm{sing},\eta_*}(x,y)\right|
	\lesssim
	\eta^2
	\left(
	1+	\frac{1+\mathcal C(\eta,\eta_*)}{r^{\min\{s,d+1\}+2}}
	\right),
	\qquad r>10\eta .
\end{equation}
Finally, combining the near-field estimate \eqref{z7}, the far-field estimate
\eqref{far-field-final}, and \eqref{q3}, we obtain \eqref{proF1}.\\
\noindent
\textbf{Step 3: Proof of \eqref{proF2}.}
If $r\le 10\eta$, we have 
\begin{align*}|\F_{<\eta}^{\mathrm{sing},\eta_{*}}(x,y)+\F_{<\eta}^{\mathrm{sing},\eta_{*}}(y,x)|&\leq \left|\int(\F^{\mathrm{sing},\eta_{*}}(x,y+z)+\F^{\mathrm{sing},\eta_{*}}(y,x-z)) \Phi^\eta(z)dz\right|\\&+|\F^{\mathrm{sing},\eta_{*}}(x,y)+\F^{\mathrm{sing},\eta_{*}}(y,x)|. \end{align*}
By \eqref{assumpF3c}, we have 
\begin{align}\nonumber|\F_{<\eta}^{\mathrm{sing},\eta_{*}}(x,y)+\F_{<\eta}^{\mathrm{sing},\eta_{*}}(y,x)|&\nonumber\leq \int \frac{\Phi^\eta(z)dz}{(|x-y-z|+\eta_{*}\mathbf{1}_{s\geq d+1})^{s-1}} \\&\nonumber+1+\frac{1}{(r+\eta_{*}\mathbf{1}_{s\geq d+1})^{s-1}} \\&\lesssim 
	\frac{1+\mathcal C(\eta,\eta_*)}{r^{\min\{s-1,d\}}}+1.\label{q12}\end{align}
For $r>10\eta$,  using \eqref{Fsingtaylor}, we may write
\begin{align*}
&\F_{<\eta}^{\mathrm{sing},\eta_{*}}(x,y)+\F_{<\eta}^{\mathrm{sing},\eta_{*}}(y,x)
\\	&=
	-\int_0^1(1-\tau)
	\int z^{\otimes 2}:
(	(\nabla_y^2\F^{\mathrm{sing},\eta_*})(x,y+\tau z)+	(\nabla_x^2\F^{\mathrm{sing},\eta_*})(y,x-\tau z))
	\Phi^\eta(z)\\&\quad\quad\quad\quad\times
	\left(1-\chi\right)
	\left(\frac{4|x-y-\tau z|}{r}\right)
	\,dz\,d\tau
	\nonumber \\
	&\quad
	-\int_0^1(1-\tau)\tau^{-2}
	\int
\left(	\F^{\mathrm{sing},\eta_*}(x,y+\tau z)+	\F^{\mathrm{sing},\eta_*}(y,x-\tau z)\right)
\\&\quad\quad\quad\quad\quad\quad\times	\nabla_z^2
	\left[
	z^{\otimes 2}\Phi^\eta(z)
	\chi\left(\frac{4|x-y-\tau z|}{r}\right)
	\right]
	\,dz\,d\tau .
\end{align*}
By  \eqref{assumpF3c} and \eqref{cutoff-bound-1},  we get, 
\begin{align*}
	&|\F_{<\eta}^{\mathrm{sing},\eta_{*}}(x,y)+\F_{<\eta}^{\mathrm{sing},\eta_{*}}(y,x)|
	\\	&\lesssim
	\int_0^1
	\int |z|^2(1+\frac{1}{(|x-y-\tau z|+\eta_{*}\mathbf{1}_{s\geq d+1})^{s+1}})
	\Phi^\eta(z)\mathbf{1}_{|x-y-\tau z|\geq r/8}dzd\tau
	\nonumber \\
	&
	+\int_0^1\tau^{-2}
	\int (1+\frac{1}{(|x-y-\tau z|+\eta_{*}\mathbf{1}_{s\geq d+1})^{s-1}})\Big(1+\frac{|z|^4}{\eta^4}+|z|^2\frac{\tau^2}{r^2}\Big)\Phi^\eta(z)  \mathbf{1}_{|x-y-\tau z|\leq r/2}dzd\tau .
\end{align*}
Arguing as in the proof of \eqref{z9}, we obtain
\begin{align*}
	|\F_{<\eta}^{\mathrm{sing},\eta_{*}}(x,y)+\F_{<\eta}^{\mathrm{sing},\eta_{*}}(y,x)|\lesssim \eta^2	\left(
	1+	\frac{1+\mathcal C(\eta,\eta_*)}{r^{\min\{s-1,d\}+2}}
	\right).\end{align*}
This implies \eqref{proF2} for $r>10\eta$, hence the relation is proved in all cases.
\smallskip

\noindent
\textbf{Step 4: Proof of \eqref{proF3}.}
Steps~4 and~5 follow the same strategy as Step~2 (Taylor remainder to gain $\eta^2$), but with the test function $g$ playing the role of $\Phi^\eta$. The key difference is that $g$ is not Gaussian, so the regularity requirements on $g$ enter explicitly.
First, from \eqref{q3} we have
\begin{equation}\label{q9}
	|\F_{<\eta}^{\mathrm{reg}}*g(x)|\lesssim \eta^2 \|\xg \nab_x^2g\|_{L^\infty}.
\end{equation}
We have \begin{align*}
  \F_{<\eta}^{\mathrm{sing},\eta_{*}}*g(x)
  &= \int \F^{\mathrm{sing},\eta_{*}}(x,z')\,g(z')\,dz'
    - \iint \F^{\mathrm{sing}}(x,z'+z)g(z')\Phi^\eta(z)dzdz'
  \\
  &= \iint \F^{\mathrm{sing},\eta_{*}}(x,z')\,
    \(g(z')-g(z'-z)\)\,\Phi^\eta(z)\,dz\,dz'.
\end{align*}
Taylor-expanding $g$ to second order, the first order terms vanish and we obtain 
\begin{align*}
	\F_{<\eta}^{\mathrm{sing},\eta_{*}}*g(x)
	=-\int_{0}^{1}(1-\tau)\iint\F^{\mathrm{sing},\eta_{*}}(x,z')\, z^{\otimes 2}: \nab^{\otimes 2} g(z'-\tau z) \,\Phi^\eta(z)\,dz\,dz'd\tau.
\end{align*}
Set 
\begin{align*}
	I_5=\left|\int_{0}^{1}(1-\tau)\iint\chi(|x-z'|)\F^{\mathrm{sing},\eta_{*}}(x,z')z^{\otimes 2}: \nab^{\otimes 2} g(z'-\tau z) \,\Phi^\eta(z)dzdz'd\tau\right|.
\end{align*}
Hence,
\begin{align}\nonumber
|	\F_{<\eta}^{\mathrm{sing},\eta_{*}}*g(x)|&	\lesssim \int_{0}^{1}\iint \Big(1+\frac{1}{|x-z'|^s}\Big)|z|^2\langle z'-\tau z\rangle^{-\gamma} \Phi^\eta(z)dz dz'd\tau \, \|\xg \nab^2g\|_{L^\infty}\mathbf{1}_{s<d}\\&\nonumber+
\int_{0}^{1}\iint |z|^2\langle z'-\tau z\rangle^{-\gamma} \Phi^\eta(z) dz dz'd\tau \|\xg \nab^2g\|_{L^\infty}\mathbf{1}_{s\geq d}
+	I_5\mathbf{1}_{s\geq d}\\&\lesssim \eta^2 \|\xg \nab^2g\|_{L^\infty}+	I_5\mathbf{1}_{s\geq d}.\label{q18}
\end{align}
Now we estimate $I_5$ in the case $s\geq d$. By \eqref{assumpF2'}, we have  
\begin{align}\nonumber
\left|	\int\chi(|x-z'|)\F^{\mathrm{sing},\eta_{*}}(x,z') dz'\right|&\lesssim\int_0^2 (\rho^{d-1}+\rho^{d-s}(\mathbf{1}_{s<d+1}+\mathbf{1}_{\rho\geq \eta_{*}/2}\mathbf{1}_{s\geq d+1})) d\rho\\&\lesssim \mathcal{C}(\eta_{*}).\label{q19}
\end{align}Thus, 
\begin{align*}
	I_5&\lesssim \int_{0}^{1}(1-\tau)\iint_{|x-z'|<2}|\F^{\mathrm{sing},\eta_{*}}(x,z')||z|^2 |\nab^{2} g(z'-\tau z)-\nab^{\otimes 2} g(x-\tau z)| \Phi^\eta(z)dzdz'd\tau\\&+
\mathcal{C}(\eta_{*})	\int_{0}^{1}(1-\tau)\int |z|^2 |\nab^{2} g(x-\tau z)| \Phi^\eta(z)\,dzd\tau\\&\lesssim
\int_{0}^{1}(1-\tau)\iint_{|x-z'|<2}(1+\frac{1}{(|x-z'|+\eta_{*}\mathbf{1}_{s\geq d+1})^s})|z|^2|x-z'|\langle x-\tau z\rangle^{-\gamma}\Phi^\eta(z)dzdz'd\tau\  \|\xg \nab^3g\|_{L^\infty}\\&+
\mathcal{C}(\eta_{*})	\int_{0}^{1}(1-\tau)\int |z|^2 \langle x-\tau z\rangle^{-\gamma} \Phi^\eta(z)\,dzd\tau  \|\xg \nab^2g\|_{L^\infty}.
\end{align*}
This implies 
\begin{align}\label{q13}
	I_5\lesssim \eta^2	\mathcal{C}(\eta_{*})	\left(  \|\xg \nab^2g\|_{L^\infty}+\|\xg \nab^3g\|_{L^\infty}\right).
\end{align}
Together with \eqref{q9} and \eqref{q18}  this yields \eqref{proF3}.
\smallskip

\noindent

\textbf{Step 5: Proof of \eqref{proF4}.} We first recall that, by \eqref{q3},
\begin{equation}\label{reg-part-proF4}
\sum_{j=0,1}	\left|\nabla^j\F_{<\eta}^{\mathrm{reg}}*g(x)\right|
	\lesssim
	\eta^2 \|\langle \cdot\rangle^\gamma g\|_{L^\infty}.
\end{equation}
It remains to estimate the singular contribution. We write
\begin{align}
	\nabla \F_{<\eta}^{\mathrm{sing},\eta_*}*g(x)
	&=
	\iint
	\nabla_x \F^{\mathrm{sing},\eta_*}(x,z')
	\big(g(z')-g(z'-z)\big)
	\Phi^\eta(z)\,dz\,dz' .
\end{align}
Using Taylor's formula and the cancellation of the first-order term, we obtain
\begin{align}
	\nabla \F_{<\eta}^{\mathrm{sing},\eta_*}*g(x)
	&=
	-\int_0^1(1-\tau)
	\iint
	\nabla_x \F^{\mathrm{sing},\eta_*}(x,z')
	z^{\otimes 2}:\nabla_{z'}^2 g(z'-\tau z)
	\Phi^\eta(z)\,dz\,dz'\,d\tau
	\label{sing-decomp-proF4}
\end{align}
By \eqref{assumpF2G} and \eqref{assumpF>=d+1} with $\ep=\eta_{*}$, we get 
\begin{align}\nonumber
&|	\nabla \F_{<\eta}^{\mathrm{sing},\eta_*}*g(x)|
	\lesssim 
\left|	\int_0^1(1-\tau)
	\iint
 \F^{\mathrm{sing},\eta_*}(x,z')
	z^{\otimes 2}:\nabla_{z'}^3 g(z'-\tau z)
	\Phi^\eta(z)\,dz\,dz'\,d\tau\right|\\&\quad+ 	\int_0^1
	\iint
	\left(
	1
	+
	\frac{\mathbf 1_{\{s<d+1\}}}{|x-z'|^{s'}}
	+
	\frac{\mathbf 1_{\{s\geq d+1\}}}{(|x-z'|+\eta_*)^{s-1}}
	\right)
	|z|^2
	\langle z'-\tau z\rangle^{-\gamma}
	\Phi^\eta(z)\,dz\,dz'\,d\tau
	\|\langle \cdot\rangle^\gamma\nabla^2 g\|_{L^\infty}.
	\label{first-sing-proF4}
\end{align}
Arguing exactly as in the proof of \eqref{proF3}, the second term on the right-hand side of
\eqref{first-sing-proF4} is bounded by
\begin{equation}\label{first-sing-bound-proF4}
	\eta^2 \mathcal C(\eta_*)
	\|\langle \cdot\rangle^\gamma\nabla^2 g\|_{L^\infty}.
\end{equation}

It remains to control the first term on the right-hand side of
\eqref{first-sing-proF4}. Again, the
argument is the same as in the proof of \eqref{proF3}. More precisely,
\begin{align}
	&\left|
	\int_0^1(1-\tau)
	\iint
	\F^{\mathrm{sing},\eta_*}(x,z')
	z^{\otimes 2}:\nabla_{z'}^3 g(z'-\tau z)
	\Phi^\eta(z)\,dz\,dz'\,d\tau
	\right|
	\nonumber \\
	&\qquad\lesssim
	\eta^2\mathcal C(\eta_*)
	\left(
	\|\langle \cdot\rangle^\gamma\nabla^3 g\|_{L^\infty}
	+
	\mathbf 1_{\{s\geq d\}}
	\|\langle \cdot\rangle^\gamma\nabla^4 g\|_{L^\infty}
	\right).
	\label{second-sing-bound-proF4}
\end{align}
Indeed, in the case $s\geq d$, the localized contribution
\begin{align}
	\left|
	\int_0^1(1-\tau)
	\iint
	\chi(|x-z'|)
	\F^{\mathrm{sing},\eta_*}(x,z')
	z^{\otimes 2}:\nabla_{z'}^3 g(z'-\tau z)
	\Phi^\eta(z)\,dz\,dz'\,d\tau
	\right|
\end{align}
is estimated by the same  cancellation argument used
for $I_5$ in \eqref{q13}. This is the only place where the additional
$\|\langle \cdot\rangle^\gamma\nabla^4 g\|_{L^\infty}$ term is needed.

Combining \eqref{reg-part-proF4},
\eqref{first-sing-bound-proF4}, and \eqref{second-sing-bound-proF4}, we obtain
\begin{align}
	\left|\nabla \F_{<\eta}^{\eta_*}*g(x)\right|
	&\lesssim
	\eta^2
	\|\langle \cdot\rangle^\gamma g\|_{L^\infty}
	+
	\eta^2\mathcal C(\eta_*)
	\|\langle \cdot\rangle^\gamma\nabla^2 g\|_{L^\infty}
	\nonumber \\
	&\quad
	+
	\eta^2\mathcal C(\eta_*)
	\left(
	\|\langle \cdot\rangle^\gamma\nabla^3 g\|_{L^\infty}
	+
	\mathbf 1_{\{s\geq d\}}
	\|\langle \cdot\rangle^\gamma\nabla^4 g\|_{L^\infty}
	\right).
\end{align}
This proves \eqref{proF4}.
 The proof is complete.
\end{proof}
\begin{lem}[Gradient bound for the Coulomb truncation]
	\label{lem:gradient-Fless-coulomb}
	Assume $s=d-1$. Then, for every $\eta\in(0,1]$ and every
	$x\neq y$,
	\begin{equation}\label{proF1-gradient-coulomb}
		\left|
		\nabla_x\F_{<\eta}(x,y)
		\right|
		\lesssim
		\eta^2
		+
		\frac1{|x-y|^{s+1}}
		\min\left\{
		1,\frac{\eta}{|x-y|}
		\right\}^2.
	\end{equation}
\end{lem}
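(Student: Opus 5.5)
We would follow Step 2 of the proof of Lemma~\ref{lem2.4}, now applied to $\nabla_x\F$ rather than $\F$. The point is that for $s=d-1$ the kernel $\nabla_x\F^{\mathrm{sing}}(x,\cdot)$ has singularity $|x-y|^{-d}$, which is exactly borderline. So no spherical cancellation is needed in the near field, and a logarithm-free bound at the scale $|x-y|^{-(s+1)}$ should hold. Since $s<d+1$ we have $\F^{\eta_*}=\F$, so we work with $\F=\F^{\mathrm{reg}}+\F^{\mathrm{sing}}$ and set $r=|x-y|$. The regular part is handled at once: by \eqref{q3} with $m=1$, it contributes $|\nabla_x\F^{\mathrm{reg}}_{<\eta}|\lesssim\eta^2$.

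\emph{Near field $r\le 10\eta$.} We bound $|\nabla_x\F^{\mathrm{sing}}(x,y)|\lesssim r^{-d}+1$ directly by \eqref{assumpFsing}. For $|\nabla_x\F^{\mathrm{sing}}_\eta(x,y)|$ we invoke \eqref{proF0} with $m=1$; there $\mathcal C(\eta,\eta_*)=0$ since $s<d+1$. This gives $\lesssim \eta^{-d}+1\lesssim r^{-d}+1$. Since $\min\{1,\eta/r\}\sim 1$ in this regime, and the additive constant $1$ only matters for $r\gtrsim 1$, we get the claimed bound there. For $r\gtrsim1$ the constant can be absorbed: $\nabla_x\F_{<\eta}$ has the $\eta^2$ Taylor gain whenever the kernel is smooth at scale $r$, and this is the far-field case below.

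\emph{Far field $r>10\eta$.} As in \eqref{Fsingtaylor}, we write
\[
\nabla_x\F_{<\eta}^{\mathrm{sing}}(x,y)=-\int_0^1(1-\tau)\int z^{\otimes2}:\nabla_z^2\nabla_x\F^{\mathrm{sing}}(x,y+\tau z)\,\Phi^\eta(z)\,dz\,d\tau ,
\]
using that the first-order term vanishes by oddness. We then split with the cutoff $\chi(4|x-y-\tau z|/r)$ exactly as in \eqref{z8}. On the region $|x-y-\tau z|>r/8$ we apply \eqref{assumpFsing} with $m_1=1$, $m_2=2$, giving $|\nabla^3\F|\lesssim|x-y-\tau z|^{-(s+3)}+1$. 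The computation \eqref{first-term-estimate} then yields $\eta^2(1+r^{-(s+3)})$. On the complementary region we integrate by parts twice in $z$, moving both derivatives onto $z^{\otimes2}\Phi^\eta(z)\chi$. This leaves $\nabla_x\F^{\mathrm{sing}}$, which is bounded by $|x-y-\tau z|^{-d}+1$. There the constraint $\tau\ge r/(2|z|)$ turns $\tau^{-2}$ into $|z|^2/r^2$, as in \eqref{second-term-estimate}, and the Gaussian factor $e^{-r^2/(C\eta^2)}$ is available since $|z|\gtrsim r\gg\eta$.

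\emph{Main obstacle.} The difficulty is that $|\cdot|^{-d}$ is not locally integrable, so \eqref{second-term-estimate} cannot be copied verbatim, since it relied on $s<d$. We would avoid the problem by not integrating by parts all the way to the singularity. First, we would use the cancellation \eqref{assumpF2'} with $m=1$, exactly as in the estimate of $I_4$, via the coarea formula in $\rho=|x-y-\tau z|$. This gives spherical averages of $\nabla_x\F^{\mathrm{sing}}$ bounded by $\rho^{-s}+1=\rho^{1-d}+1$, which is integrable against $\rho^{d-1}d\rho$. Second, we would combine this with the Lipschitz bound \eqref{gaussian-bound-2} on the Gaussian factor, which handles $\rho^{-d}\cdot\rho$. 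Both pieces carry $e^{-r^2/(32\eta^2)}$ and produce at most $r^{-d}e^{-r^2/(32\eta^2)}\lesssim \eta^2 r^{-d-2}$. This is dominated by $r^{-(s+1)}(\eta/r)^2$, and together with the near-field case this yields \eqref{proF1-gradient-coulomb}.
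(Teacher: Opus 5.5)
Your outline is the paper's own proof of this lemma almost verbatim. The near field uses \eqref{assumpFsing} and \eqref{proF0} with $m=1$. The far field uses the Taylor/cutoff split \eqref{z8} applied to $\nabla_x\F^{\mathrm{sing}}$, followed by an $I_4$-type coarea argument citing \eqref{assumpF2'} with $m=1$. The near field and the region $|x-y-\tau z|>r/8$ are fine. However, the step you identify as the main obstacle does not follow from the hypotheses, and the paper's proof has the same defect.

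After your two integrations by parts, the coarea argument needs the analogue of \eqref{sphere-cancel} for the \emph{partial} derivative, namely $\bigl|\int_{|w|=\rho}(\nabla_x\F^{\mathrm{sing}})(x,x+w)\,d\mathcal H^{d-1}(w)\bigr|\lesssim 1+\rho^{d-1}$. But \eqref{assumpF2'} with $m=1$ bounds $\nabla_x\dashint_{|w|=\rho}\F^{\mathrm{sing}}(x,x+w)\,d\mathcal H^{d-1}(w)$, which is the spherical average of the total derivative $\bigl((\nabla_x+\nabla_y)\F^{\mathrm{sing}}\bigr)(x,x+w)$. The discrepancy is $\int_{|w|=\rho}(\nabla_y\F^{\mathrm{sing}})(x,x+w)\,d\mathcal H^{d-1}(w)=\partial_\rho\int_{|w|=\rho}\F^{\mathrm{sing}}(x,x+w)\otimes\frac{w}{|w|}\,d\mathcal H^{d-1}(w)$, and no hypothesis controls it.

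For a counterexample, take $d\ge2$ and $\F^{\mathrm{sing}}(x,y)=k(y-x)$ with $k(w)=A\,w\,|w|^{-d}\bigl(2+\sin\log|w|\bigr)$, multiplied by a radial cutoff near $|w|=1$, where $A\neq0$ is skew-symmetric.
\begin{itemize}
\item It satisfies all the hypotheses: \eqref{assumpF2'} and \eqref{assumpF2G} hold trivially by translation invariance and oddness, and \eqref{coudet} holds since $\operatorname{div}k=0$.
\item Yet $\int_{|w|=\rho}(\nabla_x\F^{\mathrm{sing}})(x,x+w)\,d\mathcal H^{d-1}(w)=-c_d\,\rho^{-1}\cos(\log\rho)\,A$, so the bound your coarea step requires is false.
\item The lemma itself remains true for this kernel.
\end{itemize}

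The fix is to split off the translation derivative before any Taylor expansion. Writing $\F(x,y)=\tilde\F(x,y-x)$ shows that $\nabla_x+\nabla_y$ commutes with mollification in $y$. Hence
\[
\nabla_x\F_{<\eta}^{\mathrm{sing}}=\bigl[(\nabla_x+\nabla_y)\F^{\mathrm{sing}}\bigr]_{<\eta}-\nabla_y\F_{<\eta}^{\mathrm{sing}}.
\]
Two integrability facts handle both pieces:
\begin{itemize}
\item By \eqref{assumpF2G} with $m=1$ (used as in the proof of \eqref{proF0}), $|(\nabla_x+\nabla_y)\F^{\mathrm{sing}}(x,y')|\lesssim 1+|x-y'|^{-s'}$ with $s'<d$, so this kernel is locally integrable.
\item $\F^{\mathrm{sing}}(x,\cdot)$ is locally integrable because $s=d-1<d$.
\end{itemize}
Now split both kernels with a cutoff $\psi(y'-x)$ supported in $|y'-x|\le r/4$.

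The inner parts vanish at $y'=y$ and enter only through the mollification. They are tested against $\Phi^\eta(y'-y)$, or against $\nabla\Phi^\eta(y'-y)$ after moving the $y$-derivative onto the Gaussian, with $|y'-y|\ge 3r/4$. By local integrability they are exponentially small, of size $\lesssim\eta^{-d-1}e^{-cr^2/\eta^2}(r+r^d)\lesssim\eta^2 r^{-d-2}+\eta^2$.

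The outer parts are smooth. The quantities entering the second-order Taylor remainder are $\nabla_{y'}^2$ of $(\nabla_x+\nabla_y)\F^{\mathrm{sing}}$ and $\nabla_{y'}^3\F^{\mathrm{sing}}$, including derivatives of the cutoff. By \eqref{assumpFsing} these are $O(r^{-s-3}+1)$, so the plain Taylor estimate gives $\eta^2(r^{-d-2}+1)$. This proves \eqref{proF1-gradient-coulomb} for $r>10\eta$ without any spherical cancellation. If you prefer to keep your coarea argument, it is legitimate for the first piece, since its spherical averages are exactly what \eqref{assumpF2'} bounds.
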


\begin{proof}
	Set $r:=|x-y|$. The regular part is bounded by \eqref{q3} with
	$m=1$.
	
	If $r\leq10\eta$, the pointwise singular-kernel assumption and
	\eqref{proF0} with $m=1$ give
	\begin{equation}
		\left|
		\nabla_x\F_{<\eta}^{\mathrm{sing}}(x,y)
		\right|
		\lesssim
		r^{-s-1}+1,
	\end{equation}
	which is the desired estimate in this regime.
	
	Assume now that $r>10\eta$. Apply the proof of
	\eqref{proF1}, with the kernel $\F^{\mathrm{sing}}$ replaced by
	$\nabla_x\F^{\mathrm{sing}}$. The required pointwise bounds follow
	from the mixed-derivative assumptions:
	\begin{equation}
		\left|
		\nabla_y^2\nabla_x
		\F^{\mathrm{sing}}(x,y)
		\right|
		\lesssim
		1+r^{-s-3}.
	\end{equation}
	The localized contribution is treated by the same coarea
	decomposition as $I_4$, now using \eqref{assumpF2'} with $m=1$:
	\begin{equation}
		\left|
		\nabla_x
		\dashint_{|z|=\rho}
		\F^{\mathrm{sing}}(x,x+z)\,dz
		\right|
		\lesssim
		1+\rho^{-s}.
	\end{equation}
	Thus, the estimates corresponding to
	\eqref{z9} and \eqref{I4-final} give
	\begin{equation}
		\left|
		\nabla_x
		\F_{<\eta}^{\mathrm{sing}}(x,y)
		\right|
		\lesssim
		\eta^2
		+
		\frac{\eta^2}{r^{s+3}},
		\qquad r>10\eta.
	\end{equation}
	Combining the two regimes proves
	\eqref{proF1-gradient-coulomb}.
\end{proof}
\begin{lem}\label{lem:integralestimates}  	For any $\eta\in[\eta_*,1)$, the following estimates hold uniformly in $x\in\mathbb R^d$.

\smallskip
\noindent\emph{Integral bounds on the kernel.} We have
	\begin{equation}
	\label{le230}  \int_{\mr^d} \frac{| \F_{\eta}^{\eta_*}(x,y)|}{\langle y\rangle^\gamma}\, dy\lesssim
		\langle x\rangle +\begin{cases}0
 & \text{if } s< d,\\
	|\log\eta| & \text{if } s= d,\\
	\eta^{d-s} & \text{if } d<s<d+1,\\
	\log(2\eta/\eta_*)\,\eta^{-1} & \text{if } s= d+1,\\
	\eta_*^{-s+d+1}\,\eta^{-1} & \text{if } s> d+1,
	\end{cases}
	\end{equation}
	\begin{equation}
	\label{le230'}  \int_{\mr^d} \frac{| \F_{\eta}^{\mathrm{sing},\eta_{*}}(x,y)|}{\langle y\rangle^\gamma}\, dy\lesssim
1+\begin{cases}0
		& \text{if } s< d,\\
		|\log\eta| & \text{if } s= d,\\
		\eta^{d-s} & \text{if } d<s<d+1,\\
		\log(2\eta/\eta_*)\,\eta^{-1} & \text{if } s= d+1,\\
		\eta_*^{-s+d+1}\,\eta^{-1} & \text{if } s> d+1,
	\end{cases}
\end{equation}
		
	\begin{equation}
	\label{le231}  \int_{\mr^d} \frac{|\nab_x \F_{\eta}^{\eta_*}(x,y)|}{\langle y\rangle^\gamma}\, dy\lesssim
	\begin{cases}
	1& \text{if } s< d-1,\\
	|\log\eta| & \text{if } s= d-1,\\
	\eta^{d-1-s} & \text{if } d-1<s<d+1,\\
	\log(2\eta/\eta_*)\,\eta^{-2} & \text{if } s= d+1,\\
	\eta_*^{-s+d+1}\,\eta^{-2} & \text{if } s> d+1.
	\end{cases}
	\end{equation}
	For $m=2,3$, we have
	\begin{equation}
	\label{le232}
	\int_{\mr^d} \frac{|\nab_x^{m} \F_{\eta}^{\eta_*}(x,y)|}{\langle y\rangle^\gamma}\, dy\lesssim
	\begin{cases}
	\eta^{d-s-m} & \text{if } s<d+1,\\
	\eta^{-m-1}\log(2\eta/\eta_*) & \text{if } s=d+1,\\
	\eta^{-m-1}\eta_*^{-s+d+1} & \text{if } s>d+1.
	\end{cases}
	\end{equation}

\smallskip
\noindent\emph{Difference estimates.} Assume in addition that $\eta,\eta'\in(\eta_*,1)$ and $\eta\sim\eta'$. Then
		\begin{equation}
	\label{le234}  \int_{\mr^d}|	(\F_{\eta}^{\eta_*}-\F_{\eta'}^{\eta_*} )(x,y)| \frac{dy}{\langle y\rangle^\gamma}\lesssim
	\begin{cases}
	\eta^{d-s} & \text{if } s<d+1,\\
	\log(2\eta/\eta_*)\,\eta^{-1} & \text{if } s= d+1,\\
	\eta_*^{-s+d+1}\,\eta^{-1} & \text{if } s> d+1,
	\end{cases}
	\end{equation}
\begin{equation}
	\label{le233}
	\int_{\mr^d}|\nab_x 	(\F_{\eta}^{\eta_*}-\F_{\eta'}^{\eta_*} )(x,y)| \frac{dy}{\langle y\rangle^\gamma}\lesssim
	\begin{cases}
	1 & \text{if } s\leq d-1,\\
	\eta^{d-1-s} & \text{if } d-1<s<d+1,\\
	\log(2\eta/\eta_*)\,\eta^{-2} & \text{if } s= d+1,\\
	\eta_*^{-s+d+1}\,\eta^{-2} & \text{if } s> d+1.
	\end{cases}
	\end{equation}

\smallskip
\noindent\emph{Diagonal and convolution bounds.} For every $\eta,\eta'\in[\eta_*,1)$, we have
\begin{align}
\label{226}
&	\left| \F_{\eta}^{\eta_*}(x,x)\right|\lesssim \frac{1+\mathcal C(\eta,\eta_*)}{\eta^{\min\{s-1,d\}}}+\langle x\rangle,	\\
&\label{226b} 	\left| \F_{\eta}^{\eta_*}(x,x)-\F_{\eta'}^{\eta_*}(x,x)\right|\lesssim \frac{1+\mathcal C(\eta,\eta_*)}{\eta^{\min\{s-1,d\}}}+\frac{1+\mathcal C(\eta',\eta_*)}{\eta'^{\min\{s-1,d\}}}+1,\\
&\label{226c} 		\left| \nab_x(\F_{\eta}^{\eta_*}(x,x))\right|\lesssim \frac{1}{\eta^s}+1~~\text{if}~s<d,
\\ & \label{227} \|\nab_x^m \F_{\eta}^{\eta_*}*\mu\|_{L^\infty}\lesssim \mathcal{C}(\eta_*)( \A(\mu)+\mathbf{1}_{s\geq d, m=3}	\|\langle \cdot\rangle^\gamma\nabla^4 
\mu\|_{L^\infty}), \quad  m=1,2,3.
\end{align}
\end{lem}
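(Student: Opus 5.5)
Almost every bound in the statement follows by integrating the pointwise kernel estimates of Lemma~\ref{lem2.4} against the weight $\langle y\rangle^{-\gamma}$. The difference and diagonal bounds additionally need the almost antisymmetry \eqref{assumpF3c} and the cancellation \eqref{assumpF2'}.

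\textbf{Step 1: the integral bounds \eqref{le230}--\eqref{le232}.} Set $r=|x-y|$ and split $\mathbb{R}^d$ into $\{r\le \eta\}$, $\{\eta<r\le 1\}$ and $\{r>1\}$. On $\{r>1\}$, \eqref{proF0} gives $|\nabla_x^m\F_\eta^{\eta_*}|\lesssim 1+(|x|+|y|)\mathbf{1}_{m=0}$. Since $\gamma>d+2$, integrating against $\langle y\rangle^{-\gamma}$ gives $\langle x\rangle$ when $m=0$ and $O(1)$ otherwise. For the singular part of $\F$, which is bounded near infinity, this region gives $O(1)$, which is \eqref{le230'}. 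On the near region, \eqref{proF0} is a bound of size $(1+\mathcal C(\eta,\eta_*))(\eta+r)^{-\min\{s,d+1\}-m}$, and we integrate it over $r\le 1$ radially:
\[
\int_0^1 \frac{\rho^{d-1}}{(\eta+\rho)^{a}}\,d\rho\sim
\begin{cases}
1, & a<d,\\
|\log\eta|, & a=d,\\
\eta^{d-a}, & a>d,
\end{cases}
\qquad a=\min\{s,d+1\}+m .
\]
This reproduces each case listed. For $s\ge d+1$ we have $a=d+1+m$, so the integral is $\eta^{-1-m}$ times $\mathcal C(\eta,\eta_*)$, which is $\log(2\eta/\eta_*)$ or $\eta_*^{d+1-s}$ as claimed. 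For $m=0$ and $s<d$ the near region contributes $O(1)$, which is absorbed into $\langle x\rangle$.

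\textbf{Step 2: the difference estimates \eqref{le234}--\eqref{le233}.} Take $\eta'\ge\eta$ without loss of generality and write $\F_{\eta'}=\F_\eta\star\Phi^{\sqrt{\eta'^2-\eta^2}}$ in $y$.
\begin{itemize}
\item For $r\gtrsim\eta$, the Taylor/Hermite argument of Step~2 of Lemma~\ref{lem2.4}, applied to $\F_\eta$ instead of $\F$, gives a bound $\eta^2|\nabla_y^2\F_\eta|$. Since $\F$ is almost antisymmetric, $\nabla_y^2$ can be traded for $\nabla_x^2$ up to terms one order less singular. The bound is then $\eta^2(1+\mathcal C)(\eta+r)^{-\min\{s,d+1\}-2-m}$ plus a tail.
\item For $r\lesssim\eta$, bound each term by \eqref{proF0}.
\end{itemize}
Integrating as in Step~1, the far tail only produces $O(1)$ and the near region produces $\eta^{d-\min\{s,d+1\}-m}$. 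This explains the threshold at $s=d-1$ in \eqref{le233}: for $s\le d-1$ the integral converges at scale $\eta$ with $O(1)$ mass, with no logarithm because of the extra $\eta^2 r^{-2}$ decay.

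\textbf{Step 3: the diagonal and convolution bounds \eqref{226}--\eqref{227}.} For \eqref{226}, write $\F_\eta(x,x)=\int \F(x,x+z)\Phi^\eta(z)\,dz$ and integrate in polar coordinates. Radial symmetry of $\Phi^\eta$ and \eqref{assumpF2'} with $m=0$ bound the spherical averages by $\rho^{1-s}+1$, so the integral is at most $\int \rho^{d-1}(\rho^{1-s}+1)\Phi^\eta(\rho)\,d\rho\lesssim \eta^{-(s-1)}+1$ for $s<d+1$. In the truncated case $s\ge d+1$ the integral is cut off at $\rho\ge\eta_*/2$, which produces the factor $\mathcal C(\eta,\eta_*)\eta^{-d}$. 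The regular part contributes $\langle x\rangle$. Then \eqref{226b} is the triangle inequality applied to \eqref{226}. For \eqref{226c}, differentiate in $x$, which is legitimate for $s<d$, and use \eqref{assumpF2'} with $m=1$ to get $\eta^{-s}+1$. For \eqref{227}, split $\F_\eta*\mu=\F*\mu-\F_{<\eta}*\mu$. The term $\F_{<\eta}*\mu$ is controlled by \eqref{proF3}--\eqref{proF4}, with higher derivatives handled in the same way. For $\nabla^m(\F*\mu)$, move the derivatives onto $\mu$ via \eqref{assumpF2G} (resp.\ \eqref{assumpF>=d+1}); the remaining term $\int \F^{\mathrm{sing}}(x,y)\nabla^m\mu(y)\,dy$ is bounded by the local cancellation argument used for $I_5$, giving $\mathcal C(\eta_*)$.

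\textbf{Main obstacle.} I expect the main obstacle to be \eqref{le233} in the Coulomb case. The naive bound from Step~1 gives $|\log\eta|$, and removing the logarithm requires the extra $\eta^2/r^2$ gain from Step~2 to hold uniformly for the gradient. I would obtain this from the gradient analogue of \eqref{proF1} proved in Lemma~\ref{lem:gradient-Fless-coulomb}, applied to $\F_\eta$ at the relative scale $\sqrt{\eta'^2-\eta^2}\lesssim\eta$.
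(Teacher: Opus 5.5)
Your Steps~1 and~3 follow the paper's proof: radial integration of \eqref{proF0}, polar coordinates with \eqref{assumpF2'} on the diagonal, and derivatives moved by \eqref{assumpF2G} followed by the $I_5$-type cancellation. Step~2 takes a genuinely different route. The paper treats the difference estimates case by case. For \eqref{le234}: for $s<d$ it integrates \eqref{proF1} directly; for $s>d$ it bounds each kernel by \eqref{le230'}; for $s=d$ it runs a separate semigroup argument, subtracting $\Phi^{\eta''}(x-y)$ and invoking \eqref{assumpF2'}. For \eqref{le233} at $s=d-1$ it splits $\nabla_x=(\nabla_x+\nabla_y)-\nabla_y$, controls the first piece by \eqref{assumpF2G}, and moves $\nabla_y$ onto a Gaussian. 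Your near/far split at $|x-y|\sim\max\{\eta,\eta'\}$ is uniform in $s$. It makes the paper's $s=d$ argument unnecessary, because the diagonal cancellation has already been paid for in \eqref{proF0} and in the $I_4$ term of \eqref{proF1}. The paper's route at $s=d-1$ has the merit of using only \eqref{proF1} and \eqref{assumpF2G}, without the gradient truncation bound.

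Several of your justifications fail as written, though each is easily repaired.

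(i) The far-field gain cannot come from ``trading $\nabla_y^2$ for $\nabla_x^2$ by almost antisymmetry''. \eqref{assumpF3c} exchanges the two arguments of $\F$, and its error $|x-y-z|^{-(s+1)}$ is not integrable against $\Phi^\eta(z)$ when $s\ge d-1$. Instead, write $\F_\eta-\F_{\eta'}=\F_{<\eta'}-\F_{<\eta}$ and use \eqref{proF1} in the regime $|x-y|>10\max\{\eta,\eta'\}$, which is valid for every $s$. This gives $\eta^2+(1+\mathcal C(\eta,\eta_*))\eta^2|x-y|^{-\min\{s,d+1\}-2}$, so the tail is $O(\eta^2)$, not $O(1)$. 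An $O(1)$ tail would break \eqref{le234} for $s<d$; the bound $\eta^2\le\eta^{d-s}$ uses $s\ge d-\frac54$.

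For \eqref{le233}, only $s=d-1$ needs an argument, since otherwise it follows from \eqref{le231}. There, apply Lemma~\ref{lem:gradient-Fless-coulomb} to $\F_{<\eta}$ and $\F_{<\eta'}$ in the far field, and use \eqref{proF0} with $m=1$ near the diagonal, where the lemma's bound $|x-y|^{-d}$ is not integrable. Do not apply it to $\F_\eta$ at scale $\sqrt{\eta'^2-\eta^2}$.

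(ii) \eqref{226b} is not the triangle inequality applied to \eqref{226}, which would leave $\langle x\rangle$ instead of $1$. Apply the triangle inequality to the singular part only. For the regular part, use $|\F^{\mathrm{reg}}_\eta(x,x)-\F^{\mathrm{reg}}_{\eta'}(x,x)|\lesssim1$, which follows from the boundedness of $\nabla_y\F^{\mathrm{reg}}$.

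(iii) In \eqref{227}, splitting off $\F_{<\eta}*\mu$ costs regularity. Already for $m=1$ and $s\ge d$, \eqref{proF4} involves $\nabla^4\mu$, which \eqref{227} does not allow. For $m=2,3$ the same argument needs derivatives of $\mu$ of order $m+2$ or $m+3$. Skip the split: $\F_\eta*\mu=\F*(\Phi^\eta\star\mu)$ with $\A(\Phi^\eta\star\mu)\lesssim\A(\mu)$ by Lemma~\ref{lem:gauss}, and your direct argument then gives \eqref{227}.
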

\begin{proof} 	\noindent\textbf{Proof of  \eqref{le230}--\eqref{le232}}. These estimates follow from the pointwise bound \eqref{proF0} by integration in $y$ against $\langle y\rangle^{-\gamma}$: observe that
$$
\int\frac{dy}{(\eta+|x-y|)^{\min\{s,d+1\}+m} \langle y\rangle^{\gamma}}\lesssim\begin{cases}
1&\text{when}\  \min\{s,d+1\}+m<d\\ 
 |\log \eta| & \text{when} \ \min\{s, d+1\}+m=d\\
 \eta^{d-\min\{s,d+1\}-m} &  \text{when } \ \min\{s,d+1\}+m>d.\end{cases}$$ 
  The additional terms involving $\eta_*$ come from the cutoff correction when $s\geq d+1$. 
  
\medskip

\noindent\textbf{Proof of \eqref{226}--\eqref{226c}.}
We write the diagonal value as
\[
\F_\eta^{\eta_*}(x,x)
= \int_{\mathbb{R}^d} \F^{\mathrm{sing},\eta_*}(x,x+z)\,\Phi^\eta(z)\,dz
+ \F^{\mathrm{reg}}_\eta(x,x).
\]
By \eqref{assumpFre}, the regular part satisfies, for all $0<\eta,\eta'\le 1$,
\begin{align}\label{q50}
	|\F^{\mathrm{reg}}_\eta(x,x)| &\lesssim 1+|x|,\\ \label{q51}
	|\nab_x(\F^{\mathrm{reg}}_\eta(x,x))| &\lesssim 1,\\
	|\F^{\mathrm{reg}}_\eta(x,x)-\F^{\mathrm{reg}}_{\eta'}(x,x)| &\lesssim 1 .\label{q52}
\end{align}
It remains to estimate the singular contribution.

By the spherical average assumption \eqref{assumpF2'}, for $m=0,1$ we have
\[
\left|
\nab_x^m \dashint_{|z|=\rho}
\F^{\mathrm{sing},\eta_*}(x,x+z)\mathcal H^{d-1}(z)
\right|
\lesssim
\rho^{-s+1-m}
\Big(
\mathbf{1}_{s<d+1}
+
\mathbf{1}_{\rho\ge \eta_*/2}\mathbf{1}_{s\ge d+1}
\Big)
+1 .
\]
Integrating this bound against the radial mollifier $\Phi^\eta$ in polar coordinates gives
\begin{align}\nonumber
	\left|
	\int_{\mathbb{R}^d}
	\F^{\mathrm{sing},\eta_*}(x,x+z)\Phi^\eta(z)\,dz
	\right|
	&\lesssim
	\int_0^\infty
	\left[
	\rho^{-s+1}
	\Big(
	\mathbf{1}_{s<d+1}
	+
	\mathbf{1}_{\rho\ge \eta_*/2}\mathbf{1}_{s\ge d+1}
	\Big)
	+1
	\right]
	\rho^{d-1}\Phi^\eta(\rho)\,d\rho  \\
	&\lesssim
	\frac{1+\mathcal C(\eta,\eta_*)}{\eta^{\min\{s-1,d\}}}
	+1. \label{q53}
\end{align}
Similarly, in the range $s<d$, applying the same estimate with $m=1$ yields
\begin{align}\nonumber
	\left|
	\nab_x
	\left(
	\int_{\mathbb{R}^d}
	\F^{\mathrm{sing},\eta_*}(x,x+z)\Phi^\eta(z)\,dz
	\right)
	\right|
	&\lesssim
	\int_0^\infty
	\left(\rho^{-s}+1\right)
	\rho^{d-1}\Phi^\eta(\rho)\,d\rho  \\
	&\lesssim\eta^{-s}+1. \label{q54}
\end{align}
Here we used the elementary scaling estimate
\begin{align*}
	\int_\varepsilon^\infty
	\rho^{-s+m}\rho^{d-1}\Phi^\eta(\rho)\,d\rho
	&=
	C_d\eta^{-s+m}
	\int_{\varepsilon/\eta}^\infty
	t^{d-1-s+m}
	e^{-t^2/2}\,dt.
\end{align*}
Combining the above estimates for the regular and singular parts proves
\eqref{226}, \eqref{226b}, and \eqref{226c}.\\
\medskip
\noindent\textbf{Proof of \eqref{227}.} Let $m=1,2,3.$
Since $|\mu(y)|\lesssim \A(\mu)\,\langle y\rangle^{-\gamma-1}$
(by the definition \eqref{defA} of $\A$).  By \eqref{assumpF2G}, we have 
\begin{align*}
		\|\nab_x^m \F_\eta^{\eta_*}*\mu\|_{L^\infty}\lesssim  	\| \F_\eta^{\eta_*}*(\nab_x^m\mu)\|_{L^\infty}+ \int_{\mathbb{R}^d}\left( 1+\frac{\mathbf{1}_{s<d+1}}{|x-y|^{s'}}+\frac{\mathbf{1}_{s\geq d+1}}{(|x-y|+\eta_{*})^{s-1}}\right)\sum_{k=0}^{m-1}|\nab_y^k \mu|(y) dy 
\end{align*}
So, 
\begin{align*}
	\|\nab_x^m \F_\eta^{\eta_*}*\mu\|_{L^\infty}\lesssim  	\| \F_\eta^{\eta_*}*(\nab_x^m\mu)\|_{L^\infty}\mathbf{1}_{s\geq d}+ \mathcal{C}(\eta_{*})\A(\mu).
\end{align*}
By \eqref{q19}, we get 
\begin{align*}
	 &	\| \F_\eta^{\eta_*}*(\nab_x^m\mu)\|_{L^\infty}\mathbf{1}_{s\geq d}\lesssim \mathcal{C}(\eta_{*})\A(\mu)+\mathbf{1}_{s\geq d}\sup_x\int_{|x-y|<1} |\F_\eta^{\eta_*}(x,y)| |\nab^m\mu(x)-\nab^m\mu(y)|dy\\&\lesssim
	 	\mathcal{C}(\eta_{*})\A(\mu)+\mathbf{1}_{s\geq d}	\|\langle \cdot\rangle^\gamma\nabla^{m+1} \mu\|_{L^\infty}\sup_x\int_{|x-y|<1} (1+\frac{1}{(|x-y|+\eta_{*}\mathbf{1}_{s\geq d+1})^s}) |x-y|\langle y\rangle^{-\gamma}dy
	 	\\&\lesssim
	 	\mathcal{C}(\eta_{*})(A(\mu)+\mathbf{1}_{s\geq d}	\|\langle \cdot\rangle^\gamma\nabla^{m+1} \mu\|_{L^\infty})
\end{align*}
This implies \eqref{227}.\\

\medskip
\noindent
\textbf{Proof of \eqref{le234}.}
We first dispose of the regular part. By \eqref{q3}, for any
$\eta\sim \eta'$, we have
\begin{equation}\label{reg-diff-le234}
	\sum_{m=0}^{1}
	\int_{\mathbb R^d}
	\left|
	\nabla_x^m
	\big(
	\F_{\eta}^{\mathrm{reg}}
	-
	\F_{\eta'}^{\mathrm{reg}}
	\big)(x,y)
	\right|
	\frac{dy}{\langle y\rangle^\gamma}
	\lesssim
	\eta^2 .
\end{equation}
It is therefore enough to estimate the singular contribution. We distinguish three cases: $s<d$, $s=d$, and $s>d$.\\
We first consider the case $s<d$. Since
\[
\F_{\eta}^{\mathrm{sing},\eta_*}
-
\F_{\eta'}^{\mathrm{sing},\eta_*}
=
\F_{<\eta'}^{\mathrm{sing},\eta_*}
-
\F_{<\eta}^{\mathrm{sing},\eta_*},
\]
we may apply the truncated kernel estimate \eqref{proF1} to each term. In this
regime, \eqref{proF1} gives, for $\xi\in\{\eta,\eta'\}$,
\begin{equation}\label{trunc-bound-slessd}
	\left|
	\F_{<\xi}^{\mathrm{sing},\eta_*}(x,y)
	\right|
	\lesssim
	\xi^2
	+
	\frac1{|x-y|^s}
	\min\left\{
	1,\frac{\xi}{|x-y|}
	\right\}^2 .
\end{equation}
Consequently,
\begin{align}
	&\int_{\mathbb R^d}
	\left|
	\big(
	\F_{\eta}^{\mathrm{sing},\eta_*}
	-
	\F_{\eta'}^{\mathrm{sing},\eta_*}
	\big)(x,y)
	\right|
	\frac{dy}{\langle y\rangle^\gamma}
	\nonumber \\
	&\qquad\lesssim
	\sum_{\xi\in\{\eta,\eta'\}}
	\int_{\mathbb R^d}
	\left[
	\xi^2
	+
	\frac1{|x-y|^s}
	\min\left\{
	1,\frac{\xi}{|x-y|}
	\right\}^2
	\right]
	\frac{dy}{\langle y\rangle^\gamma}.
	\label{sing-diff-slessd}
\end{align}
Splitting the singular integral at $|x-y|=\xi$, and using the decay provided
by the weight $\langle y\rangle^{-\gamma}$ at infinity, we obtain
\begin{align}
	\int_{\mathbb R^d}
	\frac1{|x-y|^s}
	\min\left\{
	1,\frac{\xi}{|x-y|}
	\right\}^2
	\frac{dy}{\langle y\rangle^\gamma}
	&\lesssim
	\int_0^\xi \rho^{d-1-s}\,d\rho
	+
	\xi^2\int_\xi^1 \rho^{d-3-s}\,d\rho
	+
	\xi^2
	\notag
	\\
	&\lesssim
	\xi^{d-s}+\xi^2
	\lesssim
	\xi^{d-s}.
	\label{radial-est-slessd}
\end{align}
Here we used the standing assumption $s\ge (d-\frac54)_+$, which implies
$d-s<2$ in the regime $s<d$, and hence $\xi^2\le \xi^{d-s}$ for
$0<\xi\le1$.
 Since $\eta\sim\eta'$, this yields
\begin{equation}\label{le234-slessd}
	\int_{\mathbb R^d}
	\left|
	\big(
	\F_{\eta}^{\mathrm{sing},\eta_*}
	-
	\F_{\eta'}^{\mathrm{sing},\eta_*}
	\big)(x,y)
	\right|
	\frac{dy}{\langle y\rangle^\gamma}
	\lesssim
	\eta^{d-s}.
\end{equation}
This proves \eqref{le234} when $s<d$.

For $s>d$, the desired estimate follows directly from the integral bound
\eqref{le230'}. Hence it remains only to treat the borderline case $s=d$.

When $s=d$, the direct use of \eqref{le230} gives a logarithmic loss of the
form $1+|\log\eta|$. Such a loss is too large for closing the Gronwall
argument on the time scale $[0,\eta_*^{s+1-d}T_0]$. We therefore exploit the
cancellation structure of $\F^{\eta_*}$.

Let
\[
\eta'' := \frac1{10}\min\{\eta,\eta'\}.
\]
By the semigroup property of the heat kernel, we may write
\begin{align}\label{semigroup-borderline}
	\big(
	\F_{\eta}^{\mathrm{sing},\eta_*}
	-
	\F_{\eta'}^{\mathrm{sing},\eta_*}
	\big)(x,y)
	=
	\int_{\mathbb R^d}
	\left[
	\F_{\sqrt{\eta^2-(\eta'')^2}}^{\mathrm{sing},\eta_*}(x,y+z)
	-
	\F_{\sqrt{(\eta')^2-(\eta'')^2}}^{\mathrm{sing},\eta_*}(x,y+z)
	\right]
	\Phi^{\eta''}(z)\,dz .
\end{align}
On the support of
$1-\chi(|x-y-z|)$, we have $|x-y-z|\geq1$. Hence,
by \eqref{proF0},
\begin{equation}
	\left|
	\F_\varepsilon^{\mathrm{sing}}(x,y+z)
	\right|
	\lesssim1,
	\qquad
	\varepsilon\in(0,1].
\end{equation}
Consequently, the nonlocalized contribution is bounded uniformly in
$\eta$, $\eta'$, and $\eta_*$ after integration against
$\langle y\rangle^{-\gamma}$ and the Gaussian. Thus,
\begin{align}
	&\int_{\mathbb R^d}
	\left|
	\big(
	\F_{\eta}^{\mathrm{sing},\eta_*}
	-
	\F_{\eta'}^{\mathrm{sing},\eta_*}
	\big)(x,y)
	\right|
	\frac{dy}{\langle y\rangle^\gamma}
	\nonumber \\
	&\qquad\lesssim
	1
	+
	\int_{\mathbb R^d}
	\left|
	\int_{\mathbb R^d}
	\left[
	\F_{\sqrt{\eta^2-(\eta'')^2}}^{\mathrm{sing},\eta_*}(x,y+z)
	-
	\F_{\sqrt{(\eta')^2-(\eta'')^2}}^{\mathrm{sing},\eta_*}(x,y+z)
	\right]
	\Phi^{\eta''}(z)
	\chi(|x-y-z|)
	\,dz
	\right|
	\frac{dy}{\langle y\rangle^\gamma}.
	\label{borderline-split}
\end{align}
For the localized term, we subtract and add
$\Phi^{\eta''}(x-y)$. This gives
\begin{align}
	&\int_{\mathbb R^d}
	\left|
	\int_{\mathbb R^d}
	\left[
	\F_{\sqrt{\eta^2-(\eta'')^2}}^{\mathrm{sing},\eta_*}(x,y+z)
	-
	\F_{\sqrt{(\eta')^2-(\eta'')^2}}^{\mathrm{sing},\eta_*}(x,y+z)
	\right]
	\Phi^{\eta''}(z)
	\chi(|x-y-z|)
	\,dz
	\right|
	\frac{dy}{\langle y\rangle^\gamma}
	\nonumber \\
	&\qquad\leq
	\int_{\mathbb R^d}
	\left|
	\int_{\mathbb R^d}
	\left[
	\F_{\sqrt{\eta^2-(\eta'')^2}}^{\mathrm{sing},\eta_*}(x,y+z)
	-
	\F_{\sqrt{(\eta')^2-(\eta'')^2}}^{\mathrm{sing},\eta_*}(x,y+z)
	\right]
	\chi(|x-y-z|)
	\,dz
	\right|
	\Phi^{\eta''}(x-y)
	\frac{dy}{\langle y\rangle^\gamma}
	\nonumber \\
	&\qquad\quad
	+
	\int_{\mathbb R^d}
	\int_{\mathbb R^d}
	\left|
	\F_{\sqrt{\eta^2-(\eta'')^2}}^{\mathrm{sing},\eta_*}(x,y+z)
	-
	\F_{\sqrt{(\eta')^2-(\eta'')^2}}^{\mathrm{sing},\eta_*}(x,y+z)
	\right|
	\nonumber \\
	&\qquad\qquad\qquad\qquad
	\times
	\left|
	\Phi^{\eta''}(z)-\Phi^{\eta''}(x-y)
	\right|
	\chi(|x-y-z|)
	\,dz\,
	\frac{dy}{\langle y\rangle^\gamma}.
	\label{borderline-subtract}
\end{align}
The reason for subtracting and adding $\Phi^{\eta''}(x-y)$ is that the
constant part can be treated using the cancellation property
\eqref{assumpF2'}, while the difference
$\Phi^{\eta''}(z)-\Phi^{\eta''}(x-y)$ gains a factor $|x-y-z|$. This gain
removes the logarithmic divergence of the borderline kernel
$|x-y-z|^{-d}$.\\

We estimate the two terms separately. First, by the cancellation assumption
\eqref{assumpF2'}, uniformly for $\varepsilon\in(0,1)$,
\begin{equation}\label{localized-cancel-bound}
	\left|
	\int_{\mathbb R^d}
	\F_\varepsilon^{\mathrm{sing},\eta_*}(x,y+z)
	\chi(|x-y-z|)
	\,dz
	\right|
	\lesssim
	1 .
\end{equation}
Therefore the first term on the right-hand side of
\eqref{borderline-subtract} is bounded by a constant.\\
For the second term, using \eqref{proF1} with $s=d$, we have
\begin{equation}\label{borderline-diff-kernel}
	\left|
	\F_{\sqrt{\eta^2-(\eta'')^2}}^{\mathrm{sing},\eta_*}(x,y+z)
	-
	\F_{\sqrt{(\eta')^2-(\eta'')^2}}^{\mathrm{sing},\eta_*}(x,y+z)
	\right|
	\lesssim
	\eta^2
	+
	\frac1{|x-y-z|^d}
	\min\left\{
	1,\frac{\eta}{|x-y-z|}
	\right\}^2 .
\end{equation}
Moreover, the Gaussian Lipschitz bound gives
\begin{equation}\label{gaussian-difference-le234}
	\left|
	\Phi^{\eta''}(z)-\Phi^{\eta''}(x-y)
	\right|
	\lesssim
	(\eta'')^{-d-1}
	|x-y-z|
	\left[
	\exp\left(-\frac{|z|^2}{4(\eta'')^2}\right)
	+
	\exp\left(-\frac{|x-y|^2}{4(\eta'')^2}\right)
	\right].
\end{equation}
Since $\chi(|x-y-z|)$ restricts to a bounded neighborhood of the singularity,
the extra factor $|x-y-z|$ in \eqref{gaussian-difference-le234} removes the
borderline logarithmic divergence in \eqref{borderline-diff-kernel}. Hence
\begin{align}\nonumber
	&\int_{\mathbb R^d}
	\int_{\mathbb R^d}
	\left|
	\F_{\sqrt{\eta^2-(\eta'')^2}}^{\mathrm{sing},\eta_*}(x,y+z)
	-
	\F_{\sqrt{(\eta')^2-(\eta'')^2}}^{\mathrm{sing},\eta_*}(x,y+z)
	\right|\\&\quad\quad\quad\quad\quad\quad\quad\quad\quad\quad\times
	\left|
	\Phi^{\eta''}(z)-\Phi^{\eta''}(x-y)
	\right|
	\chi(|x-y-z|)
	\,dz\,
	\frac{dy}{\langle y\rangle^\gamma}\nonumber
\\&\lesssim \nonumber\eta+\eta^{-d-1}
\int_{\mathbb R^d}
\int_{|x-y-z|\leq 1}
	\frac1{|x-y-z|^{d-1}}
\min\left\{
1,\frac{\eta}{|x-y-z|}
\right\}^2\\&\quad\quad\quad\quad\quad\quad\quad\quad\quad\quad\times\nonumber
	\left[
\exp\left(-c\frac{|z|^2}{\eta^2}\right)
+
\exp\left(-c\frac{|x-y|^2}{\eta^2}\right)
\right]
\,dz\,
\frac{dy}{\langle y\rangle^\gamma}
\\&\lesssim \eta+\eta^{-1}
\int_{|z|\leq 1}
\frac1{|z|^{d-1}}
\min\left\{
1,\frac{\eta}{|z|}
\right\}^2dz\lesssim 1.
	\label{borderline-second-term}
\end{align}
Indeed, in the last line we used the change of variables $w=x-y-z$ and the
Gaussian integration in the remaining variable. Finally,
\begin{align*}
	\eta^{-1}
	\int_{|w|\le 1}
	\frac1{|w|^{d-1}}
	\min\left\{
	1,\frac{\eta}{|w|}
	\right\}^2
	dw
	&\lesssim
	\eta^{-1}
	\left(
	\int_0^\eta d\rho
	+
	\eta^2\int_\eta^1 \rho^{-2}\,d\rho
	\right)
	\lesssim 1 .
\end{align*}
Combining \eqref{borderline-split}--\eqref{borderline-second-term}, we obtain
\begin{equation}\label{borderline-final-le234}
	\int_{\mathbb R^d}
	\left|
	\big(
	\F_{\eta}^{\mathrm{sing},\eta_*}
	-
	\F_{\eta'}^{\mathrm{sing},\eta_*}
	\big)(x,y)
	\right|
	\frac{dy}{\langle y\rangle^\gamma}
	\lesssim
	1 .
\end{equation}
This proves \eqref{le234} in the borderline case $s=d$.\\
Together with \eqref{reg-diff-le234}, the estimates above prove
\eqref{le234} for all $s\neq d$ and also for the critical case $s=d$.
\smallskip

\noindent
\textbf{Proof of \eqref{le233}.}
	When $s\neq d-1$, the bound \eqref{le233} follows from \eqref{le231} and the fact that $\eta\sim\eta'$.
	It remains to treat the borderline $s=d-1$, where $\nabla_x\F_\eta^{\eta_*}=\nabla_x\F_\eta$ is just barely non-integrable. The idea is to again use the near-antisymmetry of $\F$, while the $\nabla_y$ part is transferred to the Gaussian via the semigroup property, reducing the problem to an estimate on $\F$ which is integrable.
	We write
\begin{align*}
		&\int_{\mr^d}|	\nab_x(\F_{\eta}-\F_{\eta'} )(x,y)| \frac{dy}{\langle y\rangle^\gamma}\\
		&\quad\leq 	\int_{\mr^d}|	\nab_x(\F_{\eta}-\F_{\eta'} )(x,y)+\nab_y(\F_{\eta}-\F_{\eta'})(x,y)| \frac{dy}{\langle y\rangle^\gamma}\\&
		\quad\quad+	\int_{\mr^d}|	\nab_y(\F_{\eta}-\F_{\eta'} )(x,y)| \frac{dy}{\langle y\rangle^\gamma}.
\end{align*}
The first term can be bounded using \eqref{assumpFre} and \eqref{assumpF2G}. To estimate the second term, we bound
\begin{align*}
	|	\nab_y(\F_{\eta}-\F_{\eta'} )(x,y)|\leq  \int|\F_{\eta/2}-\F_{\eta''}|(x,y+z)\,|\nab_z\Phi^{c_1\eta}(z)|\,dz
\end{align*}
for some  $c_1>0$ and $\eta''\sim \eta$. By \eqref{proF1}, this is bounded by
\begin{align*}
&\sum_{\xi\in\{\eta/2,\, \eta''\}}	\iint\Big(\xi^2+\frac{1}{|x-y+z|^{d-1}}\min\!\Big(1,\frac{\xi}{|x-y+z|}\Big)^2\Big)|\nab_z\Phi^{c_1\eta}(z)|\,dz\,\frac{dy}{\langle y\rangle^\gamma}\\&\quad\quad\quad\quad\quad\quad\quad\lesssim \eta+	\iint\frac{1}{|y+z|^{d-1}}\min\!\Big(1,\frac{\eta}{|y+z|}\Big)^2\eta^{-1}\Phi^{2c_1\eta}(z)dz\,\frac{dy}{\langle y\rangle^\gamma}\lesssim 
1.
\end{align*} 
Here we used that $|\nabla \Phi^{c_2\eta}|\lesssim \eta^{-1}\Phi^{2c_2\eta}$ and, 
\begin{align*}
	\int\frac{1}{|y+z|^{d-1}}\min\!\Big(1,\frac{\eta}{|y+z|}\Big)^2\frac{dy}{\langle y\rangle^\gamma}\lesssim 	\int\frac{1}{|y|^{d-1}}\min\!\Big(1,\frac{\eta}{|y|}\Big)^2dy\sim \eta.
\end{align*}
	This concludes the proof of \eqref{le233}.
\end{proof}
\subsection{Dual transport estimates}\label{sec:dual}
To close the argument we will need to propagate $( W^{1,1})^*$ bounds forward in time.
The following two lemmas provide the necessary Grönwall-type estimates for the error $\omega=\tilde\mu_{N,\eta}$ in dual Sobolev norms, in the sub-Coulomb and super-Coulomb regimes respectively, using that $\omega$ will solve 
 \eqref{eqfordiffeq}.
The proof relies on  testing $\omega=\tilde \mu_{N,\eta}$ against the solution of the \emph{backward} transport equation, exploiting the regularity of $\F*\mu$ to control the resulting commutator, and closing by Grönwall's lemma.

The first result concerns existence and estimates for that backward equation.
\begin{lem}\label{transesle}
	Assume that $s\le d-1$, and let $t_1\in[0,T]$.  Define the dual kernel by $\F_{\rm dual}(x,y):=\F(y,x)$. Let $\mu\in C([0,T];C^2(\mathbb{R}^d))$ be a prescribed function satisfying $$\sup_{t\in [0,T]}\A(\mu^t)<\infty.$$ 
 Then, for any $\sigma\in W^{1,1}(\mathbb{R}^d)$ and $d+1< \gamma_1\leq \gamma-1,$ there exists a unique solution $\overline f$ to
	\begin{equation}\label{eq:dualb}
		\partial_t 	\overline{f}
		= -\langle x\rangle^{-\gamma_1}\F_{\rm dual}\ast(\mu\nabla( \langle \cdot \rangle^{\gamma_1}	\overline{f}))
		-\F\ast\mu\,\nabla 	\overline{f}-\gamma_1 (\F\ast\mu \cdot x\langle x\rangle^{-2})\overline{f} ,
		\quad
		\overline{f}(t_1)=\sigma
	\end{equation}
	such that
	\begin{equation}\label{fW11}
		\sup_{t\in[0,t_1]}\|\overline{f}(t)\|_{W^{1,1}}
		\le C\, e^{Ct_1\A_{t_1}}\,\|\sigma\|_{W^{1,1}}.
	\end{equation}
where $\A_{t_1}=\max_{[0,t_1]}\A(\mu^t)$.
\end{lem}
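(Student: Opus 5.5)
We would work with the weighted unknown $g:=\langle x\rangle^{\gamma_1}\overline f$. Since $\nabla(\langle x\rangle^{\gamma_1}\overline f)=\langle x\rangle^{\gamma_1}(\nabla\overline f+\gamma_1 x\langle x\rangle^{-2}\overline f)$, equation \eqref{eq:dualb} is exactly the backward adjoint of the linearized continuity equation,
\[
\partial_t g+(\F\ast\mu)\cdot\nabla g+\F_{\rm dual}\ast(\mu\nabla g)=0 .
\]
It is therefore a linear transport equation with a nonlocal perturbation. The plan is to prove the \emph{a priori} estimate \eqref{fW11} directly on \eqref{eq:dualb}. We would then obtain existence by regularizing: replace $\F$ by $\F_\varepsilon$, which gives a Lipschitz drift and a bounded nonlocal operator. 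For fixed $\varepsilon$, solve by a Picard iteration in $C([0,t_1];W^{1,1})$ (characteristics for the transport part, Duhamel for the rest). Then pass to the limit $\varepsilon\to0$ using the uniform bound, weak-$*$ compactness and linearity. Uniqueness follows from the same estimate applied to the difference of two solutions, which has zero data at $t_1$.

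\textbf{$L^1$ estimate.} Write $b:=\F\ast\mu$. Multiply \eqref{eq:dualb} by $\mathrm{sgn}\,\overline f$ and integrate. The transport term gives $\int \operatorname{div} b\,|\overline f|$. This is controlled by $\|\operatorname{div}(\F\ast\mu)\|_{L^\infty}\lesssim \A(\mu)$: for $s<d-1$ we use \eqref{227} with $m=1$ and $\mathcal C(\eta_*)=1$, and for $s=d-1$ we use \eqref{coudet}. The zeroth-order term is harmless because $|b|\lesssim\langle x\rangle$, so $|b\cdot x|\langle x\rangle^{-2}\lesssim 1$. For the nonlocal term we use Fubini:
\[
\int \langle x\rangle^{-\gamma_1}|\F(y,x)|\,dx\lesssim\langle y\rangle
\]
since $s<d$, together with $|\mu(y)|\langle y\rangle^{\gamma_1+1}\le\A(\mu)$ because $\gamma_1+1\le\gamma$. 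Hence this term is bounded by $\A\,\|\overline f\|_{W^{1,1}}$. The condition $\gamma_1>d+1$ is what makes the weights integrable.

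\textbf{Gradient estimate.} Differentiate \eqref{eq:dualb} and test with $\mathrm{sgn}\,\partial_k\overline f$. The transport part gives $\int\operatorname{div} b\,|\nabla\overline f|$ plus $\|\nabla b\|_{L^\infty}\|\nabla\overline f\|_{L^1}$, both controlled by \eqref{227}. The derivative of the zeroth-order term is bounded because $\nabla(b\cdot x\langle x\rangle^{-2})$ is bounded. Derivatives falling on the weight $\langle x\rangle^{-\gamma_1}$ are treated as in the $L^1$ step. The genuinely new term is
\[
\langle x\rangle^{-\gamma_1}\int\nabla_x\F(y,x)\,h(y)\,dy,\qquad h:=\mu\nabla g .
\]
We cannot move the derivative onto $h$, since that would produce $\nabla^2 g$. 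For $s<d-1$ the kernel satisfies $|\nabla_x\F(y,x)|\lesssim|x-y|^{-s-1}+1$ by \eqref{assumpFsing}, which is locally integrable. The same Fubini argument then gives a bound $\A\|\overline f\|_{W^{1,1}}$, and Gronwall on $[0,t_1]$ yields \eqref{fW11}.

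\textbf{Main obstacle: the Coulomb case $s=d-1$.} Here $\nabla_x\F(y,x)\sim|x-y|^{-d}$ is not integrable, and singular integrals are not bounded on $L^1$. We would argue by duality. Pair the term with $\psi\in L^\infty$, $\|\psi\|_{L^\infty}\le1$, which gives
\[
\int h(y)\cdot\Big[\int\nabla_x\F(y,x)\,\langle x\rangle^{-\gamma_1}\psi(x)\,dx\Big]dy .
\]
By the almost antisymmetry \eqref{assumpF3c} with $m=1$ (and \eqref{assumpF2}), the inner bracket equals, up to kernels of size $|x-y|^{-s}+1$ (which are integrable), a divergence of the form $\operatorname{div}_y(\F^{\mathrm{sing}}\ast\phi)(y)$ with $\phi=\langle\cdot\rangle^{-\gamma_1}\psi$, with the convention \eqref{assumpF2G} used to commute the derivative. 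Assumption \eqref{coudet} then bounds this in $L^\infty$ by $\|\langle\cdot\rangle^{\gamma_0}\phi\|_{L^\infty}\lesssim1$, provided $\gamma_0\le\gamma_1$, which holds since $\gamma_0<d+1<\gamma_1$. The term is therefore bounded by $\|h\|_{L^1}\lesssim\A\|\overline f\|_{W^{1,1}}$. For this to be legitimate, $\psi$ must first be approximated by $W^{2,\infty}$ functions, and that approximation together with the precise form of the antisymmetry errors is the step we expect to require the most care. Gronwall then closes the estimate exactly as in the sub-Coulomb case.
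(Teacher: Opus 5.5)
Your $L^1$ estimate (Fubini with $\int\langle x\rangle^{-\gamma_1}|\F(y,x)|\,dx\lesssim\langle y\rangle$) and your gradient estimate for $s<d-1$ are correct. The Coulomb gradient step, which you rightly identify as the crux, fails as proposed.

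After differentiating in $x_k$, the term is $\langle x\rangle^{-\gamma_1}\sum_j\int\partial_{x_k}\F_j(y,x)\,h_j(y)\,dy$. So the bracket you pair against $h$ is the matrix $\int\partial_{x_k}\F_j(y,x)\phi(x)\,dx$, with two free indices: $k$ comes from the outer derivative and $j$ is contracted with $h_j$. The almost-antisymmetry assumptions \eqref{assumpF2}, \eqref{assumpF3c} only let you swap the two slots of $\F$ modulo integrable kernels. They turn this matrix into the full Jacobian $\partial_{y_k}\int\F_j(x,y)\phi(x)\,dx$, never into a trace.

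For the model Coulomb kernel this Jacobian is, up to sign, $\partial_j\partial_k\Gamma\ast\phi$ with $\Gamma$ the Newton potential. That is a second-order Riesz transform, unbounded on (weighted) $L^\infty$: taking $\phi=\operatorname{sgn}(x_1x_2)$ near the origin makes $\partial_1\partial_2\Gamma\ast\phi$ diverge logarithmically at $0$. Assumption \eqref{coudet} controls only the trace $\operatorname{div}(\F^{\mathrm{sing}}\ast\phi)$. Since $h=\mu\nabla g$ is only in $L^1$, you cannot bound the pairing by $\|h\|_{L^1}$ times an $L^\infty$ norm of the bracket, however carefully $\psi$ is approximated.

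The missing idea is to use that the nonlocal operator acts on a gradient. First integrate by parts in $y$:
\[
\int\F(y,x)\cdot\mu(y)\nabla g(y)\,dy=-\int\operatorname{div}_y\F(y,x)\,\mu(y)g(y)\,dy-\int\F(y,x)\cdot\nabla\mu(y)\,g(y)\,dy .
\]
This contracts the component index of $\F$ with a derivative in its first slot. After dualizing in $x$, $\int\operatorname{div}_y\F(y,x)\phi(x)\,dx=\operatorname{div}(\F\ast\phi)(y)$, which is exactly what \eqref{coudet} bounds.

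Then do not let the outer $\partial_{x_k}$ fall on the kernel. Commute it onto the functions $\mu g$ and $g\nabla\mu$ via \eqref{assumpF2G}; the cost is error kernels $1+|x-y|^{-s'}$, which are integrable, acting on $|\nabla(\mu g)|$ and $|\nabla(g\nabla\mu)|$. The main term becomes $-\int\operatorname{div}_y\F^{\mathrm{sing}}(y,x)\,\partial_k(\mu g)(y)\,dy$. It involves only first derivatives of $g$, and the same duality with \eqref{coudet} bounds it by $\A\|\overline f\|_{W^{1,1}}$. This is how the paper obtains \eqref{qq6}. Your objection that moving the derivative onto $h$ produces $\nabla^2 g$ is sidestepped because the gradient has first been taken off $g$.

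A secondary point: bounded sets in $W^{1,1}$ are not weak-$*$ compact (limits only land in $BV$). Once the $W^{1,1}$ bound on the nonlocal operator is available, the paper's Duhamel contraction around the transport propagator in $C([t_1-\delta,t_1];W^{1,1})$ is simpler and avoids your regularization limit.
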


\begin{proof} We first establish the a priori estimate \eqref{fW11} for smooth
solutions. The construction of the solution in
$C([0,t_1];W^{1,1}(\mathbb R^d))$ will be given at the end of the
proof.
We first record the bounds on the coefficients. For $n=1,2$,
\eqref{assumpF2G} gives
\begin{align*}
	|	\nabla_x^n(\F^{\mathrm{sing}}*\mu)(x)|\lesssim
	|	\F^{\mathrm{sing}}*(\nabla^n\mu)(x)|+	\int_{\mathbb R^d}
	\left(
	1+\frac1{|x-y|^{s'}}
	\right)
	\sum_{k=0}^{n-1}|\nabla^k\mu(y)|\,dy.
\end{align*}
Since $s\leq d-1<d$, $s'<d$, and $\gamma>d$, the assumptions
\eqref{assumpFre}, \eqref{assumpFsing}, and \eqref{assumpF2G}
imply
\begin{equation}\label{w3}
	\sum_{n=1}^{2}
	\|\nabla_x^n(\F*\mu)\|_{L^\infty}
	+
	\|\langle\cdot\rangle^{-1}\F*\mu\|_{L^\infty}
	\lesssim
	\A_t.
\end{equation}
We now derive \eqref{fW11} under the additional assumption
\begin{equation}
	\overline f
	\in
	C\bigl([0,t_1];L^1(\mathbb R^d)\cap C^2(\mathbb R^d)\bigr).
\end{equation}
	Moreover, for $\square\in\{\mathrm{sing},\mathrm{reg}\}$, we may write
\begin{align*}
	\F_{\rm dual}^{\square}*
	\bigl(\mu\nabla(\langle\cdot\rangle^{\gamma_1}\overline f)\bigr)(x)
	&=
	\int_{\mathbb R^d}
	\G^\square(x,y)\langle y\rangle^{\gamma_1}\overline f(y)\,dy,
\end{align*}
where
\begin{equation}\label{defGdual}
	\G^\square(x,y)
	:=
	-\operatorname{div}_y\F^\square(y,x)\,\mu(y)
	+
	\F^\square(y,x)\cdot\nabla\mu(y).
\end{equation}

We begin with the regular part. By the mixed-derivative bounds in
\eqref{assumpFre}, together with
$d+1<\gamma_1\leq\gamma-1$, we obtain, for $n=0,1$,
\begin{equation}\label{qq1}
	|\nabla_x^n\G^{\mathrm{reg}}(x,y)|
	\lesssim
	\langle x\rangle^{1-n}
	\langle y\rangle^{-\gamma+1}\A_t.
\end{equation}
Consequently, for $n=0,1$,
\begin{align}\label{qq4}
	&\Bigl\|
	\langle\cdot\rangle^{-\gamma_1}
	\nabla^n
	\F_{\rm dual}^{\mathrm{reg}}*
	\bigl(\mu\nabla(\langle\cdot\rangle^{\gamma_1}\overline f)\bigr)
	\Bigr\|_{L^1}
	\nonumber\\
	&\qquad\lesssim
	\Bigl\|
	\langle\cdot\rangle^{-1}
	\nabla^n
	\F_{\rm dual}^{\mathrm{reg}}*
	\bigl(\mu\nabla(\langle\cdot\rangle^{\gamma_1}\overline f)\bigr)
	\Bigr\|_{L^\infty}
	\lesssim
	\A_t\|\overline f\|_{L^1}.
\end{align}

We next treat the singular part. If $s<d-1$, then \eqref{assumpFsing}
directly gives
\begin{equation}\label{qq5}
	\Bigl\|
	\langle\cdot\rangle^{-\gamma_1}
	\F_{\rm dual}^{\mathrm{sing}}*
	\bigl(\mu\nabla(\langle\cdot\rangle^{\gamma_1}\overline f)\bigr)
	\Bigr\|_{L^1}
	\lesssim
	\A_t\|\overline f\|_{L^1}.
\end{equation}
The endpoint case $s=d-1$ requires a duality argument. Since
$\gamma_1>d+1>\gamma_0$, using \eqref{assumpFsing} we reduce the estimate to
controlling
\begin{align*}
	\int_{\mathbb R^d}
	\left|
	\int_{\mathbb R^d}
	\operatorname{div}_y\F^{\mathrm{sing}}(y,x)\,
	\mu(y)\langle y\rangle^{\gamma_1}\overline f(y)\,dy
	\right|
	\frac{dx}{\langle x\rangle^{\gamma_0}} .
\end{align*}
By duality, this quantity is bounded by
\begin{align*}
	&\sup_{\|g\|_{L^\infty}\leq 1}
	\left|
	\int_{\mathbb R^d}
	g(x)
	\int_{\mathbb R^d}
	\operatorname{div}_y\F^{\mathrm{sing}}(y,x)\,
	\mu(y)\langle y\rangle^{\gamma_1}\overline f(y)\,dy
	\frac{dx}{\langle x\rangle^{\gamma_0}}
	\right|
	\\
	&\quad =
	\sup_{\|g\|_{L^\infty}\leq 1}
	\left|
	\int_{\mathbb R^d}
	\mu(x)\langle x\rangle^{\gamma_1}\overline f(x)\,
	\operatorname{div}_x
	\F^{\mathrm{sing}}*
	\bigl(g\langle\cdot\rangle^{-\gamma_0}\bigr)(x)
	\,dx
	\right|
	\\
	&\quad \leq
	\sup_{\|g\|_{L^\infty}\leq 1}
	\|\mu\langle\cdot\rangle^{\gamma_1}\overline f\|_{L^1}
	\,
	\Bigl\|
	\operatorname{div}_x
	\F^{\mathrm{sing}}*
	\bigl(g\langle\cdot\rangle^{-\gamma_0}\bigr)
	\Bigr\|_{L^\infty}.
\end{align*}
Using \eqref{coudet}, this is bounded by $
\A_t\|\overline f\|_{L^1}.$
Hence \eqref{qq5} holds for every $s\leq d-1$.

It remains to estimate the gradient of the singular contribution. By
\eqref{assumpF2G}, we obtain
\begin{align*} &\Bigl\| \langle\cdot\rangle^{-\gamma_1} \nabla_x \F_{\rm dual}^{\mathrm{sing}}* \bigl(\mu\nabla(\langle\cdot\rangle^{\gamma_1}\overline f)\bigr) \Bigr\|_{L^1} \\ &\qquad\lesssim \int_{\mathbb R^d} \int_{\mathbb R^d} \left(1+\frac{1}{|x-y|^{s'}}\right) \left| \nabla_y\bigl(\mu(y)\langle y\rangle^{\gamma_1}\overline f(y)\bigr) \right| dy\, \frac{dx}{\langle x\rangle^{\gamma_0}} \\ &\qquad\quad+ \int_{\mathbb R^d} \left| \int_{\mathbb R^d} \operatorname{div}_y\F^{\mathrm{sing}}(y,x)\, \nabla_y\bigl(\mu(y)\langle y\rangle^{\gamma_1}\overline f(y)\bigr) \,dy \right| \frac{dx}{\langle x\rangle^{\gamma_0}} . \end{align*} The first term is bounded by $\A_t\|\overline f\|_{W^{1,1}}$. Thus \begin{align*} &\Bigl\| \langle\cdot\rangle^{-\gamma_1} \nabla_x \F_{\rm dual}^{\mathrm{sing}}* \bigl(\mu\nabla(\langle\cdot\rangle^{\gamma_1}\overline f)\bigr) \Bigr\|_{L^1} \\ &\qquad\lesssim \A_t\|\overline f\|_{W^{1,1}} + \int_{\mathbb R^d} \left| \int_{\mathbb R^d} \operatorname{div}_y\F^{\mathrm{sing}}(y,x)\, \nabla_y\bigl(\mu(y)\langle y\rangle^{\gamma_1}\overline f(y)\bigr) \,dy \right| \frac{dx}{\langle x\rangle^{\gamma_0}} . \end{align*}
The last term is estimated by the same duality argument as above, giving
\[
\int_{\mathbb R^d}
\left|
\int_{\mathbb R^d}
\operatorname{div}_y\F^{\mathrm{sing}}(y,x)\,
\nabla_y\bigl(\mu(y)\langle y\rangle^{\gamma_1}\overline f(y)\bigr)
\,dy
\right|
\frac{dx}{\langle x\rangle^{\gamma_0}}
\lesssim
\A_t\|\overline f\|_{W^{1,1}}.
\]
Therefore,
\[
\Bigl\|
\langle\cdot\rangle^{-\gamma_1}
\nabla_x
\F_{\rm dual}^{\mathrm{sing}}*
\bigl(\mu\nabla(\langle\cdot\rangle^{\gamma_1}\overline f)\bigr)
\Bigr\|_{L^1}
\lesssim
\A_t\|\overline f\|_{W^{1,1}}.
\]
Combining this estimate with \eqref{qq4} and \eqref{qq5}, we obtain
\begin{equation}\label{qq6}
	\sum_{n=0,1}
	\Bigl\|
	\langle\cdot\rangle^{-\gamma_1}
	\nabla^n
	\F_{\rm dual}*
	\bigl(\mu\nabla(\langle\cdot\rangle^{\gamma_1}\overline f)\bigr)
	\Bigr\|_{L^1}
	\lesssim
	\A_t\|\overline f\|_{W^{1,1}} .
\end{equation}
We now derive the $W^{1,1}$ estimate. For $\varepsilon>0$, define
	\begin{align*}
		&	\phi_\ep(r):=\frac{r}{\sqrt{r^2+\ep^2}},\\&
		\overline\phi_\ep(r):=\sqrt{r^2+\ep^2}-\ep.
	\end{align*}
Note that
\begin{equation}
	0\leq
	\overline\phi_\varepsilon(r)
	\leq |r|,
	\qquad
	|\phi_\varepsilon(r)|\leq1,
	\qquad
	\overline\phi_\varepsilon(r)\longrightarrow|r|,~~\text{as}\varepsilon\to0.
\end{equation}
		Using $\phi_\varepsilon(\overline f)$ as a test function in
	\eqref{eq:dualb}, together with \eqref{w3} and \eqref{qq6}, gives
	\begin{equation}\label{L1eps}
		\left|
		\partial_t
		\|\overline\phi_\varepsilon(\overline f)\|_{L^1}
		\right|
		\lesssim
		\A_t\|\overline f\|_{L^1}.
	\end{equation}
	Similarly, using
	$\partial_{x_j}\phi_\varepsilon(\partial_{x_j}\overline f)$ as a test
	function, for $j=1,\ldots,d$, yields
	\begin{equation}\label{W11eps}
		\left|
		\partial_t
		\|\overline\phi_\varepsilon(\partial_{x_j}\overline f)\|_{L^1}
		\right|
		\lesssim
		\A_t\|\overline f\|_{W^{1,1}} .
	\end{equation}
	Summing \eqref{W11eps} over $j$, adding \eqref{L1eps}, applying Grönwall's
	inequality, and finally letting $\varepsilon\to0$, we obtain \eqref{fW11}.\\
	It remains to construct the solution. Define, for
	$f\in L^1([0,T],W^{1,1}(\mathbb R^d))$,
	\begin{align}
		\mathcal K_t f
		:=-\langle x\rangle^{-\gamma_1}\F_{\rm dual}\ast(\mu\nabla( \langle \cdot \rangle^{\gamma_1}f))-\gamma_1 (\F\ast\mu \cdot x\langle x\rangle^{-2})f.
	\end{align}
	The estimates \eqref{w3} and \eqref{qq6}, initially proved for smooth
	functions and then extended by density, imply
	\begin{equation}\label{bounded-K-dual}
		\|\mathcal K_tf\|_{W^{1,1}}
		\lesssim
		\A_t\|f\|_{W^{1,1}}.
	\end{equation}
	Let $U(t,s)$ denote the backward evolution operator associated with
$\partial_tu+\F*\mu\cdot\nabla u=0.$
	By \eqref{w3}, the vector field $\F*\mu$ is globally Lipschitz in space
	and has at most linear growth. It therefore generates a global
	characteristic flow, and
	\begin{equation}\label{transport-propagator-bound}
		\|U(t,s)g\|_{W^{1,1}}
		\leq
		\exp\left(
		C\int_t^s\A_\tau\,d\tau
		\right)
		\|g\|_{W^{1,1}},
		\qquad
		0\leq t\leq s\leq t_1.
	\end{equation}
	Equation \eqref{eq:dualb} is equivalent to the integral equation
	\begin{equation}\label{dualb-duhamel}
		\overline f(t)
		=
		U(t,t_1)\sigma
		-
		\int_t^{t_1}
		U(t,\tau)\mathcal K_\tau\overline f(\tau)\,d\tau.
	\end{equation}
	By \eqref{bounded-K-dual} and
	\eqref{transport-propagator-bound}, the right-hand side of
	\eqref{dualb-duhamel} defines a contraction on
	$C([t_1-\delta,t_1];W^{1,1})$ when $\delta>0$ is sufficiently small.
	Iterating this construction over finitely many successive intervals
	gives
	\begin{equation}
		\overline f\in
		C\bigl([0,t_1];W^{1,1}(\mathbb R^d)\bigr).
	\end{equation}
	The solution satisfies \eqref{eq:dualb} in the distributional sense.
	Uniqueness follows by applying \eqref{fW11} to the difference of two
	solutions with the same terminal datum. This completes the proof.
\end{proof}

\begin{lem}[Dual transport: sub-Coulomb case]\label{Le-dual1}
Assume $s\le d-1$. Let $\mu\in C([0,T];C^2(\mathbb{R}^d))$ be a prescribed function satisfying $$\sup_{t\in [0,T]}\A(\mu^t)<\infty.$$
Let $\omega$ solve
\begin{equation}\label{eq:omega}\partial_t\omega
  =
  -\operatorname{div}((\F\ast\omega )\mu+(\F\ast\mu) \omega)
  +\mathbf R.
\end{equation}
Then, for any $t\in (0,T]$ and $d+1<\gamma_1\leq \gamma-1$
\begin{equation}\label{omega_est}
	\|\langle \cdot \rangle^{\gamma_1}\omega\|_{L^\infty([0,t],(W^{1,1})^*)}
	\lesssim \left(\|\langle\cdot\rangle^{\gamma_1}\omega(0)\|_{(W^{1,1})^*} +\|\langle \cdot\rangle^{\gamma_1}\mathbf R \|_{L^1([0,t],(W^{1,1})^*)}\right) e^{Ct\A_t}
\end{equation}
where $C>0$ depends only on $d$ and $s$.
\end{lem}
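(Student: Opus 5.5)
The plan is a duality argument built on the backward equation of Lemma~\ref{transesle}. Fix $t_1\in(0,t]$ and a test function $\sigma\in W^{1,1}(\mathbb R^d)$ with $\|\sigma\|_{W^{1,1}}\le 1$. Let $\overline f$ be the solution of \eqref{eq:dualb} on $[0,t_1]$ with terminal datum $\overline f(t_1)=\sigma$. Lemma~\ref{transesle} supplies this solution together with the bound \eqref{fW11}. The quantity to track is
\[
\mathcal I(\tau):=\int_{\mathbb R^d}\langle x\rangle^{\gamma_1}\omega(\tau,x)\,\overline f(\tau,x)\,dx .
\]
We have $\mathcal I(t_1)=\int\langle x\rangle^{\gamma_1}\omega(t_1)\sigma$, and taking the supremum over $\sigma$ gives $\|\langle\cdot\rangle^{\gamma_1}\omega(t_1)\|_{(W^{1,1})^*}$.

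First, differentiate $\mathcal I$ in $\tau$, using \eqref{eq:omega} for $\omega$ and \eqref{eq:dualb} for $\overline f$. Write $g:=\langle\cdot\rangle^{\gamma_1}\overline f$ and integrate the divergence terms by parts. The transport term $-\operatorname{div}((\F*\mu)\omega)$, tested against $g$, gives $\int\omega\,(\F*\mu)\cdot\nabla g$. Expanding $\nabla g=\langle x\rangle^{\gamma_1}\bigl(\nabla\overline f+\gamma_1 x\langle x\rangle^{-2}\overline f\bigr)$, this term cancels exactly against the last two terms of \eqref{eq:dualb} multiplied by $\langle x\rangle^{\gamma_1}\omega$.

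The nonlocal term $-\operatorname{div}((\F*\omega)\mu)$, tested against $g$, gives
\[
\iint \F(x,y)\,\omega(y)\,\mu(x)\cdot\nabla g(x)\,dy\,dx .
\]
Swapping the variables, this equals $\int\omega(y)\,\F_{\rm dual}*(\mu\nabla g)(y)\,dy$. It therefore cancels the first term of \eqref{eq:dualb}, since that term carries the prefactor $\langle x\rangle^{-\gamma_1}$ and is then multiplied by $\langle x\rangle^{\gamma_1}\omega$. This exact cancellation is the reason \eqref{eq:dualb} was chosen in that form. We are left with
\[
\frac{d}{d\tau}\mathcal I=\int\langle x\rangle^{\gamma_1}\mathbf R\,\overline f .
\]
Integrating over $[0,t_1]$ and using \eqref{fW11} gives
\[
|\mathcal I(t_1)|\le \|\langle\cdot\rangle^{\gamma_1}\omega(0)\|_{(W^{1,1})^*}\,\|\overline f(0)\|_{W^{1,1}}+\int_0^{t_1}\|\langle\cdot\rangle^{\gamma_1}\mathbf R\|_{(W^{1,1})^*}\|\overline f(\tau)\|_{W^{1,1}}\,d\tau .
\]
Each factor $\|\overline f\|_{W^{1,1}}$ is at most $Ce^{Ct_1\A_{t_1}}$. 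Taking the supremum over $\sigma$ and then over $t_1\le t$ yields \eqref{omega_est}, because $\A_{t_1}\le\A_t$.

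The main obstacle is justifying the computation for the regularity at hand. The difficulty is twofold. First, $\omega$ is only a distribution, with $\langle\cdot\rangle^{\gamma_1}\omega\in(W^{1,1})^*$. Second, $\overline f$ is only in $C([0,t_1];W^{1,1})$, so the pairing and the exchange of integrals in the nonlocal term are not a priori meaningful, particularly when $s=d-1$ and $\F$ is not absolutely integrable against $\nabla g$.

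I would first try a regularization. Mollify $\sigma$, and if needed $\omega$ and $\mathbf R$ in space, so that the smooth-solution estimate in Lemma~\ref{transesle} applies. The identity for $\frac{d}{d\tau}\mathcal I$ is then rigorous. One then passes to the limit using the linearity of \eqref{eq:dualb}, the uniform bound \eqref{fW11}, and the continuity estimate \eqref{qq6}. The kernel swap in the Coulomb case is controlled by the duality argument built on \eqref{coudet}, exactly as in the proof of Lemma~\ref{transesle}. The decay needed to integrate by parts against the weight $\langle x\rangle^{\gamma_1}$ comes from $\gamma_1\le\gamma-1$ together with the decay of $\mu$ encoded in $\A$.
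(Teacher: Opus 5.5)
Your proposal is correct and follows the paper's proof essentially verbatim. The paper also solves the backward problem \eqref{eq:dualb} with terminal datum $\sigma$ and works with $f=\langle x\rangle^{\gamma_1}\overline f$ (your $g$); it checks $\frac{d}{dt}\int\omega f=\int\mathbf R f$ via the same two cancellations and concludes from \eqref{fW11} after taking the supremum over $\sigma$, the only difference being that it carries out this pairing computation formally, without the regularization step you sketch.
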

\begin{proof}
\textbf{Step 1: Backward equation.}
Fix $t_1\in[0,T]$, and let $f$ solve the backward equation
\begin{equation}\label{eq:dual}
  \partial_t f
  = -\F_{\rm dual}\ast(\mu\nabla f)
    -(\F\ast\mu)\nabla f,
  \qquad
  f(t_1)=\langle x\rangle^{\gamma_1}\sigma.
\end{equation}
with $\sigma\in W^{1,1}$.\\
Set
\begin{align*}
	\overline f:=\langle x\rangle^{-\gamma_1}f.
\end{align*}
Then $\overline f$ satisfies \eqref{eq:dualb}. By Lemma \ref{transesle}, there exists a unique solution $f$, and $\overline f$ satisfies the estimate \eqref{fW11}.
\medskip\\
\noindent\textbf{Step 2: Duality argument.}
Using \eqref{eq:omega} and an integration by parts, we compute \begin{align*}
	\partial_t\!\int\omega(t)\,f(t) \,dx
	&= \int ( \omega(t) \partial_tf(t)+\partial_t\omega(t) f(t))dx
	\\&= \int  \(\omega(t) \partial_t f(t)+\left(  -\operatorname{div}((\F\ast\omega )\mu+(\F\ast\mu) \omega)
	+\mathbf R\right) f(t)\)dx\\&
	= \int \omega
	\Bigl[
	\partial_t f\
	+ \F_{\rm dual}\ast\bigl(\mu\nabla f\bigr)
	+(\F\ast\mu)\nabla f
	\Bigr]dx+\int \mathbf R f(t)dx\\&=\int \mathbf R f(t)dx.
\end{align*}
Integrating over $[0,t_1]$, we obtain
\begin{align*}
	\int\langle x\rangle^{\gamma_1}\omega(t_1)\sigma dx
	=\int\langle x\rangle^{\gamma_1}\omega(0)\overline{f}(0) dx+\int_{0}^{t_1}\int \langle x\rangle^{\gamma_1}\mathbf R \overline{f}(\tau)dxd\tau.
\end{align*}
Therefore,
\begin{align*}
	\left|	\int\langle x\rangle^{\gamma_1}\omega(t_1)\sigma dx\right|
	\lesssim \left(\|\langle\cdot\rangle^{\gamma_1}\omega(0)\|_{(W^{1,1})^*} +\|\langle\cdot\rangle^{\gamma_1}\mathbf R \|_{L^1([0,t_1],(W^{1,1})^*)}\right) \|\overline{f}\|_{L^\infty([0,t_1],W^{1,1})}.
\end{align*}
By \eqref{fW11}, it follows that
\begin{align*}
	\left|	\int\langle x\rangle^{\gamma_1}\omega(t_1)\sigma dx\right|
	\lesssim \left(\|\langle\cdot\rangle^{\gamma_1}\omega(0)\|_{(W^{1,1})^*} +\|\langle\cdot\rangle^{\gamma_1}\mathbf R \|_{L^1([0,t_1],(W^{1,1})^*)}\right) e^{Ct_1\A_{t_1}}\,\|\sigma\|_{W^{1,1}}.
\end{align*}
Since this holds for every $\sigma\in W^{1,1}(\mathbb R^d)$, we deduce that
\begin{align*}
\|\langle\cdot\rangle^{\gamma_1}\omega(t_1)\|_{(W^{1,1})^*}
	\lesssim \left(\|\langle\cdot\rangle^{\gamma_1}\omega(0)\|_{(W^{1,1})^*} +\|\langle\cdot\rangle^{\gamma_1}\mathbf R \|_{L^1([0,t_1],(W^{1,1})^*)}\right) e^{Ct_1\A_{t_1}}
\end{align*}
for any $t_1\in [0,T]$.
This proves \eqref{omega_est}.
\end{proof}

We next turn to the case $s >d-1$.
\begin{lem}\label{transesle2} 
	Assume that $s> d-1$, and let $t_1\in[0,T_0]$. Let $\mu\in C([0,T];C^2(\mathbb{R}^d))$ be a prescribed function satisfying $$\sup_{t\in [0,T]}\A(\mu^t)<\infty.$$ Then, for any $\sigma\in W^{1,1}(\mathbb{R}^d)$ and $d+1<\gamma_1\leq \gamma-1$, there exists a unique solution $\overline f$ to
	\begin{equation}\label{eq:dualb2}
		\partial_t 	\overline{f}
		= -\langle x\rangle^{-\gamma_1}\F^{\eta_*}_{\eta_*,\mathrm{dual}}\ast(\mu\nabla( \langle\cdot\rangle^{\gamma_1}	\overline{f}))
		-\F^{\eta_*}_{\eta_*}\ast\mu\,\nabla 	\overline{f}-\gamma_1 (\F^{\eta_*}_{\eta_*}\ast\mu\cdot x\langle x\rangle^{-2})\overline{f} ,
		\qquad
		\overline{f}(\eta_*^{s-d+1}t_1)=\sigma.
	\end{equation}
in $[0,\eta_*^{s-d+1}t_1]$
	such that
	\begin{equation}\label{fW12}
		\sup_{t\in[0,t_1]}\|\overline{f}(\eta_*^{s-d+1}t)\|_{W^{1,1}}
		\le Ce^{Ct_1\A_{\eta_*^{s-d+1}t_1}}\|\sigma\|_{W^{1,1}}.
	\end{equation}
\end{lem}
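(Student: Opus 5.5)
We follow the proof of Lemma~\ref{transesle}, replacing the kernel $\F$ by the regularized kernel $\F^{\eta_*}_{\eta_*}$ and working in the rescaled time $t=\eta_*^{s-d+1}\tau$. The key point is that every coefficient bound now costs a negative power of $\eta_*$. On the time interval $[0,\eta_*^{s-d+1}t_1]$ this power is exactly compensated by the length of the interval, so Gr\"onwall gives an $N$-independent exponent.

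\textbf{Step 1: coefficient bounds.} For the velocity field $\F^{\eta_*}_{\eta_*}*\mu$ we use \eqref{227} with $\eta=\eta_*$ together with the regular-part growth in \eqref{assumpFre}. This gives $\|\nabla^n(\F^{\eta_*}_{\eta_*}*\mu)\|_{L^\infty}\lesssim \mathcal C(\eta_*)\A_t$ for $n=1,2$, and $\|\langle\cdot\rangle^{-1}\F^{\eta_*}_{\eta_*}*\mu\|_{L^\infty}\lesssim\A_t$. Here one should also check that the bound from \eqref{227} is in fact no worse than $\eta_*^{d-1-s}\A_t$. For the dual operator we write it as $\int \G(x,y)\langle y\rangle^{\gamma_1}\overline f(y)\,dy$, with $\G$ as in \eqref{defGdual} but built from the mollified kernel. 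Then $\G$ involves $\nabla_y\F^{\eta_*}_{\eta_*}(y,x)$, which by \eqref{proF0} is bounded pointwise by $(\eta_*+|x-y|)^{-\min\{s,d+1\}-1}$ up to the factor $1+\mathcal C(\eta_*,\eta_*)$, and $\mathcal C(\eta_*,\eta_*)\lesssim 1$ since $\log 2$ and $\eta_*^{-s+d+1}\eta_*^{\ldots}$ combine appropriately. Integrating in $x$ against $\langle x\rangle^{-\gamma_1}$, via \eqref{le231} (and \eqref{le232} for one more derivative, applied to the $\nabla^n$ of the operator), gives
\[
\sum_{n=0,1}\big\|\langle\cdot\rangle^{-\gamma_1}\nabla^n\F^{\eta_*}_{\eta_*,\mathrm{dual}}*(\mu\nabla(\langle\cdot\rangle^{\gamma_1}\overline f))\big\|_{L^1}\lesssim \eta_*^{d-1-s}\A_t\|\overline f\|_{W^{1,1}}.
\]
For $s\ge d+1$ the factor $\eta_*^{-s+d+1}\eta_*^{-2}$ from \eqref{le231} must likewise be shown to be at most $\eta_*^{d-1-s}$; this follows by direct comparison. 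No Coulomb duality argument such as \eqref{coudet} is needed here, since the mollification removes the endpoint nonintegrability at the price of the power $\eta_*^{d-1-s}$.

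\textbf{Step 2: a priori estimate and Gr\"onwall.} Testing \eqref{eq:dualb2} with $\phi_\varepsilon(\overline f)$ and with $\partial_{x_j}\phi_\varepsilon(\partial_{x_j}\overline f)$ exactly as in Lemma~\ref{transesle}, the transport term contributes only through $\nabla(\F^{\eta_*}_{\eta_*}*\mu)$ and the zeroth-order weight term, by integration by parts. We obtain
\[
\big|\partial_t\|\overline f\|_{W^{1,1}}\big|\lesssim \eta_*^{d-1-s}\A_t\|\overline f\|_{W^{1,1}}.
\]
Integrating backward over an interval of length $\eta_*^{s-d+1}t_1$ gives $e^{Ct_1\A}$, which is \eqref{fW12}.

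\textbf{Step 3: construction.} As in Lemma~\ref{transesle}, we use the Duhamel formula built on the characteristic flow of the Lipschitz field $\F^{\eta_*}_{\eta_*}*\mu$, which is smooth for fixed $N$. The fixed-point argument runs on short intervals of length $\sim\eta_*^{s-d+1}/\A$ and is iterated finitely many times. Uniqueness follows from the a priori bound applied to the difference of two solutions.

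\textbf{Main obstacle.} The main obstacle is Step~1: obtaining the sharp factor $\eta_*^{d-1-s}$ uniformly across the regimes $s\in(d-1,d+1)$, $s=d+1$, and $s>d+1$. This requires a careful comparison of the factors $\mathcal C(\eta_*)$ and $\log(2\eta/\eta_*)$ appearing in \eqref{le231}, \eqref{le232} and \eqref{227} with $\eta=\eta_*$. If \eqref{227} is too lossy for the $\nabla^2$ bound on $\F^{\eta_*}_{\eta_*}*\mu$, we instead bound it directly by \eqref{le232} with $\eta=\eta_*$ and $\|\langle\cdot\rangle^\gamma\mu\|_{L^\infty}$.
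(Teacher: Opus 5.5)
Your overall route is the paper's. You bound the coefficients of \eqref{eq:dualb2} by $\mathcal C(\eta_*)\A_t\lesssim \eta_*^{d-1-s}\A_t$ using \eqref{227}, and the nonlocal dual term in $W^{1,1}$ by $\eta_*^{d-1-s}\A_t\|\overline f\|_{W^{1,1}}$. Then the interval length $\eta_*^{s-d+1}t_1$ absorbs this power in Gr\"onwall, with construction and uniqueness as in Lemma~\ref{transesle}. There is, however, one step where your justification does not give the power you claim, and it is exactly the step you single out as the crux.

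\textbf{The gap.} For $n=1$ you differentiate the $\G$-representation in $x$. Since $\G(x,y)$ already contains $\nabla_1\F^{\eta_*}_{\eta_*}(y,x)$, the extra $\nabla_x$ falls on the second (mollified) variable, so you are integrating a second-order derivative of the kernel. Then \eqref{le232} with $\eta=\eta_*$ gives $\eta_*^{d-s-2}$ in every regime ($s<d+1$, $s=d+1$, $s>d+1$). That is an extra factor $\eta_*^{-1}$ relative to $\eta_*^{d-1-s}$, and the Gr\"onwall exponent becomes $Ct_1\eta_*^{-1}\A$, which is not uniform in $N$. (The paper's one-line proof also points to \eqref{le232} here, so the same caveat applies to it.)

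\textbf{The fix.} Use the fact that the right-hand side of \eqref{fW12} contains the full $\|\overline f\|_{W^{1,1}}$. For the gradient, do not integrate by parts in $y$. Instead differentiate $\langle x\rangle^{-\gamma_1}\int \F^{\eta_*}_{\eta_*}(y,x)\mu(y)\nabla_y(\langle y\rangle^{\gamma_1}\overline f(y))\,dy$ directly, so that exactly one derivative lands on the kernel (in its second variable) and one on $\overline f$. The pointwise bound \eqref{proF0} with $m=1$ holds equally for a derivative in the second variable. Indeed, its proof reduces to bounding $\int \F^{\mathrm{sing},\eta_*}(x,y+z)\nabla_z(\cdot)\,dz$ against the (cut-off) Gaussian, which is precisely that derivative. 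Integrating against $\langle x\rangle^{-\gamma_1}$ then gives $\eta_*^{d-1-s}$ as in \eqref{le231}, hence the claimed $\eta_*^{d-1-s}\A_t\|\overline f\|_{W^{1,1}}$.

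\textbf{Two smaller points.} Your fallback of bounding $\nabla^2(\F^{\eta_*}_{\eta_*}*\mu)$ via \eqref{le232} would lose the same $\eta_*^{-1}$. It is also unnecessary: the $W^{1,1}$ estimate uses only $\nabla(\F^{\eta_*}_{\eta_*}*\mu)$ and $\langle\cdot\rangle^{-1}\F^{\eta_*}_{\eta_*}*\mu$, and \eqref{227} already gives $\mathcal C(\eta_*)\A_t\lesssim\eta_*^{d-1-s}\A_t$. Also, $\mathcal C(\eta_*,\eta_*)=\eta_*^{d+1-s}$ is not $O(1)$ when $s>d+1$. Your later bookkeeping $\eta_*^{d+1-s}\eta_*^{-2}=\eta_*^{d-1-s}$ is the correct accounting.
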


\begin{proof} 	The proof is similar to that of Lemma \ref{transesle}. By \eqref{226} and \eqref{227}, we have
	\begin{align}\label{w4}
		\sum_{n=1,2}	\|\nab_x^n(\F^{\eta_*}_{\eta_*}\ast\mu^t)\|_{L^\infty}+\|\langle\cdot\rangle^{-1}\F^{\eta_*}_{\eta_*}\ast\mu^t\|_{L^\infty}\lesssim 	\mathcal C(\eta_*)\A_t.
	\end{align}
Moreover, by \eqref{le232},
	\begin{align}
		\|\langle \cdot \rangle^{-\gamma_1}\F^{\eta_*}_{\eta_*,\mathrm{dual}}\ast(\mu^t \nabla( \langle\cdot\rangle^{\gamma_1}	\overline{f}(t)))\|_{W^{1,1}}\lesssim  	 \eta^{d-1-s}_*\A_t \|\overline{f}\|_{W^{1,1}}.
	\end{align}
Therefore
	\begin{equation}\label{fW13}
	\sup_{t\in[0,\eta_*^{s-d+1}t_1]}\|\overline{f}(t)\|_{W^{1,1}}
	\le Ce^{Ct_1\A_{\eta_*^{s-d+1}t_1}}\|\sigma\|_{W^{1,1}}.
\end{equation}
	This yields \eqref{fW12}.
\end{proof}
The proof of the  following lemma follows the same lines as 
that of Lemma \ref{Le-dual1} modulo a  time-rescaling.
\begin{lem}[Dual transport: super-Coulomb case]\label{Le-dual2}
Assume $s>d-1$. Let $\mu\in C([0,T];C^2(\mathbb{R}^d))$ be a prescribed function satisfying $$\sup_{t\in [0,T]}\A(\mu^t)<\infty.$$
Let $\omega$ satisfy
\begin{equation}\label{eq:omega-super}
  \partial_t\omega
  = -\operatorname{div}\bigl(
  (\F^{\eta_*}_{\eta_*}\ast\omega) \mu
    +(\F^{\eta_*}_{\eta_*}\ast\mu)\omega
  \bigr)
  + \mathbf R
\end{equation}
on the time interval $[0,T_0\eta_*^{s-d+1}]$.
Then, for any $t\in [0,T_0]$ and $d+1<\gamma_1\leq \gamma-1$,
\begin{equation}\label{omega-super-est}
  \|\langle\cdot\rangle^{\gamma_1}\omega(t \eta_*^{s-d+1})\|_{(W^{1,1})^*}
  \lesssim
\left(  \|\langle\cdot\rangle^{\gamma_1}\omega(0)\|_{(W^{1,1})^*}
  + \|\langle\cdot\rangle^{\gamma_1}\mathbf R\|_{L^1([0,t\eta_*^{s-d+1}],\,(W^{1,1})^*)}\right) e^{Ct\A_{\eta_*^{s-d+1}t}}.
\end{equation}
\end{lem}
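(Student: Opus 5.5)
The plan is to copy the duality argument of Lemma~\ref{Le-dual1} almost verbatim, with $\F$ replaced by $\F^{\eta_*}_{\eta_*}$ and the backward estimate of Lemma~\ref{transesle} replaced by that of Lemma~\ref{transesle2}. Fix $t_1\in[0,T_0]$, set $\tau_1:=t_1\eta_*^{s-d+1}$, and take any $\sigma\in W^{1,1}(\mathbb R^d)$. Let $\overline f$ be the solution of \eqref{eq:dualb2} on $[0,\tau_1]$ with $\overline f(\tau_1)=\sigma$, which exists by Lemma~\ref{transesle2}. Put $f:=\langle x\rangle^{\gamma_1}\overline f$. A direct computation, the same as in Step~1 of Lemma~\ref{Le-dual1}, shows that $f$ solves
\begin{equation*}
\partial_t f=-\F^{\eta_*}_{\eta_*,\mathrm{dual}}\ast(\mu\nabla f)-(\F^{\eta_*}_{\eta_*}\ast\mu)\cdot\nabla f,\qquad f(\tau_1)=\langle x\rangle^{\gamma_1}\sigma .
\end{equation*}
The term $-\gamma_1(\F^{\eta_*}_{\eta_*}\ast\mu\cdot x\langle x\rangle^{-2})\overline f$ in \eqref{eq:dualb2} is exactly what $\nabla\langle x\rangle^{\gamma_1}$ produces when this is rewritten in terms of $\overline f$.

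Next, differentiate $\int\omega f\,dx$ using \eqref{eq:omega-super} and integrate by parts. The two transport terms are transposed by the identity
\begin{equation*}
\int \operatorname{div}\big((\F^{\eta_*}_{\eta_*}\ast\omega)\mu\big)f=-\int\omega\,\F^{\eta_*}_{\eta_*,\mathrm{dual}}\ast(\mu\nabla f),
\end{equation*}
which is Fubini together with the definition $\F_{\mathrm{dual}}(x,y)=\F(y,x)$. Since $\F^{\eta_*}_{\eta_*}$ is the $\eta_*$-mollified kernel, all of these manipulations are legitimate for $s>d-1$ at fixed $N$. The boundary terms at infinity vanish because of the weights $\langle x\rangle^{-\gamma}$ and the $W^{1,1}$ decay of $\overline f$. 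The result is $\partial_t\int\omega f=\int\mathbf R f$. Integrating over $[0,\tau_1]$ gives
\begin{equation*}
\int\langle x\rangle^{\gamma_1}\omega(\tau_1)\sigma=\int\langle x\rangle^{\gamma_1}\omega(0)\overline f(0)+\int_0^{\tau_1}\!\!\int\langle x\rangle^{\gamma_1}\mathbf R\,\overline f .
\end{equation*}

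Bound each pairing by the $(W^{1,1})^*$ norm times $\sup_{[0,\tau_1]}\|\overline f\|_{W^{1,1}}$. By \eqref{fW12}, this supremum is at most $Ce^{Ct_1\A_{\eta_*^{s-d+1}t_1}}\|\sigma\|_{W^{1,1}}$. Taking the supremum over $\|\sigma\|_{W^{1,1}}\le1$ yields \eqref{omega-super-est} at $t=t_1$.

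The only real content is the uniformity of the backward bound \eqref{fW12} on the long physical interval. The factor $\eta_*^{d-1-s}$ in the operator bounds behind Lemma~\ref{transesle2} is exactly compensated by the rescaled time length $\eta_*^{s-d+1}t_1$, so the Grönwall exponent stays $O(t_1\A)$. Since Lemma~\ref{transesle2} is assumed, this step is taken as given. What remains is the routine justification of the integration by parts for the weak solution $\omega$, done by approximating $\sigma$ with smooth compactly supported functions and using density in $W^{1,1}$.
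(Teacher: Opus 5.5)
Your proposal is correct and takes the paper's approach: the paper proves this lemma by rerunning the duality argument of Lemma~\ref{Le-dual1}, with $\F$ replaced by $\F^{\eta_*}_{\eta_*}$, on the physical interval $[0,t_1\eta_*^{s-d+1}]$. As in your proposal, it uses the backward bound \eqref{fW12} of Lemma~\ref{transesle2}, where the rescaled time length offsets the $\eta_*^{d-1-s}$ loss in the coefficients.
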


\section{The time-evolution of the metric}
In \S\ref{sec:eqmollified} we derive the evolution equation satisfied by the mollified error $\tilde\mu_{N,\eta} := \mu_{N,\eta} - \mu$, identifying all the error terms $\R^1$--$\R^{12}$ that arise from the discretization. In the following subsections, we start controlling these error terms.

\subsection{The equation for the mollified quantity}\label{sec:eqmollified}
The first step is to show that the mollified empirical measure $\mu_{N,\eta}$ solves the mean-field equation, up to error terms. The bulk of the work will then consist in estimating these error terms.

We define
\begin{equation}\label{tildefneta}
	\tilde\mu_{N,\eta}
	\;:=\;
	\mu_{N,\eta}-\mu,
\end{equation}
where $\mu_{N,\eta}$ is defined as in \eqref{fneta}. The function $\mu$ denotes here
\begin{equation}\label{choixmu}
\begin{cases}
\text{the solution
of the mean-field equation \eqref{mflimit} when $s\le d-1$}\\
\text{the solution of the
regularized equation \eqref{mflimitsup} when $s\in (d-1,d)$}\\
\text{  $\mu\equiv \mu^0$ when $s\geq d$.}\end{cases}\end{equation}

We prove the following algebraic decomposition.
\begin{lem}\label{lem2.1}
Let $\eta\ge \eta_0 \geq\eta_*>0$ be such that $\eta= 2^n \eta_0$ for some integer $n\ge 0$. For any $X_N$, and any $\mu$, letting $\tilde \mu_{N,\eta}= \mu_{N,\eta}-\mu$ and using the same notation as in \eqref{defFeta}, we have 
	\begin{equation}\label{3.3}\operatorname{div} ((\F^{\eta_*}*\mu) \mu) -\frac1{N^{2}} \sum_{i=1}^N 	\sum_{j\neq i} \F^{\eta_*}(x_i,x_j)\cdot \nab \Phi_{x_i}^\eta =	 -\F^{\eta_*} \star 	\tilde \mu_{N, \eta}   \cdot \nab \mu -	\F^{\eta_*} \star 	 \mu_{N, \eta}\cdot \nab \tilde \mu_{N,\eta}   + \sum_{k=1}^{7} \R_{N, \eta}^k,
	\ee
		where, letting $\kappa_0$ be as in \eqref{defc0},
		\begin{align} \label{defr2}&
	 \R_{N, \eta}^1:=   -\frac1{N^2}\sum_{i\neq j} \F^{\eta_*}_{<\eta_0}(x_i,x_j) \cdot \nab  \pex(x),
\\
\label{defr1'} &
\R_{N,\eta}^2=  - \frac1{N^2}\sum_{i\neq j} \big(\F^{\eta_*}_{<\eta}(x_i,x_j) -\F^{\eta_*}_{<\eta_0}(x_i,x_j) \big) \cdot \nab  \pex(x),
\\
\label{defr3} &
	  \R_{N, \eta}^3:= -\frac1{N}  \sum_{i=1}^N  \big( \F^{\eta_*}_{\kappa_0\eta} *  \tilde\mu_{N,\eta/2}(x_i) - \F^{\eta_*}_{\kappa_0\eta} *  \tilde\mu_{N,\eta/2}(x)\big)  \cdot  \nab \pex(x),
\\
\label{defr1}& \R_{N, \eta}^4:=  \frac1{N^2} \sum_{i=1}^N  \F^{\eta_*}_\eta(x_i,x_i) \cdot\nab\pex(x),
	  \\  \label{defr4} &\R_{N, \eta}^5:= -\frac1{N}  \sum_{i=1}^N  \big( \F^{\eta_*}_{\kappa_0\eta} * \mu(x_i) - \F^{\eta_*}_{\kappa_0\eta} *  \mu(x)-\nab_x\F^{\eta_*}_{\kappa_0 \eta} *  \mu(x) (x_i-x)\big) \cdot   \nab \pex(x),\\
	  \label{defr5}
	  & \R_{N, \eta}^6=- \frac1{N}  \sum_{i=1}^N \nab_x\F^{\eta_*}_{\kappa_0 \eta} *  \mu(x) (x_i-x)\cdot  \nab \pex(x)
	  +\mu \operatorname{div}(\F^{\eta_*}_{\kappa_0\eta}*\mu),
	\\ \label{defr6}
	& \R_{N,\eta}^7=  \mu\operatorname{div} ( \F^{\eta_*}_{<\kappa_0\eta}*\mu).
	  \end{align}

	In particular, if $X_N^t$ solves \eqref{ode} and $\mu^t$ is as in \eqref{choixmu}, then
	\begin{align}
		\label{eqfordiffeq}
		\partial_t	\tilde \mu_{N, \eta}=	 -\F^{\eta_*} * 	\tilde \mu_{N, \eta}   \cdot \nab \mu -	\F^{\eta_*}  * 	 \mu\cdot \nab \tilde \mu_{N,\eta} -	\F^{\eta_*}  * 	 \tilde\mu_{N, \eta}\cdot \nab \tilde \mu_{N,\eta}  + \sum_{k=1}^{9} \R_{N, \eta}^k,
	\end{align}
where 
\begin{align} \label{defr70}
&	\R_{N, \eta}^{8} :=-\mathbf{1}_{s\in ( d-1,d)}  	\operatorname{div} ((\F_{<\eta_*}*\mu) \mu)-\mathbf{1}_{s\geq d}  	\operatorname{div} ((\F^{\eta_*}*\mu^0) \mu^0),\\&\label{defr100}
	\R_{N, \eta}^{9} :=  - \frac{\mathbf{1}_{s\geq d+1}}{N^2}\sum_{i\neq j}\F^{\mathrm{sing}}(x_i,x_j)\chi\(\frac{|x_i-x_j|}{\eta_*}\)\cdot \nab  \pex(x).
\end{align}
Moreover, for almost every $t\in[0,T]$, let $x^t$ be a point where the
maximum defining $\M(X_N^t,\mu^t,\eta)$ is attained, namely
\[
\M(X_N^t,\mu^t,\eta)
=
\langle x^t\rangle^\gamma
|\tilde\mu_{N,\eta}(x^t,t)|.
\]
Then

		\be \label{ptM}\partial_t \M(X_N^t, \mu^t, \eta)\le\langle x_t\rangle^\gamma |\R_{N, \eta}^1(x^t)|+ \sum_{k=2}^{9} \langle x_t\rangle^\gamma |\R_{N, \eta}^k(x^t)|+ \R_{N,\eta}^{10} + \R_{N,\eta}^{11}+ \R_{N,\eta}^{12},\ee
		with
		\be\label{defr7} \R_{N,\eta}^{10}:= \langle x_t\rangle^\gamma |\F^{\eta_*}* \tilde \mu_{N,\eta} (x^t) \cdot  \nab \mu(x^t)|,
		\ee
		\be \label{defr8}
		\R_{N,\eta}^{11} := \gamma |\langle x^t\rangle^{\gamma-2}\tilde \mu_{N,\eta}(x^t)\, x^t\cdot\F^{\eta_*}* \mu(x^t)|,\ee
		\begin{align} \label{defr12}	\R_{N,\eta}^{12} := \gamma\langle x^t\rangle^{\gamma-1}
		|\F^{\eta_*}  *	\tilde \mu_{N, \eta}(x^t)| \,|\tilde \mu_{N,\eta}(x^t) |.
		\end{align}

\end{lem}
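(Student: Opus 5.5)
The plan is to verify \eqref{3.3} as a pure algebraic identity, then insert it into the time derivative of $\mu_{N,\eta}$ along \eqref{ode}, and finally differentiate the weighted supremum at a maximizing point.

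\textbf{Step 1: expanding the discrete sum.} I start from the discrete sum $\frac1{N^2}\sum_i\sum_{j\ne i}\F^{\eta_*}(x_i,x_j)\cdot\nab\pex$. First I split $\F^{\eta_*}=\F^{\eta_*}_{\eta}+\F^{\eta_*}_{<\eta}$, and then split $\F^{\eta_*}_{<\eta}=\F^{\eta_*}_{<\eta_0}+(\F^{\eta_*}_{<\eta}-\F^{\eta_*}_{<\eta_0})$. This produces $-\R^1-\R^2$.

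In the smooth part I restore the diagonal $j=i$ at the cost of $\R^4$. Then I use the semigroup property, in the form of \eqref{defc0} and \eqref{eta1eta2}, to rewrite
\[
\frac1N\sum_j\F^{\eta_*}_\eta(x_i,x_j)=\F^{\eta_*}_{\kappa_0\eta}*\mu_{N,\eta/2}(x_i),
\]
since $\F_\eta=\F_{\kappa_0\eta}$ mollified once more at scale $\eta/2$. Writing $\mu_{N,\eta/2}=\tilde\mu_{N,\eta/2}+\mu$ gives two pieces:
\begin{itemize}
\item the $\tilde\mu_{N,\eta/2}$ piece equals its value at $x$ plus $-\R^3$;
\item the $\mu$ piece is Taylor expanded at $x$ to first order, giving $\R^5$, with the linear term in $(x_i-x)$ going to $\R^6$.
\end{itemize}

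\textbf{Step 2: recombining the main terms.} The values at $x$ recombine as
\[
\F^{\eta_*}_{\kappa_0\eta}*\mu_{N,\eta/2}(x)\cdot\nab\mu_{N,\eta}(x)=\F^{\eta_*}*\mu_{N,\eta}(x)\cdot\nab\mu_{N,\eta}(x),
\]
again by \eqref{eta1eta2}, because mollifying the kernel in $y$ is equivalent to mollifying the measure. On the continuum side I expand
\[
\div((\F^{\eta_*}*\mu)\mu)=\F^{\eta_*}*\mu\cdot\nab\mu+\mu\div(\F^{\eta_*}*\mu),
\]
and split $\F^{\eta_*}=\F^{\eta_*}_{\kappa_0\eta}+\F^{\eta_*}_{<\kappa_0\eta}$ inside the divergence. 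This yields the correction terms in $\R^6$ and $\R^7$. Subtracting the two sides and writing $\mu_{N,\eta}=\mu+\tilde\mu_{N,\eta}$ leaves exactly $-\F^{\eta_*}*\tilde\mu_{N,\eta}\cdot\nab\mu-\F^{\eta_*}*\mu_{N,\eta}\cdot\nab\tilde\mu_{N,\eta}$. The main bookkeeping risk is sign conventions and the convention $\F*\varphi(x)=\int\F(x,y)\,d\varphi(y)$; I will check these carefully.

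\textbf{Step 3: the evolution equation \eqref{eqfordiffeq}.} Along \eqref{ode},
\[
\partial_t\mu_{N,\eta}=-\frac1N\sum_i\dot x_i\cdot\nab\pex=-\frac1{N^2}\sum_{i\ne j}\F(x_i,x_j)\cdot\nab\pex .
\]
Here $\F$ differs from $\F^{\eta_*}$ only by the cutoff piece when $s\ge d+1$, which is $\R^9$. For $\partial_t\mu$ I use the equation prescribed in \eqref{choixmu}. It differs from $-\div((\F^{\eta_*}*\mu)\mu)$ by $\R^8$: either the $\F_{<\eta_*}$ correction when $s\in(d-1,d)$, or the full term when $\mu\equiv\mu^0$. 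Subtracting and using \eqref{3.3} then gives \eqref{eqfordiffeq}.

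\textbf{Step 4: the inequality \eqref{ptM}.} Since $\M(X_N^t,\mu^t,\eta)$ is a supremum of a family that is Lipschitz in $t$ and attained at $x^t$, a Danskin-type argument gives, for a.e. $t$,
\[
\partial_t\M\le\langle x^t\rangle^\gamma\,\mathrm{sgn}(\tilde\mu_{N,\eta})\,\partial_t\tilde\mu_{N,\eta}(x^t).
\]
The terms from \eqref{eqfordiffeq} are handled as follows:
\begin{itemize}
\item the $\R^k$ terms with $k\le9$ are bounded by absolute values;
\item the first term is $\R^{10}$.
\end{itemize}
For the transport terms, the maximality of $\langle x\rangle^\gamma|\tilde\mu_{N,\eta}|$ at $x^t$ gives
\[
\nab\bigl(\langle x\rangle^\gamma\tilde\mu_{N,\eta}\bigr)(x^t)=0,\quad\text{that is,}\quad \langle x^t\rangle^\gamma\nab\tilde\mu_{N,\eta}(x^t)=-\gamma\langle x^t\rangle^{\gamma-2}x^t\,\tilde\mu_{N,\eta}(x^t).
\]
With this identity, the $\F^{\eta_*}*\mu\cdot\nab\tilde\mu$ term becomes $\R^{11}$, and the quadratic term becomes $\R^{12}$, after using $|x|\langle x\rangle^{-2}\le\langle x\rangle^{-1}$.

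The main obstacle is Step 4's rigor: justifying that the maximum is attained and that a.e. differentiability holds. I would argue via decay of $\mu_{N,\eta}$ and of $\mu$ with $\A(\mu)<\infty$, together with Lipschitz continuity of $t\mapsto\M$ on intervals with no collisions.
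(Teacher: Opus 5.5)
Your proposal is correct and follows essentially the paper's route. You use the same splitting $\F^{\eta_*}=\F^{\eta_*}_{\eta}+\F^{\eta_*}_{<\eta_0}+(\F^{\eta_*}_{<\eta}-\F^{\eta_*}_{<\eta_0})$, the same diagonal reinsertion producing $\R^4$, and the same semigroup rewriting $\F^{\eta_*}*\mu_{N,\eta}=\F^{\eta_*}_{\kappa_0\eta}*\mu_{N,\eta/2}$ with the $\tilde\mu_{N,\eta/2}/\mu$ split and first-order Taylor expansion, and you then use the first-order condition at the maximizer to turn the transport terms into $\R^{11}$ and $\R^{12}$. The only differences are cosmetic: the paper rewrites the equation for $\langle x\rangle^\gamma\tilde\mu_{N,\eta}$ and cites Constantin--Tarfulea--Vicol for differentiating the supremum, whereas you apply $\langle x^t\rangle^\gamma\nabla\tilde\mu_{N,\eta}(x^t)=-\gamma\langle x^t\rangle^{\gamma-2}x^t\tilde\mu_{N,\eta}(x^t)$ directly with a Danskin-type argument, which is equivalent.
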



\begin{remark}[Classification of the error terms]\label{rem:errorclass}
The error terms $\R^1$, $\R^2$ are the singular short-range errors. They carry the full singularity of $\F$ and will be the hardest to control. The term $\R^3$ is a commutator error measuring the difference between the mollified force field evaluated at the particle position $x_i$ and at the spatial variable $x$. It is the second most delicate term to control. The terms  $\R^5$, $\R^6$ come from Taylor expanding $\F^{\eta_*}_{\kappa_0\eta}*\mu$ around $x$ up to first order. 
\end{remark}

\begin{remark}\label{re-for}
	If an external force $V_N= V_N^1+V_N^2$ is added to the system \eqref{ode}, namely,
	\begin{equation}\label{ode-fo}
		\left\{
		\begin{array}{ll}
			\dot x_i=\displaystyle \frac1N\sum_{\substack{j=1\\j\neq i}}^N \F(x_i,x_j)+V_N^1(x_i)+V_N^2(x_i),\\[1ex]
			x_i(0)=x_i^0,
		\end{array}
		\right.
	\end{equation}
	then \eqref{ptM} becomes
	\begin{align}\label{w7}
		\partial_t \M(X_N^t,\mu^t,\eta)
		&\le
		\langle x^t\rangle^\gamma |\R_{N,\eta}^1(x^t)|
		+\sum_{k=2}^{9}\langle x^t\rangle^\gamma |\R_{N,\eta}^k(x^t)|
		+\R_{N,\eta}^{10}
		+\R_{N,\eta}^{11}
		+\R_{N,\eta}^{12}
		\notag\\
		&+\frac1N\sum_{i=1}^N (|V_N^1(x_i)-V_N^1(x^t)|+|V_N^2(x_i)|)|\nabla \Phi^\eta_{x_i}(x^t)|+\gamma\langle x_t\rangle^{\gamma-1}
		|V_N^1(x_t)| |\tilde \mu_{N,\eta}(x_t) |
	\end{align}
The last term can be controlled in the same way as the corresponding weighted
transport contribution, provided that $V_N^1$ has at most linear growth in $x$.
This observation will be used in the proof of the collision estimate.
\end{remark}
\begin{proof}
	First, we note that by the definitions \eqref{fneta} and \eqref{defFeta}, the convolution $\F^{\eta_*} *\mu_{N,\eta}$ can be expressed as a sum over particles:
	\begin{multline}\label{2.9}
	 \F^{\eta_*}* \mu_{N,\eta}(x)=\int \F^{\eta_*}(x,y) \mu_{N,\eta}(y)\,dy=\frac1N\sum_{ k=1}^N  \int \F^{\eta_*}(x,y)\Phi^{\eta}(y-x_k)\,dy\\=\frac1N\sum_{k=1}^N (\F^{\eta_*}(x,\cdot)\star\Phi^{\eta})(x_k)= \frac1N\sum_{k=1}^N
	 \F^{\eta_*}_\eta(x,x_k).\end{multline}

Now we rewrite the discrete interaction sum. Using the definition \eqref{fneta}, we split the force into its mollified part $\F^{\eta_*}_\eta$ and the short-range remainder $\F^{\eta_*}_{<\eta}$, then separate the evaluation at $x_i$ from the evaluation at~$x$ to obtain
	\begin{align}
 & -\frac1{N^2} \sum_{i=1}^N\sum_{j\neq i}\F^{\eta_*}(x_i,x_j) \cdot \nab \pex
		=-\frac1{N^2} \sum_{i=1}^N \sum_{j\neq i} \F^{\eta_*}_\eta(x_i,x_j)\cdot \nab  \pex - \frac1{N^2}\sum_{i\neq j} \F^{\eta_*}_{<\eta}(x_i,x_j)  \cdot \nab \pex \\
	\notag	&\quad\quad= - \frac1{N^2} \sum_{i=1}^N\sum_{j\neq i}   \F^{\eta_*}_\eta(x,x_j)\cdot \nab\pex(x)-  \frac1{N^2} \sum_{i=1}^N \sum_{j\neq i} (\F^{\eta_*}_\eta(x_i,x_j) - \F^{\eta_*}_\eta(x,x_j) )\cdot\nab\pex(x)
		\\ \notag  & \quad\quad\quad- \frac1{N^2}\sum_{i\neq j} \F^{\eta_*}_{<\eta}(x_i,x_j) \cdot \nab  \pex(x).\end{align}
		Reinserting the diagonal terms $i=j$ (which contribute $\R^4$) and using \eqref{2.9} to recognize $\F^{\eta_*}*\mu_{N,\eta}\cdot\nabla\mu_{N,\eta}$, we may write
		\begin{align*}
&	-\frac1{N^2} \sum_{i=1}^N\sum_{j\neq i}\F^{\eta_*}(x_i,x_j) \cdot \nab \pex=	  - \F^{\eta_*} *\mu_{N,\eta}   \cdot \nab\mu_{N,\eta}+ \frac1{N^2} \sum_{i=1}^N  \F^{\eta_*}_\eta(x,x_i) \cdot\nab\pex(x)\\
		&\quad\quad\quad-\frac1{N}  \sum_{i=1}^N  \big( \F^{\eta_*} * \mu_{N,\eta}(x_i) - \F^{\eta_*} * \mu_{N,\eta}(x)\big)  \cdot  \nab \pex(x)
		\\ & \quad\quad\quad+ \frac1{N^2} \sum_{i=1}^N  (\F^{\eta_*}_\eta(x_i,x_i) - \F^{\eta_*}_\eta(x,x_i) )\cdot \nab\pex(x) - \frac1{N^2}\sum_{i\neq j} \F^{\eta_*}_{<\eta}(x_i,x_j) \cdot \nab  \pex(x)
		\\
		&\quad\quad = - \F^{\eta_*} * \mu_{N,\eta}  \cdot   \nab\mu_{N,\eta}- \frac1{N}  \sum_{i=1}^N  \big( \F^{\eta_*} *  \mu_{N,\eta}(x_i) - \F^{\eta_*} *  \mu_{N,\eta}(x)\big)  \cdot  \nab \pex(x)
		\\ & \quad\quad\quad+  \frac1{N^2} \sum_{i=1}^N  \F^{\eta_*}_\eta(x_i,x_i) \cdot\nab\pex(x) - \frac1{N^2}\sum_{i\neq j} \F^{\eta_*}_{<\eta}(x_i,x_j) \cdot \nab  \pex(x).
	\end{align*}
Next, we use the semigroup decomposition \eqref{defc0} to replace $\F^{\eta_*}*\mu_{N,\eta}$ by $\F^{\eta_*}_{\kappa_0\eta}*\mu_{N,\eta/2}$, and then split $\mu_{N,\eta/2}=\tilde\mu_{N,\eta/2}+\mu$ to separate the error from the smooth background. This produces the commutator term $\R^3$ (from $\tilde\mu_{N,\eta/2}$) and the smooth terms $\R^5$, $\R^6$ (from the Taylor expansion of $\F^{\eta_*}_{\kappa_0\eta}*\mu$ around~$x$). Using \eqref{defc0} and noting that $\F^{\eta_*}* (\Phi^{\kappa_0\eta} \star \mu_{N,\eta/2} )= \F^{\eta_*}_{\kappa_0\eta} * \mu_{N,\eta/2}$,
 we may write
	\begin{align*}& \frac1{N}  \sum_{i=1}^N  \big( \F^{\eta_*}* \mu_{N,\eta}(x_i) - \F^{\eta_*}*  \mu_{N,\eta}(x)\big)  \cdot  \nab \pex(x)
\\ &\quad\quad =\frac1{N}  \sum_{i=1}^N  \big( \F^{\eta_*}_{\kappa_0\eta} * \mu_{N,\eta/2}(x_i) - \F^{\eta_*}_{\kappa_0\eta}*  \mu_{N,\eta/2}(x)\big)  \cdot  \nab \pex(x)\\
&\quad\quad =  \frac1{N}  \sum_{i=1}^N  \big( \F^{\eta_*}_{\kappa_0\eta} * \tilde\mu_{N,\eta/2}(x_i) - \F^{\eta_*}_{\kappa_0\eta}* \tilde\mu_{N,\eta/2}(x)\big)  \cdot  \nab \pex(x) \\
&\quad\quad\quad+\frac1{N}  \sum_{i=1}^N  \big( \F^{\eta_*}_{\kappa_0\eta} * \mu(x_i) - \F^{\eta_*}_{\kappa_0\eta}*  \mu(x)\big)  \cdot  \nab \pex(x).
\end{align*}
	Inserting into the above and using $\div ((\F^{\eta_*}* \mu) \mu)= \mu \,\div (\F^{\eta_*}*\mu)+\nab \mu \cdot \F^{\eta_*}*\mu$, we obtain \eqref{3.3}.

	Now we pass from the algebraic identity to the evolution equation. We compute using the ODE \eqref{ode} and
	$\frac{d}{dt}\pex=-\dot{x}_i\cdot\nab \Phi^{\eta}_{x_i}(x)$ that
	\begin{align*}
		\partial_t \mu_{N,\eta} &=  -\frac1{N^2} \sum_{i=1}^N\sum_{j\neq i}\F(x_i,x_j) \cdot \nab \pex\\&=-\frac1{N^2} \sum_{i=1}^N\sum_{j\neq i}\F^{\eta_*}(x_i,x_j) \cdot \nab \pex+\R_{N, \eta}^{9}.
	\end{align*}
	Subtracting $\partial_t\mu$ (given by \eqref{choixmu}) and applying \eqref{3.3}, we obtain \eqref{eqfordiffeq}. The additional error terms $\R^8$ and $\R^9$ account respectively for the mismatch between $\F$ and $\F^{\eta_*}$ in the continuum equation, and for the cutoff regularization when $s\geq d+1$.

	Next, to derive the differential inequality for $\M$, we multiply \eqref{eqfordiffeq} by the weight $\xg$ and use the product rule to rewrite the transport terms. We immediately deduce that
	\begin{multline}\label{eqfordiffww}
		\partial_t\big(\xg	\tilde \mu_{N, \eta}\big)
		=	 -\F^{\eta_*} * 	\tilde \mu_{N, \eta}   \cdot (\xg \nab \mu) -	 \F^{\eta_*} * 	 \mu_{N, \eta}\cdot \nab (\xg \tilde \mu_{N,\eta} ) \\+\gamma x \langle x\rangle^{\gamma-2}\tilde \mu_{N,\eta} \cdot \F^{\eta_*} * 	 \mu_{N, \eta}+ \sum_{m=1}^{9} \xg \R_{N, \eta}^m.
\end{multline}
By \cite[Lemma~B.1]{ConstantinTarfuleaVicol2015}
(see also the proof of Lemma~5 in \cite{ConstantinIgnatovaNguyen2025}),
the function
\[
t\mapsto \M(X_N^t,\mu^t,\eta)
\]
is absolutely continuous on $[0,T]$. Moreover, for almost every
$t\in[0,T]$, there exists a point $x_t\in\mathbb{R}^d$ such that
\begin{equation}\label{xt3}
	\langle x_t\rangle^\gamma
	|\tilde \mu_{N,\eta}(x_t,t)|
	=
	\M(X_N^t,\mu^t,\eta),
\end{equation}
and
\begin{align}\label{diff-max}
	\frac{d}{dt}\M(X_N^t,\mu^t,\eta)
	=
	\partial_t
	\left[
	\langle x\rangle^\gamma
	|\tilde \mu_{N,\eta}(x,t)|
	\right]_{x=x_t}.
\end{align}
	Since $x_t$ is a maximum point of $x\mapsto \langle x\rangle^\gamma|\tilde\mu_{N,\eta}(x,t)|$, we have the first-order optimality condition
\be \label{cmax}\nab \big(\langle x \rangle^\gamma |\tilde \mu_{N,\eta}(x, t)|\big)\big\vert_{x=x_t}=0,\ee
which cancels the transport terms $\F^{\eta_*}*\mu\cdot\nabla(\xg\tilde\mu_{N,\eta})$ and $\F^{\eta_*}*\tilde\mu_{N,\eta}\cdot\nabla(\xg\tilde\mu_{N,\eta})$ in \eqref{eqfordiffww} at $x=x_t$. The remaining terms give
\begin{multline} \label{ptM'}\partial_t \M(X_N^t, \mu^t, \eta)\le \Big|  \F^{\eta_*} * 	\tilde \mu_{N, \eta}   \cdot (\xg \nab \mu) \Big|\vert_{x=x_t} +\gamma \big|\langle x_t\rangle^{\gamma-2}\tilde \mu_{N,\eta}(x_t)\, x_t\cdot\F^{\eta_*} * 	 \mu(x_t)\big|\\+\gamma\langle x_t\rangle^{\gamma-1}
	|\F^{\eta_*}  * 	\tilde \mu_{N, \eta}(x_t)| \,|\tilde \mu_{N,\eta}(x_t) |+ \sum_{k=1}^{9}  \langle x_t\rangle^{\gamma} |\R_{N, \eta}^k(x_t)|,\end{multline}
which implies the result.
		\end{proof}


\subsection{Control of error terms}\label{sec:error}
		
We now turn to estimating the remainder terms $\R_{N,\eta}^k$ for $k \ge 2$, in terms of the metric~$\M$ and the regularity quantity~$\A$.
The most singular term $\R^1_{N,\eta}$ requires a separate and more delicate treatment and is set aside for now.

\smallskip

We set 
\begin{align}\label{defU}
    \M(t,\eta)&:=\M(\XN^t, \mu^t, \eta)\\
  \U(t,\eta)&:= \M(t,\eta)+ \A(\mu^t),\label{defMshort}
\end{align}
 and will use the fact that $1\lesssim \A(\mu)$ since $\int \mu=1$.

Unless there is ambiguity, we will drop the time dependence and write $\M(\eta)$, $\U(\eta)$ and $\A(\mu)$. These 
 shorthands will be used throughout. 
The goal is to bound every error term by a product of $\M$ (at various scales) with $\U$ or $\A$, so that the resulting inequality can be closed by a Grönwall argument in Section~\ref{sec:mainproof}.

Note that \eqref{triangleU} implies that the full empirical measure is controlled by 
$\langle x\rangle^\gamma |\mu_{N,\eta}(x)| \lesssim \U(\eta)$. We now proceed to bounding the error terms, defined in \eqref{defr2}--\eqref{defr12}, in terms of $\M$ and $\A$, recalling the definitions \eqref{defCeta} and \eqref{defCeta'}.

\begin{prop}\label{pro23}
 Let $\eta= 2^n\eta_0\leq \frac{1}{2}$ with $\eta_0\geq\eta_*>0$.
 In the super-Coulomb regime $s>d-1$, set $\eta_{0}=\eta_*$.
 For every fixed $\eta$ and for almost every $t$, let $x^{t}\in\mathbb R^d$ be a point such that $\M(X_N^t,\mu^t,\eta) = \langle x^t\rangle^\gamma \left| \widetilde\mu_{N,\eta}(t,x^t) \right|.$ The pointwise estimates in parts \emph{(i)} and \emph{(ii)} below are evaluated at $x=x^t$. The dual estimate in part \emph{(iii)} holds globally and does not require the choice of a maximum point.
 Then for every integer $n_0\geq0$ satisfying $2^{n_0}\eta\leq 1,$ and  for each $t$, we have the following estimates. 

\smallskip\noindent\emph{(i) Weighted $L^\infty$ bound on $\R^2$--$\R^8$:} 
\begin{multline}
	\sum_{m=2}^8\xg |\R_{N,\eta}^m|
	\lesssim
	\sum_{k=0}^{n+1}\eta_0^{d-s-1}\,
	2^{k(d+1-s)_{+}-k-n}
	\bigl(1+k\,\mathbf{1}_{s=d+1}\bigr)\,
	\M(2^{k-1}\eta_{0})\,\U(\eta)	\\
	+ \Bigl(
	(\eta^{2}+\mathbf{1}_{s\geq d}\mathcal{C}(\eta_*))	(	\A(\mu)+
	\big\|\xg \nabla^4 \mu\big\|_{L^\infty}	\mathbf{1}_{s\geq d})
	+ 	\frac{1+\mathcal C(\eta,\eta_*)}
	{N\eta_0^{\min\{s,d+1\}}}
	\Bigr)\U(\eta)
	\\
	+ \mathbf{1}_{s\neq d-1}\,
	\eta^{-\min\{2,(s+1-d)_{+}\}}\,
	\bigl(1+	\mathcal C(\eta,\eta_*)\bigr)\,
	\M(\eta/2)\,\U(\eta)
	\\
	+ \mathbf{1}_{s=d-1}\,
	\Bigl(\sum_{k=0}^{n_0-1}\M(2^{k}\eta)
	+|\!\log(2^{n_0}\eta)|\,\M(2^{n_0}\eta)
	+|\!\log(2^{n_0}\eta)|\,2^{2n_0}\eta^2\A(\mu)
	\Bigr)
	\bigl(\M(\eta/2)+\eta^2\A(\mu)\bigr).
	\label{R2-8}
\end{multline}
\smallskip\noindent\emph{(ii) Weighted $L^\infty$ bound on  $\R^{10}$--$\R^{12}$:}
\begin{multline}
	\R_{N,\eta}^{10}+\R_{N,\eta}^{11} +\R_{N,\eta}^{12}
	\lesssim
	\mathbf{1}_{s< d}\, \M(\eta)\, \U(\eta)
	+ \bigl(
	\mathbf{1}_{s=d}|\!\log\eta|
	+\mathbf{1}_{s\in (d,d+1)}\eta^{d-s}
	+\eta^{-1}\mathcal C(\eta,\eta_*)
	\bigr)\,\M(\eta/2)\, \U(\eta)
	\\
	+ \eta^2\mathcal C(\eta_*)\A(\mu)\,\U(\eta).
	\label{R10-12}
\end{multline}
\smallskip\noindent\emph{(iii) Dual estimate on $\R^2$--$\R^8$:}
\begin{multline}
	\Bigl\|\langle \cdot\rangle^{\gamma'}\Bigl(
	\sum_{k=2}^{8} \R_{N, \eta}^k
	+\tilde\mu_{N,\eta}\operatorname{div} (\F^{\eta_*}* \mu )
	+\operatorname{div}(\F^{\eta_*} *  \tilde\mu_{N,\eta})\mu
	\Bigr)\Bigr\|_{(W^{1,1})^{*}}
	\\
	\lesssim
	\eta^{-\min\{2,(s+1-d)_{+}\}}\,
	\bigl(1+	\mathcal C(\eta,\eta_*)\bigr) \M(\eta/2)\,\M(\eta)\\
	+\sum_{k=0}^{n+1}\eta_0^{d-s}\,
	2^{k(d+1-s)_{+}-k}
	\bigl(1+k\,\mathbf{1}_{s=d+1}\bigr)\,
	\M(2^{k-1}\eta_{0})\,\U(\eta)
	\\
	+ \Bigl(
	(   \mathbf 1_{ s<d}\, \eta^2+\mathbf{1}_{s\geq d}	\mathcal C(\eta_*))	(	\A(\mu)+
	\big\|\xg \nabla^4 \mu\big\|_{L^\infty}	\mathbf{1}_{s\geq d})
	+ 	\frac{1+\mathcal C(\eta,\eta_*)}
	{N\eta_0^{\min\{s-1,d\}}}+\frac{1}{N}
	\Bigr)\U(\eta)\\+\eta\mathcal C(\eta_*)  \M(\eta/2)(	\A(\mu)+
	\big\|\xg \nabla^4 \mu\big\|_{L^\infty}	\mathbf{1}_{s\geq d})
	\\
	+\mathbf{1}_{s\neq d-1}\,
	\eta^{1-\min\{2,(s+1-d)_{+}\}}\,
	\bigl(1+	\mathcal C(\eta,\eta_*)\bigr)\,
	\M(\eta/2)\,\U(\eta)
	\\
	+ \mathbf{1}_{s=d-1}\,
	\Bigl(\sum_{k=0}^{n_0-1}\M(2^{k}\eta)
	+|\!\log(2^{n_0}\eta)|\,\M(2^{n_0}\eta)
	+|\!\log(2^{n_0}\eta)|\,2^{2n_0}\eta^2\A(\mu)
	\Bigr)\eta\U(\eta),
	\label{R2-8-dual}
\end{multline}
for any $0\leq \gamma'\leq \gamma-1$.
\end{prop}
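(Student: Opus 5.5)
The plan is to estimate each remainder $\R^k_{N,\eta}$ separately at the maximum point $x=x^t$, using one device throughout. Whenever a particle sum $\frac1N\sum_i h(x_i)\,\nab\pex(x)$ appears, we absorb the polynomial factors into the Gaussian via \eqref{prod2exp}, which gives
\[
\frac1N\sum_i |x_i-x|^m|\nab\pex(x)|\lesssim \eta^{m-1}\mu_{N,2\eta}(x)\lesssim \eta^{m-1}\langle x\rangle^{-\gamma}\U(\eta).
\]
The last step uses \eqref{Mee} and \eqref{triangleU}. After this reduction, every term becomes a kernel bound from Lemmas \ref{lem2.4} and \ref{lem:integralestimates} times $\U(\eta)$.

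\emph{Easy terms.} The diagonal term $\R^4$ is controlled directly by \eqref{226}. It yields $\frac{1+\mathcal C(\eta,\eta_*)}{N\eta^{\min\{s-1,d\}}}\eta^{-1}\U$, which is dominated by the $\frac{1}{N\eta_0^{\min\{s,d+1\}}}$ term since $\eta\ge\eta_0$. For $\R^5$ we use the second-order Taylor remainder, controlled by $\|\nab^2\F^{\eta_*}_{\kappa_0\eta}*\mu\|_{L^\infty}$ through \eqref{227}; this gives $\eta\,\mathcal C(\eta_*)\A\,\U$. For $\R^6$ we apply \eqref{z15'} to rewrite
\[
\frac1N\sum_i (x_i-x)_j\partial_k\pex=\eta^2\partial_{jk}\mu_{N,\eta}+\delta_{jk}\mu_{N,\eta}.
\]
The trace term then cancels against $\mu\operatorname{div}(\F_{\kappa_0\eta}*\mu)$ up to $\tilde\mu_{N,\eta}\operatorname{div}(\cdot)$, and $\eta^2\nab^2\mu_{N,\eta}$ is handled by \eqref{d2m}. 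This produces $\M(\eta/2)+\eta^2\A$ times $\|\nab\F_{\kappa_0\eta}*\mu\|_{L^\infty}$. In the Coulomb case that factor is only logarithmically bounded, and we split the convolution into dyadic shells $2^k\eta$, $k<n_0$, plus a remainder beyond $2^{n_0}\eta$. Using \eqref{le233} on each shell produces exactly the $\mathbf 1_{s=d-1}$ sum of $\M(2^k\eta)$ and the logarithmic terms. For $\R^7$ and $\R^8$ we use \eqref{proF4} (respectively \eqref{proF3} for the cut-off mismatch when $s\ge d$); this gives the $(\eta^2+\mathbf 1_{s\ge d}\mathcal C(\eta_*))\A\,\U$ contribution. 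The transport terms $\R^{10}$–$\R^{12}$ are bounded by writing $\F^{\eta_*}*\tilde\mu_{N,\eta}=\F^{\eta_*}_{\kappa_0\eta}*\tilde\mu_{N,\eta/2}+\F^{\eta_*}_{<\kappa_0\eta}*\mu$-type corrections. For the first piece we use \eqref{le230} when $s\ge d$; when $s<d$ we instead use the direct integrability of $\F$ against $\langle y\rangle^{-\gamma}$. For the second piece we use \eqref{proF3} and \eqref{226}.

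\emph{Main obstacle: $\R^2$ and $\R^3$.} For $\R^2$ we telescope
\[
\F_{<\eta}-\F_{<\eta_0}=\sum_{k}(\F_{2^{k}\eta_0}-\F_{2^{k+1}\eta_0}),
\]
and write each dyadic difference as a kernel acting on $\mu_{N,2^{k-1}\eta_0}$ through the semigroup \eqref{semigroupmu}. Each scale is then controlled by \eqref{le234} times $\M(2^{k-1}\eta_0)+\A$, with the Gaussian prefactor $\eta^{-1}$ coming from $\nab\pex$. This yields the weights $\eta_0^{d-s-1}2^{k(d+1-s)_+-k-n}$, including the logarithm at $s=d+1$. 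For $\R^3$ we use the mean value theorem, $|x_i-x|\,\|\nab\F^{\eta_*}_{\kappa_0\eta}*\tilde\mu_{N,\eta/2}\|$, with \eqref{le231} (and \eqref{le232} for the second-order part when $s>d-1$). The delicate point is to keep the estimate linear in the small quantity $\M(\eta/2)$ rather than $\U$; this gives the factor $\eta^{-\min\{2,(s+1-d)_+\}}(1+\mathcal C)$. The Coulomb endpoint of $\R^3$ again requires the dyadic splitting described above.

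\emph{Part (iii).} The dual estimate follows the same decomposition, but now pairs against $\phi\in W^{1,1}$. In each term we integrate the $\nab\pex$ by parts onto the test function, which gains one power of $\eta$ relative to (i); this is where the extra factors $\eta$ and $\eta_0^{d-s}$ instead of $\eta_0^{d-s-1}$ come from. The antisymmetry bound \eqref{proF2} is used on the diagonal-type term $\R^4$, lowering the exponent to $\min\{s-1,d\}$. The terms $\tilde\mu_{N,\eta}\operatorname{div}(\F*\mu)$ and $\operatorname{div}(\F*\tilde\mu)\mu$ are exactly the leftovers of the $\R^6$ cancellation, and they are included on the left-hand side of \eqref{R2-8-dual} so that they cancel. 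Finally, $\M(\eta/2)\M(\eta)$ comes from pairing the $\R^3$ commutator against $\tilde\mu$.
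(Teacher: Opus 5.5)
Your architecture (pointwise bounds at $x^t$, dyadic telescoping of $\R^2$, mean value theorem for $\R^3$, divergence form for (iii)) is the paper's. Your treatment of $\R^3$ for $s\neq d-1$, of $\R^6$, $\R^7$ and of $\R^{10}$–$\R^{12}$ is essentially correct.

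Several steps, however, do not give \eqref{R2-8}. For $s<d$, its right-hand side has no $\M$-free term carrying less than $\eta^2$. An $\eta\,\A$ source would only yield $\M(\eta)=O(\eta)$ in the cascade, which is worse than $\eta_*/\eta$ once $\eta\gg\eta_*^{1/2}$.

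\textbf{$\R^5$.} The second-order Taylor remainder gives only $\eta\,\mathcal C(\eta_*)\A(\mu)\U(\eta)$. This contains $\eta\,\A(\mu)^2$, which is absent from \eqref{R2-8} when $s<d$. You must expand one order further. With $h=\F^{\eta_*}_{\kappa_0\eta}*\mu$, freeze $D^2h$ at $x$ and use \eqref{z15b}:
$\frac1N\sum_i(x_i-x)^{\otimes2}\otimes\nab\Phi^\eta_{x_i}(x)=\eta^4\nab^3\mu_{N,\eta}(x)+3\eta^2\,\mathrm{Sym}\bigl(I_d\otimes\nab\mu_{N,\eta}(x)\bigr)$.
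After weighting, \eqref{d2m} bounds this by $\eta\,\M(\eta/2)+\eta^2\A(\mu)$. The freezing error costs $\eta^2\|D^3h\|_{L^\infty}\U(\eta)$, i.e.\ \eqref{227} with $m=3$; this is where $\nab^4\mu$ enters when $s\ge d$.

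\textbf{$\R^2$.} Applying \eqref{le234} to all of $\mu_{N,2^{k-1}\eta_0}$ leaves $\sum_k\eta_0^{d-s-1}2^{k(d+1-s)_+-k-n}\A(\mu)\U(\eta)$, which at $s=d-1$ is $\sim\A(\mu)\U(\eta)$. You must split $\mu_{N,2^{k-1}\eta_0}=\tilde\mu_{N,2^{k-1}\eta_0}+\mu$ and send only the fluctuation through \eqref{le234}. For the smooth part, \eqref{proF3} alone still loses the $\eta^{-1}$ from $\nab\Phi^\eta_{x_i}$. The paper instead:
\begin{itemize}
\item freezes $\tilde\F^{\eta_*}_k*\mu$ at $x$ and bounds the increment by \eqref{proF4}, so $\sum_k(2^k\eta_0)^2\lesssim\eta^2$;
\item writes the frozen part as $\sum_k\tilde\F^{\eta_*}_k*\mu(x)\cdot\nab\mu_{N,\eta}(x)$, which is $O(\eta^2)$ by \eqref{proF3}, with $\nab\tilde\mu_{N,\eta}(x^t)$ controlled by the maximum-point condition;
\item handles the diagonal terms left by the $i\neq j$ telescoping with \eqref{226b}.
\end{itemize}
The same maximum-point device is needed for $\R^4$: \eqref{226} contains $\langle x\rangle$ from the linearly growing regular part, so a direct bound leaves an uncontrolled factor $\langle x^t\rangle$.

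\textbf{Coulomb $\R^3$.} After the mean value theorem, the dyadic bound on $\|\nab\F_{\kappa_0\eta}*\tilde\mu_{N,\eta/2}\|_{L^\infty}$ is multiplied by $\U(\eta)$. This produces $|\log\eta|\,\M(\eta)\A(\mu)$ for $n_0=0$, and an $O(\A(\mu)^2)$ term $|\log(2^{n_0}\eta)|2^{2n_0}\eta^2\A(\mu)^2$ when $2^{n_0}\eta=\frac12$. The last line of \eqref{R2-8} avoids exactly this: there the logarithmic factor multiplies only $\M(\eta/2)+\eta^2\A(\mu)$.

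The missing idea is to expand to second order and apply \eqref{z15'} to the linear term. This splits it into $\operatorname{div}(\F_{\kappa_0\eta}*\tilde\mu_{N,\eta/2})\,\mu_{N,\eta}$ and $\eta^2\nab(\F_{\kappa_0\eta}*\tilde\mu_{N,\eta/2}):\nab^2\mu_{N,\eta}$. The first is bounded by $\M(\eta/2)\U(\eta)$ with no logarithm thanks to \eqref{coudet}, an assumption your proposal never uses. Only the second needs the dyadic bound \eqref{z11}, and it meets the small factor $\eta^2|\nab^2\mu_{N,\eta}|$. The quadratic remainder is handled by \eqref{le232}. You placed this dyadic machinery in $\R^6$ instead, where it is unnecessary: \eqref{227} gives $\|\nab\F^{\eta_*}_{\kappa_0\eta}*\mu\|_{L^\infty}\lesssim\mathcal C(\eta_*)\A(\mu)$ directly.

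\textbf{Part (iii).} The same precision issues recur. In addition, $\operatorname{div}(\F^{\eta_*}*\tilde\mu_{N,\eta})\mu$ is absorbed by $\R^3$, via $\F^{\eta_*}_{\kappa_0\eta}*\tilde\mu_{N,\eta/2}=\F^{\eta_*}*\tilde\mu_{N,\eta}+\F^{\eta_*}_{<\kappa_0\eta}*\mu$, not by $\R^6$. The exponent $\min\{s-1,d\}$ in the diagonal terms comes from the divergence form together with \eqref{226b} (the spherical cancellation \eqref{assumpF2'}), not from \eqref{proF2}.
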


\begin{proof}
The proof proceeds by estimating each $\R^k$ individually.
We first treat the most intricate term $\R^2$ in detail (Steps~1--5), then handle $\R^3$--$\R^8$ (Steps~6--11), and finally $\R^{10}$--$\R^{12}$ (Steps~12--14).

\medskip
\noindent
\textbf{Step 1: Telescoping decomposition of $\R^2$ \eqref{defr1'}.}
The key idea is to decompose the multiscale interaction $\F_{<\eta}^{\eta_*}-\F_{<\eta_0}^{\eta_*}$
as a dyadic telescoping sum.
Using \eqref{defFeta} and \eqref{defc0}, we write
\begin{align}\notag
  -\R_{N,\eta}^2
  &= \frac1{N^2}\sum_{i\neq j}
    \bigl(\F_{<\eta}^{\eta_*}(x_i,x_j)
      -\F_{<\eta_0}^{\eta_*}(x_i,x_j) \bigr)
    \cdot \nab  \pex(x)
  \\ \notag
  &= \frac1{N^2}\sum_{i\neq j}
    \bigl(\F_{\eta_0}^{\eta_*}(x_i,x_j)
      -\F_{\eta}^{\eta_*}(x_i,x_j) \bigr)
    \cdot \nab  \pex(x)
  \\[4pt] \notag
  &= \sum_{k=0}^{n-1}\frac1{N^2}\sum_{i\neq j}
    \bigl(\F_{2^{k}\eta_0}^{\eta_*}
      -\F_{2^{k+1}\eta_0}^{\eta_*}\bigr)(x_i,x_j)
    \cdot \nab  \Phi^{\eta}_{x_i}(x)
  \\[4pt] \nonumber 
  &=\sum_{k=0}^{n-1}\frac1{N}\sum_{i=1}^N
    \bigl(\F_{2^{k}\eta_0}^{\eta_*}
      -\F_{2^{k+1}\eta_0}^{\eta_*}\bigr)* \mu_N(x_i)
    \cdot \nab  \Phi^{\eta}_{x_i}(x)
  \\
  &\qquad
 +\sum_{k=0}^{n-1}\frac1{N^2}\sum_{i=1}^N
    \bigl(\F_{2^{k+1}\eta_0}^{\eta_*}
     -\F_{2^k\eta_0}^{\eta_*}\bigr)(x_i,x_i)
    \cdot \nab  \Phi^{\eta}_{x_i}(x)
  \notag
  \\[4pt] \notag
  &= \sum_{k=0}^{n-1}\frac1{N}\sum_{i=1}^N
    \bigl(\F_{\kappa_0 2^{k}\eta_0}^{\eta_*}
      -\F_{\sqrt{15} \cdot  2^{k-1}\eta_0}^{\eta_*}\bigr)
    *\mu_{N,2^{k-1}\eta_{0}}(x_i)
    \cdot \nab  \Phi^{\eta}_{x_i}(x)
  \\
  &\qquad
  +\sum_{k=0}^{n-1}\frac1{N^2}\sum_{i=1}^N
    \bigl(\F_{2^{k+1}\eta_0}^{\eta_*}
      -\F_{2^k\eta_0}^{\eta_*}\bigr)(x_i,x_i)
    \cdot \nab  \Phi^{\eta}_{x_i}(x).
  \nonumber
\end{align}
For the last equality, we applied the semigroup property~\eqref{semigroup}, which introduces the smoothed empirical measure $\mu_{N,2^{k-1}\eta_0}$, defined by~\eqref{fneta}.
We may thus write
\begin{align}\label{x2}
  \R_{N,\eta}^2
  = -\R_{N,\eta}^{2,1}-\R_{N,\eta}^{2,2}-\R_{N,\eta}^{2,3}
    -\R_{N,\eta}^{2,4}-\R_{N,\eta}^{2,5},
\end{align}
where, writing $\tilde \F_k^{\eta_*} :=\F_{\kappa_02^{k}\eta_0}^{\eta_*}-\F_{\sqrt{15}\cdot 2^{k-1}\eta_0}^{\eta_*}$ for the difference at the $k$-th dyadic scale, we set
\begin{alignat}{2}
  \R_{N,\eta}^{2,1}
    &= \sum_{k=0}^{n-1}\frac1{N}\sum_{i=1}^N
      \tilde \F_k^{\eta_*} *\tilde\mu_{N,2^{k-1}\eta_{0}}(x_i)
      \cdot \nab  \Phi^{\eta}_{x_i}(x),
    \label{def:R21}
  \\[3pt]
  \R_{N,\eta}^{2,2}
    &= \sum_{k=0}^{n-1}\frac1{N}\sum_{i=1}^N
      \bigl(\tilde \F_k^{\eta_*}*\mu(x_i)
        -\tilde \F_k^{\eta_*}*\mu(x)\bigr)
      \cdot \nab  \Phi^{\eta}_{x_i}(x),
    \label{def:R22}
  \\[3pt]
  \R_{N,\eta}^{2,3}
    &=\sum_{k=0}^{n-1}
      \tilde \F_k^{\eta_*}*\mu(x)
      \cdot \nab \tilde \mu_{N,\eta}(x),
    \label{def:R23}
  \\[3pt]
  \R_{N,\eta}^{2,4}
    &=\sum_{k=0}^{n-1}
      \tilde \F_k^{\eta_*} *\mu(x)
      \cdot \nab \mu(x),
    \label{def:R24}
  \\[3pt]
  \R_{N,\eta}^{2,5}
    &= \frac1{N^2}\sum_{i=1}^N
      \bigl(\F_{\eta}^{\eta_*}-\F_{\eta_0}^{\eta_*}\bigr)(x_i,x_i)
      \cdot \nab  \Phi^{\eta}_{x_i}(x).
    \label{def:R25}
\end{alignat}

\medskip
\noindent
\textbf{Step 2: The principal term $\R^{2,1}$.}
By definition of $\M$ in \eqref{defM} and of $\tilde \mu_{N,\eta}$ in \eqref{tildefneta}, we may bound the integrand using $|\tilde\mu_{N,2^{k-1}\eta_0}(y)|\leq\M(X_N, \mu, 2^{k-1}\eta_0)\langle y\rangle^{-\gamma}$:
\begin{multline*}
  \xg|\R_{N, \eta}^{2,1}|
 \\
  \lesssim \xg \sum_{k=0}^{n-1}\frac1{N} \sum_{i=1}^N
    \M(\XN,\mu,2^{k-1}\eta_{0})
    \int_{\mr^d}
      \bigl|\F_{\kappa_0 2^{k}\eta_0}^{\eta_*}(x_i,y)
        -\F_{\sqrt{15}2^{k-1}\eta_0}^{\eta_*}(x_i,y)\bigr|
      \frac{dy}{\langle y\rangle^\gamma}
    \,\bigl|\nab  \Phi^{\eta}_{x_i}(x)\bigr|.
\end{multline*}
The $y$-integral is estimated by the kernel bound \eqref{le234}, and the Gaussian tail gives $|\nab \Phi^\eta|\lesssim \eta^{-1}\Phi^{2\eta}$ via~\eqref{prod2exp}.
This yields
\begin{align}\notag
  \xg|\R_{N, \eta}^{2,1}|
  &\lesssim
    \sum_{k=0}^{n-1}
      \eta_0^{d-s}\,2^{k(d+1-s)_{+}-k}
      \bigl(1+k\,\mathbf{1}_{s=d+1}\bigr)\,
      \M(2^{k-1}\eta_{0})\,
      \frac1{N}\sum_{i=1}^N
        \frac{\xg}{\eta}\,\Phi^{2\eta}_{x_i}(x)
  \\[3pt]
  &\lesssim
    \sum_{k=0}^{n-1}\eta_0^{d-s-1}\,
      2^{k(d+1-s)_{+}-k-n}
      \bigl(1+k\,\mathbf{1}_{s=d+1}\bigr)\,
      \M(2^{k-1}\eta_{0})\,\U(\eta),
  \label{R13}
\end{align}
where  we used $\eta_0=\eta_*$ when $s>d-1$ and $\eta=2^n\eta_0$.\\
This estimate reveals a  dichotomy:
when $s\leq d-1$ (sub-Coulomb),  the dyadic sum converges; when $s>d-1$ (super-Coulomb), the factor $\eta_0^{d-s-1}=\eta_*^{d-s-1}\to \infty$ as $N\to\infty$, signalling that the problem is supercritical.

Similarly, replacing $|\nab\Phi^\eta|$ by $|\Phi^\eta|$ in the argument above, we obtain the dual estimate (where we gain one power of $\eta$)
\begin{align}
  \|\langle \cdot\rangle^{\gamma'}\R_{N, \eta}^{2,1}\|_{(W^{1,1})^{*}}
  &\lesssim
    \sum_{k=0}^{n-1}\eta_0^{d-s}\,
      2^{k(d+1-s)_{+}-k}
      \bigl(1+k\,\mathbf{1}_{s=d+1}\bigr)\,
      \M(2^{k-1}\eta_{0})\,\U(\eta),
  \label{R13'}
\end{align}
for any $0\leq \gamma'\leq \gamma$.\\
\medskip
\noindent
\textbf{Step 3: The Taylor term $\R^{2,2}$ and the smooth terms $\R^{2,3}$, $\R^{2,4}$.}
These terms involve $\tilde\F_k*\mu$, which is smooth and bounded.
For $\R^{2,2}$, using the mean value theorem $|\tilde\F_k*\mu(x_i)-\tilde\F_k*\mu(x)|\leq |x_i-x|\|\nab(\tilde\F_k*\mu)\|_{L^\infty}$, together with \eqref{proF4} and \eqref{prod2exp}, and summing the geometric series $\sum_k (2^k\eta_0)^2\lesssim \eta^2$, we find
\begin{align}\label{R14}
  \langle x\rangle^\gamma|\R_{N, \eta}^{2,2}|
  &\lesssim
    \eta^{2}\,\mathcal{C}(\eta_*)
   	(	\A(\mu)+
   \big\|\xg \nabla^4 \mu\big\|_{L^\infty}	\mathbf{1}_{s\geq d})\U(\eta).
\end{align}

For $\R^{2,3}$, since the weight is maximised at $x=x_t$ (i.e.\ $\nab (\xg |\tilde\mu_{N,\eta}(x)|)=0$), the gradient $\nab\tilde\mu_{N,\eta}$ only contributes through the weight factor $\gamma x\langle x\rangle^{\gamma-2}$.
By \eqref{proF3} we get
\begin{align}\label{R15}
  \xg|\R_{N, \eta}^{2,3}(x)|
  &\lesssim
    \eta^2\,
   \mathcal{C}(\eta_*)  	\A(\mu)\M(\eta).
\end{align}

For $\R^{2,4}$, which involves only $\mu$ (no $\M$), the same estimate gives
\begin{align}\label{R16}
  \xg|\R_{N, \eta}^{2,4}(x)|
  &\lesssim
    \eta^2\mathcal{C}(\eta_*)	\A(\mu)^2.
\end{align}
\medskip
\noindent
\noindent\textbf{Step 4: The diagonal term $\R^{2,5}$.}
This is the correction from the diagonal $i=j$ contribution.
Adding and subtracting the value at the point $x$, we obtain
\begin{align}
	\xg |\R_{N,\eta}^{2,5}|
	&\leq
	\xg \frac{1}{N^2}\sum_{i=1}^N
	\left|
	(\F_{\eta}^{\eta_*}-\F_{\eta_0}^{\eta_*})(x_i,x_i)
	-
	(\F_{\eta}^{\eta_*}-\F_{\eta_0}^{\eta_*})(x,x)
	\right|
	|\nab \Phi_{x_i}^{\eta}(x)|
	\nonumber \\
	&\quad
	+ \xg \frac{1}{N}
	\left|
	(\F_{\eta}^{\eta_*}-\F_{\eta_0}^{\eta_*})(x,x)
	\right|
	|\nab \tilde \mu_{N,\eta}(x)|
	\nonumber \\
	&\quad
	+ \xg \frac{1}{N}
	\left|
	(\F_{\eta}^{\eta_*}-\F_{\eta_0}^{\eta_*})(x,x)
	\right|
	|\nab \mu(x)|.
	\label{q22}
\end{align}
By \eqref{226}, \eqref{226b}, and \eqref{226c}, we have
\begin{align}
	\left|
	(\F_{\eta}^{\eta_*}-\F_{\eta_0}^{\eta_*})(x,x)
	\right|
	&\lesssim
	\frac{1+\mathcal C(\eta,\eta_*)}{\eta_0^{\min\{s-1,d\}}}
	+1,
	\label{diag-bound-1}
	\\
	\left|
	(\F_{\eta}^{\eta_*}-\F_{\eta_0}^{\eta_*})(x_i,x_i)
	-
	(\F_{\eta}^{\eta_*}-\F_{\eta_0}^{\eta_*})(x,x)
	\right|
	&\lesssim
	\mathbf{1}_{s<d}\eta_0^{-s}|x-x_i|
	\nonumber \\
	&\quad
	+\mathbf{1}_{s\ge d}
	\left(
	\frac{1+\mathcal C(\eta,\eta_*)}{\eta_0^{\min\{s-1,d\}}}
	+1
	\right).
	\label{diag-bound-2}
\end{align}
For the second term in \eqref{q22}, we use the fact that
\[
\nab\bigl(\xg |\tilde\mu_{N,\eta}|\bigr)(x)=0.
\]
The third term is bounded directly by $\A$. Combining these bounds gives
\begin{align}
	\xg |\R_{N,\eta}^{2,5}|
	&\lesssim
	\frac{1+\mathcal C(\eta,\eta_*)}
	{N\eta_0^{\min\{s,d+1\}}}
	\,\U(\eta).
	\label{R17}
\end{align}
We now derive the corresponding dual estimate. We write
\[
\R_{N,\eta}^{2,5}
=
\operatorname{div}
\left(
\frac{1}{N^2}\sum_{i=1}^N
\bigl(\F_{\eta}^{\eta_*}
-
\F_{\eta_0}^{\eta_*}\bigr)(x_i,x_i)
\Phi_{x_i}^{\eta}(x)
\right).
\]
Hence, by \eqref{226b},
\begin{align}
	\|
	\langle \cdot\rangle^{\gamma'}
	\R_{N,\eta}^{2,5}
	\|_{(W^{1,1})^*}
	&\lesssim
	\frac{1}{N}
	\left(
	\frac{1+\mathcal C(\eta,\eta_*)}
	{\eta_0^{\min\{s-1,d\}}}
	+1
	\right)
	\U(\eta),
	\label{R17'}
\end{align}
for any $0\leq \gamma'\leq \gamma$.\\
\medskip
\noindent
\textbf{Step 5: Assembly of the $\R^2$ bound.}
Combining estimates \eqref{R13}, \eqref{R14}, \eqref{R15}, \eqref{R16}, and \eqref{R17}, we obtain
\begin{multline}\label{boundR2tot}
  \langle x\rangle^\gamma|\R_{N,\eta}^2|
  \lesssim
    \sum_{k=0}^{n-1}\eta_0^{d-s-1}\,
      2^{k(d+1-s)_{+}-k-n}
      \bigl(1+k\,\mathbf{1}_{s=d+1}\bigr)\,
      \M(2^{k-1}\eta_{0})\,\U(\eta)
  \\
  + \Bigl(
      \eta^{2}\mathcal{C}(\eta_*)	(	\A(\mu)+
      \big\|\xg \nabla^4 \mu\big\|_{L^\infty}	\mathbf{1}_{s\geq d})
    + 	\frac{1+\mathcal C(\eta,\eta_*)}
    {N\eta_0^{\min\{s,d+1\}}}
  \Bigr)\U(\eta),
\end{multline}
and the analogous dual estimate (using \eqref{R13'}, \eqref{R17'})
\begin{multline}\label{boundR2tot-1}
  \|\langle \cdot\rangle^{\gamma'}\R_{N, \eta}^{2}\|_{(W^{1,1})^{*}}
  \lesssim
    \sum_{k=0}^{n-1}\eta_0^{d-s}\,
      2^{k(d+1-s)_{+}-k}
      \bigl(1+k\,\mathbf{1}_{s=d+1}\bigr)\,
      \M(2^{k-1}\eta_{0})\,\U(\eta)
  \\
  + \Bigl(
  \eta^{2}\mathcal{C}(\eta_*)	(	\A(\mu)+
  \big\|\xg \nabla^4 \mu\big\|_{L^\infty}	\mathbf{1}_{s\geq d})
  + 	\frac{1+\mathcal C(\eta,\eta_*)}
  {N\eta_0^{\min\{s-1,d\}}}+\frac{1}{N}
  \Bigr)\U(\eta),
\end{multline}
for any $0\leq \gamma'\leq \gamma$.\\
\medskip
\noindent
\begin{align*}
	 \R_{N, \eta}^3:= -\frac1{N}  \sum_{i=1}^N  \big( \F^{\eta_*}_{\kappa_0\eta} *  \tilde\mu_{N,\eta/2}(x_i) - \F^{\eta_*}_{\kappa_0\eta} *  \tilde\mu_{N,\eta/2}(x)\big)  \cdot  \nab \pex(x),
\end{align*}
\textbf{Step 6: First bound on $\R^3$ \eqref{defr3}.}
The term $\R^3$ captures the error from replacing $\F_{\kappa_0\eta}*\tilde\mu_{N,\eta/2}(x_i)$ by its value at $x_i=x$.
A direct estimate gives
\begin{equation}\label{232}
  |\xg \R_{N, \eta}^3|
  \le \frac{1}{N}\sum_{i=1}^N
    \|\nab \F_{\kappa_0\eta}^{\eta_*}  *\tilde \mu_{N,\eta/2}\|_{L^\infty}\,
    \frac{|x_i-x|^2}{2\eta^2}\,
    \Phi_{x_i}^{\eta}(x)\,\xg .
\end{equation}
Using \eqref{le231} to bound the kernel integral, we obtain
\begin{align}\nonumber
&  \left|\int_{\mr^d}
  \nab_x \F_{\kappa_0\eta}^{\eta_*} (x,y)\,\tilde \mu_{N,\eta/2}(y)\, dy
  \right|
  \le \M(\eta/2)\int_{\mr^d}
    \frac{|\nab_x \F_{\kappa_0\eta}^{\eta_*} (x,y)|}{\langle y\rangle^\gamma}\, dy
  \\[3pt] \nonumber
  &\qquad\lesssim
    \bigl(  \mathbf{1}_{s\neq d-1}\,
    \eta^{-\min\{2,(s+1-d)_{+}\}}\,
    \bigl(1+	\mathcal C(\eta,\eta_*)\bigr)
          +\mathbf{1}_{s=d-1}|\!\log\eta|
    \bigr) \M(\eta/2),
\end{align}
and by \eqref{prod2exp} and \eqref{semigroup2}, it follows that 
\begin{align}
  |\xg\R_{N, \eta}^3| 
  \lesssim
    \bigl(  \mathbf{1}_{s\neq d-1}\,
 \eta^{-\min\{2,(s+1-d)_{+}\}}\,
 \bigl(1+	\mathcal C(\eta,\eta_*)\bigr)
 +\mathbf{1}_{s=d-1}|\!\log\eta|
 \bigr)\M(\eta/2)\U(\eta).
  \label{boundR3}	
\end{align}
\medskip
\noindent
\textbf{Step 7: Improved bound on $\R^3$ in the Coulomb case $\boldsymbol{s=d-1}$.}
In the Coulomb case, the factor $|\!\log\eta|\,\M(\eta)$ in \eqref{boundR3} is too large for closing the Grönwall argument.
The idea is to perform a second-order Taylor expansion and use a dyadic decomposition of $\nab\F_{\kappa_0\eta}*\tilde\mu_{N,\eta/2}$ to trade the logarithmic cost for a sum of $\M$ at coarser scales.

Concretely, we split $\R^3$ into a second-order Taylor remainder plus the first-order term:
\begin{align} \notag
  \R_{N,\eta}^3
  &= -
    \frac1{N}  \sum_{i=1}^N  \bigl( \F_{\kappa_0\eta} *  \tilde\mu_{N,\eta/2}(x_i) - \F_{\kappa_0\eta}*  \tilde\mu_{N,\eta/2}(x)-\nab \F_{\kappa_0\eta}*  \tilde\mu_{N,\eta/2}(x)\,(x_i-x)\bigr)   \cdot \nab_x \Phi^{\eta}_{x_i}(x)
   \\[3pt]\notag
  &\quad-
    \frac1{N}  \sum_{i=1}^N \nab\F_{\kappa_0\eta}*  \tilde\mu_{N,\eta/2}(x)\, (x_i-x)   \cdot \nab_x \Phi^{\eta}_{x_i}(x)
  .
\end{align}
Applying the identity~\eqref{z15'} with $A=\nab \F_{\kappa_0\eta}*  \tilde\mu_{N,\eta/2}(x)$ to the first-order term, we rewrite
\begin{align}
  \label{r42}
  \R_{N,\eta}^3
  &=- \frac1{N}  \sum_{i=1}^N  \bigl( \F_{\kappa_0\eta} *  \tilde\mu_{N,\eta/2}(x_i) -\F_{\kappa_0\eta}*  \tilde\mu_{N,\eta/2}(x)-\nab\F_{\kappa_0\eta}*  \tilde\mu_{N,\eta/2}(x)\, (x_i-x)\bigr)  \cdot  \nab_x \Phi^{\eta}_{x_i}(x)
  \notag \\
  &\quad -
  \operatorname{div}\F_{\kappa_0\eta}*  \tilde\mu_{N,\eta/2}(x)\;  \mu_{N,\eta}(x) -
  \eta^2  \nab \F_{\kappa_0\eta}*  \tilde\mu_{N,\eta/2}(x)\,  \nab^2 \mu_{N,\eta}(x).
\end{align}
We now estimate each term of \eqref{r42}.

For the second-order remainder, using $s=d-1$ and \eqref{le232} give
\begin{align}
  \notag
  &\langle x\rangle^{\gamma}\Bigl|	\frac1{N}  \sum_{i=1}^N  \bigl( \F_{\kappa_0\eta} *  \tilde\mu_{N,\eta/2}(x_i) -\F_{\kappa_0\eta}*  \tilde\mu_{N,\eta/2}(x)-\nab\F_{\kappa_0\eta}*  \tilde\mu_{N,\eta/2}(x)\,(x_i-x)\bigr)  \cdot  \nab_x \Phi^{\eta}_{x_i}(x)\Bigr|
  \\
  &\quad
  \lesssim
    \|\nab^2\F_{\kappa_0\eta} *  \tilde\mu_{N,\eta/2}\|_{L^\infty}\,
    \frac{\langle x\rangle^{\gamma}}{N}
    \sum_{i=1}^N |x_i-x|^2\,| \nab_x \Phi^{\eta}_{x_i}(x)|
  \notag\\
  &\quad
  \lesssim (1+\eta^{-1})\,\M(\eta/2)\, \eta\,\M(t, 0, 2\eta)
  \lesssim  \M(\eta/2)\, \U(\eta).
  \label{3etr3}
\end{align}

For the divergence term, using \eqref{coudet}, we have for $\gamma_0\in (d,d+1)$
\begin{align}
  \langle x\rangle^{\gamma}
  |(\operatorname{div}\F_{\kappa_0\eta})*  \tilde\mu_{N,\eta/2}(x)\,\mu_{N,\eta}(x) |
  &\lesssim
   \|\langle \cdot\rangle^{\gamma_0}\tilde\mu_{N,\eta/2} \|_{L^\infty}\U(\eta)
 \lesssim \M( \eta/2)\,\U(\eta).
  \label{4etre3}
\end{align}
Finally we examine the last term in the right-hand side of \eqref{r42}, and use \eqref{d2m} to  write that
\begin{equation} \label{5etre3}
	|\nab\F _{\kappa_0 \eta}*  \tilde\mu_{N,\eta}(x)| \langle x\rangle^{\gamma}  |\eta^2  \nab^2 \mu_{N,\eta}(x) |\lesssim \|\nab \F _{\kappa_0 \eta}*  \tilde\mu_{N,\eta}\|_{L^\infty}(\M(t,\mu,\eta/2)+\eta^2\A(\mu)).
\end{equation}
It remains to bound the $L^\infty$ norm in the right-hand side.
In view of \eqref{defc0}, 
we have  for $\eta\leq 2^{n_0}\eta\leq \frac{1}{2}$,
\begin{align*}
	\F_{\kappa_0\eta}^{\eta_*} * \tilde \mu_{N,\eta/2}(x)
	&=
	\bigl(\F_{\kappa_0\eta}^{\eta_*}-\F_{2\eta}^{\eta_*}\bigr)
	* \tilde \mu_{N,\eta/2}(x) +
	\sum_{k=1}^{n_0}
	\bigl(\F_{2^k\eta}^{\eta_*}-\F_{2^{k+1}\eta}^{\eta_*}\bigr)
	* \tilde \mu_{N,\eta/2}(x)  \\
	&\quad+
	\F_{2^{n_0+1}\eta}^{\eta_*} * \tilde \mu_{N,\eta/2}(x).
\end{align*}
Using \eqref{defc0}, the intermediate differences can be rewritten in the
regularized form
\begin{align*}
	\F_{2^k\eta}^{\eta_*}-\F_{2^{k+1}\eta}^{\eta_*}
	=
	\bigl(\F_{\kappa_0 2^k\eta}^{\eta_*}
	-\F_{\sqrt{15}\,2^{k-1}\eta}^{\eta_*}\bigr)
	* \Phi^{2^{k-1}\eta},
	\qquad 1\leq k\leq n_0-1.
\end{align*}
Moreover, again by \eqref{defc0},
\begin{align*}
	\F_{\sqrt{15}\,2^{n_0-1}\eta}^{\eta_*}
	* \Phi^{2^{n_0-1}\eta}
	=
	\F_{2^{n_0+1}\eta}^{\eta_*}
	=
	\F_{\kappa_0 2^{n_0+1}\eta}^{\eta_*}
	* \Phi^{2^{n_0}\eta}.
\end{align*}
Therefore,
\begin{align*}
	\F_{\kappa_0\eta}^{\eta_*} * \tilde \mu_{N,\eta/2}(x)
	&=
	\bigl(\F_{\kappa_0\eta}^{\eta_*}-\F_{2\eta}^{\eta_*}\bigr)
	* \tilde \mu_{N,\eta/2}(x)  \\
	&\quad+
	\sum_{k=1}^{n_0-1}
	\bigl(\F_{\kappa_0 2^k\eta}^{\eta_*}
	-\F_{\sqrt{15}\,2^{k-1}\eta}^{\eta_*}\bigr)
	* \Phi^{2^{k-1}\eta} * \tilde \mu_{N,\eta/2}(x)  \\
	&\quad+
	\F_{\kappa_0 2^{n_0+1}\eta}^{\eta_*}
	* \Phi^{2^{n_0}\eta} * \tilde \mu_{N,\eta/2}(x).
\end{align*}
In view of \eqref{eta1eta2}, we have 
\begin{align*}
	\Phi^{2^{k-1}\eta}\star\tilde \mu_{N,\eta/2}(x)= 	\Phi^{\eta/2}\star\tilde \mu_{N,2^{k-1}\eta}(x)+\Phi^{\eta/2} \star\mu(x)-	\Phi^{2^{k-1}\eta}\star \mu(x),
\end{align*}
and using  \eqref{le231}, \eqref{le233},  \eqref{Dphi}  and \eqref{Mee}--\eqref{Meta1eta2}, we may obtain
\begin{align}\label{z11}
		\|\nab \F_{\kappa_0\eta}^{\eta_*}  *\tilde \mu_{N,\eta/2}\|_{L^\infty} &\leq  	\|\nab	\left(\F_{\kappa_0\eta}^{\eta_*}-\F_{2\eta}^{\eta_*} \right)*\tilde \mu_{N,\eta/2}\|_{L^\infty}\\
	\notag & +\sum_{k=1}^{n_0-1}	\|\nab \left(\F_{\kappa_0 2^{k}\eta}^{\eta_*}-\F_{\sqrt{15}2^{k-1}\eta}^{\eta_*} \right)* 	\Phi^{\eta/2}\star \tilde \mu_{N,2^{k-1}\eta}\|_{L^\infty}\\&\nonumber+\sum_{k=1}^{n_0-1}	\|\nab \left(\F_{\kappa_0 2^{k}\eta}^{\eta_*}-\F_{\sqrt{15}2^{k-1}\eta}^{\eta_*} \right)* (\Phi^{\eta/2} \star\mu-	\Phi^{2^{k-1}\eta}\star\mu)\|_{L^\infty} \\ \notag
	&+\| \nab \F_{ \kappa_02^{n_0+1}\eta}^{\eta_*} *\Phi^{\eta/2}\star \tilde \mu_{N,2^{n_0}\eta}\|_{L^\infty}\\&+\| \nab \F_{ \kappa_02^{n_0+1}\eta}^{\eta_*} *\Phi^{2^{n_0}\eta}\star (\Phi^{\eta/2} \star\mu-	\Phi^{2^{n_0}\eta}\star\mu)\|_{L^\infty}
 \\&
 \lesssim \sum_{k=0}^{n_0-1}\M(2^{k}\eta)+|\log(2^{n_0}\eta)|\M(2^{n_0}\eta)+|\log(2^{n_0}\eta)|2^{2n_0}\eta^2\A(\mu).\nonumber
\end{align}

The logarithmic factor now appears multiplied by $\M$ at the coarse scale $2^{n_0}\eta$, which is more favorable for closing the estimate, since $\M$ at coarse scales is expected to be small.

Combining \eqref{3etr3}--\eqref{z11} and using \eqref{d2m}, we get
\begin{align}  \label{boundR3'}
  |\xg\R_{N, \eta}^3|
 & \lesssim
    \mathbf{1}_{s\neq d-1}\,
    \eta^{-\min\{2,(s+1-d)_{+}\}}\,
  \bigl(1+	\mathcal C(\eta,\eta_*)\bigr)
    \M(\eta/2)\,\U(\eta)
\\
  &+\mathbf{1}_{s=d-1}\,
    \Bigl(\sum_{k=0}^{n_0-1}\M(2^{k}\eta)
      +|\!\log(2^{n_0}\eta)|\,\M(2^{n_0}\eta)
      +|\!\log(2^{n_0}\eta)|\,2^{2n_0}\eta^2\,\A(\mu)
    \Bigr)\bigl(\M(\eta/2)+\eta^2\A(\mu)\bigr)
  \notag\\
  &+ \mathbf{1}_{s=d-1}\,\M( \eta/2)\,\U(\eta).
 \notag
\end{align}
\smallskip
\noindent
\textbf{Step 8: Dual estimate for $\R^3$.}
By the semigroup property and the definition of the truncated kernel, we have
\begin{equation}\label{eq:G-eta-identity}
\F_{\kappa_0\eta}^{\eta_*}
* \tilde\mu_{N,\eta/2}
	=
	\F^{\eta_*}*\widetilde\mu_{N,\eta}
	+
	\F_{<\kappa_0\eta}^{\eta_*}*\mu .
\end{equation}
Indeed,
\begin{align*}
	\F_{\kappa_0\eta}^{\eta_*}*\widetilde\mu_{N,\eta/2}
	&=
	\F_{\kappa_0\eta}^{\eta_*}*\mu_{N,\eta/2}
	-
	\F_{\kappa_0\eta}^{\eta_*}*\mu
	=
	\F^{\eta_*}*\mu_{N,\eta}
	-
	\F_{\kappa_0\eta}^{\eta_*}*\mu
	\\
	&=
	\F^{\eta_*}*\widetilde\mu_{N,\eta}
	+
	\F_{<\kappa_0\eta}^{\eta_*}*\mu .
\end{align*}
Using \eqref{eq:G-eta-identity}, we rewrite
$\R_{N,\eta}^3+\operatorname{div}(\F^{\eta_*}*\widetilde\mu_{N,\eta})\mu$
in divergence form as
\begin{align*}
	\R_{N,\eta}^3
	+\operatorname{div}(\F^{\eta_*} * \tilde\mu_{N,\eta})\, \mu  
	&=
	-\operatorname{div}\Biggl(
	\frac1N \sum_{i=1}^N
	\Bigl[
	\F_{\kappa_0\eta}^{\eta_*} * \tilde\mu_{N,\eta/2}(x_i)
	-
	\F_{\kappa_0\eta}^{\eta_*} * \tilde\mu_{N,\eta/2}(x)
	\Bigr]\pex(x)
	\Biggr)
	\\
	&\quad
	-
	\operatorname{div}\bigl(\F_{\kappa_0\eta}^{\eta_*}
	* \tilde\mu_{N,\eta/2}\bigr)(x)\,
	\tilde\mu_{N,\eta}(x)
	-
	\operatorname{div}\bigl(\F_{<\kappa_0\eta}^{\eta_*} * \mu\bigr)(x)\,\mu(x).
\end{align*}
By \eqref{proF4}, we have
\begin{align*}
	\|\operatorname{div}\F_{<\kappa_0\eta}^{\eta_*} * \mu\|_{L^\infty}
	\lesssim
	\eta^2\mathcal C(\eta_*)
	\left(
	\A(\mu)
	+
	\|\xg \nabla^4 \mu\|_{L^\infty}\mathbf{1}_{s\geq d}
	\right).
\end{align*}
Next, by \eqref{le231} for $s\not= d-1$ and \eqref{coudet} in the Coulomb case
$s=d-1$, we obtain
\begin{align*}
	\|\operatorname{div}\F_{\kappa_0\eta}^{\eta_*}
	* \tilde\mu_{N,\eta/2}\|_{L^\infty}
	&\lesssim
	\mathbf{1}_{s\neq d-1}\,
	\eta^{-\min\{2,(s+1-d)_{+}\}}
	\bigl(1+\mathcal C(\eta,\eta_*)\bigr)
	\M(\eta/2)+
	\mathbf{1}_{s=d-1}
	\|\langle \cdot\rangle^{\gamma_0} \tilde\mu_{N,\eta/2}\|_{L^\infty}
	\\
	&\lesssim
	\eta^{-\min\{2,(s+1-d)_{+}\}}
	\bigl(1+\mathcal C(\eta,\eta_*)\bigr)
	\M(\eta/2).
\end{align*}
Moreover, applying the same dyadic argument as in Step~7, we also have
\begin{align*}
&	\|\nabla_x\F_{\kappa_0\eta}^{\eta_*}
	* \tilde\mu_{N,\eta/2}\|_{L^\infty}
	\lesssim
	\mathbf{1}_{s\neq d-1}\,
	\eta^{-\min\{2,(s+1-d)_{+}\}}
	\bigl(1+\mathcal C(\eta,\eta_*)\bigr)
	\M(\eta/2)
	\\
	&+
	\mathbf{1}_{s=d-1}
	\Biggl(
	\sum_{k=0}^{n_0-1}\M(2^k\eta)
	+
	|\log(2^{n_0}\eta)|\,\M(2^{n_0}\eta)
	+
	|\log(2^{n_0}\eta)|\,2^{2n_0}\eta^2\,\A(\mu)
	\Biggr).
\end{align*}
We now estimate the dual norm. Combining the divergence representation above with the preceding bounds, we obtain, for every $0\le \gamma'\le \gamma$,
\begin{align}   \label{boundR3-1}
	&\|\langle \cdot\rangle^{\gamma'}
	\bigl(
	\R_{N,\eta}^3
	+
	\operatorname{div}(\F^{\eta_*} * \tilde\mu_{N,\eta})\,\mu
	\bigr)\|_{(W^{1,1})^{*}}
	\\&\quad\quad\lesssim \nonumber
	\|\nabla_x\F_{\kappa_0\eta}^{\eta_*}
	* \tilde\mu_{N,\eta/2}\|_{L^\infty}	\sup_x(\frac1N \langle x\rangle^{\gamma'} \sum_{i=1}^N|x-x_i||\pex(x)|)\\&\quad\quad\quad\quad+
	\|\operatorname{div}\F_{\kappa_0\eta}^{\eta_*}
	* \tilde\mu_{N,\eta/2}\|_{L^\infty}\|\langle \cdot\rangle^{\gamma'}	\tilde\mu_{N,\eta}\|_{L^\infty}+
	\|\operatorname{div}\F_{<\kappa_0\eta}^{\eta_*}
	* \mu\|_{L^\infty}\|\langle \cdot\rangle^{\gamma'}	\mu\|_{L^\infty}
	\nonumber\\
	&\quad\lesssim
	\mathbf{1}_{s\neq d-1}\,
	\eta^{1-\min\{2,(s+1-d)_{+}\}}
	\bigl(1+\mathcal C(\eta,\eta_*)\bigr)
	\M(\eta/2)\U(\eta)
	\nonumber\\
	&\qquad+
	\mathbf{1}_{s=d-1}
	\Biggl(
	\sum_{k=0}^{n_0-1}\M(2^k\eta)
	+
	|\log(2^{n_0}\eta)|\,\M(2^{n_0}\eta)
	+
	|\log(2^{n_0}\eta)|\,2^{2n_0}\eta^2\,\A(\mu)
	\Biggr)\eta\U(\eta),\nonumber
	\\
	&\qquad+
	\eta^{-\min\{2,(s+1-d)_{+}\}}
	\bigl(1+\mathcal C(\eta,\eta_*)\bigr)
	\M(\eta/2)\M(\eta)
	\nonumber\\
	&\qquad+
	\eta^2\mathcal C(\eta_*)
	\left(
	\A(\mu)
	+
	\|\xg \nabla^4 \mu\|_{L^\infty}\mathbf{1}_{s\geq d}
	\right)\A(\mu).
	\nonumber
\end{align}
\smallskip
\noindent
\textbf{Step 9: The diagonal term $\R^4$ \eqref{defr1}.}
This term corresponds to the diagonal correction
$\F_{\eta}^{\eta_*}(x_i,x_i)$ arising from the full convolution.
It can be treated in the same way as $\R^{2,5}$ in Step~4. We obtain
\begin{align}
	\xg |\R^4|
	&\leq
	\xg \frac{1}{N^2}\sum_{i=1}^N
	\left|
	\F_{\eta}^{\eta_*}(x_i,x_i)
	-\F_{\eta}^{\eta_*}(x,x)
	\right|
	|\nab \Phi_{x_i}^{\eta}(x)|
	\nonumber \\
	&\quad
	+ \xg \frac{1}{N}
	\left|\F_{\eta}^{\eta_*}(x,x)
	\right|
	|\nab \tilde \mu_{N,\eta}(x)|
	+ \xg \frac{1}{N}
	\left|\F_{\eta}^{\eta_*}(x,x)
	\right|
	|\nab \mu(x)|.
	\label{q22'}
\end{align}
By \eqref{226}, \eqref{q51} and \eqref{q53}, \eqref{q54} we have
\begin{align*}
&\langle x\rangle^{-1}	\left|\F_{\eta}^{\eta_*}(x,x)
	\right|\lesssim
	\frac{1+\mathcal C(\eta,\eta_*)}{\eta_0^{\min\{s-1,d\}}}
	+1,
	\\
	\left|\F_{\eta}^{\eta_*}(x_i,x_i)
	-\F_{\eta}^{\eta_*}(x,x)
	\right|
	&\lesssim 	\left|\F_{\eta}^{\mathrm{reg}}(x_i,x_i)
	-\F_{\eta}^{\mathrm{reg}}(x,x)
	\right|+	\left|\F_{\eta}^{\mathrm{sing},\eta_*}(x_i,x_i)
	-\F_{\eta}^{\mathrm{sing},\eta_*}(x,x)
	\right|\\&\lesssim |x-x_i|+
	\mathbf{1}_{s<d}\eta^{-s}|x-x_i|
	+\mathbf{1}_{s\ge d}
	\left(
	\frac{1+\mathcal C(\eta,\eta_*)}{\eta^{\min\{s-1,d\}}}
	+1
	\right).
\end{align*}
For the second term in \eqref{q22'}, we use the fact that
\[
\nab\bigl(\xg |\tilde\mu_{N,\eta}|\bigr)(x)=0.
\]
The third term is bounded directly by $\A$. Combining these bounds gives
\begin{align}
	\xg |\R_{N,\eta}^{4}|
	&\lesssim
	\frac{1+\mathcal C(\eta,\eta_*)}
	{N\eta^{\min\{s,d+1\}}}
	\,\U(\eta).
	\label{R4}
\end{align}
We next derive the corresponding dual estimate. Since
\[
\R_{N,\eta}^{4}
=
\operatorname{div}
\left(
\frac{1}{N^2}\sum_{i=1}^N
\F_{\eta}^{\eta_*}(x_i,x_i)
\Phi_{x_i}^{\eta}(x)
\right),
\]
we get
\begin{align}
	\|
	\langle \cdot\rangle^{\gamma'}
	\R_{N,\eta}^{4}
	\|_{(W^{1,1})^*}
	&\lesssim
	\frac{1}{N}
	\left(
	\frac{1+\mathcal C(\eta,\eta_*)}
	{\eta_0^{\min\{s-1,d\}}}
	+1
	\right)
	\U(\eta),
	\label{R17''}
\end{align}
for any $0\leq \gamma'\leq \gamma-1$.  The restriction
$\gamma'\le \gamma-1$ comes from this diagonal term: the factor
$\langle x\rangle^{-1}$ is needed to absorb the possible linear growth of
$\F_{\eta}^{\eta_*}(x,x)$. \\
\smallskip
\noindent
\textbf{Step 10: The higher-order term $\R^5$ \eqref{defr4}.}
This term arises from the third-order Taylor remainder from expanding $h:=\F_{\kappa_0\eta}^{\eta_*}*\mu$ around~$x$.
Using \eqref{z15b} to control the moment, and \eqref{prod2exp}, \eqref{d2m}, \eqref{semigroup}, \eqref{227}, we obtain for any $x\in \mathbb{R}^d$
\begin{align*}
  |\xg\R_{N, \eta}^5|
  &\lesssim
    \frac1{N}  \sum_{i=1}^N
      \frac{|x_i-x|^4}{\eta^2}\,\pex(x)\,
      \|D_x^3\F_{\kappa_0\eta}^{\eta_*}  *  \mu\|_{L^\infty}
  \\
  &\quad
  + \bigl(\eta^2|\nab \mu_{N,\eta}|
    +\eta^4|\nab^3\mu_{N,\eta}|\bigr)\,
    \|D_x^2\F_{\kappa_0\eta}^{\eta_*} *  \mu\|_{L^\infty}.
\end{align*}
Indeed, we split
$D^2h(x+\tau(x_i-x))=D^2h(x)+\mathcal{O}(|x_i-x|\,\|D^3h\|_{L^\infty})$.
The $D^3h$ correction contributes
$\mathcal{O}(|x_i-x|^3\|D^3h\|_{L^\infty})$ to the remainder;
multiplied by $|\nab\pex|\lesssim|x_i-x|\eta^{-2}\pex$,
this gives the first line.

For the $D^2h(x)$ part, since $D^2h(x)$ is independent of~$x_i$,
we apply the Hermite identity \eqref{z15'} to trade
$(x_i-x)^{\otimes 2}\cdot\nab\pex$ for derivatives of $\pex$:
the leading terms are $\eta^4\nab^3\pex$ and $\eta^2\nab\pex$.
Summing $\frac{1}{N}\sum_i$ converts these into
$\eta^4\nab^3\mu_{N,\eta}$ and $\eta^2\nab\mu_{N,\eta}$,
giving the second line after multiplication by
$\|D^2\F_{\kappa_0\eta}^{\eta_*}*\mu\|_{L^\infty}$.

We conclude that 
\begin{align}\label{boundR4}
  \|\xg\R_{N, \eta}^5\|_{L^\infty}
  &\lesssim
	\mathcal C(\eta_*) \eta (\M(\eta/2)+ \eta \A(\mu))	\left(
	\A(\mu)
	+
	\|\xg \nabla^4 \mu\|_{L^\infty}\mathbf{1}_{s\geq d}
	\right).
\end{align}
\smallskip
\noindent
\textbf{Step 11: The mean-field terms $\R^6$, $\R^7$, $\R^8$.}
These terms involve the interaction of $\F*\mu$ (a smooth field) with the error $\tilde\mu_{N,\eta}$ or with~$\mu$ itself.
Their treatment is more straightforward since $\F*\mu$ is controlled by~$\A(\mu)$.

For $\R^6$ \eqref{defr5}, inserting \eqref{z15}, we get a decomposition into a term involving $\operatorname{div}(\F*\mu)\,\tilde\mu_{N,\eta}$ (controlled by 	$\A(\mu)$ and $\M(\eta)$) and a term involving $\nab\F*\mu:\nab^2\mu_{N,\eta}$ (controlled via~\eqref{d2m}):
\begin{align*}
  \R_{N,\eta}^6
  &= -\frac1{N}  \sum_{i=1}^N
    \nab_x\F_{\kappa_0\eta}^{\eta_*} *  \mu(x)\, (x_i-x)\cdot  \nab \pex(x)
    +\mu \operatorname{div}(\F_{\kappa_0\eta}^{\eta_*}*\mu)
  \\
  &= -(\operatorname{div}\F_{\kappa_0 \eta}^{\eta_*} *  \mu)\, \tilde \mu_{N,\eta}
    -\eta^2\, \nab_x\F_{\kappa_0 \eta}^{\eta_*} *  \mu(x) : \nab^2 \mu_{N,\eta}.
\end{align*}
Using \eqref{227} and \eqref{d2m}, we obtain
\begin{align}\label{boundR5}
  |\xg\R_{N,\eta}^6|
  &\lesssim
   \mathcal{C}(\eta_*)    \A(\mu)\,\bigl(\M(\eta/2)+\eta^2\A(\mu)\bigr),
\end{align}
and the dual estimate
\begin{align}\label{boundR5-1}
  &\|\langle \cdot\rangle^\gamma
    (\R_{N,\eta}^6+\tilde\mu_{N,\eta}\operatorname{div} (\F^{\eta_*}* \mu ))
  \|_{(W^{1,1})^{*}}
\lesssim
   \mathcal{C}(\eta_*)
    \A(\mu)\,\eta\bigl(\M(\eta/2)+\eta\A(\mu)\bigr).
\end{align}
For $\R^7$ (the commutator of mollification and divergence), using \eqref{proF4}, we have
\begin{align}\label{boundR7}
  \|\xg\R_{N,\eta}^7\|_{L^\infty}
  &\lesssim
    \mathcal{C}(\eta_*) \eta^2\,\A(\mu)	\left(
    \A(\mu)
    +
    \|\xg \nabla^4 \mu\|_{L^\infty}\mathbf{1}_{s\geq d}
    \right).
\end{align}
For $\R^8$ (the time derivative error), for $s \ge d$ this is a direct estimate, for $s \in (d-1,d)$ we use \eqref{227} to write  \begin{align}\label{boundR8}
 \| \langle x\rangle^\gamma
  \R_{N, \eta}^{8}\|_{L^\infty}
  &\lesssim
    \Bigl(
      \mathbf 1_{d-1\le s<d}\, \eta_*^2
      +\mathbf{1}_{s\geq d}	\, \mathcal C(\eta_*)
    \Bigr)\A(\mu)^2.
\end{align}
\medskip
\noindent
\textbf{Step 12: The transport term $\R^{10}$.}
When $s<d$, the kernel $\F$ is locally integrable and a direct bound gives $|\R^{10}|\lesssim \M(\eta)\,\A(\mu)$.

When $s\geq d$, the kernel is too singular for direct integration.
We decompose $\F^{\eta_*}*\tilde\mu_{N,\eta} = \F_{\kappa_0\eta}^{\eta_*}*\tilde\mu_{N,\eta/2} - \F_{<\kappa_0\eta}^{\eta_*}*\mu$ and use \eqref{le230} and \eqref{proF3} to obtain
\begin{align}
\label{Step12}
  \langle x\rangle^{-1}	|\F^{\eta_*}* \tilde \mu_{N,\eta} (x) |&\lesssim
    \bigl(
      \mathbf{1}_{s=d}|\!\log\eta|
      +\mathbf{1}_{s\in (d,d+1)}\eta^{d-s}
    	+\eta^{-1}\mathcal C(\eta,\eta_*)
    \bigr)\,\M(\eta/2)
\\
  &\quad\quad
  +\eta^2
      \mathbf{1}_{s\geq d}\,	\mathcal C(\eta_*)\A(\mu). \notag
\end{align}
Combining with $|\nab\mu(x_t)|\lesssim \A(\mu)\langle x_t\rangle^{-\gamma-1}$, we get
\begin{align}\label{boundr7}
  \R_{N,\eta}^{10}
  &\lesssim  \mathbf{1}_{s<d}\,\M(\eta)\, \A(\mu)
  + \bigl(
      \mathbf{1}_{s=d}|\!\log\eta|
      +\mathbf{1}_{s\in (d,d+1)}\eta^{d-s}
      	+\eta^{-1}\mathcal C(\eta,\eta_*)
    \bigr)\,\M(\eta/2)\, \A(\mu)
  \\
  &\quad
  +\eta^2
  \mathbf{1}_{s\geq d}\,	\mathcal C(\eta_*)\A(\mu)^2.\nonumber
\end{align}
\medskip
\noindent
\textbf{Step 13: The weight term $\R^{11}$.}
This term arises from the transport of the weight $\langle x\rangle^\gamma$.
Using \eqref{Step12}, we find
\begin{align}\label{boundr8}
  \R_{N,\eta}^{11}
  &= \gamma |\langle x_t\rangle^{\gamma-2}\tilde \mu_{N,\eta}(x_t)\,x_t\cdot\F^{\eta_*}* \mu(x_t)|
  \notag\\
  &\lesssim
    \bigl(1+ \mathbf{1}_{s=d}|\!\log\eta|
      +\mathbf{1}_{s\in (d,d+1)}\eta^{d-s}
      +\eta^{-1}\mathcal C(\eta,\eta_*)
    \bigr) \M(\eta)\,\U(\eta).
\end{align}
\medskip
\noindent
\textbf{Step 14: The nonlinear term $\R^{12}$.} 
This is the ``$\M\cdot\M$'' term. Using again \eqref{Step12} we find
\begin{align*}
  |\R_{N,\eta}^{12}|
  &\lesssim
    \mathbf 1_{s<d}\,\M(\eta)^2
  +   \bigl(1+ \mathbf{1}_{s=d}|\!\log\eta|
  +\mathbf{1}_{s\in (d,d+1)}\eta^{d-s}
  +\eta^{-1}\mathcal C(\eta,\eta_*)
  \bigr)\M(\eta/2)\,\M(\eta)
  \\
  &\quad
  +\eta^2
  \mathbf{1}_{s\geq d}	\mathcal C(\eta_*)\A(\mu)\M(\eta).
\end{align*}
Combining the estimates from Steps~1--14, using $\eta=2^n\eta_0$ and
$\mathcal C(\eta_*)=1$ when $s<d+1$, we obtain the claimed estimate. Here we
also use
\begin{align*}
	\mathbf{1}_{s=d-1}\,\M(\eta)\,\U(\eta)
	&\leq
	\sum_{k=0}^{n+1}
	\eta_0^{d-s-1}
	2^{k(d+1-s)_+-k-n}
	\bigl(1+k\,\mathbf{1}_{s=d+1}\bigr)
	\M(2^{k-1}\eta_0)\,\U(\eta),
\end{align*}
and
$$
\|\langle\cdot\rangle^{\gamma'}u\|_{(W^{1,1})^*}
\lesssim
\|\langle\cdot\rangle^{\gamma'}u\|_{L^\infty}.
$$
The dual estimates are stated for $0\leq\gamma'\leq\gamma-1$, due to the
loss of one weight in the diagonal estimate \eqref{R17''}. This completes
the proof.
\end{proof}

\begin{remark}\label{rem:symmetrization}
The most delicate contribution is $\R_{N,\eta}^2$, and the decomposition in \eqref{x2} is essentially optimal. To explain why a more naive approach fails, let us consider the following symmetrization.
\begin{align*}
  -	\R_{N,\eta}^2
  =&\;
    \frac12 \sum_{k=0}^{n-1}\frac{1}{N^2}\sum_{i\neq j}
      \bigl(\F_{2^k\eta_0}^{\eta_*}-\F_{2^{k+1}\eta_0}^{\eta_*}\bigr)(x_i,x_j)
      \cdot\bigl(\nabla\Phi^\eta_{x_i}(x)-\nabla\Phi^\eta_{x_j}(x)\bigr)
  \\
  &+\sum_{k=0}^{n-1}\frac{1}{N^2}\sum_{i\neq j}
    \bigl(\F_{2^k\eta_0}^{\eta_*,\mathrm{sym}}
      -\F_{2^{k+1}\eta_0}^{\eta_*,\mathrm{sym}}\bigr)(x_i,x_j)
    \,\nabla\Phi^\eta_{x_i}(x),
\end{align*}
where $\F^{\eta_*,\mathrm{sym}}(x,y):=\frac12(\F^{\eta_*}(x,y)+\F^{\eta_*}(y,x))$.
 Taylor expanding the first term  to order $m_0\geq 1$ introduces a remainder of the form
\[
  \sum_{k=0}^{n-1}\frac{1}{N^2}\sum_{i\neq j}
    \bigl|\F_{2^k\eta_0}^{\eta_*}-\F_{2^{k+1}\eta_0}^{\eta_*}\bigr|(x_i,x_j)\,
    \min\{|x_i-x_j|,\eta\}^{m_0+1}\,
    \bigl|\nabla^{m_0+2}\Phi^\eta_{x_i}(x)\bigr|.
\]
This term is \emph{uniformly bounded but not small}. By convolution estimates it behaves like
\begin{align*}
  &\sum_{k=0}^{n-1}\frac{1}{N}\sum_i
    \Bigl(
      \frac{(2^k\eta_0)^2}{(2^k\eta_0+|\cdot|)^{s+2}}\,
      \min\{|\cdot|,\eta\}^{m_0+1}
    \Bigr)
    *\mu_{N,\eta}(x_i)\,
    \eta^{-m_0-2}\,
    \Phi^{2\eta}_{x_i}(x)
  \\
  &\qquad\lesssim
    \U(\eta)^2
    \sum_{k=0}^{n-1}
    \Bigl\|
      \Bigl(
        \frac{(2^k\eta_0)^2\,\min\{|\cdot|,\eta\}^{m_0+1}}
          {(2^k\eta_0+|\cdot|)^{s+2}}	
      \Bigr)
      *(\langle\cdot\rangle^{-\gamma})
    \Bigr\|_{L^\infty}
    \eta^{-m_0-2}
  \lesssim \U(\eta)^2,
\end{align*}
for $s\leq d-1$.
Hence $\langle x\rangle^{\gamma}|\R_{N,\eta}^2|\lesssim \U(\eta)^2$,
which is not small and therefore insufficient to close the   Grönwall argument.
This illustrates why the  decomposition into $\R^{2,1}$--$\R^{2,5}$ via the semigroup property is necessary.
\end{remark}


\section{Control of small scale interactions}\label{sec:smallscale}

We now introduce a quantity $H(t)$ that controls the small-scale interactions between particles, i.e.~the regime $|x_i-x_j|\lesssim \eta_*$.
This quantity provides a bound on the most singular error terms $\R_{N,\eta}^1$ and $\R_{N,\eta}^9$, and it can  in turn be controlled via a differential inequality which controls its derivative in terms of the metric $\M$.

Define 
\be\label{condt}
  H(t)
  :=\frac{1}{N}\sup_{i\in [1,N]}
    \sum_{\substack{j=1\\j\neq i}}^N
    \frac{\chi\bigl(\frac{|x_i-x_j|}{2\eta_*}\bigr)}{|x_i-x_j|^{d-\theta}},
  \qquad
  \eta_*=N^{-\frac{1}{d}},
\ee 
with 
\begin{equation}\label{def:theta}
  \theta:=\frac{(d-1-s)_+}{3d}.
\end{equation}
Note that $\theta=0$ whenever $s\geq d-1$, so the exponent $d-\theta$ equals $d$ in both the Coulomb and super-Coulomb regimes.
The cutoff $\chi$, equals to $1$ in $B(0,1)$ and vanishing outside $B(0, 2)$ ensures that $H$ only sees pairs of particles at mutual distance $\lesssim \eta_*$; the power $d-\theta$ is chosen so that $H$ has the right homogeneity for the Grönwall estimate below (Lemma~\ref{lem31}).

By the definition of $H(t)$, we obtain the \emph{minimum separation bound}: for any $i\neq j$,
\begin{equation}\label{q10}
  \inf_{i \neq j} |x_i - x_j|
  \;\ge\;
  \min (\eta_*, \bigl( N H(t) \bigr)^{-\frac{1}{d-\theta}}).
\end{equation}

We next turn to controlling the first error term \eqref{defr2} via this small-scale interaction control.
An easier estimate that would suffice to obtain mere convergence is possible, but we give here the estimate that will provide the best convergence rate.

Let us start with a preliminary lemma.
\begin{lem}[Empirical sum estimate]\label{lem:empsum}
Let $K:\mr^d\to [0,\infty)$ be a radial kernel satisfying
\[
  K(z)\lesssim \frac{C_K}{|z|^{\alpha}}\,\mathbf{1}_{|z|\leq R}
  \qquad\text{for some }\alpha,\;R>0,\;C_K>0,
\]
and 
\begin{equation}\label{convolutiondom}
	K(z)\mathbf{1}_{|z|>R_0}
	\lesssim
	\bigl(K\,\mathbf{1}_{|\cdot|>R_0/2}\bigr)*\Phi^{R_0/2}(z),
\end{equation}
and let $\Psi(z):=K(z)\mathbf{1}_{|z|>R_0/2}$ for some $\eta_*\leq R_0\leq R$.
The following holds.
\begin{enumerate}[label=\emph{(\roman*)},leftmargin=2em]
\item \textbf{Close pairs} ($|x_i-x_j|\leq R_0$): if $0\le \alpha\le d-\theta$,
\begin{align}\label{empsum:close}
  \frac{\xg}{N^2}\sum_{i\neq j}
    K(x_i-x_j)\,\mathbf{1}_{|x_i-x_j|\leq R_0}\,\Phi^{2\eta}_{x_i}(x)
  &\lesssim
    C_K\,\eta_*^{d-\theta-\alpha}\,H(t)\,\U(\eta)
  \notag\\
  &\quad
  + C_K\sum_{k\geq 1, 2^k\leq \frac{R_0}{\eta_*}}
    (2^k\eta_*)^{d-\alpha}\,\U(2^k\eta_*)\,\U(\eta)
\end{align} where $\U$ is defined in \eqref{defU}.
When $\alpha \le 0$, this can be replaced by $C_K\,R_0^{d-\alpha}\,\U(R_0)\,\U(\eta)$.
\item \textbf{Far pairs} ($|x_i-x_j|>R_0$): if $\int\langle y\rangle^{-\gamma}\Psi(y)\,dy=:I_\Psi<\infty$,
\begin{align}\label{empsum:far}
  \frac{\xg}{N^2}\sum_{i\neq j}
    \Psi(x_i-x_j)\,\Phi^{2\eta}_{x_i}(x)
  &\lesssim
    I_\Psi\,\U(R_0)\,\U(\eta).
\end{align}
\end{enumerate}
\end{lem}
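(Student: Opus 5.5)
The plan is to split the close-pair sum dyadically in $|x_i-x_j|$. We then convert each dyadic shell into a mollified empirical density at the matching scale, so that the $j$-sum is read off from $\M(X_N,0,2^k\eta_*)\lesssim\U(2^k\eta_*)$ via \eqref{triangleU}. The $i$-sum against $\Phi^{2\eta}_{x_i}(x)$ is absorbed by $\xg\frac1N\sum_i\Phi^{2\eta}_{x_i}(x)\lesssim\U(\eta)$, which follows from \eqref{Mee} and \eqref{triangleU}.

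\textbf{Close pairs.} We split $\mathbf 1_{|x_i-x_j|\le R_0}$ into the innermost region $|x_i-x_j|\le 2\eta_*$ and the shells $2^{k}\eta_*<|x_i-x_j|\le 2^{k+1}\eta_*$ with $k\ge1$ and $2^k\le R_0/\eta_*$.

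\emph{Innermost region.} Here $\chi(|x_i-x_j|/(2\eta_*))=1$, and $K\lesssim C_K|z|^{-\alpha}=C_K|z|^{d-\theta-\alpha}|z|^{-(d-\theta)}\le C_K(2\eta_*)^{d-\theta-\alpha}|z|^{-(d-\theta)}$, using $\alpha\le d-\theta$. Summing over $j$ and applying the definition \eqref{condt} bounds the $j$-sum by $C_K\eta_*^{d-\theta-\alpha}H(t)$, uniformly in $i$. The $i$-sum then contributes $\U(\eta)$.

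\emph{Shells $k\ge1$.} We bound $K\lesssim C_K(2^k\eta_*)^{-\alpha}$ and write
\[
\mathbf 1_{|x_i-x_j|\le 2^{k+1}\eta_*}\lesssim (2^k\eta_*)^d\,\Phi^{2^{k+1}\eta_*}(x_i-x_j).
\]
This gives
\[
\frac1N\sum_j\mathbf 1_{\{\cdot\}}\lesssim(2^k\eta_*)^d\,\mu_{N,2^{k+1}\eta_*}(x_i)\lesssim(2^k\eta_*)^d\,\langle x_i\rangle^{-\gamma}\,\U(2^k\eta_*),
\]
where the last step uses \eqref{Mee} to pass between adjacent scales. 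Dropping $\langle x_i\rangle^{-\gamma}\le1$ and summing over $i$ with the Gaussian gives the shell term $C_K(2^k\eta_*)^{d-\alpha}\U(2^k\eta_*)\U(\eta)$. Summing over $k$ yields \eqref{empsum:close}.

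\emph{Case $\alpha\le0$.} The kernel bound is monotone: $|z|^{-\alpha}\le R_0^{-\alpha}$ on $|z|\le R_0$. So we treat the whole region $|z|\le R_0$ as a single shell at scale $R_0$, without invoking $H$, and obtain $C_KR_0^{d-\alpha}\U(R_0)\U(\eta)$.

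\textbf{Far pairs.} Hypothesis \eqref{convolutiondom} is where the structure enters (it is stated for $K\mathbf 1_{|z|>R_0}$; I use it in the form $\Psi\lesssim\Psi*\Phi^{R_0/2}$). This gives
\[
\frac1N\sum_j\Psi(x_i-x_j)\lesssim\frac1N\sum_j\bigl(\Psi*\Phi^{R_0/2}\bigr)(x_i-x_j)=\int\Psi(x_i-y)\,\mu_{N,R_0/2}(y)\,dy.
\]
Since $\mu_{N,R_0/2}(y)\le\langle y\rangle^{-\gamma}\M(X_N,0,R_0/2)$ and $\M(X_N,0,R_0/2)\lesssim\U(R_0)$ by \eqref{Mee}, this is at most $\U(R_0)\int\Psi(x_i-y)\langle y\rangle^{-\gamma}\,dy$.

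\textbf{Main obstacle.} The remaining integral $\int\Psi(x_i-y)\langle y\rangle^{-\gamma}\,dy$ must be bounded by $I_\Psi$. This is not automatic, because $I_\Psi$ is centred at $0$ and not at $x_i$. My first attempt: since $\Psi$ is radial and bounded by $C_KR_0^{-\alpha}$ with support in $|z|\le R$, I would compare it to $I_\Psi$ through Lemma \ref{lem:gauss}-type weight equivalences, paying at most harmless constants in $R$. If that fails, I would keep the factor $\langle x_i\rangle^{-\gamma}$ coming from $\mu_{N,R_0/2}$ and exploit the Gaussian localisation $|x_i-x|\lesssim\eta$ from $\Phi^{2\eta}_{x_i}(x)$. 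The final multiplication by the $i$-sum produces the factor $\U(\eta)$ exactly as in the close-pair case.
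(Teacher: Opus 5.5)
Your proposal is essentially the paper's proof: dyadic shells, the definition of $H$ for the innermost shell, $\mathbf 1_{|z|\le r}\lesssim r^d\Phi^r(z)$ to turn each shell into a mollified density at scale $\sim 2^k\eta_*$, and \eqref{convolutiondom} to replace the far-pair $j$-sum by a convolution against $\mu_{N,R_0/2}$. The one difference is that the paper applies \eqref{convolutiondom} exactly as stated, to the pairs with $|x_i-x_j|>R_0$, with the annulus $R_0/2<|x_i-x_j|\le R_0$ covered by part (i); so you do not need your strengthened form $\Psi\lesssim\Psi*\Phi^{R_0/2}$.

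The recentring issue you flag is genuine, and the paper does not address it explicitly. Your first attempt settles it without any Gaussian lemma: since $\langle w\rangle^{-\gamma}\ge\langle R\rangle^{-\gamma}$ on the support of $\Psi$, one has $\int\Psi(x_i-y)\langle y\rangle^{-\gamma}\,dy\le\int\Psi\le\langle R\rangle^{\gamma}I_\Psi$, which is a harmless constant when $R\lesssim 1$.
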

\begin{proof}
(i) For the close pairs, we split into dyadic shells.
The innermost shell ($|x_i-x_j|\leq \eta_*$) is bounded using the definition~\eqref{condt} of $H(t)$ and the minimum separation~\eqref{q10},
\begin{align*}
  \frac{\langle x\rangle^\gamma}{N^2}\sum_{i\neq j}
    \frac{C_K}{|x_i-x_j|^{\alpha}}\,
    \mathbf{1}_{|x_i-x_j|\leq \eta_*}\,
    \Phi^{2\eta}_{x_i}(x)
  \lesssim
    C_K\,\eta_*^{d-\theta-\alpha}\,
    \frac{1}{N}\sup_i\sum_{j\neq i}
      \frac{\chi(\frac{|x_i-x_j|}{2\eta_*})}{|x_i-x_j|^{d-\theta}}\,
    \frac{\xg}{N}\sum_i\Phi^{2\eta}_{x_i}(x)\\
    \lesssim \eta_*^{d-\theta-\alpha} H(t) \U(\eta)\end{align*}
    where we used \eqref{d2m}.
For each intermediate shell $|x_i-x_j|\sim 2^k\eta_*$ with $2^k\leq R_0/\eta_*,k\geq 1$, we use $\mathbf{1}_{|z|\leq r}\lesssim r^d\,\Phi^r(z)$ to bound the sum over $j$ by $(2^k\eta_*)^d\,\mu_{N,2^k\eta_*}(x_i)$, hence by $(2^k\eta_*)^d\,\U(2^k\eta_*)$ after weighting by $\langle x \rangle^\gamma$.

When $\alpha \leq 0$, the $H$-bound is not needed and the entire close-pair sum is controlled directly by $\U(R_0)$.

(ii) For the far pairs, the convolution domination \eqref{convolutiondom}
replaces the sum over $j$ by a convolution against $\mu_{N,R_0/2}$.
Then, using again \eqref{d2m}, we have
\begin{align*}\frac{\langle x \rangle^\gamma }{N^2}
\sum_{i, j} K(x_i-x_j) \indic_{|x_i-x_j|>R_0} \Phi_{x_i}^{2\eta}(x)\lesssim
  \frac{\xg}{N}\sum_i
    \bigl(K\,\mathbf{1}_{|\cdot|>R_0/2}\bigr)*\mu_{N,R_0/2}(x_i)\,
    \Phi^{2\eta}_{x_i}(x)
 \\ \lesssim
    I_\Psi\,\U(R_0/2)\,\U(\eta). 
\end{align*}
Finally, by Lemma~\ref{lem2.3} applied to the comparable scales $R_0/2$ and $R_0$, we have $\U(R_0/2)\lesssim\U(R_0),$ we obtain
\eqref{empsum:far}. The proof is complete.
\end{proof}

The next lemma finally provides a bound on $\R^1 $ (and also on $\R^9$). Note that when $s\le 1$, we do not need the help of $H$ to control $\R^1$ or any other term.

\begin{lem}\label{lemR1}
Let $\eta= 2^n\eta_0\leq \frac{1}{2}$ and $\eta_0\geq\eta_*>0$.
In the regime $s>d-1$, we fix $\eta_{0}=\eta_*$.
We have the following.

\smallskip\noindent\emph{(i) Weighted $L^\infty$ bounds on $\R^1$ and $\R^9$:}
\begin{align}\label{R1}
	\xg|\R_{N, \eta}^{1}|
	&\lesssim
	\frac{1+ \mathbf 1_{s>d+1}\eta_*^{-s+d+1}}{\eta^2}\,
	\biggl(
	\Xi(\eta,\eta_0)\eta_{0}^2\U(\eta_0)+ \mathbf{1}_{s\leq 1}\eta_{0}^{d+1-s}\U(\eta_0)
	+ \mathbf{1}_{s> 1}\,\eta_*^{d-\theta-\min\{s-1,d\}}\,H(t)
	\notag\\
	&\qquad\qquad\qquad\qquad
	+ \mathbf{1}_{s> 1}\sum_{k, 1\leq 2^k\leq \frac{\eta_{0}}{\eta_*}}
	(2^k\eta_*)^{d-\min\{s-1,d\}}\,\U(2^k\eta_*)
	\biggr)\U(\eta),
\end{align}
and
\begin{align}\label{R9}
  \xg|\R_{N, \eta}^{9}|
  &\lesssim
    \mathbf{1}_{s\geq d+1}\,
    \frac{\eta_{*}^{-s+d+1}}{\eta^2}\,
    H(t)^{\frac{s-1}{d}}\,\U(\eta),
\end{align}
where
\begin{equation}\label{def:Xi}
  \Xi(\eta,\eta_0)
  :=
  \eta^{d-s-1}\,\mathbf 1_{s<d-1}
  + \mathbf 1_{s=d-1}\log \frac{2\eta}{\eta_0}
  + \eta_0^{d-1-\min\{s,d+1\}}\,\mathbf 1_{s>d-1}.
\end{equation}

\smallskip\noindent\emph{(ii) Dual estimates:} for any $d<\gamma_1\leq \gamma-1$
\begin{align}\label{R1-dual}
  \|\langle \cdot\rangle^{\gamma_1}\R_{N, \eta}^{1}\|_{(W^{1,1})^*}
  &\lesssim	\frac{1+ \mathbf 1_{s>d+1}\eta_*^{-s+d+1}}{\eta}\,
  \biggl(
  \Xi(\eta,\eta_0)\eta_{0}^2\U(\eta_0)+\mathbf{1}_{s\leq 1}\eta_{0}^{d+1-s}\U(\eta_0)
  \notag\\
  &
  + \mathbf{1}_{s> 1}\eta_*^{d-\theta-\min\{s-1,d\}}H(t)  + \mathbf{1}_{s> 1}\sum_{k, 1\leq 2^k\leq \frac{\eta_{0}}{\eta_*}}
  (2^k\eta_*)^{d-\min\{s-1,d\}}\,\U(2^k\eta_*)
  \biggr)\U(\eta),
\end{align}
and 
\begin{align}\label{R9-dual}
  \|\langle \cdot\rangle^{\gamma_1}\R_{N, \eta}^{9}\|_{(W^{1,1})^*}
  &\lesssim
    \mathbf{1}_{s\geq d+1}\,
    \frac{\eta_{*}^{-s+d+1}}{\eta}\,
    H(t)^{\frac{s-1}{d}}\,\U(\eta).
\end{align}
\end{lem}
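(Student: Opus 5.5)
The plan is to symmetrize $\R^1$ in $(i,j)$ so that the truncated kernel acts on a difference of Gaussian gradients, and then split the pair sum by the distance $|x_i-x_j|$. Write
\[
-\R^1_{N,\eta}=\frac{1}{2N^2}\sum_{i\ne j}\F^{\eta_*}_{<\eta_0}(x_i,x_j)\cdot\bigl(\nab\Phi^\eta_{x_i}-\nab\Phi^\eta_{x_j}\bigr)(x)+\frac{1}{2N^2}\sum_{i\neq j}\bigl(\F^{\eta_*}_{<\eta_0}(x_i,x_j)+\F^{\eta_*}_{<\eta_0}(x_j,x_i)\bigr)\cdot\nab\Phi^\eta_{x_j}(x).
\]
For the first sum, the mean value theorem together with \eqref{prod2exp} gives
\[
|\nab\Phi^\eta_{x_i}-\nab\Phi^\eta_{x_j}|(x)\lesssim \eta^{-2}\min\{|x_i-x_j|,\eta\}\bigl(\Phi^{2\eta}_{x_i}+\Phi^{2\eta}_{x_j}\bigr)(x),
\]
up to the harmless enlargement of the Gaussian when $|x_i-x_j|\lesssim\eta$. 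Combined with \eqref{proF1} at scale $\eta_0$, the effective kernel becomes $\eta^{-2}|z|^{1-\min\{s,d+1\}}\min(1,\eta_0/|z|)^2$. The second sum is handled by \eqref{proF2}, which already provides a kernel of order $|z|^{-\min\{s-1,d\}}\min(1,\eta_0/|z|)^2$, multiplied by $|\nab\Phi^\eta|\lesssim\eta^{-1}\Phi^{2\eta}$. Since $|z|\lesssim\eta$ on the relevant range, the two contributions have the same structure: a prefactor $\eta^{-2}$ times a kernel $K(z)\sim|z|^{-(s-1)}$ for $|z|\le\eta_0$, and $\eta_0^2|z|^{-(s+1)}$ for $|z|>\eta_0$. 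The additive $\eta^2$ and $\mathcal C(\eta_0,\eta_*)$ pieces only produce the factor $1+\mathbf 1_{s>d+1}\eta_*^{d+1-s}$.

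Next I apply Lemma~\ref{lem:empsum} with $R_0=\eta_0$. For the close pairs I use part (i) with $\alpha=\min\{s-1,d\}$ and $C_K=1+\mathcal C$. This produces exactly the term $\eta_*^{d-\theta-\min\{s-1,d\}}H(t)$ and the dyadic sum $\sum_{2^k\le\eta_0/\eta_*}(2^k\eta_*)^{d-\min\{s-1,d\}}\U(2^k\eta_*)$. When $s\le1$ we have $\alpha\le0$, so the $H$-bound is unnecessary and the alternative form $\eta_0^{d+1-s}\U(\eta_0)$ applies. For the far pairs I use part (ii), with $\Psi=\eta_0^2|z|^{-(s+1)}\mathbf 1_{|z|>\eta_0/2}$ truncated at $|z|\lesssim\eta$ (beyond $\eta$ the factor $\min\{|z|,\eta\}$ saturates, and \eqref{proF1} decays). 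Then $I_\Psi\lesssim\eta_0^2\int_{\eta_0}^{\eta}r^{d-2-s}\,dr$, which equals $\eta_0^2\Xi(\eta,\eta_0)$ in each of the three regimes $s<d-1$, $s=d-1$ (logarithm) and $s>d-1$. The hypothesis \eqref{convolutiondom} holds for such power kernels with a Gaussian of width $\eta_0/2$. This gives \eqref{R1}.

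For $\R^9$, the kernel $\F^{\mathrm{sing}}\chi(|z|/\eta_*)$ is supported in $|z|\le2\eta_*$ and has size $|z|^{-s}$. Here I symmetrize as well and use \eqref{assumpF2}, but I do not pass through $H$ linearly. Instead I bound $\min\{|z|,\eta\}|z|^{-s}\lesssim|z|^{1-s}$ and estimate $\frac1N\sum_{j:|x_i-x_j|\le2\eta_*}|x_i-x_j|^{1-s}$. For this I interpolate the pointwise minimum separation \eqref{q10}, namely $|x_i-x_j|\gtrsim(NH)^{-1/d}$, against the count bound from $H$, which gives $\frac1N\sum_j|x_{ij}|^{-(s-1)}\lesssim \eta_*^{d-s+1}H^{(s-1)/d}$ up to the normalization $\eta_*^{-d}=N$. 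Together with the $\eta^{-2}\Phi^{2\eta}$ factor and \eqref{d2m}, this yields \eqref{R9}.

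The dual estimates \eqref{R1-dual} and \eqref{R9-dual} follow by the same computation written in divergence form. We have $\R^1=-\operatorname{div}\bigl(N^{-2}\sum_{i\ne j}\F_{<\eta_0}(x_i,x_j)\Phi^\eta_{x_i}\bigr)$, and testing against $\phi\in W^{1,1}$ moves the derivative onto $\phi$. This saves one factor $\eta^{-1}$, and the weight $\gamma_1\le\gamma-1$ absorbs $\langle x\rangle$ growth. The main obstacle I expect is the close-pair regime for $s\ge d$, where $\F_{<\eta_0}$ on $|z|\lesssim\eta_*$ is only controlled through the symmetrized bound, with the constant $\mathcal C(\eta,\eta_*)$. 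One also has to check carefully that the Gaussian-difference gain $\min\{|z|,\eta\}$ is not lost when $\Phi^{2\eta}_{x_i}$ and $\Phi^{2\eta}_{x_j}$ are exchanged. I would handle this using $|x_i-x_j|\le\eta_0\le\eta$ and $\Phi^{2\eta}_{x_j}\lesssim\Phi^{4\eta}_{x_i}$ on that set.
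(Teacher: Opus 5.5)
Your argument is essentially the paper's proof. It uses the same splitting of $\R^1_{N,\eta}$ into a Gaussian-difference part and an almost-symmetric part, the bound \eqref{q11}, and the kernel estimates \eqref{proF1}--\eqref{proF2}. It then applies Lemma~\ref{lem:empsum} with $R_0=\eta_0$ and $\alpha=\min\{s-1,d\}$, with the $\alpha\le 0$ variant when $s\le 1$, and the far-pair integral produces $\Xi(\eta,\eta_0)$. For $\R^9_{N,\eta}$ it likewise interpolates between \eqref{q10} and the definition of $H$, after symmetrizing with \eqref{assumpF2}.

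One step must be corrected. For \eqref{R1-dual} and \eqref{R9-dual}, the divergence form has to be taken of the \emph{symmetrized} decomposition, exactly as in the paper: $\Phi^\eta_{x_i}-\Phi^\eta_{x_j}$ in the first piece, plus the almost-symmetric piece. The unsymmetrized identity you display leaves $|\F^{\eta_*}_{<\eta_0}(x_i,x_j)|\sim|x_i-x_j|^{-\min\{s,d+1\}}$ on close pairs without the compensating factor $\min\{1,|x_i-x_j|/\eta\}$. It therefore loses a factor of order $\eta/\eta_*$ against the claimed bound. For $s>d$ it also falls outside the range $\alpha\le d-\theta$ of Lemma~\ref{lem:empsum}. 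The saving of one power $\eta^{-1}$ only appears when the divergence form is combined with the Gaussian difference.

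A minor point: the far tail of the almost-symmetric piece is not cut off at $|z|\lesssim\eta$, so your remark that $|z|\lesssim\eta$ on the relevant range does not apply there. That tail contributes $\eta^{-1}\eta_0^2\,\Xi(1,\eta_0)$. This is still absorbed into \eqref{R1}, using $s\ge(d-\frac54)_+$ and $\eta|\log\eta_0|\lesssim\log(2\eta/\eta_0)$.
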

\begin{proof}
Recall the definitions:
\begin{align*}
  \R_{N, \eta}^1
  &= -\frac1{N^2}\sum_{i\neq j}
    \F^{\eta_*}_{<\eta_0}(x_i,x_j) \cdot \nab  \pex(x),
  \\[3pt]
  \R_{N, \eta}^{9}
  &= -\frac{\mathbf{1}_{s\geq d+1}}{N^2}\sum_{i\neq j}\F(x_i,x_j)\chi\(\frac{|x_i-x_j|}{\eta_*}\)\cdot \nab  \pex(x).
\end{align*}
\medskip\noindent
\textbf{Step 1: Symmetrization.}
Since the kernel $\F$ may not be symmetric, we symmetrize $\R^1$ by writing
\begin{align}\notag
  \R_{N,\eta}^1
  &= -\frac1{2N^2}\sum_{i\neq j}
    \F_{<\eta_0}^{\eta_*}(x_i,x_j)
    \cdot \bigl(\nab  \Phi^{\eta}_{x_i}(x)-\nab  \Phi^{\eta}_{x_j}(x)\bigr)
  \\[3pt]\notag
  &\quad
  - \frac1{2N^2}\sum_{i\neq j}
    \bigl(\F_{<\eta_0}^{\eta_*}(x_i,x_j)+\F_{<\eta_0}^{\eta_*}(x_j,x_i)\bigr)
    \cdot \nab  \Phi^{\eta}_{x_j}(x)
  \\[3pt]\notag
  &=: \R_{N, \eta}^{1,1}+\R_{N,\eta}^{1,2}.
\end{align}
Similarly, we symmetrize $\R^9$:
\begin{align}\notag
	\R_{N,\eta}^9
	&=-\frac{\mathbf{1}_{s\geq d+1}}{2N^2}\sum_{i\neq j}
	\F^{\mathrm{sing}}(x_i,x_j) \chi\(\frac{|x_i-x_j|}{\eta_*}\)
	\cdot \bigl(\nab  \Phi^{\eta}_{x_i}(x)-\nab  \Phi^{\eta}_{x_j}(x)\bigr)
	\\
	&\quad
	-\frac{\mathbf{1}_{s\geq d+1}}{2N^2}\sum_{i\neq j}
	\bigl(	\F^{\mathrm{sing}}(x_i,x_j)+	\F^{\mathrm{sing}}(x_j,x_i)\bigr)\chi\(\frac{|x_i-x_j|}{\eta_*}\)
	\cdot \nab  \Phi^{\eta}_{x_j}(x)\label{R9symmetrization}.
\end{align}
If $|x_i-x_j|\le \eta$, then by  the mean value theorem and \eqref{prod2exp},
\begin{equation}
\label{R9symmetrization2}
  \bigl|\nabla\Phi^\eta_{x_i}(x)-\nabla\Phi^\eta_{x_j}(x)\bigr|
  \le |x_i-x_j|\,|\nabla^2\Phi^\eta_{z}(x)|
  \lesssim \frac{|x_i-x_j|}{\eta^{2}}\,\Phi^{2\eta}_{x_i}(x),
\end{equation}
for some $z\in[x_i,x_j]$.
More generally, for any $x_i,x_j$, using \eqref{prod2exp} we have
\begin{equation}\label{q11}
  \bigl|\nabla\Phi^\eta_{x_i}(x)-\nabla\Phi^\eta_{x_j}(x)\bigr|
  \lesssim
  \frac{1}{\eta}\min\Bigl(\frac{|x_i-x_j|}{\eta},1\Bigr)
  \bigl(\Phi^{2\eta}_{x_i}(x)+\Phi^{2\eta}_{x_j}(x)\bigr).
\end{equation}
This replaces the $\eta^{-1}$ blow-up of $|\nab\Phi^\eta|$ by the smaller quantity $|x_i-x_j|/\eta^2$ when particles are close.

\medskip\noindent
\textbf{Step 2: The term $\R^{1,1}$ --- applying the empirical sum lemma.}
Using \eqref{proF1} and \eqref{q11}, and separating the sum into pairs with $|x_i-x_j|\leq \eta_0\le \eta$ and $|x_i-x_j|> \eta_0$ we obtain
\begin{align}\label{R11m}
  |\R_{N, \eta}^{1,1}|
  &\lesssim
      \frac1{N^2}\sum_{i\neq j}
      \frac{	1
      	+\mathbf{1}_{s>d+1}\eta_*^{-s+d+1}}{
        |x_i-x_j|^{\min\{s,d+1\}}}
      \,\frac{|x_i-x_j|}{\eta^2}\,
      \mathbf{1}_{|x_i-x_j|\leq \eta_{0}}\,
      \Phi^{2\eta}_{x_i}(x)
  \\[5pt] \notag
  &\quad+
    \frac{\eta_0^2}{N^2}\sum_{i\neq j}
    \Bigl(1
      +\frac{	1+\mathbf{1}_{s>d+1}\eta_*^{-s+d+1}}{
        |x_i-x_j|^{2+\min\{s,d+1\}}}
    \Bigr)
    \frac{1}{\eta}\min\Bigl(\frac{|x_i-x_j|}{\eta},1\Bigr)\,
    \Phi^{2\eta}_{x_i}(x)\,
    \mathbf{1}_{|x_i-x_j|> \eta_{0}}
\\ \notag
&: = Q_1+Q_2.
\end{align}
Here we have used the fact that 	$$\mathcal C(\eta_0,\eta_*)= \mathbf{1}_{s= d+1}\!\log2+\mathbf{1}_{s>d+1}\eta_*^{-s+d+1},$$ since $\eta_{0}=\eta_{*}$ 
whenever $s>d-1$.\\
For $Q_1$, we apply Lemma~\ref{lem:empsum}\,(i) with $K(z)=\eta^{-2}(1+\mathbf{1}_{s>d+1}\eta_*^{-s+d+1})\cdot|z|^{1-\min\{s,d+1\}}$, $R_0=\eta_0$, and $\alpha=\min\{s,d+1\}-1$.
For $Q_2$, we apply Lemma~\ref{lem:empsum}\,(ii) with the far-pair kernel $\Psi(z)$ given by the integrand in \eqref{R11m} restricted to $|z|>\eta_0$.
The integral $I_\Psi$ produces the function $\Xi(\eta,\eta_0)$ defined in \eqref{def:Xi}:
\begin{align*}
  I_\Psi
  &=  \frac{\eta_0^2}{\eta}\int \langle y\rangle^{-\gamma}
    \Bigl(1
      +\frac{1+ \mathbf 1_{s>d+1}\eta_*^{-s+d+1}}{
        |y|^{2+\min\{s,d+1\}}}
    \Bigr)
    \min\Bigl(\frac{|y|}{\eta},1\Bigr)\,
    \mathbf{1}_{|y|> \frac{\eta_0}{2}}\, dy 
  \\
  &\sim
    \frac{\eta_0^2}{\eta^2}\bigl(1+\mathbf 1_{s>d+1}\eta_*^{-s+d+1}\bigr)\,
    \Xi(\eta,\eta_0).
\end{align*}
We claim that
\begin{align}\label{zz}
		&\int \langle y\rangle^{-\gamma} \left[\left(1
	+\frac{1}{
		|y|^{2+\min\{s,d+1\}}
	}\right)\min\(\frac{|y|}{\eta},1\) \mathbf{1}_{\frac{|y|}{\eta_0}> \frac{1}{2}}\right] dy \\&\lesssim \mathbf{1}_{s<d-1} \eta^{d-2-s}+\mathbf{1}_{s=d-1} \eta^{-1}\log\frac{2\eta}{\eta_0}+\mathbf{1}_{s>d-1} \eta^{-1} \eta_0^{d-1-\min\{s,d+1\}}.\nonumber
\end{align}
Since we assume $s\geq d-1-\frac{1}{4}$, we have $2+\min\{s,d+1\}> d+\frac{3}{4}$.
Splitting the integral into the three regions
\[
|y|>1,\qquad \eta<|y|\le 1,\qquad \eta_0/2\le |y|\le \eta,
\]
the left-hand side of \eqref{zz} is bounded by 
\begin{align*}
&\lesssim 1+\int_{\eta<|y|\leq 1}\left(1
	+\frac{1}{
		|y|^{2+\min\{s,d+1\}}
	}\right)dy+\int_{\frac{\eta_0}{2}\leq |y|\leq \eta} \frac{|y|}{\eta} \left(1
	+\frac{1}{
		|y|^{2+\min\{s,d+1\}}
	}\right) dy\\&\lesssim 1+\eta^{d-2-\min\{s,d+1\}}+\mathbf{1}_{s<d-1} \eta^{d-2-s}+\mathbf{1}_{s=d-1} \eta^{-1}\log \frac{2\eta}{\eta_0}+\mathbf{1}_{s>d-1} \eta^{-1} \eta_0^{d-1-\min\{s,d+1\}}.
\end{align*}
This implies \eqref{zz}.\\
Combining the two contributions and using \eqref{Mee}, we obtain
\begin{align}\label{R11}
  \xg|\R_{N, \eta}^{1,1}|
    &\lesssim
  \frac{1+ \mathbf 1_{s>d+1}\eta_*^{-s+d+1}}{\eta^2}\,
  \biggl(
  \Xi(\eta,\eta_0)\eta_{0}^2\U(\eta_0)+ \mathbf{1}_{s\leq 1}\eta_{0}^{d+1-s}\U(\eta_0)
  + \mathbf{1}_{s> 1}\,\eta_*^{d-\theta-\min\{s-1,d\}}\,H(t)
  \notag\\
  &\qquad\qquad\qquad\qquad
  + \mathbf{1}_{s> 1}\sum_{k, 1\leq 2^k\leq \frac{\eta_{0}}{\eta_*}}
  (2^k\eta_*)^{d-\min\{s-1,d\}}\,\U(2^k\eta_*)
  \biggr)\U(\eta).
\end{align}
Here we have used the fact that $s>(d-\frac{5}{4})_+$, so, $s<1$ only if $d\leq 2$.
\medskip

\noindent
\textbf{Step 3: The second term $\R^{1,2}$.}
By \eqref{proF2},  absorbing some terms, we have 
\begin{align}\label{R12m}
  \xg|\R_{N, \eta}^{1,2}|
  &\lesssim
    \frac{\xg}{N^2}\sum_{i\neq j}
   \mathbf{1}_{|x_i-x_j|\leq \eta_{0}}\frac{1+ \mathbf 1_{s>d+1}\eta_*^{-s+d+1}}{|x_i-x_j|^{\min\{s-1,d\}}}
    \,\frac{1}{\eta}\, \Phi^{2\eta}_{x_j}(x)
  \notag\\
  &\quad
  + \frac{\xg}{N^2}\sum_{i\neq j}
    \Bigl(1+\frac{1+ \mathbf 1_{s>d+1}\eta_*^{-s+d+1}}{|x_i-x_j|^{2+\min\{s-1,d\}}}\Bigr)
    \,\mathbf{1}_{|x_i-x_j|>\eta_{0}}
    \,\frac{\eta_{0}^2}{\eta}\, \Phi^{2\eta}_{x_j}(x).
\end{align}
By Lemma~\ref{lem:empsum}, the close-pair contribution is bounded exactly as $\eta Q_1$ in Step~2 with $\alpha=\min\{s-1,d\}$ and $R_0=\eta_0$, and the far-pair contribution is bounded with $I_\Psi\lesssim \Xi(1,\eta_0)$.
Combining the two yields
\begin{align}\label{R12}
  \xg |\R_{N,\eta}^{1,2}|
  &\lesssim
    \frac{1+ \mathbf 1_{s>d+1}\eta_*^{-s+d+1}}{\eta}\,
    \biggl(
    \Xi(1,\eta_0)\eta_{0}^2\U(\eta_0)+ \mathbf{1}_{s\leq 1}\eta_{0}^{d+1-s}\U(\eta_0)
      + \mathbf{1}_{s> 1}\,\eta_*^{d-\theta-\min\{s-1,d\}}\,H(t)
  \notag\\
  &\qquad\qquad\qquad\qquad
      + \mathbf{1}_{s> 1}\sum_{k, 1\leq 2^k\leq \frac{\eta_{0}}{\eta_*}}
        (2^k\eta_*)^{d-\min\{s-1,d\}}\,\U(2^k\eta_*)
    \biggr)\U(\eta).
\end{align}
Combining \eqref{R11} and \eqref{R12}, and using that $\eta|\log \eta_{0}|\lesssim \log(\frac{2\eta}{\eta_{0}})$,  we obtain \eqref{R1}.\\
\medskip\noindent
\textbf{Step 4: Bound on $\R^9$.}
By  \eqref{assumpFsing}, \eqref{assumpF2}, \eqref{R9symmetrization},  \eqref{q10}, \eqref{q11}, and $\eta_{*}=N^{-1/d}$,  we have
\begin{align*}
|\R_{N, \eta}^{9}|
	&\lesssim
	\frac{\mathbf{1}_{s\geq d+1}}{N^2}\sum_{i\neq j}
	\frac{	1}{
		|x_i-x_j|^{s}}
	\,\frac{|x_i-x_j|}{\eta^2}\,
	\mathbf{1}_{|x_i-x_j|\leq 2 \eta_{0}}\,
	\Phi^{2\eta}_{x_i}(x)\\&+
		\frac{\mathbf{1}_{s\geq d+1}}{N^2}\sum_{i\neq j}
	\frac{	1}{
		|x_i-x_j|^{s-1}}
\frac{1}{\eta}
	\mathbf{1}_{|x_i-x_j|\leq 2 \eta_{0}}\,
	\Phi^{2\eta}_{x_i}(x)\\&\lesssim 	\frac{\mathbf{1}_{s\geq d+1}}{N^2}\sum_{i\neq j}
	\frac{	1}{
		|x_i-x_j|^{s-1}}
	\frac{1}{\eta^2}
	\mathbf{1}_{|x_i-x_j|\leq 2\eta_{0}}\,
	\Phi^{2\eta}_{x_i}(x)
	  \\
	&\lesssim
	\frac{\mathbf{1}_{s\geq d+1}}{N^2}\sum_{i\neq j}
	\frac{\mathbf{1}_{|x_i-x_j|\leq 2\eta_{*}}}{|x_i-x_j|^{d}}
	\,\frac{\eta_{*}^{-s+d+1}}{\eta^2}\,
	H(t)^{\frac{s-1-d}{d}}\,
	\Phi^{2\eta}_{x_i}(x)
	\\
	&\lesssim
	\mathbf{1}_{s\geq d+1}\,
	\frac{\eta_{*}^{-s+d+1}}{\eta^2}\,
	H(t)^{\frac{s-1}{d}}\,\U(\eta).
\end{align*}
This gives \eqref{R9}.

\medskip\noindent
\textbf{Step 5: dual estimates.}
The dual estimates \eqref{R1-dual} and \eqref{R9-dual} follow from writing $\R^1$ and $\R^9$ in divergence form:
\begin{align*}
  \R_{N,\eta}^1
  &=- \operatorname{div}\biggl(
    \frac1{2N^2}\sum_{i\neq j}
    \F_{<\eta_0}^{\eta_*}(x_i,x_j)\,
    \bigl(\Phi^{\eta}_{x_i}(x)-\Phi^{\eta}_{x_j}(x)\bigr)
  \biggr)
  \\
  &\quad
  - \operatorname{div}\biggl(
    \frac1{2N^2}\sum_{i\neq j}
    \bigl(\F_{<\eta_0}^{\eta_*}(x_i,x_j)+\F_{<\eta_0}^{\eta_*}(x_j,x_i)\bigr)\,
    \Phi^{\eta}_{x_j}(x)
  \biggr),
\end{align*}
and
\begin{align*}\notag
	\R_{N,\eta}^9
	&=-\operatorname{div}\biggl[\frac{\mathbf{1}_{s\geq d+1}}{2N^2}\sum_{i\neq j}
	\F^{\mathrm{sing}}(x_i,x_j) \chi\big(\frac{|x_i-x_j|}{\eta_*}\big)
	\bigl(  \Phi^{\eta}_{x_i}(x)-  \Phi^{\eta}_{x_j}(x)\bigr)  \biggr]
	\\
	&\quad
	-\operatorname{div}\biggl[ \frac{\mathbf{1}_{s\geq d+1}}{2N^2}\sum_{i\neq j}
	\bigl(	\F^{\mathrm{sing}}(x_i,x_j)+	\F^{\mathrm{sing}}(x_j,x_i)\bigr)\chi\big(\frac{|x_i-x_j|}{\eta_*}\big)
 \Phi^{\eta}_{x_j}(x)  \biggr].
\end{align*}
The divergence structure absorbs one power of~$\eta^{-1}$, producing $\eta^{-1}$ in \eqref{R1-dual}--\eqref{R9-dual} instead of $\eta^{-2}$.
This completes the proof.
\end{proof}
We next turn to the time evolution of~$H(t)$.

\begin{lem}\label{lem31} Let $s>1$.
Let $\eta_0\geq\eta_*>0$.
In the regime $s>d-1$, set $\eta_{0}=\eta_*$.
Let $X_N^t$ be a classical solution to \eqref{ode} on
$[0,T]$, and assume that no collision occurs on this interval, namely
$$
x_i(t)\neq x_j(t),
\qquad
0\le t\le T,\quad i\neq j.
$$
Define
\begin{align}\label{defS}
	S(t)
	&:= \sup_{0\leq \tau\leq t}
	(\U(\tau,\eta_*) +1)^2
	\biggl(
	\mathbf{1}_{s=d-1}\Bigl(
	\sum_{k=0}^{[\log_2(\eta_*^{-1/2})]-1}\M(\tau, 2^{k}\eta_*)
	+(\log N)\,\M(\tau,\eta_*^{1/2})
	+(2H(0)+2)^2
	\Bigr)
	\notag\\
	&\qquad\qquad\qquad\qquad
	+ \eta_*^{d-\frac{3}{2}\theta-s-1}(2H(0)+1)^{\max\{\frac{s+1}{d},1\}}
	+ 1
	+ \mathbf{1}_{s>d-1}\,\eta_*^{d-1-s}
	\biggr).
\end{align}

Assume that there exists a constant $C$, depending only on $d$ and $s$, such that
\begin{equation}\label{condS}
  H(0)\, e^{2C  S(T) T}
  + \eta_*^{\theta}\bigl( e^{2C  S(T) T}-1\bigr)
  \leq \frac{2}{3}(2H(0)+1)\, \eta_*^{-\theta/2}.
\end{equation}
Then, for all $t\le T$:
\begin{equation}\label{boundH}
  H(t)
  \le
  H(0)\,e^{2C S(t)\,t}
  +\eta_*^{\theta}\bigl(e^{2C S(t)\,t}-1\bigr).
\end{equation}
\end{lem}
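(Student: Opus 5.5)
Differentiate $H$ along the flow and close a differential inequality of the form $\dot H\le C S(t)(H+\eta_*^\theta)$ by continuity (bootstrap). Since $H$ is a supremum over $i$ of smooth functions as long as there is no collision, it is Lipschitz. For a.e.\ $t$ we pick an index $i$ realizing the sup (or work with the upper Dini derivative), and compute
\[
\frac{d}{dt}\frac1N\sum_{j\ne i}\frac{\chi(\frac{|x_i-x_j|}{2\eta_*})}{|x_i-x_j|^{d-\theta}}
=\frac1N\sum_{j\neq i}\nabla_z\Big(\frac{\chi(\frac{|z|}{2\eta_*})}{|z|^{d-\theta}}\Big)\Big|_{z=x_{ij}}\cdot(\dot x_i-\dot x_j).
\]
The gradient is bounded by $|x_{ij}|^{-(d-\theta+1)}\mathbf 1_{|x_{ij}|\le 4\eta_*}$. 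It thus suffices to bound $|\dot x_i-\dot x_j|$ by $C S\,|x_{ij}|$ plus an error for close pairs, i.e.\ to prove a local Lipschitz bound on the velocity field.

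For the velocity difference, we write $\dot x_i=\frac1N\sum_{k\ne i}\F(x_i,x_k)$ and split it as
\[
\F=\F_{\eta_*}+\F_{<\eta_*}
\]
(with $\eta_0=\eta_*$), as in Lemma~\ref{lem2.1}.
\begin{itemize}
\item The smooth part gives $\F_{\eta_*}*\mu_N(x_i)-\F_{\eta_*}*\mu_N(x_j)$, which by the semigroup identity equals $\F_{\kappa_0\eta_*}*\mu_{N,\eta_*/2}$ evaluated at the two points. Its Lipschitz constant is bounded by $\|\nabla\F_{\kappa_0\eta_*}*\tilde\mu_{N,\eta_*/2}\|_{L^\infty}+\|\nabla \F*\mu\|_{L^\infty}$. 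In the sub-Coulomb case \eqref{le231} gives $\lesssim \U(\eta_*)$. In the Coulomb case we use the dyadic bound \eqref{z11} with $2^{n_0}\eta_*=\eta_*^{1/2}$; this produces exactly the terms $\sum_k\M(2^k\eta_*)+(\log N)\M(\eta_*^{1/2})$ appearing in $S$. In the super-Coulomb case the bound is $\eta_*^{d-1-s}\U$.
\item The self-interaction correction, i.e.\ the pair terms $\F(x_i,x_j)$ and $\F(x_j,x_i)$, together with the near-field part $\F_{<\eta_*}(x_i,x_k)$ for $k$ close to $x_i$ or $x_j$, must be handled directly. For these we use the almost antisymmetry \eqref{assumpF2} on the pair term, which reduces $|x_{ij}|^{-s}$ to $|x_{ij}|^{1-s}$. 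For the remaining near-field contributions we use \eqref{proF1} and the mean value bound $|\F_{<\eta_*}(x_i,x_k)-\F_{<\eta_*}(x_j,x_k)|\lesssim |x_{ij}|\,|x_i-x_k|^{-s-1}$ when $|x_{ij}|\ll|x_i-x_k|$. Sums of $|x_i-x_k|^{-s-1}$ over nearby $k$ are bounded via Lemma~\ref{lem:empsum} by $\eta_*^{d-\theta-s-1}H+\sum_k(2^k\eta_*)^{d-s-1}\U(2^k\eta_*)$. Where $|x_i-x_k|\lesssim|x_{ij}|$ we instead use the minimum separation \eqref{q10}, converting powers of $H$ into $(2H(0)+1)^{\max\{(s+1)/d,1\}}$ factors under the bootstrap hypothesis $H\le \eta_*^{-\theta/2}(2H(0)+1)$. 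The excess $\eta_*^{-3\theta/2}$ then accounts for the factor $\eta_*^{d-\frac32\theta-s-1}$ in $S$.
\end{itemize}

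Inserting these bounds, the close-pair derivative is controlled by
\[
\frac1N\sum_{j}\frac{\mathbf 1_{|x_{ij}|\le4\eta_*}}{|x_{ij}|^{d-\theta}}\cdot C S
\]
plus terms where the cutoff derivative is active ($|x_{ij}|\sim\eta_*$). The cutoff-derivative terms are bounded by $\eta_*^{\theta}\cdot CS$, since there are $O(1)$ such neighbours by $\U(\eta_*)$. Moreover, $\mathbf 1_{|x_{ij}|\le4\eta_*}$ versus $\chi$ costs only another such term. Hence, as long as $H(t)\le (2H(0)+1)\eta_*^{-\theta/2}$,
\[
\dot H\le 2CS(t)\big(H+\eta_*^\theta\big).
\]
Gronwall then yields \eqref{boundH}. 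Hypothesis \eqref{condS} makes the right-hand side at most $\frac23$ of the bootstrap threshold, so by continuity the bootstrap never fails on $[0,T]$.

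The main obstacle is the near-field Lipschitz estimate for third particles $k$ close to both $x_i$ and $x_j$, where neither the Taylor bound nor the antisymmetry applies cleanly. There I would first try bounding each term crudely by $|x_i-x_k|^{-s}+|x_j-x_k|^{-s}$ and using separation ($\ge (NH)^{-1/(d-\theta)}$) together with the $|x_{ij}|^{-(d-\theta+1)}$ weight. The resulting powers of $H$ must be absorbed into the constant $S$ via the bootstrap, at the cost of the $\eta_*^{-3\theta/2}$ loss.
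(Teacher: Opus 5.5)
Your architecture is the paper's. You differentiate each $i$-sum, reduce to $\sup_{k\neq k'}|\dot x_k-\dot x_{k'}|/|x_k-x_{k'}|$ times $H+\eta_*^{\theta}\U(\eta_*)$ (with the cutoff-derivative terms counted through $\mu_{N,4\eta_*}$), prove a Lipschitz bound on the velocities, and close by Gr\"onwall under the bootstrap $H\le(2H(0)+1)\eta_*^{-\theta/2}$. That bootstrap is where the extra $\eta_*^{-\theta/2}$ in $\eta_*^{d-\frac32\theta-s-1}$ comes from.

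\textbf{A false step: the pair term.} The pair contribution to $\dot x_i-\dot x_j$ is $\frac1N\bigl(\F(x_i,x_j)-\F(x_j,x_i)\bigr)$, a \emph{difference}. Assumption \eqref{assumpF2} controls the \emph{sum} $\F(x_i,x_j)+\F(x_j,x_i)$. Near antisymmetry therefore makes the difference $\approx\frac2N\F(x_i,x_j)$, of full size $N^{-1}|x_{ij}|^{-s}$; this is exactly the term that drives the collisions in Section~\ref{sec:blowupf}. No reduction to $|x_{ij}|^{1-s}$ is available.

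The repair needs no antisymmetry. Write $N^{-1}|x_{ij}|^{-s}=|x_{ij}|\cdot N^{-1}|x_{ij}|^{-s-1}$. The factor $N^{-1}|x_{ij}|^{-s-1}$ is simply the $k=j$ term of $\frac1N\sum_{k\neq i}|x_i-x_k|^{-s-1}$, which you must bound anyway:
\begin{itemize}
\item for $s<d-1$, by $(2\eta_*)^{d-\theta-s-1}H$ on close pairs, since $s+1<d-\theta$;
\item for $s>d-1$, by the separation \eqref{q10}, which is the source of the power $H^{(s+1)/d}$;
\item for $s=d-1$, inside the truncated sum.
\end{itemize}

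\textbf{The ``main obstacle'' is not one.} If $|x_{ij}|\ge\frac12|x_i-x_k|$, then $|x_j-x_k|\le 3|x_{ij}|$. So your crude bound $|x_i-x_k|^{-s}+|x_j-x_k|^{-s}$ is already $\lesssim|x_{ij}|\bigl(|x_i-x_k|^{-s-1}+|x_j-x_k|^{-s-1}\bigr)$. Combined with the mean value theorem in the complementary regime, this gives
\[
|\F(x_i,x_k)-\F(x_j,x_k)|\lesssim|x_{ij}|\bigl(1+|x_i-x_k|^{-s-1}+|x_j-x_k|^{-s-1}\bigr)
\]
for every configuration. No separation, no powers of $H$, and no interplay with the weight $|x_{ij}|^{-(d-\theta+1)}$ are needed.

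\textbf{Comparison with the paper's route.} For $s\neq d-1$, this two-point bound \emph{is} the paper's argument, with no mollification at all. The sum $\frac1N\sum_k|x_{k'}-x_k|^{-s-1}$ is split into close pairs (handled by $H$ or \eqref{q10}) and far pairs (convolution with $\mu_{N,\eta_*}$, giving $(1+\mathbf 1_{s>d-1}\eta_*^{d-1-s})\U(\eta_*)$). Your split $\F=\F_{\eta_*}+\F_{<\eta_*}$ is superfluous there. For $s\ge d+1$ it is not even defined in general unless you pass to the cut-off kernel $\F^{\eta_*}$.

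The split is needed only for $s=d-1$. There the direct sum costs $|\log\eta_*|\U(\eta_*)$, which $S$ cannot absorb: it would make $e^{2CS(T)T}$ grow like a power of $N$. In this case the paper does not use a mean value bound on the truncated part. It bounds each truncated sum by $\eta_*\bigl(H+\U(\eta_*)\bigr)$ via \eqref{proF1}, then converts $\eta_*\lesssim|x_{ij}|(1+H^{1/d})$ using \eqref{q10}. This is the origin of the $(2H(0)+2)^2$ term in $S$.

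Your mean value route also works for $s=d-1$. For far $k$, however, it needs the decaying gradient bound of Lemma~\ref{lem:gradient-Fless-coulomb}; the plain bound $|x_i-x_k|^{-d}$ would bring the logarithm back.

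Finally, the correction your split actually creates is the self-interaction term $\frac1N\bigl(\F_{\eta_*}(x_i,x_i)-\F_{\eta_*}(x_j,x_j)\bigr)$, not the pair terms. It is handled as in \eqref{z10}.
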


\begin{proof}
\textbf{Step 1: Differential inequality for $\boldsymbol{H}$.}
Since the solution is classical and collision-free on $[0,T]$, all the
functions
$$
t\mapsto
\frac{1}{N}\sum_{\substack{j=1\\j\neq i}}^N
\frac{\chi\bigl(\frac{|x_i(t)-x_j(t)|}{2\eta_*}\bigr)}
{|x_i(t)-x_j(t)|^{d-\theta}}
$$
are $C^1$ on $[0,T]$. Therefore, differentiating \eqref{condt}, for every $i$
we have
\begin{align*}
  &\partial_t\biggl(
    \frac{1}{N}\sum_{\substack{j=1\\j\neq i}}^N
    \frac{\chi\bigl(\frac{|x_i-x_j|}{2\eta_*}\bigr)}{|x_i-x_j|^{d-\theta}}
  \biggr)
  \\
  &\qquad=
    \frac{1}{N}\sum_{\substack{j=1\\j\neq i}}^N
    \biggl[
      -(d-\theta)
      \frac{\chi\bigl(\frac{|x_i-x_j|}{2\eta_*}\bigr)\,(x_i-x_j)}{
        |x_i-x_j|^{d-\theta+2}}
      + \frac{\chi'\bigl(\frac{|x_i-x_j|}{2\eta_*}\bigr)\,(x_i-x_j)}{
        2\eta_*\,|x_i-x_j|^{d-\theta+1}}
    \biggr]
    \cdot(\dot{x}_i-\dot{x}_j).
\end{align*}

The derivative produces two contributions: one from differentiating $|x_i-x_j|^{-(d-\theta)}$ (giving $H$ back) and one from differentiating the cutoff $\chi$ (giving a term supported at $|x_i-x_j|\sim \eta_*$).
By definition of $\eta_*$ this yields
\begin{align}\notag
  \partial_t\biggl(
    \frac{1}{N}\sum_{\substack{j=1\\j\neq i}}^N
    \frac{\chi\bigl(\frac{|x_i-x_j|}{2\eta_*}\bigr)}{|x_i-x_j|^{d-\theta}}
  \biggr)
  & \lesssim \sup_{k\neq k'}\frac{|\dot{x}_k-\dot{x}_{k'}|}{|x_k-x_{k'}|}
    \Bigl(H(t) +\frac{\eta_{*}^{\theta-d}}{N}\sum_{j\neq i} \indic_{|x_i-x_j|\le 4 \eta_*}\Bigr)
    \\  \notag  &\lesssim \sup_{k\neq k'}\frac{|\dot{x}_k-\dot{x}_{k'}|}{|x_k-x_{k'}|}
    \Bigl(H(t) +\frac{\eta_{*}^{\theta-d}}{N}\sum_{j\neq i} \Phi^{4 \eta_*} (x_i-x_j) \eta_*^d \Bigr) \\ \notag
       &\lesssim \sup_{k\neq k'}\frac{|\dot{x}_k-\dot{x}_{k'}|}{|x_k-x_{k'}|}
    \Bigl(H(t) +\eta_{*}^{\theta}\,\mu_{N, 4\eta_*} (x_i) \Bigr) 
  \\ \label{q17}&\lesssim
    \sup_{k\neq k'}\frac{|\dot{x}_k-\dot{x}_{k'}|}{|x_k-x_{k'}|}
    \Bigl(H(t) +\eta_{*}^{\theta}\, \U(\eta_*)\Bigr).
\end{align}
\medskip\noindent
\textbf{Step 2: Lipschitz bound on the relative velocity.}
We now prove that
\begin{align}\label{q16}
  \sup_{i\neq j}\frac{|\dot{x}_i-\dot{x}_j|}{|x_i-x_j|}
  &\lesssim
    \mathbf{1}_{s=d-1}\Bigl(
      \sum_{k=0}^{[\log_2(\eta_*^{-1/2})]-1}\M(2^{k}\eta_*)
      +(\log N)\,\M(\eta_*^{1/2})
      +(H(t)^{1/d}+1)\bigl(H(t)+\U(\eta_*)\bigr)
    \Bigr)
  \notag\\
  &\quad
  + \eta_*^{d-\theta-s-1}H(t)^{\max\{(s+1)/d,\,1\}}
  + \bigl(1+\mathbf{1}_{s>d-1}\eta_*^{d-1-s}\bigr)\,\U(\eta_*).
\end{align}
This bound captures the competition between the singular (small-scale, $H$-dependent) and smooth (large-scale, $\M$-dependent) parts of the velocity field.

\smallskip
\emph{Proof of \eqref{q16} for $s\neq d-1$.}
By \eqref{ode} and \eqref{assumpFsing}--\eqref{assumpFre}, using \eqref{q10} when $\theta=0$ (i.e.\ $s\geq d-1$), for $i\neq j$:
\begin{align*}
  |\dot{x}_i-\dot{x}_j|
  &\leq \frac{1}{N}
    \biggl|\sum_{\substack{k=1\\k\notin\{i,j\}}}^N
    \bigl(\F(x_i,x_k)-\F(x_j,x_k)\bigr)\biggr|
    + \frac{1}{N}|\F(x_i,x_j)-\F(x_j,x_i)|
  \\
  &\lesssim |x_i-x_j|\,\biggl(
    \sum_{k'\in\{i,j\}}\frac{1}{N}\sum_{\substack{k=1\\k\neq k'}}^N
    \frac{\mathbf{1}_{|x_{k'}-x_k|\leq 1}}{|x_{k'}-x_k|^{s+1}}
    + 1
    + \frac{\mathbf{1}_{s\leq d-1}}{N|x_i-x_j|^{s}}
  \biggr)
   \\
  &\lesssim |x_i-x_j|\,\biggl(
  \sum_{k'\in\{i,j\}}\frac{1}{N}\sum_{\substack{k=1\\k\neq k'}}^N
  \frac{\mathbf{1}_{|x_{k'}-x_k|\leq 1}}{|x_{k'}-x_k|^{s+1}}
  + 1
  \biggr).
\end{align*}
For the inner sum over~$k$, we separate close and far pairs.
The close-pair contribution ($|x_{k'}-x_k|\leq \frac{3}{2}\eta_*$) is bounded  by using the definition of $H$ for $s\le d-1$ and \eqref{q10} for $s >d-1$; the far-pair contribution ($|x_{k'}-x_k|> \frac{3}{2}\eta_*$) is controlled by using as in \eqref{convolutiondom} that 
\begin{equation}\label{muconvol}
  \frac{1}{N}\sum_{\substack{k=1\\k\neq k'}}^N
  \frac{\mathbf{1}_{\frac{3}{2}\eta_*\leq |x_{k'}-x_k|\leq 1}}{|x_{k'}-x_k|^{s+1}}
  \lesssim
  \frac{\indic_{\frac{\eta_*}{2}\le |\cdot |\le 2}}{|\cdot|^{s+1}}
  \star\mu_{N,\eta_*}(x_{k'}),
\end{equation}
and using Young's convolution inequality. This gives
\begin{align}\label{q15}
  |\dot{x}_i-\dot{x}_j|
  &\lesssim |x_i-x_j|\,\Bigl(
    \eta_*^{d-\theta-s-1}\,H(t)^{\max\{(s+1)/d,\,1\}}
  \notag\\
  &\qquad\qquad\qquad
    +\bigl(1+\mathbf{1}_{s=d-1}|\!\log\eta_*|
      +\mathbf{1}_{s>d-1}\eta_*^{d-1-s}\bigr)\,\U(\eta_{*})
  \Bigr).
\end{align}
This implies \eqref{q16} for $s\neq d-1$.

\smallskip
\emph{Proof of \eqref{q16} for the critical case $s=d-1$.} Note that this and $s>1$ imply that  $d>2$. 
Here a more refined decomposition is needed.
We write
\begin{align*}
  \dot{x}_i
  &= \F_{\eta_*}*\tilde\mu_{N,\eta_*}(x_i)
    + \F_{\eta_*}*\mu(x_i)
    + \frac{1}{N}\sum_{\substack{k=1\\k\neq i}}^N \F_{<\sqrt2\eta_*}(x_i,x_k)
    - \frac{1}{N}\F_{\sqrt2\eta_*}(x_i,x_i).
\end{align*}
The smooth part $\F_{\eta_*}*\tilde\mu_{N,\eta_*}$ is controlled via $\|\nab\F_{\eta_*}*\tilde\mu_{N,\eta_*}\|_{L^\infty}$,
which we bound using the dyadic decomposition~\eqref{z11} with $\eta=\eta_*$ and $n_0=\lfloor\log_2(\eta_*^{-1/2})\rfloor$
\begin{align*}
  \|\nab\F_{\eta_*}*\tilde\mu_{N,\eta_*}\|_{L^\infty}
  &\lesssim
    \sum_{k=0}^{\lfloor\log_2(\eta_*^{-1/2})\rfloor-1}\M(2^{k}\eta_*)
    +\log(N)\,\M(\eta_*^{1/2})
 +(\log N)\eta_*\, \A(\mu)\\
 &\lesssim
 \sum_{k=0}^{\lfloor\log_2(\eta_*^{-1/2})\rfloor-1}\M(2^{k}\eta_*)
 +\log(N)\,\M(\eta_*^{1/2})
 +\U(\eta_*).
\end{align*}
By \eqref{226c} and \eqref{227} we have for any $i\not=j$
\begin{align}\label{z10}
&	\frac{1}{N}|\F_{\sqrt2\eta_*}(x_i,x_i)-\F_{\sqrt2\eta_*}(x_j,x_j)|\lesssim  \frac{1}{N\eta_*^{s-1}}+\frac{1}{N}\lesssim   \frac{|x_i-x_j|}{N\eta_*^{s}}(1+H(t)^{1/d}),\\&
|\F_{\eta_*}*\mu(x_i)-\F_{\eta_*}*\mu(x_j)|\lesssim |x_i-x_j|\A(\mu)\label{z10'},
\end{align}
where we used 
\eqref{q10} for the first inequality. \\
For the singular part ($\F_{<\sqrt2\eta_*}$), using \eqref{proF1} and $N^{-1}\eta_*^{1-d}\leq 1$, we have
\begin{align} \left|\frac{1}{N}\sum_{\substack{k=1\\k\neq i}}^N \F_{<\sqrt2\eta_*}(x_i,x_k)- \frac{1}{N}\sum_{\substack{k=1\\k\neq j}}^N \F_{<\sqrt2\eta_*}(x_j,x_k)\right|
	&\lesssim  \eta_*^2
	+ \sup_{l}\frac{1}{N}\sum_{\substack{k=1\\k\neq l}}^N
	\frac{\min\bigl(1,\frac{\eta_*}{|x_l-x_k|}\bigr)^2}{|x_l-x_k|^{d-1}}.
\end{align}
By the definition of $H(t)$, splitting between $|x_l-x_k|\ge \eta_*/2$ and $|x_l-x_k|< \eta_*/2$, we have
\begin{align}\label{z40}
 \left|\frac{1}{N}\sum_{\substack{k=1\\k\neq i}}^N \F_{<\sqrt2\eta_*}(x_i,x_k)- \frac{1}{N}\sum_{\substack{k=1\\k\neq j}}^N \F_{<\sqrt2\eta_*}(x_j,x_k)\right|
	&\lesssim\notag\eta_{*}^2+
	\eta_*\,H(t)
	+ \eta_*^2\,
	\Bigl\|\Bigl(\frac{\mathbf{1}_{|\cdot|>\eta_*/2}}{|\cdot|^{d+1}}\Bigr)
	*\mu_{N,\eta_*}\Bigr\|_{L^\infty}
	\notag\\
	&\lesssim
	\eta_*\,\bigl(H(t)+\U(\eta_*)\bigr) \\&\lesssim |x_i-x_j| (H(t)^{1/d} +1)\bigl(H(t)+\U(\eta_*)\bigr), \notag
\end{align} where we used 
\eqref{q10} for the last inequality. \\
 Combining \eqref{z10},  \eqref{z10'}  and \eqref{z40}, we deduce
\begin{align}\label{q14}
  |\dot{x}_i-\dot{x}_j|
  &\lesssim |x_i-x_j|\,\Bigl(
    \|\nab\F_{\eta_*}*\tilde\mu_{N,\eta_*}\|_{L^\infty}
    +\A(\mu)
    +(H(t)^{1/d}+1)\bigl(H(t)+\U(\eta_*)\bigr)
    \Bigr).
\end{align}
Substituting the dyadic bound for $\|\nab\F_{\eta_*}*\tilde\mu_{N,\eta_*}\|_{L^\infty}$ into \eqref{q14} proves \eqref{q16} for $s=d-1$.\\
\medskip\noindent
\textbf{Step 3: Grönwall closure.}
It follows from \eqref{q17} and \eqref{q16} that 
\begin{align*}
	&	\partial_t\biggl(
	\frac{1}{N}\sum_{\substack{j=1\\j\neq i}}^N
	\frac{\chi\bigl(\frac{|x_i-x_j|}{2\eta_*}\bigr)}{|x_i-x_j|^{d-\theta}}
	\biggr)
\\&	\lesssim 
	\left( \mathbf{1}_{s=d-1}\Bigl(
	\sum_{k=0}^{[\log_2(\eta_*^{-1/2})]-1}\M(t,2^{k}\eta_*)
	+(\log N)\,\M(t,\eta_*^{1/2})
	+(H(t)^{1/d}+1)\bigl(H(t)+\U(t,\eta_*)\bigr)
	\Bigr)\right.
	\\
	&\quad\left.
	+ \eta_*^{d-\theta-s-1}\,H(t)^{\max\{(s+1)/d,\,1\}}
	+ \bigl(1+\mathbf{1}_{s>d-1}\eta_*^{d-1-s}\bigr)\,\U(t,\eta_*)\right)
	\Bigl(H(t) +\eta_{*}^{\theta}\, \U(t,\eta_*)\Bigr).
\end{align*}
Set \begin{equation}\label{def:Tprime-H} T' := \sup \left\{ \tau\in[0,T]: \sup_{0\leq t\leq\tau}H(t) \leq (2H(0)+1)\eta_*^{-\theta/2} \right\}. \end{equation} By continuity of $H$ and the initial bound, we have $0<T'\leq T,$ and the bootstrap bound holds on $[0,T']$. If $T'<T$, then $H(T') = (2H(0)+1)\eta_*^{-\theta/2}.$ Using the definition of $S$, we obtain, for every $0\leq t\leq T'$,
\begin{align*}
  \forall t\in [0,T'],\qquad
	\partial_t\biggl(
\frac{1}{N}\sum_{\substack{j=1\\j\neq i}}^N
\frac{\chi\bigl(\frac{|x_i-x_j|}{2\eta_*}\bigr)}{|x_i-x_j|^{d-\theta}}
\biggr)
  &\leq C\,S(t)\,\bigl(H(t) +\eta_*^{\theta}\bigr).
\end{align*}
Integrating in time,  and taking the maximum over $i$, this implies that 
\begin{align*}
	\forall t\in [0,T'],\quad	H(t) &\leq H(0)+C	S(t)\int_{0}^{t}\left(H(\tau) +\eta_*^{\theta} \right)d\tau.
\end{align*}
Applying Grönwall's lemma, we obtain
\begin{align}\label{3.8}
  \forall t\in [0,T'],\qquad
  H(t) +\eta_*^{\theta}
  &\le
    \bigl(H(0) +\eta_*^{\theta}\bigr)\,
    (1+CS(t)\,t)\,e^{CS(t)\,t}
  \le
    \bigl(H(0) +\eta_*^{\theta}\bigr)\,e^{2C S(t)\, t}.
\end{align}
If \eqref{condS} holds at $T$, then \eqref{3.8} gives $H(T')<(2H(0)+1)\eta_*^{-\theta/2}$, contradicting the maximality of $T'$ unless $T'=T$.
We conclude that $T'=T$ and \eqref{3.8} holds for all $t\leq T$, proving \eqref{boundH}.\\
The argument above is therefore valid on any classical collision-free time
interval. In the applications below, the lemma is only used before the first
collision time, or on intervals where the particle system is known to be
classical and collision-free.
\end{proof}

\begin{remark}\label{rem:timescale}
When $s>d-1$, we have $\theta=0$ and $S(t)\sim \eta_*^{d-1-s}$, so in order for \eqref{condS} to hold we must impose the short-time condition
\[
  T \ll \eta_*^{s-d+1}.
\]
This reflects the fact that for super-Coulomb interactions, the singularity is too strong to be controlled globally in time.
In Theorem~\ref{supercou} we show that this timescale is optimal.
\end{remark}

\medskip


\section{Main proof of mean-field limits}\label{sec:mainproof}

To prove Theorem~\ref{th1}, we need two elementary lemmas.
The first provides sharp bounds on coupled differential inequalities with a cascading structure; the second is a Grönwall-type comparison for quadratic nonlinearities.
\begin{lem}[Cascading differential inequality]\label{Le1}
	Let $(b_n(t))_{n\ge0}$ be nonnegative absolutely continuous functions such that, for every $0\le n\le N_0$ and a.e. $t\ge0$,
	$$
	\partial_t b_n(t)
	\le
	1+\sum_{k=0}^n b_k(t),
	\qquad
	b_n(0)\le 1.
	$$
	Then, for every $0\le n\le N_0$ and every $t\ge0$,
	\begin{equation}\label{x1}
		b_n(t)
		\le
		2\exp\bigl(2\sqrt{nt}+2t\bigr).
	\end{equation}
\end{lem}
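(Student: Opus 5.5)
The plan is to compare $b_n$ with the solution of the associated equality system, which is linear and cooperative. We then solve that system explicitly by the Laplace transform and bound the explicit formula.

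\textbf{Step 1 (comparison).} Let $(B_n)_{n\ge0}$ solve
\[
B_n'=1+\sum_{k=0}^n B_k,\qquad B_n(0)=1 .
\]
Its solution has $B_n\ge0$. We show $b_n\le B_n$ by induction on $n$.
\begin{itemize}
\item For $n=0$, the inequality $(b_0-B_0)'\le b_0-B_0$ together with $(b_0-B_0)(0)\le0$ gives $b_0\le B_0$ by Gronwall.
\item For $n\ge1$, the induction hypothesis $b_k\le B_k$ for $k<n$ and nonnegativity of the coefficients give $(b_n-B_n)'\le b_n-B_n$. Gronwall again gives $b_n\le B_n$.
\end{itemize}
This uses only absolute continuity of the $b_n$. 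So it suffices to prove $B_n(t)\le 2\exp(2\sqrt{nt}+2t)$.

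\textbf{Step 2 (explicit solution).} Subtracting consecutive equations gives $(B_n-B_{n-1})'=B_n$ for $n\ge1$, and $B_0'=1+B_0$. Denote Laplace transforms by $\widehat B_n(p)$, for $p>1$.
\begin{itemize}
\item Since $B_n(0)=B_{n-1}(0)$, the difference equation transforms to $(p-1)\widehat B_n=p\,\widehat B_{n-1}$.
\item The $n=0$ equation gives $\widehat B_0=\frac{p+1}{p(p-1)}$.
\item Hence
\[
\widehat B_n(p)=\Bigl(1+\tfrac1{p-1}\Bigr)^n\frac{p+1}{p(p-1)}
=\Bigl(1+\tfrac1p\Bigr)\sum_{j=0}^n\binom nj (p-1)^{-j-1}.
\]
\item Inverting, with $(p-1)^{-j-1}\leftrightarrow t^je^t/j!$ and multiplication by $1/p\leftrightarrow\int_0^t$, we get
\[
B_n(t)=g_n(t)+\int_0^t g_n,\qquad g_n(t):=e^t\sum_{j=0}^n\binom nj\frac{t^j}{j!}.
\]
\end{itemize}
The comparison step ensures nonnegativity, so no sign issues arise.

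\textbf{Step 3 (estimate).} Use $\binom nj\le n^j/j!$ and $j!^2\ge$ the terms of a Cauchy product:
\[
\sum_j\frac{(nt)^j}{(j!)^2}\le\Bigl(\sum_j\frac{(\sqrt{nt})^j}{j!}\Bigr)^2=e^{2\sqrt{nt}} .
\]
Thus $g_n(t)\le e^{t+2\sqrt{nt}}$. Since this bound is increasing in $t$, we get $\int_0^t g_n\le te^{t+2\sqrt{nt}}\le e^{2t+2\sqrt{nt}}$, using $t\le e^t$. Adding the two pieces yields $B_n\le 2e^{2\sqrt{nt}+2t}$, which is \eqref{x1}.

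The only mildly delicate point is Step 2, namely getting the exact formula, in particular the initial-data bookkeeping that makes the difference equation homogeneous. Everything else is routine.
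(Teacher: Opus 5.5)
Your proof is correct, but it follows a different route from the paper's. The paper never solves the equality system. It integrates $\partial_t(e^{-t}b_n)\le e^{-t}\bigl(1+\sum_{k<n}b_k\bigr)$ to get the Duhamel inequality $b_n(t)\le 2e^t+\sum_{k<n}\int_0^t e^{t-\tau}b_k(\tau)\,d\tau$. It then proves by induction on $n$ the one-parameter family of bounds $b_n(t)\le 2(1-r)^{-n}e^{t/r}$, $r\in(0,\tfrac12]$, which amounts to comparison with explicit exponential supersolutions. Finally it optimizes in $r$, taking $r=\sqrt t/(\sqrt n+\sqrt t)$ for $0<t\le n$ and $r=\tfrac12$ for $t>n$. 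You instead compare with the exact extremal solution and compute it in closed form. Your comparison step, the Laplace computation (including $B_n(0)=B_{n-1}(0)$, which makes the difference equation homogeneous), and the bound $\sum_j (nt)^j/(j!)^2\le e^{2\sqrt{nt}}$ are all sound, and you get $B_n(t)\le (1+t)e^{t+2\sqrt{nt}}$ directly, with no case distinction. The paper's argument is shorter and needs no transform calculus. Yours, in exchange, identifies the worst case exactly: $g_n(t)=e^t\sum_{j\le n}\binom nj t^j/j!=e^tL_n(-t)$ with $L_n$ the Laguerre polynomial. Since $L_n(-t)$ grows like $e^{2\sqrt{nt}}$ up to lower-order factors as $n\to\infty$ (already visible from the term $j\approx\sqrt{nt}$), the equality system shows that the exponent $2\sqrt{nt}$ in \eqref{x1} cannot be improved. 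One small point of rigor: a priori the transforms are only known to exist for $p>n+1$, e.g.\ from $S_n'\le (n+1)(1+S_n)$ with $S_n:=\sum_{k\le n}B_k$. That range is enough for the algebra and for uniqueness of the inverse transform; the range $p>1$ is only justified after the fact.
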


This sub-exponential dependence on $n$ is what ultimately allows the multiscale Grönwall argument to close.

\begin{proof}
	\textbf{Step 1: Duhamel-type inequality.}
We have
	$$
	\partial_t\bigl(e^{-t}b_n(t)\bigr)
	\le
	e^{-t}
	\left(
	1+\sum_{k=0}^{n-1}b_k(t)
	\right).
	$$
	Integrating from $0$ to $t$ and using $b_n(0)\le 1$, we obtain
	\begin{equation}\label{duhamel}
		b_n(t)
		\le
		2e^t
		+
		\sum_{k=0}^{n-1}
		\int_0^t e^{t-\tau}b_k(\tau)\,d\tau .
	\end{equation}
	For $n=0$, the sum is empty, and the sharper bound
	$$
	b_0(t)\le 2e^t-1
	$$
	follows directly from $\partial_t b_0\le 1+b_0$.
	
	\medskip
	\noindent
	\textbf{Step 2: Inductive exponential bound.}
	We claim that, for every $r\in(0,\frac12]$,
	\begin{equation}\label{x2p}
		b_n(t)
		\le
		2(1-r)^{-n}e^{t/r},
		\qquad
		0\le n\le N_0.
	\end{equation}
For $n=0$, this follows from $b_0(t)\le 2e^t-1\le 2e^{t/r}$.
Assume now that \eqref{x2p} holds for all $k\le n$. Then, by
\eqref{duhamel},
\begin{align*}
	b_{n+1}(t)
	&\le
	2e^t
	+
	2\sum_{k=0}^{n}
	(1-r)^{-k}
	\int_0^t e^{t-\tau}e^{\tau/r}\,d\tau
	\\
	&=
	2e^t
	+
	2\sum_{k=0}^{n}
	(1-r)^{-k}
	\frac{r}{1-r}
	\bigl(e^{t/r}-e^t\bigr)
	\\
	&<
	2e^t
	+
	2\bigl((1-r)^{-n-1}-1\bigr)
	e^{t/r}
	\\
	&<
	2(1-r)^{-n-1}e^{t/r}.
\end{align*}
Thus \eqref{x2p} holds for every $0\le n\le N_0$.\\
	\noindent
	\textbf{Step 3: Optimization in $r$.}
	Using
	$
	(1-r)^{-n}
	\le
	\exp\left(\frac{nr}{1-r}\right),
	$
	we obtain from \eqref{x2p}
	\begin{equation}\label{x3}
		b_n(t)
		\le
		2\exp\left(
		\frac{t}{r}
		+
		\frac{nr}{1-r}
		\right).
	\end{equation}	
	First, if $n=0$, then
	$
	b_0(t)\le 2e^t-1\le 2e^{2t},
	$
	which proves \eqref{x1}.\\
	Assume now that $n\ge1$. If $t=0$, then $b_n(0)\le1\le2$, so \eqref{x1} is immediate. \\
	If $0<t\le n$, choose
	$
	r
	=
	\frac{\sqrt t}{\sqrt n+\sqrt t}\le \frac12.
	$
	Then \eqref{x3} gives
	$$
	b_n(t)
	\le
	2\exp\bigl(2\sqrt{nt}+t\bigr)
	\le
	2\exp\bigl(2\sqrt{nt}+2t\bigr).
	$$
If $t>n$, choosing $r=\frac12$ in \eqref{x3}, we obtain
	$$
	b_n(t)
	\le
	2e^{2t+n}\le
	2\exp\bigl(2\sqrt{nt}+2t\bigr).
	$$
	Thus \eqref{x1} holds in all cases. The proof is complete.
\end{proof}
\begin{lem}[Quadratic Grönwall]\label{lemgronw}
Assume that $M(t)$ satisfies
\begin{equation}\label{relationM}
  \partial_t M \le C\bigl(A+ M(t)\bigr)^2,
  \qquad t \in [t_0,T],
\end{equation}
for some constants $C>0$ and $A\geq 0$. If $A+M(t_0)=0$, then $M(t)=0$ on $[t_0,T]$. Otherwise, for every $t \in \bigl[t_0,\,\min\bigl(T,\, t_0+ C^{-1}(A+M(t_0))^{-1}\bigr)\bigr]$,
\begin{equation}\label{gronw:concl}
  M(t)
  \le
  \frac{A+M(t_0)}{1-C(t-t_0)\bigl(A+M(t_0)\bigr)}- A
  = \frac{M(t_0) + CA(t-t_0)\bigl(A+M(t_0)\bigr)}{1-C(t-t_0)\bigl(A+M(t_0)\bigr)}.
\end{equation}
\end{lem}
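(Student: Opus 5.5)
The plan is a comparison argument for the Riccati equation. Set $u(t):=A+M(t)$. Since $A$ is a constant, $\partial_t u=\partial_t M\le C u^2$ on $[t_0,T]$, and $u\ge 0$ wherever we use it (in the intended application $M\ge 0$). The explicit solution of $v'=Cv^2$ with $v(t_0)=u(t_0)$ is
\[
v(t)=\frac{u(t_0)}{1-C(t-t_0)u(t_0)} ,
\]
so \eqref{gronw:concl} is exactly $u\le v$ on the stated interval, followed by subtracting $A$. The second expression in \eqref{gronw:concl} then comes from putting $\frac{A+M(t_0)}{1-C(t-t_0)(A+M(t_0))}-A$ over a common denominator, which gives numerator $M(t_0)+CA(t-t_0)(A+M(t_0))$.

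\emph{Degenerate case $u(t_0)=0$.} Let $t_1:=\sup\{t: u=0 \text{ on } [t_0,t]\}$ and suppose $t_1<T$. On a short interval after $t_1$ where $u\le 1$, we have $u'\le Cu^2\le Cu$, and $u(t_1)=0$. The linear Gronwall lemma then gives $u\equiv 0$ there, which contradicts the definition of $t_1$. Hence $u\equiv 0$, that is $M\equiv -A$. When $A=0$ this is the claim $M=0$; in the application $M\ge 0$ and $A\ge 0$, so $A+M=0$ forces both to vanish.

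\emph{Main case $u(t_0)>0$.} On any subinterval where $u>0$, the function $1/u$ is absolutely continuous and
\[
\frac{d}{dt}\Big(\frac1u\Big)=-\frac{u'}{u^2}\ge -C .
\]
Integrating gives $\frac1{u(t)}\ge \frac1{u(t_0)}-C(t-t_0)$. For $t<t_0+C^{-1}u(t_0)^{-1}$ the right-hand side is positive, and inverting yields $u\le v$. At the endpoint the bound is infinite, so it is trivially true there.

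\emph{The step needing care} is that $u$ might hit $0$ at some intermediate time, where $1/u$ stops being defined. If $u(\tau)=0$ for some $\tau>t_0$, the degenerate argument applied from $\tau$ shows $u\equiv 0$ afterwards, so $u\le v$ holds trivially from then on, since $v>0$. If instead $u$ stays positive, the integration runs over the whole interval and nothing more is needed. Apart from this check, everything is elementary and no earlier results of the paper are required.
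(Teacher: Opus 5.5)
Your proof is correct and is essentially the paper's argument: the paper integrates $\frac{d}{dt}\bigl(\frac{1}{CA}-\frac{1}{C(A+M)}\bigr)\le 1$, which is exactly your inequality $\frac{d}{dt}\frac{1}{A+M}\ge -C$. In the degenerate case the paper compares with $y'=Cy^2$, $y(t_0)=0$, which is the same as your linear Gr\"onwall step. Your extra care adds completeness: you handle $A+M$ vanishing at an intermediate time, and you make explicit the implicit sign assumption $M\ge 0$, which the paper also needs, e.g.\ to deduce $A=M(t_0)=0$.
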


\begin{proof} If $A+M(t_0)=0$, then $A=0$ and $M(t_0)=0$. Comparison with the solution of $y'=Cy^2$, $y(t_0)=0$, gives $M\equiv0$. Assume now that $A+M(t_0)>0$. 
Define the antiderivative
\[
  u(r):= \int_0^r \frac{ds}{C(A+ s)^2}
  = \frac{1}{CA}-\frac{1}{C(A+r)}.
\]
Since $u$ is increasing, \eqref{relationM} can be written as $\partial_t \bigl(u(M(t))\bigr)\le 1$, hence
\[
  \frac{1}{C(A+M(t_0))}-\frac{1}{C(A+M(t))} \le t- t_0,
\]
and the result follows by rearranging.
\end{proof}


\subsection{Proof of Theorem~\ref{th1-Coulomb}}\label{sec:proof-coulomb2D}
By Proposition~\ref{prop:weak-continuation-existence}, the particle system
	\eqref{ode} admits a weak particle continuation $\mu_N^t$ on $[0,T]$, in the
	sense of Definition~\ref{def-weak-particle-solution}. This continuation is
	obtained as the limit of regularized particle systems. More precisely, for
	$\kappa>0$, let $X_N^{\kappa,t}$ be the unique global classical solution of
	the regularized system associated with the force field
	$\F^{\mathrm{reg}}+\F^{\mathrm{sing},\kappa}$
	where the regularization $\F^{\mathrm{sing},\kappa}$ is defined by 
	\begin{equation}\label{weak-Fsing-kappa-cou}
	\F^{\mathrm{sing},\kappa}(x,y)
	:=
	\int_{\mathbb R^d}
	\F^{\mathrm{sing}}(x,y-z)\Phi^\kappa(z)\,dz .
\end{equation}
	 Then, along a sequence
	$\kappa_n\downarrow0$, the empirical measures $\mu_N^{\kappa_n,t}$ converge
	uniformly in time, against $W^{2,\infty}$ test functions, to a weak particle
	continuation $\mu_N^t$.
	
	It is therefore enough to establish the desired estimates uniformly in
	$\kappa$ for the regularized classical systems $X_N^{\kappa,t}$, and then to
	pass to the limit as $\kappa=\kappa_n\to0$. For simplicity of notation, we
	first carry out the quantitative estimates for a smooth singular force, with
	constants independent of the smoothing parameter. The passage to the weak
	particle continuation is given in Step 4 of the proof.


In the Coulomb case $s=d-1$, $d=1,2$, we do not need to use $H$ to control
$\R^1$ or $\R^9$. Using \eqref{R2-8}, \eqref{R10-12}, \eqref{R1}, and
\eqref{R9}, with $\eta=2^n\eta_0,\U(\eta)=\M(X_N^t,\mu^t,\eta)+\A(\mu^t),
$ we obtain
\begin{align*}
	&	\xg|\R_{N, \eta}^{1}|
	\lesssim 2^{-2n}(1+n)
(\M(\eta_0)+\A(\mu))(\M(2^n\eta_0)+\A(\mu)),\\
	& \sum_{m=2}^8\xg |\R_{N,\eta}^{m,\kappa}|
	\lesssim
	\sum_{k=0}^{n+1}
	2^{k-n}
	\M(2^{k-1}\eta_{0})(\M(2^n\eta_0)+\A(\mu))
	+ \Bigl( \eta^2
	\A(\mu)
	+ \frac{1}{N\eta_{0}^{d-1}}
	\Bigr)(\M(2^n\eta_0)+\A(\mu))
	\\ & \quad 
	+
\biggl(\sum_{k=0}^{n_0-1}\M(2^{k+n}\eta_0)
+|\!\log(2^{n_0+n}\eta_0)|\,\M(2^{n_0+n}\eta_0)
+|\!\log(2^{n_0+n}\eta_0)|\,2^{2(n_0+n)}\eta_0^2\,\A(\mu)\biggr)
\\ & \quad 
\times\bigl(\M(2^{n-1}\eta_0)+2^{2n}\eta_{0}^2\,\A(\mu)\bigr),\\
	&\R_{N, \eta}^{9}\equiv0,\\ &	\R_{N,\eta}^{10}+\R_{N,\eta}^{11} +\R_{N,\eta}^{12}
\lesssim (\M(2^n\eta_0)
+ \eta_0^22^{2n}\A(\mu))(\M(2^n\eta_0)+\A(\mu)).
\end{align*}
Thus, in view of  \eqref{ptM}, we obtain 
\begin{multline}\label{coulomb-raw} 
  \partial_t \M(2^n \eta_0)
  \lesssim
  \biggl(\sum_{k=0}^{n+1} 2^{k-n}\,\M(2^{k-1}\eta_{0})
    + \eta_{0}+2^{2n}\eta_{0}^2\,\A(\mu)\biggr)
  \bigl(\M(2^{n-1}\eta_0)+\A(\mu)\bigr)
  \\
  + (1+n)\,2^{-2n}\,
  \bigl(\M(\eta_0)+\A(\mu)\bigr)
  \bigl(\M(2^{n-1}\eta_0)+\A(\mu)\bigr)
  \\
  + \biggl(\sum_{k=0}^{n_0-1}\M(2^{k+n}\eta_0)
    +|\!\log(2^{n_0+n}\eta_0)|\,\M(2^{n_0+n}\eta_0)
    +|\!\log(2^{n_0+n}\eta_0)|\,2^{2(n_0+n)}\eta_0^2\,\A(\mu)\biggr)
  \\
  \times\bigl(\M(2^{n-1}\eta_0)+2^{2n}\eta_{0}^2\,\A(\mu)\bigr).
\end{multline}
We now set $\eta_0=2^m\eta_*$ and $n_0=\lfloor\log_2(\eta_*^{-2/3})\rfloor-n-m$ so that $2^{n_0+n} \eta_0= \eta_*^{1/3}$.
Then, for every $n\ge 0$ such that $n_0\ge 0$, we may absorb the term 
$$(1+n)2^{-2n}\M(\eta_0)$$
into the $k=1$ contribution in the sum. Moreover,
\begin{align*}
	|\log(2^{n_0+n}\eta_0)|\lesssim \log N,
\end{align*}
Therefore, for any $t\in[0,T]$, we obtain
\begin{multline}\label{coulomb}
  \partial_t \M(2^{n+m} \eta_*)
  \\
  \leq C_1
  \biggl(\sum_{k=0}^{n+1} 2^{k-n}\,\M(2^{k+m-1}\eta_*)
    + 2^{m}\eta_*
    +\bigl((n+1)2^{-2n}+2^{2(n+m)}\eta_*^2\bigr)\,\A(\mu)\biggr)
  \bigl(\M(2^{n-1+m}\eta_*)+\A(\mu)\bigr)
  \\
  + C_1\biggl(\sum_{k=0}^{\lfloor\log_2(\eta_*^{-2/3})\rfloor-n-m-1}
    \M(2^{k+n+m}\eta_*)
    +(\log N)	\M(2^{\lfloor\log_2(\eta_*^{-2/3})\rfloor}\eta_*)+(\log N)\eta_{*}^{\frac{2}{3}}\A(\mu)\biggr)\\\times
  \bigl(\M(2^{n-1+m}\eta_*)+2^{2(n+m)}\eta_*^2\,\A(\mu)\bigr).
\end{multline}
The proof now proceeds in three steps: (1) an a priori bound under a bootstrap assumption,
(2) closing the bootstrap by iterating a short-time estimate, and (3) combining with the dual estimates.

\medskip
\noindent\textbf{Step 1. A priori bound under a bootstrap assumption.}
Let $m_1$ be the fixed integer chosen below in Step~2. Define $\widetilde T$
as the supremum of all times $t_*\le T$ such that
\begin{equation}\label{cou-bootstrap}
  \sup_{t\in [0,\tilde T]}\M(t,2^{n+m_1}\eta_*)\leq 2^{-n/4}
\end{equation}
for any $n=0,\dots,\lfloor\log_2(\eta_*^{-2/3})\rfloor-m_1$.
In \eqref{coulomb}, set $m=m_1+1$ and use \eqref{cou-bootstrap}.
The key observation is that the first factor of the second line of \eqref{coulomb} is controlled by the bootstrap assumption as follows
\begin{align}\nonumber
  &\sum_{k=0}^{\lfloor\log_2(\eta_*^{-2/3})\rfloor-n-m-1}
    \M(2^{k+n+m}\eta_*)
    +(\log N)\M(2^{\lfloor\log_2(\eta_*^{-2/3})\rfloor}\eta_*)+(\log N)\eta_{*}^{\frac{2}{3}}\A(\mu)
  \\\nonumber
  &\qquad\leq
  \sum_{k=0}^{n_0-1}2^{-(k+n+1)/4}
  +(\log N)\, 2^{-\lfloor\log_2(\eta_*^{-2/3})\rfloor/4} 2^{m_1/4}+(\log N)\eta_{*}^{\frac{2}{3}}\A(\mu)
\\& \qquad \leq \frac{2^{-(n+1)/4}}{1-2^{-1/4}}+\frac{2(\log N)}{N^{\frac{1}{6d}}} 2^{m_1/4}
+\A(\mu)  \leq 2(1+\A(\mu)),\label{w5}
\end{align}for $N\geq 2^{100d+2dm_1}$,
for all $t\in [0,\tilde T]$. \\So, 
\begin{multline*}
	\partial_t \M(2^{n+m} \eta_*)
	\leq 2C_1
	\biggl(\sum_{k=0}^{n+1} 2^{k-n}\,\M(2^{k+m-1}\eta_*)
	+ 2^{m}\eta_*
	+\bigl((n+1)2^{-2n}+2^{2(n+m)}\eta_*^2\bigr)\biggr) (1+\A(\mu))^2
	\\
	+ 2C_1
	\bigl(\M(2^{n-1+m}\eta_*)+2^{2(n+m)}\eta_*^2\bigr)(1+\A(\mu))^2.
\end{multline*}
Since $\A(\mu^t)\le \A_T$ on $[0,\tilde T]$, controlling 
$\M(2^{n-1+m}\eta_*)$ by the $n$-th term in the sum, 
we obtain for all $t\in[0,\tilde T]$
\begin{equation}\label{coulomb-step1}
  \partial_t \M(2^{n+m_1+1} \eta_*)
  \leq 5C_1
  \biggl(\sum_{k=0}^{n} 2^{k-n}\,\M(2^{k+m_1+1}\eta_*)
    + 2^{m_1+1}\eta_*+(n+1)2^{-2n}+2^{2(n+m_1+1)}\eta_*^2\biggr)
  (1+\A_T)^2.
\end{equation}
Define, for $n\ge 0$,
\[
  b_n(t):=2^n\,\M\bigl(\tfrac{t}{C_2},\, 2^{n+m_1+1} \eta_*\bigr),
\]
with $C_2\geq 1$ to be chosen.
Then for every $t\in[0,C_2\tilde T]$ and every $n=0,\dots,\lfloor\log_2(\eta_*^{-2/3})\rfloor-m_1-1$, and $N$ large enough,
\begin{align*}
  C_2\,\partial_t b_n(t)
  &\leq 10C_1
    \biggl(\sum_{k=0}^{n} b_k(t)+
     1 + 2^{m_1+1+n}\eta_*+2^{3n+2(m_1+1)}\eta_*^2\biggr)
    (1+\A_T)^2
  \\
  &\leq 100C_1(1+\A_T)^2
    \biggl(\sum_{k=0}^{n} b_k(t)+1\biggr).
\end{align*}
Choosing
\begin{equation}\label{def:C2-cou}
  C_2:=100C_1(1+\A_T)^2,
\end{equation}
we obtain
\[
  \partial_t b_n(t)\leq 1+\sum_{k=0}^{n} b_k(t).
\]
By \eqref{Cou-initial},
\begin{align*}
	  b_n(0)=2^n\,\M\bigl(0, 2^{n+m_1+1} \eta_*\bigr)\leq 2^{-(n+m_1+1)} C\leq 2^{-(m_1+1)} C\leq 1
\end{align*}
Hence, $b_n(0)\le 1$ for all admissible $n$, provided that\begin{equation}\label{q70}
	m_1\geq \log_2 C.
\end{equation}
Hence, by Lemma~\ref{Le1},
\[
  b_n(t)\leq 	2\exp\bigl(2\sqrt{nt}+2t\bigr).
\]
Returning to $\M$, this means that
\begin{equation}\label{q30}
  \M(t, 2^{n+m_1+1} \eta_*)
  \leq2^{-n+1}\exp\bigl(2\sqrt{C_2 n t}+2C_2 t\bigr),
\end{equation}
for $t\in [0,\tilde T]$ and $n=0,\dots,\lfloor\log_2(\eta_*^{-2/3})\rfloor-m_1-1$.\\
Observe that the expected optimal estimate is
$$ \M(t, 2^{n+m_1+1} \eta_*)
\lesssim2^{-n},$$
therefore, the bound in \eqref{q30} is almost optimal, the only losses being  the exponential term
$$\exp\bigl(2\sqrt{C_2 n t}+2C_2 t\bigr).$$
\medskip
\noindent\textbf{Step 2. Closing the bootstrap for large $m_1$.}
Set
\[
  \mathbf{n}:=\bigl\lfloor\log_2(\eta_*^{-2/3})\bigr\rfloor,
  \qquad
  m\le \bigl\lfloor\log_2(\eta_*^{-1/10})\bigr\rfloor,
\]
and define the multiscale metric
\begin{equation}\label{def:Mmn}
  \M_{m,\mathbf{n}}(t):= \sum_{k=0}^{\mathbf{n}-m} 2^{k/2}\, \M(t, 2^{k+m}\eta_*).
\end{equation}
A useful consequence of this definition is the monotonicity relation
\begin{equation}\label{re:Mmn}
		\M_{m+1,\mathbf n}(t)
		=
		\sum_{k=0}^{\mathbf n-m-1} 2^{k/2}	\M\bigl(t,2^{k+m+1}\eta_*\bigr) \le
		2^{-1/2}
		\M_{m,\mathbf n}(t).
\end{equation}
Observe that $\M(t,2^{n+m}\eta_*)\le 2^{-n/2}\M_{m,\mathbf{n}}(t)$ for any $0\le n\le \mathbf{n}-m$ and that by \eqref{Meta1eta2} we have 
\begin{equation*}
  \M(t, 2^{m-1}\eta_*)
  \leq
  C_0\bigl(\M(t, 2^{m}\eta_*)+\A(\mu^t)\bigr)
  \leq C_0\bigl(\M_{m,\mathbf{n}}(t)+\A(\mu^t)\bigr).
\end{equation*} Multiplying the relations 
\eqref{coulomb} by $2^{n/2}$ and summing over $n$ (and simplifying using $2^{\mathbf{n}+m}\leq  \eta_*^{-\frac{2}{3}-\frac{1}{10}}$) we then deduce that 
for all $t\in[0,\tilde T]$,
since $\mathsf{M}(t,2^{\mathbf{n}}\eta_{*})\le 2^{-\mathbf{n}/2+m/2}\M_{m,\mathbf{n}}(t)$,
it follows from \eqref{coulomb} that
\begin{multline}\partial_t 	\M_{m,\mathbf{n}}(t)\\\leq 2C_1 \sum_{n=0}^{\mathbf{n}-m}2^{\frac{n}{2}}\left(\sum_{k=0}^{n+1} 2^{k-n}\M(2^{k+m-1}\eta_{*})+ 2^{m}\eta_{*}+((n+1)2^{-2n}+2^{2(n+m)}\eta_{*}^2)\A(\mu^t)\right)(	\M_{m,\mathbf{n}}(t)+\A(\mu^t))
	\\+C_1\sum_{n=0}^{\mathbf{n}-m}2^{\frac{n}{2}}\left(\sum_{k=0}^{[\log(\eta_{*}^{-2/3})]-n-m-1}\M(2^{k+n+m}\eta_{*})+\log(N)\M(2^{\lfloor\log_2(\eta_*^{-2/3})\rfloor}\eta_*)+(\log N)\eta_{*}^{\frac{2}{3}}\A(\mu^t)\right)\\\quad\quad\quad\times\left(\M(2^{n-1+m}\eta_{*})+2^{2(n+m)}\eta_{*}^2\A(\mu^t)\right).
\end{multline}
Since \begin{align*}
&	\sum_{n=0}^{\mathbf{n}-m}2^{\frac{n}{2}}	((n+1)2^{-2n}+2^{2(n+m)}\eta_{*}^2)\leq 3+2^{2-\frac{m}{2}}2^{\frac{5}{2}\mathbf{n}}\eta_{*}^2\leq 5,\\&
	\sum_{n=0}^{\mathbf{n}-m}2^{\frac{n}{2}}2^{m}\eta_{*}\leq 2^{\frac{\mathbf{n}}{2}}2^{\frac{m}{2}+1}\eta_{*}\leq \eta_*^{1/2}\leq 1,
\end{align*}
we obtain 
\begin{equation}\label{q24}
  \partial_t \M_{m,\mathbf{n}}(t)
  \leq 100(C_0+1)C_1\,
  \bigl(\M_{m,\mathbf{n}}(t)+1+\A_T\bigr)^2.
\end{equation}
Let $C:=100(C_0+1)C_1$.
By Lemma~\ref{lemgronw}, for $t \in [t_0,\min(\tilde T, t_0+ C^{-1}(1+\A_T+\M_{m,\mathbf{n}}(t_0))^{-1})]$:
\begin{equation*}
  \M_{m,\mathbf{n}}(t)
  \le
  \frac{\M_{m,\mathbf{n}}(t_0) + C(1+\A_T)(t-t_0)\bigl(1+\A_T+\M_{m,\mathbf{n}}(t_0)\bigr)}{
    1-C(t-t_0)\bigl(1+\A_T+\M_{m,\mathbf{n}}(t_0)\bigr)}.
\end{equation*}
In particular, if $\M_{m,\mathbf{n}}(t_0)\le 1$, then
\begin{equation}\label{z}
  \sup_{\tau\in [0,\min\{\tilde T-t_0,\,\tau_0\}]}\M_{m,\mathbf{n}}(t_0+\tau)\leq \frac{5}{4}
\end{equation}
for
\begin{equation}\label{def:tau0}
  \tau_0:=\frac{1}{100C(1+\A_T)^2}.
\end{equation}
Since $\tau_0$ is independent of $m$ and $\mathbf{n}$, we may choose the initial scale $m_0$ uniformly satisfying  $m_0\geq\log_2 C.$ Indeed, by \eqref{Cou-initial},
\begin{align}\label{q71}
	\M_{m_0+1,\mathbf n}(0)\le
	C\sum_{k=0}^{\mathbf n-m_0-1}
	2^{k/2}2^{-(k+m_0+1)} \le
	C 2^{-m_0}
	< 1,
\end{align}
provided $m_0$ is chosen sufficiently large, for instance
$$
m_0=\bigl\lceil \log_2(C+2)\bigr\rceil .
$$
Therefore, the bootstrap estimate gives
$$
\sup_{t\in [0,\min\{\widetilde T,\tau_0\}]}
\M_{m_0+1,\mathbf n}(t)
\le \frac54 .
$$
Consequently, using \eqref{re:Mmn}, we obtain
\[
  \M_{m_0+2,\mathbf{n}}(\min\{\tilde T,\tau_0\})\leq \tfrac{5}{4}\times 2^{-1/2}<1.
\]
If $\tau_0<\tilde T$, we iterate \eqref{z} on successive time intervals of length $\tau_0$.
After
\[
  m^*:=1+\Bigl\lfloor \frac{\tilde T}{\tau_0}\Bigr\rfloor
  =1+\bigl\lfloor 10^4(C_0+1)C_1(1+\A_T)^2\tilde T\bigr\rfloor
\]
steps, we obtain
\begin{equation}\label{q25}
  \sup_{[0,\tilde T]}\M_{m_0+1+m^*,\mathbf{n}}(t)< 1.
\end{equation}
We can take
$$
m_1:=m_0+1+m^* .
$$
By the choice of parameters, $m_1$ satisfies \eqref{q70}. In particular,
$m_1$ is an admissible scale in the bootstrap definition above. Since
$m_0$, $T$, and $\A_T$ are fixed independently of $N$, the integer $m_1$ is
also independent of $N$.\\ Moreover, \eqref{q25} implies that $\M(t, 2^{n+m_1}\eta_*)<2^{-n/4}$ for any $n \le \lfloor \log_2(\eta_*^{-2/3})\rfloor-m_1$, contradicting the maximality of $\tilde T $ in
 \eqref{cou-bootstrap}, unless $\tilde T =T$. We conclude that $\tilde T=T$ and that \eqref{cou-bootstrap} and \eqref{q25} hold with $\tilde T=T$.

\medskip
\noindent\textbf{Step 3. Conclusion.}
With the above choice of $m_1$, and 
$  C_2=100C_1(1+\A_T)^2,
$
the estimate \eqref{q30} holds.
Moreover, using
\[
  \sup_{n\leq m_1+1}\M(t, 2^{n} \eta_*)
  \leq C_0\, 2^{2m_1}\bigl(\M(t, 2^{m_1+2} \eta_*)+\A(\mu^t)\bigr),
\]
we infer that, for $t\in[0,T]$ and $n=0,\dots,\mathbf{n}$,
\begin{equation}\label{concl-cou-M}
  \M(t, 2^{n} \eta_*)
  \leq2^{-n}\,
  \exp\bigl[C(1+\A_T)\sqrt{nt}+C(1+\A_T)^2(1+T)\bigr].
\end{equation}
We now pass from dyadic scales to arbitrary scales. Let
$\eta\in[\eta_*,\eta_*^{1/3}]$ and choose $n$ such that
$
2^n\eta_*
\le
\eta
<
2^{n+1}\eta_* .
$
By the scale comparison estimate \eqref{Meta1eta2},
$$
\M(X_N^t,\mu^t,\eta)
\lesssim
\M(X_N^t,\mu^t,2^n\eta_*)
+
\eta^2\A(\mu^t).
$$
Using \eqref{concl-cou-M}, the relation $2^n\eta_*\sim\eta$, and the fact that
$\eta^2\lesssim \eta_*/\eta$ for $\eta\le\eta_*^{1/3}$, we obtain
$$
\M(X_N^t,\mu^t,\eta)
\lesssim
\frac{\eta_*}{\eta}
\exp\left[
C(1+\A_T)\sqrt{t\log\frac{\eta}{\eta_*}}
+
C(1+\A_T)^2(1+T)
\right].
$$
This proves \eqref{conclth1-cou} for all
$\eta\in[\eta_*,\eta_*^{1/3}]$.\\
We now derive the corresponding $\mathsf{W}$-estimate. Using \eqref{eqfordiffeq}, \eqref{R2-8-dual}, \eqref{R1-dual}, and \eqref{R9-dual} with
\[
\eta_0=\eta_*,
\qquad
\eta=2^{\lfloor\log_2(\eta_*^{-1/2})\rfloor}\eta_*\sim \eta_*^{1/2},
\qquad
n_0=0,
\]
we deduce that
$$\omega:=\tilde{\mu}_{N,2^{\bigl\lfloor\log_2(\eta_*^{-1/2})\bigr\rfloor}\eta_{*}}$$
satisfies the equation
\begin{equation*}
  \partial_t \omega
  = -\operatorname{div}\bigl((\F\ast\omega )\mu+(\F\ast\mu)\omega\bigr)
    -(\F\ast\omega )\nab\omega+ \mathbf R,
\end{equation*}
with remainder term satisfying
\begin{align*}
	 \|\langle \cdot\rangle^{\gamma-1}\mathbf R (t)\|_{(W^{1,1})^*}&	\lesssim	\sum_{k=0}^{\bigl\lfloor\log_2(\eta_*^{-1/2})\bigr\rfloor+1}\eta_*\,
	2^{k}
	\M(2^{k-1}\eta_{*})(\M(\eta)+\A(\mu))
	+ 
	 \eta_{*}(\A(\mu)+1)(\M(\eta)+\A(\mu))
	\\&\quad\quad
	+ 
	\Bigl(|\log N|\,\M(\eta)
	+|\log N|\eta_*\A(\mu)
	\Bigr)\eta_{*}^{1/2}\bigl(\M(\eta/2)+\eta^2\A(\mu)\bigr).
\end{align*}
Hence, by \eqref{concl-cou-M},
\begin{equation*}
	\|\langle \cdot\rangle^{\gamma-1}\mathbf R (t)\|_{(W^{1,1})^*}
	\lesssim
	\eta_*(\log N)
	\exp\bigl[C(1+\A_T)\sqrt{(\log N)\,t}+C(1+\A_T)^2(1+T)\bigr].
\end{equation*}
Moreover, since $s\leq d-1$
\begin{align*}
		\|\langle \cdot\rangle^{\gamma-1} 	(\F\ast\omega(t) )\nab\omega (t)\|_{(W^{1,1})^*}&\leq  \| \operatorname{div}(\langle \cdot\rangle^{\gamma-1} 	(\F\ast\omega(t) ))\omega (t)\|_{L^\infty}+\| \langle \cdot\rangle^{\gamma-1} 	(\F\ast\omega(t) )\omega (t)\|_{L^\infty}
		\\&\leq\left(\| \langle \cdot\rangle^{-1} 		\operatorname{div}\F\ast\omega(t) )\|_{L^\infty}+\gamma\| \langle \cdot\rangle^{-1} 		(\F\ast\omega(t) )\|_{L^\infty}\right) \| \langle \cdot\rangle^\gamma 	\omega (t)\|_{L^\infty}.
\end{align*}
Using \eqref{assumpFsing}, \eqref{assumpFre}, and \eqref{coudet}, this yields
\begin{align*}
	\|\langle \cdot\rangle^{\gamma-1} 	(\F\ast\omega(t) )\nab\omega (t)\|_{(W^{1,1})^*}&\lesssim \| \langle \cdot\rangle^\gamma 	\omega (t)\|_{L^\infty}^2=  \M(t, 2^{\bigl\lfloor\log_2(\eta_*^{-1/2})\bigr\rfloor}\eta_*)^2.
\end{align*}
Applying again \eqref{concl-cou-M}, we obtain
\begin{align*}
	\|\langle \cdot\rangle^{\gamma-1} 	(\F\ast\omega(t) )\nab\omega (t)\|_{(W^{1,1})^*}&\lesssim \eta_{*}(\log N)
	\exp\bigl[C(1+\A_T)\sqrt{\log(N)t}+C(1+\A_T)^2(1+T)\bigr]
\end{align*}
Therefore, Lemma~\ref{Le-dual1}, and in particular \eqref{omega_est} with $\gamma_1=\gamma-1$,  yields
	\begin{align*}\mathsf{W}( X_N^t, \mu^t,2^{\bigl\lfloor\log_2(\eta_*^{-1/2})\bigr\rfloor}\eta_{*}) &\lesssim  \exp \big(CT\A_T\big) \mathsf{W}( X_N^0, \mu^0,2^{\bigl\lfloor\log_2(\eta_*^{-1/2})\bigr\rfloor}\eta_{*})\\&\quad+\eta_*(\log N) \exp\left[C(1+\A_T)\sqrt{t \log N}+C(1+\A_T)^2(1+T)\right].
\end{align*}
Finally, arguing as in \eqref{Meta1eta2}, we have
\begin{align*}
&\mathsf{W}( X_N^t, \mu^t,\eta_{*}^{1/2})\lesssim	\mathsf{W}( X_N^t, \mu^t,2^{\bigl\lfloor\log_2(\eta_*^{-1/2})\bigr\rfloor}\eta_{*}) +\eta_{*}\A_t,\\&
	\mathsf{W}( X_N^t, \mu^t,2^{\bigl\lfloor\log_2(\eta_*^{-1/2})\bigr\rfloor}\eta_{*}) 	\lesssim  \mathsf{W}( X_N^t, \mu^t,\frac{1}{2}\eta_{*}^{1/2})+\eta_{*}\A_t.
\end{align*}
Combining the preceding estimates yields \eqref{conclth1-wn-cou}. Together
with the already proved estimate \eqref{conclth1-cou}, this completes the
proof of Theorem~\ref{th1-Coulomb} under the auxiliary smoothness assumption on $\F^{\mathrm{sing}}$.\\

\noindent
{\bf Step 4. Passage to the weak particle continuation.} It remains to detail the approximation argument that justifies the smoothness assumption. For
$\kappa\in(0,N^{-100d}]$, let
$$
X_N^{\kappa,t}
=
(x_1^\kappa(t),\ldots,x_N^\kappa(t))
$$
be the unique global classical solution of the regularized system
\begin{equation}\label{weak-reg-ODE-cou}
	\left\{
	\begin{array}{ll}
		\displaystyle
		\dot{x}_i^\kappa(t)
		=
		\frac1N
		\sum_{\substack{j=1\\ j\neq i}}^N
		\bigl(\F^{\mathrm{reg}}+\F^{\mathrm{sing},\kappa}\bigr)
		\bigl(x_i^\kappa(t),x_j^\kappa(t)\bigr),
		\\[1ex]
		x_i^\kappa(0)=x_i^0,
	\end{array}
	\right.
\end{equation}
where  $\F^{\mathrm{sing},\kappa}$ is as in \eqref{weak-Fsing-kappa-cou}.
For each fixed $\kappa>0$, the vector field
$\F^{\mathrm{reg}}+\F^{\mathrm{sing},\kappa}$ is smooth, locally Lipschitz,
and has at most linear growth. Hence \eqref{weak-reg-ODE-cou} admits a unique
global classical solution. Moreover, the regularized singular force satisfies
the structural assumptions used above, with constants independent of
$\kappa$ up to replacing $C$ by $2C$.

Let
$$
\mu_N^{\kappa,t}
:=
\frac1N\sum_{i=1}^N\delta_{x_i^\kappa(t)}
$$
be the associated empirical measure. Let also $\mu^\kappa$ be the solution of
the regularized mean-field equation
\begin{equation}\label{mukappa-eq-cou}
	\partial_t\mu^\kappa
	=
	-\operatorname{div}
	\Bigl(
	\bigl((\F^{\mathrm{reg}}+\F^{\mathrm{sing},\kappa})
	*\mu^\kappa\bigr)\mu^\kappa
	\Bigr),
	\qquad
	\mu^\kappa|_{t=0}=\mu^0 .
\end{equation}
By the local well-posedness and stability estimates, together with a continuation argument around the solution $\mu$, there exist $\kappa_0>0$ and a constant $\overline{\A}_T = C\bigl(T,\A_T(\mu)\bigr),$ depending only on $T$, $\A_T(\mu)$, and the structural constants, such that, for every $0<\kappa\leq\kappa_0$, the solution $\mu^\kappa$ exists on $[0,T]$ and \begin{equation}\label{AT-uniform-cou} \sup_{0<\kappa\leq\kappa_0} \A_T(\mu^\kappa) \leq \overline{\A}_T. \end{equation} Moreover, 
\begin{equation}\label{mukappa-conv-cou}
	\sum_{\ell=0}^2
	\bigl\|
	\langle\cdot\rangle^\gamma
	\nabla^\ell(\mu^\kappa-\mu)
	\bigr\|_{L^\infty([0,T]\times\mathbb R^d)}
	\longrightarrow 0,
\end{equation}
as $\kappa\to0$.

Applying the smooth-case estimate just proved to the pair
$(X_N^{\kappa,t},\mu^{\kappa,t})$, we obtain constants independent of
$\kappa$ such that, for every $t\in[0,T]$ and
$\eta\in[\eta_*,\eta_*^{1/3}]$,
\begin{align}\label{M-kappa-cou}
	\M(X_N^{\kappa,t},\mu^{\kappa,t},\eta)
	&\le
	\frac{\eta_*}{\eta}
	\exp\left[
	C'(1+\A_T(\mu^\kappa))\sqrt{t\log\frac{\eta}{\eta_*}}
	+
	C'(1+\A_T(\mu^\kappa))^2(1+T)
	\right],
\end{align}
and
\begin{align}\label{W-kappa-cou}
	\mathsf W(X_N^{\kappa,t},\mu^{\kappa,t},\eta_*^{1/2})
	&\le
	e^{C'(T+1)\A_T(\mu^\kappa)}
	\mathsf W\left(
	X_N^0,\mu^0,\frac12\eta_*^{1/2}
	\right)
	\notag\\
	&\quad
	+
	\eta_*(\log N)
	\exp\left[
	C'(1+\A_T(\mu^\kappa))\sqrt{t\log N}
	+
	C'(1+\A_T(\mu^\kappa))^2(1+T)
	\right].
\end{align}
By Proposition~\ref{prop:weak-continuation-existence}, there exist a sequence
$\kappa_n\downarrow0$ and a weak particle continuation $\mu_N^t$ on $[0,T]$
such that
$$
\mu_N^{\kappa_n,t}\rightharpoonup \mu_N^t
$$
uniformly in time against $W^{2,\infty}$ test functions. Passing to a further
subsequence if necessary, we may also assume that
\begin{equation}\label{AT-subsequence-cou}
	\lim_{n\to\infty}\A_T(\mu^{\kappa_n})
	=
	\liminf_{\kappa\to0}\A_T(\mu^\kappa)
	\le
	\A_T .
\end{equation}

We first pass to the limit in the $\M$-estimate. For each fixed
$x\in\mathbb R^d$ and $\eta>0$, take the test function
$$
\phi_{x,\eta}(y)
:=
\langle x\rangle^\gamma\Phi^\eta(x-y).
$$
Then $\phi_{x,\eta}\in W^{2,\infty}(\mathbb R^d)$, and hence, by
Definition~\ref{def-weak-particle-solution},
$$
\langle x\rangle^\gamma\mu_{N,\eta}^{\kappa_n,t}(x)
=
\int_{\mathbb R^d}\phi_{x,\eta}(y)\,d\mu_N^{\kappa_n,t}(y)
\longrightarrow
\int_{\mathbb R^d}\phi_{x,\eta}(y)\,d\mu_N^t(y)
=
\langle x\rangle^\gamma\mu_{N,\eta}^t(x),
$$
uniformly in $t\in[0,T]$ for this fixed $x$. Together with
\eqref{mukappa-conv-cou}, this gives, for every fixed $x$,
$$
\langle x\rangle^\gamma
\bigl(
\mu_{N,\eta}^{\kappa_n,t}(x)-\mu^{\kappa_n,t}(x)
\bigr)
\longrightarrow
\langle x\rangle^\gamma
\bigl(
\mu_{N,\eta}^{t}(x)-\mu^{t}(x)
\bigr).
$$
Taking the supremum in $x$ after passing to the limit yields the lower
semicontinuity estimate
\begin{equation}\label{M-lsc-cou}
	\M(\mu_N^t,\mu^t,\eta)
	\leq
	\liminf_{n\to\infty}
	\M
	\bigl(
	X_N^{\kappa_n,t},\mu^t,\eta
	\bigr).
\end{equation}
Similarly, by the dual definition of $\mathsf W$ and the same convergence of
test functions,
\begin{equation}\label{W-lsc-cou}
	\mathsf W(\mu_N^t,\mu^t,\eta)
	\leq
	\liminf_{n\to\infty}
	\mathsf W
	\bigl(
	X_N^{\kappa_n,t},\mu^t,\eta
	\bigr).
\end{equation}
at every scale $\eta$ used below.

Passing to the limit in \eqref{M-kappa-cou} and \eqref{W-kappa-cou}, using
\eqref{mukappa-conv-cou}, \eqref{AT-subsequence-cou}, \eqref{M-lsc-cou}, and
\eqref{W-lsc-cou}, gives
\begin{align}
	\M(\mu_N^t,\mu^t,\eta)
	&\le
	\frac{\eta_*}{\eta}
	\exp\left[
	C'(1+\A_T)\sqrt{t\log\frac{\eta}{\eta_*}}
	+
	C'(1+\A_T)^2(1+T)
	\right],
\end{align}
for every $\eta\in[\eta_*,\eta_*^{1/3}]$, and
\begin{align}
	\mathsf W(\mu_N^t,\mu^t,\eta_*^{1/2})
	&\le
	e^{C'(T+1)\A_T}
	\mathsf W\left(
	X_N^0,\mu^0,\frac12\eta_*^{1/2}
	\right)
	\notag\\
	&\quad
	+
	\eta_*(\log N)
	\exp\left[
	C'(1+\A_T)\sqrt{t\log N}
	+
	C'(1+\A_T)^2(1+T)
	\right].
\end{align}
Here, after passing to the limit, $\M$ and $\mathsf W$ are understood through
the continued empirical measure $\mu_N^t$. This proves
\eqref{conclth1-cou} and \eqref{conclth1-wn-cou}, and completes the proof of
Theorem~\ref{th1-Coulomb}.

\begin{remark}
If we consider Coulomb-type interactions cut off at scale $N^{-1/d}$ or even
\[
  \F(x,y)\sim \frac{1}{|x-y|\,(|x-y|+N^{-1/d})^{d-2}},
\]
examining the structure of the singular terms $\R_{N,\eta}^{1,1}$ and $\R_{N,\eta}^{9}$ shows that the proof carries through and yields long-time mean-field convergence.
This extends the results of \cite{LP2017}, which only treat interactions cut off at scale $\eta \ge N^{-1/d+\varepsilon}$.
\end{remark}

\medskip
\subsection{Proof of Theorem~\ref{th1}}\label{sec:proofth1}
We are  in the case $s<d-1$.
\begin{proof}
	We are in the case $s<d-1$. By Lemma~\ref{lem:local-classical-ode}, for each
	fixed $N$ the particle system admits a unique classical solution on a maximal
	collision-free interval
	$
	[0,T_{\mathrm{coll}}^N).
	$
	All estimates below are first derived on
	$
	[0,\min\{T,T_{\mathrm{coll}}^N\}).
	$
	The uniform bound on $H$ obtained in Step~3 prevents collisions, and therefore
	implies $T_{\mathrm{coll}}^N>T$. Hence the classical solution extends uniquely
	to the whole interval $[0,T]$.\\
\noindent\textbf{Step 1. Time evolution of the multiscale distance.}
As before, set $\eta=2^n\eta_0$.
Using \eqref{R2-8}, \eqref{R10-12}, \eqref{R1}, and \eqref{R9} in the regime $s<d-1$, we obtain 
\begin{align*}
 & \sum_{m=2}^8\xg |\R_{N,\eta}^m|
  \lesssim
    \sum_{k=0}^{n+1}\eta_0^{d-s-1}\, 2^{k(d-s)-n}\,\M(2^{k-1}\eta_{0})\,\U(\eta)
    + \(\frac{1}{N\eta_{0}^s}
    + \eta^2\A(\mu)+	\M(\eta/2)\)\U(\eta)
  \\
 & \R_{N,\eta}^{10}+\R_{N,\eta}^{11} +\R_{N,\eta}^{12}
  \lesssim
    \M(\eta/2)\,\U(\eta)+\eta^2\,\A(\mu)\,\U(\eta),
  \\
 & \xg|\R_{N, \eta}^{1}|
  \lesssim
    \frac{1}{\eta^2}\Bigl(
      \eta_{0}^2\,\eta^{d-s-1}\,\U(\eta_0)
      + \mathbf{1}_{s> 1}\,\eta_0^{d+1-\theta-s}\,H(t)
  \\
  &\qquad\qquad
      + \mathbf{1}_{s>1}\sum_{1\leq 2^k\leq \frac{\eta_{0}}{\eta_*}}
        (2^{k} \eta_*)^{d+1-s}\U(2^{k} \eta_*)
    \Bigr)\U(\eta),
  \\
&  |\R_{N, \eta}^{9}|=0.
\end{align*}
Using \eqref{ptM}, the identity $\U(\eta)=\M(\eta)+\A(\mu)$,
and restricting to $2^n\le \eta_*^{-2/3}$, we obtain
\begin{multline}\label{q20}
  \partial_t \M(2^n \eta_0)
  \lesssim
  \biggl(\sum_{k=0}^{n-1}\eta_0^{d\theta}\, 2^{k-n}\,\M(2^{k-1}\eta_{0})
    + \frac{1}{N\eta_{0}^s}
    + \M(2^{n-1} \eta_0)+2^{2n}\eta_0^2\,\A(\mu)\biggr)
  \bigl(\M(2^n\eta_0)+\A(\mu)\bigr)
  \\
  + 2^{-2n}\eta_{0}^{d\theta}
  \biggl(\M(\eta_0)+\A(\mu)
    + \mathbf{1}_{s> 1}\Bigl(H(t)
      +\A(\mu)
      +\sum_{1\leq 2^k\leq \frac{\eta_{0}}{\eta_*}} \M(2^{k} \eta_*)
    \Bigr)\biggr)
  \bigl(\M(2^n\eta_0)+\A(\mu)\bigr),
\end{multline}
where we used $\eta_0^{d-s-1}\, 2^{k(d-s)-n}\leq \eta_0^{(d-s-1)/3}\, 2^{k-n}=\eta_0^{d\theta}\, 2^{k-n}$ and 
\begin{equation*}
	\sum_{1\leq 2^k\leq \frac{\eta_{0}}{\eta_*}}
	(2^{k} \eta_*)^{d+1-s}\lesssim   \eta_0^{d+1-s}.
\end{equation*}

We set $\eta_0=2^m\eta_*$ with $2^m\le \eta_*^{-1/10}$.
Then for all $0\le n\le\lfloor\log_2(\eta_*^{-2/3})\rfloor-m$ and $t\in[0,\min\{T,T_{\mathrm{coll}}^N\})$:
\begin{multline}\label{q21}
  \partial_t \M(2^{n+m} \eta_*)
  \\
  \lesssim
  \biggl(\sum_{k=0}^{n-1}\eta_*^{d\theta/2}\, 2^{k-n}\,\M(2^{m+k-1}\eta_*)
    + \frac{1}{N\eta_*^s}
    + \M(2^{m+n-1} \eta_*)+2^{2(n+m)}\eta_*^2\,\A_T\biggr)
  \bigl(\M(2^{n+m}\eta_*)+\A_T\bigr)
  \\
  + 2^{-2n}\eta_*^{d\theta/3}
  \biggl(\M(2^m\eta_*)+\A_T
    + \mathbf{1}_{s> 1}\Bigl(H(t)
      +\A_T+\sum_{k\leq m} \M(2^{k} \eta_*)
    \Bigr)\biggr)
  \bigl(\M(2^{n+m}\eta_*)+\A_T\bigr).
\end{multline}

\medskip
\noindent\textbf{Step 2. Bootstrap estimates.} Let $m_1\le \log\log N$ be the fixed integer chosen below. W Define
$\widetilde T$ as the supremum of all times $t_*\le \min\{T,T_{\mathrm{coll}}^N\}$ such that
\begin{align}
  &\sup_{[0,t_*]}\M(t,2^{n+m_1}\eta_*)\leq 1,
  \label{cou-bootstrap-sub}
  \\
  &\sup_{[0,t_*]}H(t)\leq 1,
  \label{cou-bootstrap-sub-H}
\end{align}
for any $n=0,\dots,\lfloor\log_2(\eta_*^{-2/3})\rfloor-m_1$. By Lemma~\ref{lem:local-classical-ode} and the initial assumptions
\eqref{Cou-initial0sub} and \eqref{condt02}, the bootstrap bounds hold for a
positive time; hence $\widetilde T>0$ for $N$ sufficiently large.\\
By \eqref{Mee}, for some constant $C\ge 1$:
\begin{equation}\label{q26}
  \sup_{\ell\leq m_1}\M(t,2^{\ell} \eta_*)
  \leq C\,2^{dm_1}\bigl(\M(t,2^{m_1} \eta_*)+\A_T\bigr)
  \leq C\,2^{dm_1}(1+\A_T),
\end{equation}
for all $t\in[0,\tilde T]$.

We return to \eqref{q21} and set $m=m_1+1$. The terms at scales $2^{m_1+k}\eta_*$, $k\ge0$, are controlled by the bootstrap assumption \eqref{cou-bootstrap-sub}. On the other hand, the contribution of the lower scales is estimated using \eqref{q26}; indeed,
$$
\sum_{\ell\le m_1}\M(t,2^\ell\eta_*)
\le
C m_1 2^{dm_1}(1+\A_T),
\qquad t\in[0,\widetilde T].
$$
Together with \eqref{cou-bootstrap-sub-H}, this gives, for every $t\in[0,\widetilde T]$ and every
$n=0,\dots,\lfloor\log_2(\eta_*^{-2/3})\rfloor-m_1-1$,
\begin{multline}\label{q27}
  \partial_t \M(2^{n+m_1+1} \eta_*)
  \\
  \lesssim
  \biggl(\sum_{k=0}^{n-1}\eta_*^{d\theta/2}\, 2^{k-n}\,\M(2^{m_1+k}\eta_*)
    + \frac{1}{N\eta_*^s}
    + \M(2^{m_1+n} \eta_*)+2^{2n+2(m_1+1)}\eta_*^2\,\A_T\biggr)
  (1+\A_T)
  \\
  + m_12^{dm_1}2^{-2n}\,\eta_*^{d\theta/3}\,(1+\A_T)^2.
\end{multline} 
We now use a standard continuation argument. Fix $T'\in(0,\tilde T]$ be the supremum of $t\le \tilde T$ such that 
\begin{equation}\label{q23}
  \M(t,2^{n+m_1+1} \eta_*)\leq e^{C_2 t}\,2^{-n},
\end{equation}
for $0\leq t\leq T'$ and $n=0,\dots,\lfloor\log_2(\eta_*^{-2/3})\rfloor-m_1-1$.
By \eqref{Cou-initial0sub} we have $T'>0$ (but possibly depending on $N$).
Combining \eqref{q23} with \eqref{q26} and \eqref{q27}:
\begin{multline*}
  \partial_t \M(2^{n+m_1+1} \eta_*)
  \lesssim
  \biggl(\sum_{k=0}^{n-1}\eta_*^{d\theta/2}2^{-n}
    + \frac{1}{N\eta_*^s}
    + 2^{-n}+2^{2(n+m_1+1)}\eta_*^2\biggr)
  (1+\A_T)^2\,e^{C_2 t}
  \\
  + m_12^{dm_1}2^{-2n}\,\eta_*^{d\theta/3}\,(1+\A_T)^2.
\end{multline*}
Since $m_1\le \log\log N$, $ \frac{1}{N\eta_*^s}\leq 2^{-n}$,  the last term can be absorbed, and
\[
  \partial_t \M(2^{n+m_1+1} \eta_*)
  \leq \tfrac{C}{4}\,(1+\A_T)^2\,e^{C_2 t}\,2^{-n}.
\]
Integrating and using \eqref{Cou-initial0sub} with $C_02^{-m_1}\leq \frac{1}{2}$ we get
\begin{align*}
  \M(t,2^{n+m_1+1} \eta_*)
  &\leq \M(0, 2^{n+m_1+1} \eta_*)+\frac{C(1+\A_T)^2}{4C_2}\,e^{C_2 t}\,2^{-n}
  \\
  &\overset{\eqref{Cou-initial0sub}}\leq
    \Bigl(\frac{1}{2}+\frac{C(1+\A_T)^2}{4C_2}\,e^{C_2 t}\Bigr)2^{-n}.
\end{align*}
Choosing $C_2:=C(1+\A_T)^2$, we get
\[
  \M(t,2^{n+m_1+1} \eta_*)\leq \tfrac{3}{4}\,e^{C_2 t}\,2^{-n}.
\]
Since the last estimate is strict with respect to the defining bound \eqref{q23}, the standard continuation argument implies that $T'=\widetilde T$. Consequently,
\begin{equation}\label{q23b}
  \M(t,2^{n+m_1+1} \eta_*)\leq e^{C(1+\A_T)^2 t}\,2^{-n},
\end{equation}
for $t\in [0,\tilde T]$ and $n=0,\dots,\lfloor\log_2(\eta_*^{-2/3})\rfloor-m_1-1$.\\
\medskip
\noindent\textbf{Step 3. Proof of \eqref{cou-bootstrap-sub}--\eqref{cou-bootstrap-sub-H}  and $ \tilde T=T$.}
Set
$\mathbf{n}:=\lfloor\log_2(\eta_*^{-2/3})\rfloor$,
$m\le \lfloor\log_2(\eta_*^{-1/10})\rfloor$,
and define
\[
  \M_{m,\mathbf{n}}(t):= \sum_{k=0}^{\mathbf{n}-m} 2^{k/2}\, \M(t, 2^{k+m}\eta_*).
\]
By \eqref{condt02}, and \eqref{Cou-initial0sub},  we have $\tilde T>0$ and
\begin{equation}\label{q28prime}
  H(0)\leq \eta_*^{\theta/4}.
\end{equation}
As in \eqref{q24}, it follows from \eqref{q21} that
$$\partial_t \M_{m,\mathbf{n}}(t)\leq C'\bigl(\M_{m,\mathbf{n}}(t)+1+\A_T\bigr)^2,$$
for $t\in [0,\tilde T]$. Moreover, as in \eqref{q71}, by \eqref{Cou-initial0sub}, we have
$$\M_{m_0+1,\mathbf{n}}(0)\le 1,~~m_0=\bigl\lceil \log_2(C+2)\bigr\rceil .$$
Applying Lemma~\ref{lemgronw}, and iterating the argument on time intervals as in the proof of Theorem~\ref{th1-Coulomb}, we obtain, for $N$ large enough,
\begin{equation*}
  \sup_{[0,\tilde T]}\M_{m_0+1+m^*,\mathbf{n}}(t)\leq \frac{11}{10},
  \qquad
  m^*=1+\bigl\lfloor 100 C'(1+\A_T)^2T\bigr\rfloor.
\end{equation*}
We can take
$$
m_1:=m_0+2+m^*=m_0+3+\bigl\lfloor 100C'(1+\A_T)^2T\bigr\rfloor  .
$$
By the choice of parameters, $m_1$ satisfies $C_02^{-m_1}\leq \frac{1}{2}$. Moreover, $m_1$ is fixed independently of $N$. Hence, for $N$ sufficiently
large, $m_1\le \log\log N$ and $m_1$ is an admissible scale in the bootstrap
definition above.\\
Then using the monotonicity relation \eqref{re:Mmn}, we get
\begin{equation}\label{q29}
  \sup_{[0,\tilde T]}\M(t,2^{n+m_1}\eta_*)\leq \frac{11}{10}2^{-1/2}.
\end{equation}
It remains to improve the bound on $H$. 
Moreover, by \eqref{q26}, $$\sup_{m\leq m_1}\sup_{[0,\tilde T]}\M(t,2^{m} \eta_*)\leq C\,2^{dm_1}(1+\A_T).$$
Combining this lower-scale bound with \eqref{cou-bootstrap-sub} and Lemma~\ref{lem31}, we obtain, for $t\in[0,\tilde T]$,
\begin{align*}
  S(\tilde T)\lesssim \sup_{0\leq \tau\leq \tilde T}\bigl(\M(\tau,\eta_*) +\A_T+1\bigr)^2
    \bigl(\eta_*^{3\theta}(2H(0)+1)+1\bigr)
  \lesssim 2^{2dm_1}(\A_T+1)^4.
\end{align*}
Since  $T$ is bounded independently of $N$, for $N$ large enough, we have
$$H(0)\, e^{2C S(\tilde T) T}+ \eta_*^{\theta}\bigl( e^{2C S(\tilde T) T}-1\bigr)
\leq \frac{2}{3}(2H(0)+1)\, \eta_*^{-\theta/2}.$$
Applying Lemma~\ref{lem31} and using \eqref{q28prime}, we deduce
\[
  \sup_{[0,\tilde T]}H(t)
  \le
  H(0)\,e^{2C S(\tilde T)T}
  +\eta_*^{\theta}\bigl(e^{2C S(\tilde T)T}-1\bigr)
  \overset{\eqref{q28prime}}\leq \frac{1}{2}.
\]
Thus both bootstrap assumptions are improved strictly on $[0,\widetilde T]$:
by \eqref{q29},
$$
\sup_{0\le t\le\widetilde T}
\M\bigl(t,2^{n+m_1}\eta_*\bigr)<1,
$$
for every admissible $n$, and also
$$
\sup_{0\le t\le\widetilde T}H(t)\le\frac12<1.
$$
By the definition of $\widetilde T$ and the standard continuation argument,
this is impossible unless
$$
\widetilde T=\min\{T,T_{\mathrm{coll}}^N\}.
$$
We now rule out the possibility that $\widetilde T=T_{\mathrm{coll}}^N<T$.
Indeed, if $T_{\mathrm{coll}}^N\le T$, then by
\eqref{maximal-collision} some pair  distance tends to zero as
$t\uparrow T_{\mathrm{coll}}^N$. Since the cutoff in the definition of $H$ is
equal to $1$ near the origin, this would force $H(t)\to\infty$, contradicting
the uniform bound above. Therefore
$
T_{\mathrm{coll}}^N>T,
$
and hence $\widetilde T=T$.

In particular, the particle system is classical and collision-free on $[0,T]$,
the bootstrap bounds \eqref{cou-bootstrap-sub}--\eqref{cou-bootstrap-sub-H}
hold on $[0,T]$, and \eqref{conclth1-con} follows.\\
\medskip
\noindent\textbf{Step 4. Conclusion.}
It remains to derive the desired bounds from the estimates above. By \eqref{q23b}, with $\widetilde T=T$, we have
$$
\M\bigl(t,2^{n+m_1+1}\eta_*\bigr)
\le
e^{C(1+\A_T)^2T}2^{-n},
$$
for every $t\in[0,T]$ and every admissible $n$. Equivalently, after relabeling the dyadic index, this gives
$$
\M\bigl(t,2^n\eta_*\bigr)
\le
e^{C(1+\A_T)^2T}2^{m_1+1}2^{-n},
$$
for all $n\ge m_1+1$ with $n\le \mathbf n$.

For the remaining lower scales $0\le n\le m_1$, we use \eqref{Mee}. Hence there exists $C'>0$ such that
\begin{equation}\label{xx1}
	\M\bigl(t,2^n\eta_*\bigr)
	\le
	e^{C'(1+\A_T)^2(T+1)}2^{-n},
\end{equation}
for every $t\in[0,T]$ and every $n=0,\dots,\mathbf n$. \\
As in Step~3 of the proof of Theorem~\ref{th1-Coulomb}, the dyadic estimate
\eqref{xx1} extends to arbitrary scales. More precisely, for every
$\eta\in[\eta_*,\eta_*^{1/3}]$,
\begin{equation*}
	\M\bigl(t,\eta\bigr)
	\lesssim \frac{\eta_{*}}{\eta}
	e^{C'(1+\A_T)^2(T+1)}.
\end{equation*}
Since $\A_T\gtrsim1$, the desired bound \eqref{conclth1} follows.\\

We now derive the corresponding $\mathsf{W}$-estimate. The argument is the same as in
the proof of \eqref{conclth1-wn-cou}, so we only record the source estimate.
Set
$$
\omega:=\tilde{\mu}_{N,2^{\bigl\lfloor\log_2(\eta_*^{-1/2})\bigr\rfloor}\eta_{*}}.
$$
Then $\omega$ satisfies the linearized equation around $\mu$ with source
$\mathbf R$:
\begin{equation}\label{eq:omega-th1}
	\partial_t\omega
	=
	-\operatorname{div}\bigl((\F*\omega)\mu+(\F*\mu)\omega\bigr)
	+
	\mathbf R,
\end{equation}
where $\mathbf R$ consists of the dual remainders together with the nonlinear
source term
$$
-\operatorname{div}\bigl((\F*\omega)\omega\bigr).
$$
Using \eqref{R2-8-dual}, \eqref{R1-dual}, the remaining dual estimates, and
the bound \eqref{xx1}, we obtain
\begin{equation}\label{eq:R-th1-dual-source}
	\|\langle\cdot\rangle^{\gamma-1}\mathbf R(t)\|_{(W^{1,1})^*}
	\lesssim
	\eta_*
	\exp\bigl[C(1+\A_T)^2(1+T)\bigr].
\end{equation}
Indeed, at the mesoscopic scale $2^{\bigl\lfloor\log_2(\eta_*^{-1/2})\bigr\rfloor}\eta_{*}\sim\eta_*^{1/2}$,
\eqref{xx1} gives $\M(t,2^{\bigl\lfloor\log_2(\eta_*^{-1/2})\bigr\rfloor}\eta_{*})\lesssim \eta_*^{1/2}
e^{C(1+\A_T)^2(1+T)}$, and hence the nonlinear source contributes at most
$O(\eta_*)$ in the dual norm.

Applying Lemma~\ref{Le-dual1} with $\gamma_1=\gamma-1$ gives
\eqref{conclth1-wn}. This completes the proof.
\end{proof}

\subsection{The case $s=d-1$ and $d \ge 3$}\label{sec:critical-high-d}

For $d \ge 3$, the method used for the two-dimensional Coulomb case does not directly apply:
we lose the crucial fact that $|x-y|\,\F(x,y)$ is bounded (which was used for $s\le 1$ in Lemma~\ref{lemR1} to control $\R^1$).
Moreover, we cannot close a good estimate on~$H$ when defining it as in \eqref{condt} with $\theta=0$.
Finally, the difference is that the argument is local in time; however, the resulting lifespan $T_0$ is independent of $N$, and we can only reason over one timestep.


\subsection{Proof of Theorem~\ref{th2}}\label{sec:proof-th2}
By Lemma~\ref{lem:local-classical-ode}, for each fixed $N$ the particle
system admits a unique classical solution on a maximal collision-free interval
$
[0,T_{\mathrm{coll}}^N).
$
All estimates below are first derived on
$
[0,\min\{T,T_{\mathrm{coll}}^N\}).
$
The uniform bound on $H$ obtained below prevents collisions and therefore
extends the classical solution to the whole time interval under consideration.
We therefore focus on the quantitative estimates. Let $\eta_0=\eta_*$ and $\eta=2^n\eta_0$.
Using \eqref{R2-8}, \eqref{R10-12}, \eqref{R1}, and \eqref{R9} in the Coulomb regime $s=d-1$, we obtain
\begin{align*}
  & \sum_{m=2}^8\xg |\R_{N,\eta}^m|
  \lesssim
    \sum_{k=0}^{n+1} 2^{k-n}\,\M(2^{k-1}\eta_{0})\,\U(\eta)
    + \Bigl(\frac{1}{N\eta_{0}^{d-1}}+\eta^2\,\A(\mu)\Bigr)\U(\eta)
  \\
  &\quad
  + \biggl(\sum_{k=0}^{n_0-1}\M(2^{k}\eta)
    +|\!\log(2^{n_0}\eta)|\,\M(2^{n_0}\eta)
    +|\!\log(2^{n_0}\eta)|\,2^{2n_0}\eta^2\,\A(\mu)\biggr)
  \\
  &\qquad\times\bigl(\M(\eta/2)+\eta^2\,\A(\mu)\bigr),
  \\[3pt]
 & \R_{N,\eta}^{10}+\R_{N,\eta}^{11} +\R_{N,\eta}^{12}
  \lesssim \M(\eta/2)\,\U(\eta)+\eta^2\,\A(\mu)\,\U(\eta),
  \\[3pt]
 & \xg|\R_{N, \eta}^{1}|
  \lesssim
    \frac{\eta_{0}^{2}}{\eta^2}\,
    \bigl(\log(2\eta/\eta_{0})\,\U(\eta_0) +H(t)\bigr)\,\U(\eta),
  \\[3pt]
 & |\R_{N, \eta}^{9}|=0.
\end{align*}
Using \eqref{ptM}, the identity $\U(\eta)=\M(\eta)+\A(\mu)$, and choosing
$$
n_0:=\bigl\lfloor\log_2(\eta_0^{-2/3})\bigr\rfloor-n,
$$
we obtain, for $0\le n\le \lfloor\log_2(\eta_0^{-2/3})\rfloor$ and $t\in [0,\min\{T,T_{\mathrm{coll}}^N\})$
\begin{multline}\label{434}
  \partial_t \M(2^n\eta_0)
  \\
  \leq C_1
  \biggl(\sum_{k=0}^{n} 2^{k-n}\,\M(2^{k-1}\eta_{0})
    +(\eta_{0}+2^{2n}\eta_0^2)(1+\A(\mu))
    +(n+1)\,2^{-2n}\bigl(\U(\eta_0) +H(t)\bigr)\biggr)
  \\
  \times\bigl(\M(2^n\eta_0)+\A(\mu)\bigr)
  \\
  +C_1\biggl(\sum_{k=0}^{\lfloor\log_2(\eta_{0}^{-2/3})\rfloor-n-1}
    \M(2^{n+k}\eta_0)
    +\log(N)\,\M(2^{\lfloor\log_2(\eta_0^{-2/3})\rfloor}\eta_0)
    +\log(N)\,\eta_0^{2/3}\,\A(\mu)\biggr)
  \\
  \times\bigl(\M(2^{n-1}\eta_0)+2^{2n}\eta_{0}^2\,\A(\mu)\bigr),
\end{multline}
where $C_1$ depends only on the fixed parameters.\\
\medskip
\noindent\textbf{Estimate on $H(0)$.}
By the definition of $H$, together with \eqref{condt01} and \eqref{Cou-initial-high}, we have
\begin{align*}
  H(0)
  &\leq \frac{1}{N}\sup_{i\in [1,N]}\sum_{\substack{j=1\\j\neq i}}^N
    \frac{\mathbf{1}_{C^{-1}\eta_{0}\leq |x_i^0-x_j^0|\leq 2\eta_{0}}}{|x_i^0-x_j^0|^{d}}
  \\
  &\lesssim \sup_{x}
    \frac{\mathbf{1}_{\frac{1}{2C}\eta_{0}\leq |\cdot|\leq 4\eta_{0}}}{|\cdot|^{d}}
    *\mu_{N,\eta_{0}}^0(x)
  \lesssim \log(2+C),
\end{align*}
hence
\begin{equation}\label{q34}
  H(0)\leq C'\log(2+C).
\end{equation}

\medskip
\noindent\textbf{Step 1. A priori bound under bootstrap assumptions.} 
Fix an integer $m_0$ satisfying \begin{equation}\label{def:m0} m_0 \geq \left\lceil \log_2 \left( 1+ \max_{0\leq n\leq n_0} 2^{n/4}\M(0,2^n\eta_0) \right) \right\rceil. \end{equation} We shall increase $m_0$ later, if necessary, depending only on the constants appearing in the bootstrap argument. In particular, \begin{equation}\label{initial-M-bootstrap} \M(0,2^n\eta_0) \leq 2^{m_0-n/4}, \qquad 0\leq n\leq n_0. \end{equation}

Let $\widetilde T$ be the supremum of all times
$
t_*\le \min\{T,T_{\mathrm{coll}}^N\}
$
such that
\begin{align}
  &\sup_{[0,t_*]}\M(t, 2^{n}\eta_0)\leq 2^{m_0+10-n/4},
  \label{q31}
  \\
  &\sup_{[0,t_*]}H(t) \leq 2(H(0)+1),
  \label{q32}
\end{align}
for any $0\leq n\leq \lfloor\log_2(\eta_{0}^{-2/3})\rfloor$.\\
By Lemma~\ref{lem:local-classical-ode} and the initial assumptions
\eqref{Cou-initial-high} and \eqref{condt01}, the bootstrap bounds hold for a
positive time. Hence $\widetilde T>0$ for $N$ sufficiently large.\\
As \eqref{w5}, using \eqref{q31}--\eqref{q32} 
\begin{align*}
	\sum_{k=0}^{\lfloor\log_2(\eta_{0}^{-2/3})\rfloor-n-1}
	\M(2^{n+k}\eta_0)
	+(\log N)\,\M(2^{\lfloor\log_2(\eta_0^{-2/3})\rfloor}\eta_0)
	+(\log N)\,\eta_0^{2/3}\,\A(\mu)\leq 2^{m_0+12}(1+\A_T),
\end{align*}
for $N\geq 10$.\\
Then, using \eqref{q31}--\eqref{q32} and \eqref{q34} in \eqref{434}:
\begin{equation}\label{434-bootstrap}
  \partial_t \M(2^n\eta_0)
  \leq C_1\,
  \biggl(\sum_{k=1}^{n} 2^{k-n}\,\M(2^{k-1}\eta_{0})
    +\eta_{0}+2^{2n}\eta_0^2+(n+1)2^{-2n}\biggr)
  2^{2m_0+30}(1+\A_{T})^2,
\end{equation}
where $C_1$ depends on $C$.
Define
\[
  b_n(t):=2^{n-m_0}\, \M\bigl(\tfrac{t}{C_2},\, 2^n\eta_0\bigr).
\]
Then for $t\in[0,C_2 \widetilde T]$ and $0\leq n\leq \lfloor\log_2(\eta_{0}^{-2/3})\rfloor$:
\[
  \partial_t b_n(t)
  \leq\frac{C_1\,2^{2m_0+100}(1+\A_{T})^2}{C_2}
  \biggl(\sum_{k=0}^{n} b_k(t)+1\biggr),
  \qquad b_n(0)\leq 1.
\]
Choosing $C_2:= C_1\,2^{2m_0+100}(1+\A_{T})^2$,
Lemma~\ref{Le1} yields
\[
b_n(t)\leq 	2\exp\bigl(2\sqrt{nt}+2t\bigr).
\]
Returning to $\M$, for $t\in[0,\widetilde T]$ and $0\le n\le \lfloor\log_2(\eta_{0}^{-2/3})\rfloor$:
\begin{equation}\label{q33}
  \M(t, 2^n\eta_0)
  \leq 2^{-n+m_0+1}\,\exp\bigl(2\sqrt{C_2 nt}+2C_2 t\bigr).
\end{equation}
As in Step~3 of the proof of Theorem~\ref{th1-Coulomb}, the dyadic estimate
\eqref{q33} extends to arbitrary scales. More precisely, for every
$\eta\in[\eta_*,\eta_*^{1/3}]$ and $t\in[0,\widetilde T]$
\begin{equation}\label{q33'}
	\M\bigl(t,\eta\bigr)
	\lesssim \frac{\eta_{*}}{\eta}
	2^{m_0+1}\,\exp\bigl(2\sqrt{C_2 t\log(\frac{\eta}{\eta_{*}})}+2C_2 t\bigr).
\end{equation}
\medskip

\noindent\textbf{Step 2. Closing the bootstrap and choosing the lifespan.}
We now prove that the bootstrap assumptions \eqref{q31}--\eqref{q32} hold on a time interval $[0,T_0]$ with $T_0>0$ independent of~$N$.
Define
\[
  \M_{\rm tot}(t):= \sum_{k=0}^{\lfloor\log_2(\eta_{0}^{-2/3})\rfloor} 2^{k/2}\, \M(t, 2^{k}\eta_{0}).
\]
By \eqref{Cou-initial-high}, 
\begin{align}\label{w6}
 \M_{\rm tot}(0)\leq 2C.
\end{align}
By \eqref{q32} and \eqref{q34},  we have 
\begin{equation}\label{q28b}
  \sup_{[0,\widetilde T]}H(t)\leq 2(H(0)+1)\leq  2C'\,\log(2+C).
\end{equation}
Arguing as in \eqref{q24}, we have
$$\partial_t \M_{\rm tot}(t)\leq C'' 2^{2m_0}\bigl(1+\M_{\rm tot}(t)+\A_{T}\bigr)^2.$$
By Lemma~\ref{lemgronw} and \eqref{w6}, there exists $\varepsilon_0>0$, independent of $N$, such that on the interval $[0,T_1]$ with 
$$T_1:=\min\{ \frac{\ep_0}{2^{2m_0}(1+\A_{T})^2},\widetilde T\},$$ we have
\begin{equation}\label{q35}
  \sup_{[0,T_1]}\M_{\rm tot}(t)\leq 4C.
\end{equation}
In particular, choosing $m_0$ large enough so that $4C\le 2^{m_0+9}$, we obtain
\begin{equation}
	\sup_{[0,T_1]}\M(t, 2^{n}\eta_0)\leq  2^{m_0+9-n/4},
\end{equation}
for any $0\leq n\leq \lfloor\log_2(\eta_{0}^{-2/3})\rfloor$.
This is a strict improvement of \eqref{q31}.\\
It remains to improve the bound on $H$. Using \eqref{q35}, \eqref{q28b}, and \eqref{q34}, we have
\[
  \sup_{[0,T_1]}S(t)
  \lesssim 2^{3m_0}(\A_{T}+1)^2(2H(0)+2)^2
  \lesssim 2^{5m_0}(\A_{T}+1)^2.
\]
Thus, there exists $\varepsilon_0'\in(0,\varepsilon_0]$, independent of $N$, such that if $$T_2= \min\bigl\{\frac{\varepsilon_0'}{2^{5m_0}(\A_{T}+1)^2},\,T_1\bigr\},$$
then \eqref{condS} holds on $[0,T_2]$, and
$$
H(0)e^{2CS(t)t}
+
e^{2CS(t)t}-1
\leq
\frac65(H(0)+1),
\qquad 0\le t\le T_2.
$$
Applying Lemma~\ref{lem31}, we deduce
\[
  H(t)\le H(0)\,e^{2C S(t)t}+e^{2C S(t)t}-1\leq \frac{6}{5}(H(0)+1)
  \qquad \forall\,t\leq T_2.
\]
This strictly improves \eqref{q32}.

Therefore both bootstrap inequalities are strictly improved on the time
interval on which the above argument applies.

We now choose the final lifespan. Set
\begin{equation}\label{T0-th2}
	T_0:=
	\min\left\{
	T,\,
	\frac{\varepsilon_0'}{2^{5m_0+1}(1+\A_T)^2}
	\right\}.
\end{equation}
Then $T_0>0$ is independent of $N$.

We claim that $\widetilde T\ge T_0$. Suppose, by contradiction, that
$\widetilde T<T_0$. Then, by the definition of $T_0$, we have
$$
\widetilde T
<
\frac{\varepsilon_0}{2^{2m_0}(1+\A_T)^2},
\qquad
\widetilde T
<
\frac{\varepsilon_0'}{2^{5m_0}(1+\A_T)^2}.
$$
Hence $T_1=\widetilde T$ and $T_2=\widetilde T$ in the preceding argument.
Thus both bootstrap bounds \eqref{q31}--\eqref{q32} are strictly improved on
$[0,\widetilde T]$.

By the definition of $\widetilde T$, this strict improvement is impossible
unless the maximal classical interval ends there, namely unless
$
\widetilde T=T_{\mathrm{coll}}^N .
$
However, this alternative cannot occur. Indeed, if
$T_{\mathrm{coll}}^N\le T_0$, then by \eqref{maximal-collision} some pair
distance tends to zero as $t\uparrow T_{\mathrm{coll}}^N$. Since the cutoff in
the definition of $H$ is equal to $1$ near the origin, this would force
$H(t)\to\infty$, contradicting the uniform bound \eqref{q32}. Therefore
$T_{\mathrm{coll}}^N>T_0,$ and 
$\widetilde T\ge T_0.$
Consequently, the particle system is classical and collision-free on
$[0,T_0]$, and the bootstrap assumptions \eqref{q31}--\eqref{q32} hold on
$[0,T_0]$.\\
Finally, applying \eqref{q33'} on $[0,T_0]$ and using
$T_0\le 2^{-5m_0}(1+\A_T)^{-2}$, we obtain the estimate
\eqref{conclth2}. \\It remains to derive the dual estimate \eqref{conclth2-wn}. This follows by
the same dual-source argument as in the proofs of \eqref{conclth1-wn-cou} and
\eqref{conclth1-wn}. Set
$$\omega:=\tilde{\mu}_{N,2^{\bigl\lfloor\log_2(\eta_*^{-1/2})\bigr\rfloor}\eta_{*}}.$$ Then the equation for $\omega$ has a source term $\mathbf R$ consisting of the
dual remainders and the quadratic nonlinear term
$$
-\operatorname{div}\bigl((\F*\omega)\omega\bigr).
$$
By \eqref{R2-8-dual}, \eqref{R1-dual}, the remaining dual estimates, and
\eqref{conclth2}, this source satisfies
$$
\|\langle\cdot\rangle^{\gamma-1}\mathbf R(t)\|_{(W^{1,1})^*}
\lesssim 	\eta_*(\log N)
\exp\bigl[C(1+\A_{T})\sqrt{(\log N)t}+C(1+\A_{T})^2(1+T)\bigr].
$$
Lemma~\ref{Le-dual1}, applied with $\gamma_1=\gamma-1$, then yields
\eqref{conclth2-wn}. This completes the proof.


\medskip
\begin{remark}[A collision-energy estimate in the Coulomb case]
	In the Coulomb case $s=d-1$, $d\ge3$, the quantity
	\begin{equation}\label{collision-energy}
		\int_0^T
		\frac1{N^2}
		\sum_{i\neq j}
		|x_i(t)-x_j(t)|^{2-d}\,dt
	\end{equation}
	is the natural integrability condition for the singular part in the
	symmetrized weak formulation, since
	$$
	|\nabla\psi(x_i)-\nabla\psi(x_j)|
	|\F^{\mathrm{sing}}(x_i,x_j)|
	\lesssim
	\|\nabla^2\psi\|_{L^\infty}|x_i-x_j|^{2-d}.
	$$
	
	For the model attractive Coulomb force
	$
	\F^{\mathrm{sing}}(x,y)
	=
	-\frac{x-y}{|x-y|^d},
	$
	this condition follows from a virial estimate. Indeed, setting
	$
	I(t):=\frac1N\sum_{i=1}^N |x_i(t)|^2,
	$
	the antisymmetry of $\F^{\mathrm{sing}}$ gives
	$$
	\frac{d}{dt}I(t)
	+
	\frac1{N^2}\sum_{i\neq j}|x_i-x_j|^{2-d}
	\lesssim
	1+I(t),
	$$
	where the right-hand side comes from the regular part of the force. Hence
	Grönwall's inequality yields
	$$
	\int_0^T
	\frac1{N^2}
	\sum_{i\neq j}|x_i(t)-x_j(t)|^{2-d}\,dt
	\lesssim_T
	1+I(0),
	$$
	on every classical collision-free time interval.
	
Thus, in the exact antisymmetric Coulomb case, the integrated collision energy
is finite before collision. This suggests that \eqref{collision-energy} is a
natural admissibility condition for weak continuations after collision, since
it is precisely the integrability needed to define the singular part in the
symmetrized weak formulation. However, this estimate alone does not provide
the stability and multiscale estimates required to prove the mean-field limit
up to the blow-up time of \eqref{mflimit}. We therefore do not pursue this
question here.
\end{remark}

\begin{remark}\label{re-for2}
	Consider the system \eqref{ode-fo} from Remark \ref{re-for}, with external forces $V_N^1(x_i)+V_N^2(x_i)$ satisfying
\begin{equation}
|V_N^2(x_i)|+\langle x_i-x\rangle^{-1}	|V_N^1(x_i)-V_N^1(x)|+\langle x\rangle^{-1}	|V_N^1(x)|\lesssim \eta_*^{d-s},
	\end{equation}
for any $x\in \mathbb{R}^d$. Then
\begin{align*} \frac1N\sum_{i=1}^N (|V_N^1(x_i)-V_N^1(x)|+|V_N^2(x_i)|)|\nabla \Phi^\eta_{x_i}(x)|+\gamma\langle x_t\rangle^{\gamma-1}
		|V_N^1(x_t)| |\tilde \mu_{N,\eta}(x_t) |
		\lesssim
		\frac{\eta_*^{d-s}}{\eta}\,\U(\eta),
	\end{align*}
for any $x\in \mathbb{R}^d$. 
	As a consequence, the additional force term can be treated as an error term, and \eqref{434} still holds. Therefore, the conclusion of Theorem~\ref{th2} remains valid in this setting.
\end{remark}


\subsection{The case $s>d-1$: proof of Theorem~\ref{th3'}}\label{sec:proof-supercoulomb}

By Lemma~\ref{lem:local-classical-ode}, for each fixed $N$ the particle
system admits a unique classical solution on a maximal collision-free
physical-time interval
$
[0,T_{\mathrm{coll}}^N).
$
We introduce the corresponding maximal rescaled time
$
T_{\mathrm{coll,res}}^N
:=
\eta_0^{d-s-1}T_{\mathrm{coll}}^N,
$
so that the physical time $\tau=\eta_0^{s+1-d}t$ belongs to
$[0,T_{\mathrm{coll}}^N)$ precisely when
$t\in[0,T_{\mathrm{coll,res}}^N)$.

All estimates below are first derived on the rescaled interval
$
[0,\min\{T,T_{\mathrm{coll,res}}^N\}).
$
The uniform bound on $H$ obtained below prevents collisions and therefore
extends the classical solution to the physical time interval constructed
below.

We introduce
$$\A'(\mu)=\A(\mu)+
\big\|\xg \nabla^4 \mu^0\big\|_{L^\infty}	\mathbf{1}_{s\geq d}.$$
Let $\eta_0=\eta_*$ and $\eta=2^n\eta_0$.
Using \eqref{R2-8}, \eqref{R10-12}, \eqref{R1}, and \eqref{R9} in the super-Coulomb regime $s>d-1$:
\begin{align*}
&  \sum_{m=2}^8\xg |\R_{N,\eta}^m| 
  \lesssim
    \sum_{k=0}^{n+1}\eta_0^{d-s-1}\, 2^{k(d+1-s)_{+}-k-n}
    (1+k\,\mathbf{1}_{s=d+1})\,\M(2^{k-1}\eta_{0})\,\U(\eta)
  \\
  &\quad
  + \Bigl(
    (\mathbf 1_{s<d}\,\eta^2+\mathbf{1}_{s\geq d}
      +\mathbf{1}_{s=d+1}|\!\log\eta_{0}|
      +\mathbf{1}_{s>d+1}\eta_0^{d-s+1})\A'(\mu)
    + \frac{1+\mathbf{1}_{s=d+1}n}{N\eta_{0}^s}
  \Bigr)\U(\eta)\\&\quad+\left(\mathbf{1}_{s<d+1}
  + \mathbf{1}_{s=d+1}|\log\eta_0|
  + \mathbf{1}_{s>d+1}\eta_0^{-s+d+1}\right) \M(\eta/2)\A'(\mu)
  \\
  &\quad
  + \eta^{-\min\{2,s+1-d\}}
    (1+\mathbf{1}_{s=d+1}n
      +\mathbf{1}_{s>d+1}\eta_{0}^{-s+d+1})\,
    \M(\eta/2)\,\U(\eta),
\end{align*}
\begin{align*}
&  \R_{N,\eta}^{10}+\R_{N,\eta}^{11} +\R_{N,\eta}^{12}
  \lesssim
    \mathbf{1}_{s< d}\,\M(\eta)\,\U(\eta)
  \\
  &\quad
  + \bigl(\mathbf{1}_{s=d}|\log\eta|
    +\mathbf{1}_{s\in (d,d+1)}\eta^{-s+d}
    +	\eta^{-1}(\mathbf{1}_{s=d+1}n
    +\mathbf{1}_{s>d+1}\eta_{0}^{-s+d+1})\bigr)
    \M(\eta/2)\,\U(\eta)
  \\
  &\quad
  + \eta^2\bigl(
    \mathbf 1_{s<d+1}
    + \mathbf 1_{s=d+1}|\!\log\eta_0|
    + \mathbf 1_{s>d+1}\eta_0^{-s+d+1}
  \bigr)\A(\mu)\,\U(\eta),\\
  &\xg|\R_{N, \eta}^{1}|
  \lesssim
    \frac{\eta_0^{-s+d+1}}{\eta^2} 
    \bigl(\U(\eta_0) +H(t)\bigr)\,\U(\eta),
  \\
&\langle x\rangle^\gamma|\R_{N, \eta}^{9}|
  \lesssim
    \mathbf{1}_{s\geq d+1}\,
    \frac{\eta_{0}^{-s+d+1}}{\eta^2}\,
    H(t)^{(s-1)/d}\,\U(\eta).
\end{align*}
Here we have used the fact that 
\begin{align*}
	&	\frac{1+\mathcal C(\eta,\eta_*)}
	{N\eta_0^{\min\{s,d+1\}}}\sim 
\frac{	1+\mathbf{1}_{s=d+1}n}{N\eta_{0}^s},\\&
(1+ \mathbf 1_{s>d+1}\eta_*^{-s+d+1})\left(\Xi(\eta,\eta_0)\eta_{0}^2\U(\eta_0)+ \mathbf{1}_{s\leq 1}\eta_{0}^{d+1-s}\U(\eta_0)
+ \mathbf{1}_{s> 1}\,\eta_*^{d-\theta-\min\{s-1,d\}}\,H(t)\right.\\&\quad\quad\left.+ \mathbf{1}_{s> 1}\sum_{k, 1\leq 2^k\leq \frac{\eta_{0}}{\eta_*}}
(2^k\eta_*)^{d-\min\{s-1,d\}}\,\U(2^k\eta_*)\right)\lesssim \eta_0^{d+1-s} \bigl(\U(\eta_0) +H(t)\bigr).
\end{align*}
Restricting to $2^n\le\eta_0^{-2/3}$ yields
\begin{multline}\label{q36-raw}
  \partial_t \M(2^n\eta_0)
  \lesssim
  \sum_{k=0}^{n+1}\eta_0^{d-s-1}\, 2^{k(d+1-s)_{+}-k-n}
    (1+k\,\mathbf{1}_{s=d+1})\,\M(2^{k-1}\eta_{0})\,
    \bigl(\M(2^n\eta_0)+\A'(\mu)\bigr)
  \\
  + \eta_0^{-s+d-1}\,
    (\eta_0+2^{-2n})\,
    \bigl(1+\A'(\mu) +H(t)^{\max\{(s-1)/d,\,1\}}\bigr)\,
    \bigl(\M(2^n\eta_0)+\A'(\mu)\bigr).
\end{multline}
Here we used
\begin{align*}
	&(2^n\eta_0)^{-\min\{2,(s+1-d)_{+}\}}(1+(1+n)\mathbf{1}_{s=d+1}+\mathbf{1}_{s>d+1}\eta_{0}^{-s+d+1})\\&+ \left(\mathbf{1}_{s<d+1}
	+ \mathbf{1}_{s=d+1}|\log\eta_*|
	+ \mathbf{1}_{s>d+1}\eta_*^{-s+d+1}\right)\\&+
	\bigl(\mathbf{1}_{s=d}|\log\eta|
	+\mathbf{1}_{s\in (d,d+1)}\eta^{-s+d}
	+\log(2\eta/\eta_{0})\,\eta^{-1}\,\mathbf{1}_{s=d+1}
	+\eta_{0}^{-s+d+1}\,\eta^{-1}\,\mathbf{1}_{s>d+1}\bigr)
\\&\quad\quad\quad\quad\quad\quad\quad\quad\quad\lesssim \eta_0^{d-s-1}\, 2^{n(d+1-s)_{+}-2n}(1+(n+1)\mathbf{1}_{s=d+1}),
\end{align*}
and 
\begin{align*}
	(\mathbf 1_{s<d}\,\eta^2+\mathbf{1}_{s\geq d}
	+\mathbf{1}_{s=d+1}|\log\eta_{0}|
	+\mathbf{1}_{s>d+1}\eta_0^{d-s+1})
	+ \frac{1}{N\eta_{0}^s}\lesssim \eta_0^{-s+d-1}\,
	(\eta_0+2^{-2n}).
\end{align*}
We introduce the \emph{rescaled metric}
\[
  \widetilde{\M}(t, 2^n\eta_0):=\M(\eta_{0}^{s+1-d}\,t,\, 2^n\eta_0),
\]
so that it suffices to estimate $\widetilde{\M}$ on $[0,T_0]$.\\
By \eqref{q34},
\begin{equation}
	H(0)\lesssim 2^{m_0}\log(2+C^{-1}).
\end{equation}
Moreover,
\begin{equation}
	\partial_t\widetilde{\M}(t,\eta)
	=
	\eta_0^{s+1-d}
	\bigl(\partial_t\M\bigr)(\eta_0^{s+1-d}t,\eta).
\end{equation}
Multiplying \eqref{q36-raw} by $\eta_0^{s+1-d}$ and evaluating all
physical-time quantities at $\eta_0^{s+1-d}t$, we obtain the rescaled
differential inequality
\begin{multline}\label{q36}
  \partial_t \widetilde{\M}(2^n\eta_0)
  \lesssim
  \sum_{k=0}^{n+1} 2^{k(d+1-s)_{+}-k-n}
    (1+k\,\mathbf{1}_{s=d+1})\,\widetilde{\M}(2^{k-1}\eta_{0})\,
    \bigl(\widetilde{\M}(2^n\eta_0)+\widetilde{\A}_{T}\bigr)
  \\
  + (\eta_0+2^{-2n})\,
    \bigl(1+\widetilde{\A}_{T}
      +H(\eta_{0}^{s+1-d}t)^{\max\{(s-1)/d,\,1\}}\bigr)\,
    \bigl(\widetilde{\M}(2^n\eta_0)+\widetilde{\A}_{T}\bigr),
\end{multline}
for
$$
0\le t<\min\{T,T_{\mathrm{coll,res}}^N\},
\qquad
2^n\le\eta_0^{-2/3},
$$
where
$$
\widetilde{\A}_T
:=
\sup_{0\le \tau\le \eta_0^{s+1-d}T}\A'(\mu^\tau).
$$ The factor
$\eta_0^{s+1-d}$ coming from the time rescaling cancels exactly the
prefactor $\eta_0^{d-s-1}$ appearing in \eqref{q36-raw}. Thus no constants
or exponents are changed in passing from \eqref{q36-raw} to \eqref{q36}.\\
\medskip
\noindent\textbf{Step 1. A priori bound under bootstrap assumptions.} Let $\widetilde T$ be the supremum of all rescaled times
$
t_*\le \min\{T,T_{\mathrm{coll,res}}^N\}
$
such that
\begin{align}
	&\sup_{0\le t\le t_*}
	\widetilde{\M}(t,2^n\eta_0)
	\le
	2^{m_0+10-n/4},
	\label{q31p}
	\\
	&\sup_{0\le t\le t_*}
	H(\eta_0^{s+1-d}t)
	\le
	2\bigl(H(0)+1\bigr),
	\label{q32p}
\end{align}
for every
$
0\le n\le \bigl\lfloor\log_2(\eta_0^{-2/3})\bigr\rfloor .
$
By Lemma~\ref{lem:local-classical-ode} and the initial assumptions, the
bootstrap bounds hold for a positive rescaled time. Hence
$\widetilde T>0$ for $N$ sufficiently large.\\
Then, using \eqref{q31p}--\eqref{q32p} and \eqref{q36}:
\begin{multline}\label{q36p}
  \partial_t \widetilde{\M}(2^n\eta_0)
  \leq C_1\sum_{k=0}^{n} 2^{k(d+1-s)_{+}-k-n}
    (1+k\,\mathbf{1}_{s=d+1})\,\widetilde{\M}(2^{k}\eta_{0})\,
    2^{m_0}(1+\widetilde{\A}_{T_0})
  \\
  + C_1\,2^{-n}\,
    2^{(1+\max\{(s-1)/d,\,1\})m_0}\,(1+\widetilde{\A}_{T_0})^2.
\end{multline}
As in the proof of \eqref{q29}, a standard continuation argument gives
\begin{equation}\label{q37}
  \widetilde{\M}(t, 2^n\eta_0)\leq e^{C_2 t}\,2^{m_0+10-n},
\end{equation}
for all $t\in[0,\widetilde T]$ and $0\le n\le\lfloor\log_2(\eta_0^{-2/3})\rfloor$, where
$$C_2:=C_1\,2^{10+(1+\max\{(s-1)/d,\,1\})m_0}(1+\widetilde{\A}_{T})^2.$$
As in Step~3 of the proof of Theorem~\ref{th1-Coulomb}, the dyadic estimate \eqref{q37} extends to arbitrary scales. More precisely, for every
$\eta\in[\eta_*,\eta_*^{1/3}]$ and $t\in[0,T_0]$
\begin{equation}\label{q37'}
  \widetilde{\M}\bigl(t,\eta\bigr)
	\lesssim \frac{\eta_{*}}{\eta} e^{C_2 t}\,2^{m_0+10}.
\end{equation}
\medskip
\noindent\textbf{Step 2. Closing the bootstrap and choice of $T_0$.}
Define
\[
  \widetilde{\M}_{\rm tot}(t):= \sum_{k=0}^{\lfloor\log_2(\eta_{0}^{-2/3})\rfloor} 2^{k/2}\, \widetilde{\M}(t, 2^{k}\eta_{0}).
\]
Assuming \eqref{q32p}, and arguing as in \eqref{q24}, we obtain
$$
\partial_t\widetilde{\M}_{\rm tot}(t)
\le
C\,2^{\max\{(s-1)/d,1\}m_0}
\bigl(1+\widetilde{\M}_{\rm tot}(t)+\widetilde{\A}_T\bigr)^2,
$$
for $0\le t\le\widetilde T$.

By Lemma~\ref{lemgronw}, there exists $\varepsilon_0>0$, independent of $N$,
such that on the interval
$$
T_1
:=
\min\left\{
\frac{\varepsilon_0}
{2^{\max\{(s-1)/d,1\}m_0}(1+\widetilde{\A}_T)^2},
\widetilde T
\right\},
$$
we have
\begin{equation}\label{q35p}
	\sup_{0\le t\le T_1}\widetilde{\M}_{\rm tot}(t)
	\le
	2^{m_0+2}.
\end{equation}
Consequently,
$$
\sup_{0\le t\le T_1}
\widetilde{\M}(t,2^n\eta_0)
\le
2^{m_0+9-n/4},
$$
for every admissible $n$. This strictly improves \eqref{q31p} on
$[0,T_1]$.

It remains to improve the bound on $H$. Using \eqref{q35p}, we have
$$
\sup_{0\le \tau\le \eta_0^{s+1-d}T_1}S(\tau)
\lesssim
\eta_0^{d-1-s}
2^{(4+(s+1)/d)m_0}
(1+\widetilde{\A}_T)^2 .
$$
Thus there exists $\varepsilon_0'\in(0,\varepsilon_0]$, independent of $N$,
such that if
$$
T_2
:=
\min\left\{
\frac{\varepsilon_0'}
{2^{(4+(s+1)/d)m_0}(1+\widetilde{\A}_T)^2},
T_1
\right\},
$$
then \eqref{condS} with $\theta=0$ holds on the physical interval
$[0,\eta_0^{s+1-d}T_2]$, and
$$
H(0)e^{2CS(\tau)\tau}
+
e^{2CS(\tau)\tau}-1
\le
\frac65\bigl(H(0)+1\bigr),
\qquad
0\le \tau\le \eta_0^{s+1-d}T_2.
$$
Indeed, the factor $\eta_0^{d-1-s}$ in the bound for $S$ is exactly
compensated by the length $\eta_0^{s+1-d}T_2$ of the physical time interval.
Applying Lemma~\ref{lem31}, we obtain
$$
H(\eta_0^{s+1-d}t)
\le
\frac65\bigl(H(0)+1\bigr),
\qquad
0\le t\le T_2.
$$
This strictly improves \eqref{q32p}.\\

We now choose the final rescaled lifespan
\begin{equation}\label{T0'}
	T_0
	:=
	\min\left\{
	T,\,
	\frac{\varepsilon_0'}
	{2^{(4+(s+1)/d)m_0+1}(1+\widetilde{\A}_T)^2}
	\right\}.
\end{equation}
Then $T_0>0$ is independent of $N$.

We claim that $\widetilde T\ge T_0$. Suppose by contradiction that
$\widetilde T<T_0$. Then, by the definition of $T_0$, the preceding argument
gives $T_1=\widetilde T$ and $T_2=\widetilde T$. Hence both bootstrap bounds
\eqref{q31p}--\eqref{q32p} are strictly improved on $[0,\widetilde T]$.

By the definition of $\widetilde T$, this is impossible unless
$
\widetilde T=T_{\mathrm{coll,res}}^N.
$
This alternative cannot occur. Indeed, if
$\widetilde T=T_{\mathrm{coll,res}}^N<T_0$, then the physical collision time
satisfies
$
T_{\mathrm{coll}}^N
=
\eta_0^{s+1-d}\widetilde T
<
\eta_0^{s+1-d}T_0.
$
By \eqref{maximal-collision}, some pairwise distance tends to zero as
$\tau\uparrow T_{\mathrm{coll}}^N$. Since the cutoff in the definition of
$H$ is equal to $1$ near the origin, this would force $H(\tau)\to\infty$,
contradicting \eqref{q32p}. Therefore
$
T_{\mathrm{coll,res}}^N>T_0,$ and $
\widetilde T\ge T_0.
$
Consequently, the particle system is classical and collision-free on the
physical interval
$
[0,\eta_0^{s+1-d}T_0],
$
and the bootstrap bounds \eqref{q31p}--\eqref{q32p} hold on the rescaled
interval $[0,T_0]$.\\
Finally, applying \eqref{q37'} on $[0,T_0]$ gives
\eqref{M_bound_supercoulomb} in the rescaled time variable, hence on the
physical time interval
$
0\le \tau\le \eta_0^{s+1-d}T_0.
$\\

It remains to derive the dual estimate \eqref{W_bound_supercoulomb}. This follows by the same dual-source argument as in the proofs of \eqref{conclth1-wn-cou} and \eqref{conclth1-wn}, but using Lemma~\ref{Le-dual2}. Set
$$
\omega
:=
\widetilde\mu_{N,2^{\lfloor\log_2(\eta_*^{-1/2})\rfloor}\eta_*}.
$$
In the rescaled time variable, the equation for $\omega$ has a source term
$\mathbf R$ consisting of the dual remainders, including the contribution of
$\R^9$, together with the quadratic nonlinear term
$$
-\operatorname{div}\bigl((\F^{\eta_*}*\omega)\omega\bigr).
$$
By \eqref{R2-8-dual}, \eqref{R1-dual}, \eqref{R9-dual}, the remaining dual
estimates, and \eqref{M_bound_supercoulomb}, this source satisfies
$$
\|\langle\cdot\rangle^{\gamma-1}\mathbf R\|_{L^1([0,T_0\eta_*^{s-d+1}],(W^{1,1})^*)}
\lesssim
\eta_*
\exp\bigl[
C(1+\A'_{T})^2(1+T)
\bigr]
$$
on the rescaled time interval. Applying Lemma~\ref{Le-dual2} with
$\gamma_1=\gamma-1$ gives \eqref{W_bound_supercoulomb}. This completes the
proof.
\section{Finite-time collision results}\label{sec:blowupf}
Consider the first-order particle system
\begin{equation}\label{1stode}
	\left\{
	\begin{aligned}
		\dot{x}_i &= \frac1N\sum_{j\neq i}\F(x_i,x_j),\\
		x_i(0) &= x_i^0.
	\end{aligned}
	\right.
\end{equation}
We recall the attraction assumption on the singular part of the kernel: there exist
constants $c_0\geq 2$ and $\varepsilon_0>0$ such that
\begin{align}\label{att-recall}
	c_0^{-1}|x-y|^{-s+1}
	\leq
	-\F^{\mathrm{sing}}(x,y)\cdot (x-y)
	\leq
	c_0 |x-y|^{-s+1}
\end{align}
for all $0<|x-y|\leq \varepsilon_0$.
The proof proceeds by constructing an initial configuration with a distinguished pair of particles that collide in finite time, while the remaining particles remain well-separated and controlled by the mean-field estimates of the preceding sections.

\begin{proof}[Proof of Theorems~\ref{cou} and~\ref{supercou}]
\textbf{Step 1: Choice of initial configuration.}
Let
\begin{equation}\label{ini1}
\mu_0(x)=\Phi^{c'^2}(x),
\qquad c'\in[2,4].
\end{equation}
and construct the initial particle positions as follows.
\begin{itemize}[leftmargin=2em]
\item Fix $X_N^0=(x_1^0,x_2^0,\dots,x_N^0)$ with $x_2^0=0$.
\item Particles $x_2^0,\dots,x_N^0$ lie on a uniform grid:
\begin{equation}\label{2Nass1}
  (x_{j}^0)_{j=2,\dots,N}\subset (3 \eta_*)\mathbb{Z}^d,
\end{equation}
such that
$\mu_{[2,N],\eta}^0(x):= \frac{1}{N-1}\sum_{j=2}^{N}\Phi^\eta_{x_{j}^0}(x)$
satisfies
\begin{equation}\label{2Nass2}
  \sup_{x\in \mathbb{R}^d}\langle x\rangle^{3d}\,
  \bigl|\mu_{[2,N],\eta}^0(x)-\mu_0(x)\bigr|
  \leq \frac{C\eta_*}{\eta}
\end{equation}
for any $\eta\in [N^{-1/d},\,N^{-1/(3d)}]$.
\item The distinguished particle $x_1^0$ is placed at a small distance from $x_2^0$:
\begin{equation}\label{x1-init}
  x_1^0=\varepsilon\, \eta_*\,e_1=(\varepsilon\, \eta_*,0,\dots,0),
\end{equation}
with $0<\varepsilon\ll 1$.
\end{itemize}
We shall later apply the mean-field estimates to the background subsystem
$X_{[3,N]}^0=(x_3^0,\dots,x_N^0)$. We therefore record that the same initial approximation estimate remains valid
after removing the single particle $x_2^0$ from the grid configuration. For
consistency with the normalization of the particle system, we define
$$
\mu_{[3,N],\eta}^0
:=
\frac{1}{N-2}\sum_{j=3}^N \Phi_{x_j^0}^{\eta}.
$$
Then, after increasing the constant $C$ if necessary, we have
$$
\sup_{x\in\mathbb R^d}
\langle x\rangle^{3d}
\bigl|
\mu_{[3,N],\eta}^0(x)-\mu_0(x)
\bigr|
\le
C\frac{\eta_*}{\eta},
\qquad
\eta\in [N^{-1/d},N^{-1/(3d)}].
$$
A direct computation gives
\begin{equation}\label{H-init-blowup}
  \frac{1}{N}\sup_{i\in [1,N]}\sum_{\substack{j=1\\j\neq i}}^N
  \frac{\chi\bigl(\frac{|x_i-x_j|}{2\eta_*}\bigr)}{|x_i^0-x_j^0|^{d}}
  =\frac{1}{N}\,\frac{1}{|x_{12}^0|^d}
  =\frac{1}{\varepsilon^d}.
\end{equation}
	Let $\mu^t$ denote the corresponding limiting density on
$[0,\eta_*^{s+1-d}T^0]$, namely:
\begin{itemize}[leftmargin=2em]
	\item the solution to the limiting equation \eqref{mflimit}, with initial datum
	$\mu^0=\mu_0$, when $s=d-1$;
	\item the solution to the regularized limiting equation \eqref{mflimitsup}
	when $d-1<s<d$;
	\item the stationary density $\mu^t\equiv\mu_0$ when $s\geq d$.
\end{itemize}
In all cases we have
\begin{equation}\label{A-mu-bound}
	\sup_{t\in[0,\eta_*^{s+1-d}T^0]}\A(\mu^t)\lesssim 1.
\end{equation}

\medskip
\noindent\textbf{Step 2: Decomposition of the dynamics.}
We decompose the system into the interacting pair $(x_1,x_2)$ and the background particles $(x_3,\dots,x_N)$ as follows
\begin{equation}\label{pair-system}
  \left\{\begin{aligned}
    \dot{x}_1 &= \frac1N \F(x_{1},x_2) +\frac1N\sum_{j=3}^N \F(x_{1},x_j), \\
    \dot{x}_2 &= \frac1N \F(x_{2},x_1) + \frac1N\sum_{j=3}^N  \F(x_{2},x_j), \\
    \dot{x}_i &= \frac1N\sum_{\substack{j=3\\j\neq i}}^N  \F(x_{i},x_j)
      + V_N^1(x_i)+ V_N^2(x_i), \qquad i=3,\dots,N,
  \end{aligned}\right.
\end{equation}
where the perturbation generated by the pair is given by
\begin{align*}
	V_N^1(x)
	&:=
	\frac1N
	\bigl(
	\F^{\mathrm{reg}}(x,x_1)
	+
	\F^{\mathrm{reg}}(x,x_2)
	\bigr),
	\\
	V_N^2(x)
	&:=
	\frac1N
	\bigl(
	\F^{\mathrm{sing}}(x,x_1)
	+
	\F^{\mathrm{sing}}(x,x_2)
	\bigr),
\end{align*}
for any $x\not\in \{x_1,x_2\}$.\\
Throughout the proof, for $1\le N_1\le N_2\le N$, we write
$$
X_{[N_1,N_2]}^t:=(x_{N_1}^t,\dots,x_{N_2}^t),
\qquad
x_{ij}^t:=x_i^t-x_j^t.
$$
\medskip
\noindent\textbf{Case 1: the borderline case $s=d-1$.}
Assume the bootstrap conditions: for $t\in [0,T]$ with $T\leq T^0$,
\begin{align}
  &0<|x_{12}^t|\leq 2\varepsilon\, \eta_*, \label{a3} \\
  &|x_{1i}^t|,\;|x_{2i}^t|\geq 2\eta_*, \qquad i=3,\dots,N. \label{a4}
\end{align}
The bootstrap \eqref{a4} and \eqref{assumpFre}, \eqref{assumpFsing} ensures that
\begin{align}
\label{tail1}
&\langle x\rangle^{-1}|V_N^1(x)|+\langle x-y\rangle^{-1}|V_N^1(x)-V_N^1(y)|\leq 10CN^{-1},\\&
|V_N^2(x_i)|\leq 10C\eta_{*},~~i=3,...,N.\label{tail1'}
\end{align}
	Therefore, thanks to Remarks~\ref{re-for}
and~\ref{re-for2}, Theorem~\ref{th2} applies to the subsystem $(x_3,\ldots,x_N)$ with external
force $V_N^1+V_N^2$. 

Consequently, there exists $T_0\in(0,T]$, independent of
$\varepsilon$ and $N$, such that for all $t\in[0,T_0]$ and all
$\eta\in[\eta_*,\eta_*^{1/3}]$,
\begin{equation}\label{q38}
	\M(X_{[3,N]}^t,\mu^t,\eta)
	\leq
	C'\frac{\eta_*}{\eta}
	\exp\bigl(C\sqrt{t\log(\eta/\eta_*)}\bigr),
\end{equation}
and
\begin{equation}\label{q40}
	\inf_{\substack{i\neq j\\ i,j\geq 3}}
	|x_i^t-x_j^t|
	\geq
	\frac12\eta_*.
\end{equation}
In particular, 
\begin{equation*}
	\inf_{i\neq j}|x_i(t)-x_j(t)|
	=
	|x_{12}(t)|,
	\qquad
	0\leq t<T_0.
\end{equation*}
By Lemma~\ref{lem:background-force-coulomb}, the estimates
\eqref{a10}--\eqref{a9'} hold on $[0,T_0]$.
We now estimate the relative motion of the distinguished pair. By \eqref{a9},
\begin{align*}
	\biggl|
	\frac{d}{dt}x_{12}
	-
	\frac1N
	\bigl(
	\F^{\mathrm{sing}}(x_1,x_2)
	-
	\F^{\mathrm{sing}}(x_2,x_1)
	\bigr)
	\biggr|
	&\lesssim
	|x_{12}|
	+
	\frac1N
	\bigl|
	\F^{\mathrm{reg}}(x_1,x_2)
	-
	\F^{\mathrm{reg}}(x_2,x_1)
	\bigr|
	\\
	&\lesssim
	|x_{12}|
	+
	\frac1N
	|x_{12}|
	\sup_{x,y}|(\nabla_x,\nabla_y)\F^{\mathrm{reg}}(x,y)|.
\end{align*} 
Hence, by \eqref{assumpFre},
\begin{equation}\label{z31}
	\biggl|
	\frac{d}{dt}x_{12}
	-
	\frac1N
	\bigl(
	\F^{\mathrm{sing}}(x_1,x_2)
	-
	\F^{\mathrm{sing}}(x_2,x_1)
	\bigr)
	\biggr|
	\lesssim |x_{12}|.
\end{equation}
Multiplying by $x_{12}|x_{12}|^{d-2}$ gives
\begin{align}\label{z30p}
	\biggl|
	\frac{d}{dt}|x_{12}|^d
	-
	\frac{d}{N}
	|x_{12}|^{d-2}
	\bigl(
	\F^{\mathrm{sing}}(x_1,x_2)\cdot x_{12}
	+
	\F^{\mathrm{sing}}(x_2,x_1)\cdot x_{21}
	\bigr)
	\biggr|
	\lesssim |x_{12}|^d.
\end{align}
Using the attraction assumption \eqref{att-recall} and the bootstrap bound
\eqref{a3}, we infer
\begin{align}\label{z30pp}
	-C N|x_{12}|^d+2dc_0^{-1}
	\leq
	-\frac{d}{dt}\bigl(N|x_{12}|^d\bigr)
	\leq
	C N|x_{12}|^d+2dc_0.
\end{align}
Equivalently,
\begin{align*}
	e^{-Ct}N|x_{12}(0)|^d
	-\frac{2dc_0}{C}(1-e^{-Ct})
	\leq
	N|x_{12}(t)|^d
	\leq
	e^{Ct}N|x_{12}(0)|^d
	-\frac{2dc_0^{-1}}{C}(e^{Ct}-1).
\end{align*}
Since $N|x_{12}(0)|^d=\varepsilon^d$, choosing
$0<\varepsilon
\leq
\frac{1}{100d(C+c_0)^{10}}$
ensures the existence of a collision time $T_*$ satisfying
\begin{equation}\label{Tstar-coulomb}
	c_0^{-1}\frac{\varepsilon^d}{4d}
	\leq
	T_*
	\leq
	c_0\frac{\varepsilon^d}{d},
\end{equation}
such that $|x_{12}(t)|>0$ for $t<T_*$ and $|x_{12}(T_*)|=0$.
Moreover, on $[0,\min\{T_0,T_*\}]$,
\begin{align}
	dc_0^{-1}
	&\leq
	-\frac{d}{dt}\bigl(N|x_{12}|^d\bigr)
	\leq
	4dc_0,
	\label{z30a}
	\\
	|x_{12}(t)|
	&\leq
	\sqrt{2}\varepsilon\eta_*.
	\label{z30ap}
\end{align}
We also record that the colliding pair remains in a fixed bounded region up
to the collision time. Indeed, by \eqref{pair-system}, \eqref{a9}, and
\eqref{z30ap},
$$
|\dot x_1(t)|+|\dot x_2(t)|
\lesssim
1+|x_1(t)|+|x_2(t)|+\frac1{N|x_{12}(t)|^{d-1}} .
$$
Using \eqref{z30a}, we obtain
$$
\int_0^{T_*}\frac{dt}{N|x_{12}(t)|^{d-1}}
\leq c_0
|x_{12}(0)|
\leq  c_0
\varepsilon\eta_* .
$$
By Grönwall's inequality, and using the boundedness of the initial positions,
we deduce
$$
\sup_{0\le t<T_*}\bigl(|x_1(t)|+|x_2(t)|\bigr)\le C,
$$
after increasing $C$ if necessary.\\
	It remains to verify that the colliding pair stays separated from the
background particles. From \eqref{a9}, for $k=1,2$ and $i=3,\ldots,N$, we have 
\begin{align*}
	| \dot{x}_{ik}|&\leq  \frac{1}{N}\left|\sum_{\substack{j\not= i}}^N  \F^{\mathrm{reg}}(x_{i},x_j)-\sum_{\substack{j\not= k}}^N  \F^{\mathrm{reg}}(x_{k},x_j)\right|+\frac{1}{N}(|\F^{\mathrm{sing}}(x_{1},x_2)|+|\F^{\mathrm{sing}}(x_{2},x_1)|)\\&+ |V_N^2(x_i)|+ \left|\frac1N\sum_{j=3, ~j\neq i}^N  \F^{\mathrm{sing}}(x_{k},x_j)-\frac1N\sum_{\substack{j=3\\j\neq i}}^N  \F(x_{i},x_j)\right|+\frac1N| \F^{\mathrm{sing}}(x_{k},x_i)|.
\end{align*}
By \eqref{assumpFre}, \eqref{assumpFsing}, \eqref{tail1'} and \eqref{a9'}, we get 
\begin{equation}\label{a16}
	|\dot{x}_{ki}|
	\lesssim
	\frac1{N|x_{12}|^{d-1}}
	+
	|x_{ki}|+\eta_*.
\end{equation}
By Grönwall's inequality, using \eqref{z30a} and
$|x_{12}(0)|=\varepsilon\eta_*$, we obtain, for
$t\leq\min\{T_*,T_0\}$,
\begin{align*}
	|x_{ki}(t)|
	&\geq
	e^{-CT_*}|x_{ki}(0)|
	-
	CT_*\eta_*
	-
	C\int_0^{T_*}\frac{d\tau}{N|x_{12}(\tau)|^{d-1}}
	\\
	&\geq
	e^{-CT_*}|x_{ki}(0)|
	-
	CT_*\eta_*
	+
Cc_0
	\int_0^{T_*}\partial_\tau |x_{12}(\tau)|\,d\tau
	\\
	&\geq
	\bigl((3-\ep)e^{-CT_*}-CT_*-Cc_0\varepsilon\bigr)\eta_*.
\end{align*}
Thus, after decreasing $\varepsilon>0$ if necessary,
\begin{equation}\label{a2}
	|x_{ki}^t|
	\geq
	\frac83\eta_*,
	\qquad
	t\leq\min\{T_0,T_*\},
	\quad k=1,2,\quad i=3,\ldots,N.
\end{equation}
Together with \eqref{z30ap}, estimate \eqref{a2} strictly improves the
bootstrap assumptions \eqref{a3}--\eqref{a4}. Since $T_0$ is independent of
$\varepsilon$ and $T_*\lesssim \varepsilon^d$, we choose $\varepsilon>0$
small enough so that $T_*<T_0$. Thus the collision occurs before the
background estimates cease to be valid, and the bootstrap remains valid up to
the collision time $T_*$. Moreover, 
$$
|x_{12}(t)|>0 \quad \text{for } t<T_*,
\qquad
|x_{12}(T_*)|=0,
$$
and  integrating
\eqref{z30a} from $t$ to $T^*$ and  using
$N=\eta_*^{-d}$, we obtain
\begin{equation}
	|x_{12}(t)|\sim
	\eta_*(T^*-t)^{1/d}
	\sim
	\varepsilon\eta_*
	\left(
	1-\frac{t}{T^*}
	\right)^{1/d}.
\end{equation}
This proves Theorem~\ref{cou}.
\medskip

\noindent\textbf{Case 2: the super-Coulomb case $s>d-1$.} 
The argument is parallel but with modified scaling.
Assume the bootstrap condition: for $t\in [0,T]$ with $T\leq \eta_*^{s+1-d}$,
\begin{align}
  &0<|x_{12}^t|\leq 2\varepsilon\, \eta_*, \label{a5} \\
  &|x_{1i}^t|,\;|x_{2i}^t|\geq 2\eta_*, \qquad i=3,\dots,N. \label{a6}
\end{align}
The singular tail generated by the pair satisfies
\begin{equation}\label{tail2}
	\sup_{i=3,\dots,N}|V_N^2(x_i)|
	\lesssim
	\eta_*^{d-s}
	=
	\eta_*^{d-s-1}\eta_* .
\end{equation}
Applying Theorem~\ref{th3'} to $X_{[3,N]}=(x_3,\dots,x_N)$ (in view of   Remarks \ref{re-for}, \ref{re-for2} ) there exists $T_0\in(0,T^0]$, independent of $\varepsilon$ and $N$, such that for $t\in[0,T_0]$ and $\eta\in[\eta_*,\eta_*^{1/3}]$:
\begin{equation}\label{q41}
	\M\bigl(
	X_{[3,N]}^{\eta_*^{s+1-d}t},
	\mu^{\eta_*^{s+1-d}t},
	\eta
	\bigr)
	\le
	C\frac{\eta_*}{\eta}e^{Ct},
\end{equation}
and
\begin{equation}\label{q40p}
  \inf_{\substack{i \neq j\\i,j\geq 3}}|x_i^{\eta_*^{s+1-d}t}-x_j^{\eta_*^{s+1-d}t}|
  \geq \tfrac{1}{2}\,\eta_*.
\end{equation}
In particular, 
\begin{equation*}
	\inf_{i\neq j}|x_i(\eta_*^{s+1-d}t)-x_j(\eta_*^{s+1-d}t)|
	=
	|x_{12}(\eta_*^{s+1-d}t)|,
	\qquad
	0\leq t<T_0.
\end{equation*}
By Lemma~\ref{lem:background-force-supercoulomb}, the estimates
\eqref{a13}--\eqref{a15} hold on the rescaled interval $[0,T_0]$.
The relative dynamics of the pair is therefore governed by
\begin{equation}\label{z31pp}
	\biggl|
	\frac{d}{d\tau}x_{12}(\tau)
	-
	\frac1N
	\bigl(
	\F^{\mathrm{sing}}(x_1,x_2)
	-
	\F^{\mathrm{sing}}(x_2,x_1)
	\bigr)
	\biggr|
	\lesssim
	\eta_*^{d-s-1}|x_{12}(\tau)|.
\end{equation}
Using \eqref{att-recall} and the bootstrap bound \eqref{a5}, we obtain
\begin{align}\label{z30cpp}
	-C\eta_*^{d-s-1}N|x_{12}|^{s+1}
	+2(s+1)c_0^{-1}
	\le
	-\frac{d}{d\tau}\bigl(N|x_{12}|^{s+1}\bigr)
	\le
	C\eta_*^{d-s-1}N|x_{12}|^{s+1}
	+2(s+1)c_0 .
\end{align}
Introduce the rescaled time variable $\tau=\eta_*^{s+1-d}t$ and set
$$
Y(t):=
N\eta_*^{d-s-1}
|x_{12}(\eta_*^{s+1-d}t)|^{s+1}.
$$
Then $Y(0)=\varepsilon^{s+1}$, and \eqref{z30cpp} gives
$$
-CY(t)+2(s+1)c_0^{-1}
\le
-Y'(t)
\le
CY(t)+2(s+1)c_0 .
$$
Therefore, after choosing $\varepsilon>0$ sufficiently small, there exists
$T_*$ satisfying
\begin{equation}\label{Tstar-supercoulomb}
	c_0^{-1}\frac{\varepsilon^{s+1}}{4(s+1)}
	\le
	T_*
	\le
	c_0\frac{\varepsilon^{s+1}}{s+1},
\end{equation}
such that
$$
|x_{12}(\eta_*^{s+1-d}t)|>0
\quad\text{for }t<T_*,
\qquad
|x_{12}(\eta_*^{s+1-d}T_*)|=0.
$$
Since $T_0$ is independent of $\varepsilon$ and $N$, we decrease
$\varepsilon$ further, if necessary, so that $T_*<T_0$.
Moreover, for $0\le t\le T_*$,
\begin{align}
	(s+1)c_0^{-1}
	&\le
	-Y'(t)
	\le
	4(s+1)c_0,
	\label{z30c}
	\\
	|x_{12}(\eta_*^{s+1-d}t)|
	&\le
	\sqrt{2}\varepsilon\eta_* .
	\label{z30cp}
\end{align}
Similarly, in the super-Coulomb case the pair remains uniformly bounded up to
the collision time. Indeed, using \eqref{z30c},
$$
\int_0^{\eta_*^{s+1-d}T_*}
\frac{d\tau}{N|x_{12}(\tau)|^s}
\leq  c_0
|x_{12}(0)|
\leq c_0
\varepsilon\eta_* .
$$
Moreover, by the equation for the pair, the background estimates, and the
bootstrap separation, we have, for
$0\le t\le \eta_*^{s+1-d}T_*$,
$$
|\dot x_1(t)|+|\dot x_2(t)|
\lesssim 1+
\eta_*^{-s+d}+|x_1(t)|+|x_2(t)|+\frac1{N|x_{12}(t)|^{s}} .
$$
Since
$
\eta_*^{d-s}\,\eta_*^{s+1-d}T_*
\lesssim
\eta_*T_*
\lesssim
\eta_*,
$
Grönwall's inequality gives
$$
\sup_{0\le t<T_*}
\bigl(
|x_1(\eta_*^{s+1-d}t)|
+
|x_2(\eta_*^{s+1-d}t)|
\bigr)
\le C.
$$
Finally, we control the separation between the collapsing pair and the
background. As \eqref{a16}, we can obtain from \eqref{a13}, \eqref{a14} and \eqref{a15} that 
\[
|\dot{x}_{ki}|
\lesssim
\frac1{N|x_{12}|^s}
+
\eta_*^{d-1-s}\bigl(|x_{ki}|+\eta_*\bigr),
\qquad k=1,2,\quad i=3,\ldots,N.
\]
Applying Grönwall's inequality on the rescaled interval and using
\eqref{z30c}, we find
\begin{align*}
	|x_{ki}(\eta_*^{s+1-d}t)|
	&\geq
	e^{-CT_*}|x_{ki}(0)|
	-
	CT_*\eta_*
	-
	C\int_0^{\eta_*^{s+1-d}T_*}
	\frac{d\tau}{N|x_{12}(\tau)|^s}
	\\
	&\geq
	e^{-CT_*}|x_{ki}(0)|
	-
	CT_*\eta_*
	+
	Cc_0
	\int_0^{\eta_*^{s+1-d}T_*}
	\partial_\tau |x_{12}(\tau)|\,d\tau
	\\
	&\geq
	\bigl(
	(3-\ep)e^{-CT_*}
	-
	CT_*
	-
	Cc_0\varepsilon
	\bigr)\eta_*.
\end{align*}
Therefore, after decreasing $\varepsilon>0$ if needed,
\begin{equation}\label{a2p}
	|x_{ki}^{\eta_*^{s+1-d}t}|
	\ge
	\frac83\eta_*,
	\qquad
	0\le t\le \min\{T_0,T_*\},
	\quad k=1,2,\quad i=3,\ldots,N.
\end{equation}
Together with \eqref{z30cp}, estimate \eqref{a2p} strictly improves the
bootstrap assumptions \eqref{a5}--\eqref{a6} on the rescaled time interval.
Since $T_0>0$ is independent of $\varepsilon$ and $N$, while
$T_*\lesssim \varepsilon^{s+1}$, we decrease $\varepsilon>0$ further, if
necessary, so that $T_*<T_0$. Thus the bootstrap remains valid up to the
collision time. 
Moreover, 
$$
|x_{12}(\eta_*^{s+1-d}t)|>0
\quad \text{for } t<T_*,
\qquad
|x_{12}(\eta_*^{s+1-d}T_*)|=0,
$$
and  integrating
\eqref{z30c} from $t$ to $T^*$ and  using
$N=\eta_*^{-d}$, we obtain
\begin{equation}
\left|
x_{12}\bigl(\eta_*^{s+1-d}t\bigr)
\right|\sim
\eta_*(T^*-t)^{1/(s+1)}\sim
\varepsilon\eta_*
\left(
1-\frac{t}{T^*}
\right)^{1/(s+1)}.
\end{equation}
This proves Theorem~\ref{supercou}.
\end{proof}
\begin{lem}[Background force estimates in the Coulomb case]\label{lem:background-force-coulomb}
	Assume $s=d-1$. Suppose that the bootstrap assumptions \eqref{a3}--\eqref{a4}
	hold on $[0,T_0]$, and that the background subsystem $X_{[3,N]}$ satisfies
	\eqref{q38}--\eqref{q40}. Then, for all $t\in[0,T_0]$, the following estimates hold.
	
	First, 
	\begin{align}
		\frac1N
		\Bigl|
		\sum_{\substack{j=3\\j\neq i}}^N
		\nabla_{x_i}\F^{\mathrm{sing}}(x_i,x_j)
		\Bigr|
		&\lesssim 1,	\qquad i=3,\dots,N,
		\label{a10}
		\\
		\frac1N
		\sum_{\substack{j=3\\j\neq i}}^N
		\frac{1}{|x_{ij}|^{d+1}}
		&\lesssim \eta_*^{-1},	\qquad i=1,\dots,N,
		\label{a10'}
	\end{align}
	Finally,
	\begin{equation}\label{a9}
		\left|
		\frac1N\sum_{j=3}^N \F(x_1,x_j)
		-
		\frac1N\sum_{j=3}^N \F(x_2,x_j)
		\right|
		\lesssim |x_{12}|,
	\end{equation}
	and, for $k=1,2$ and $i=3,\dots,N$,
	\begin{align}\label{a9'}
		\left|
		\frac1N
		\sum_{\substack{j=3\\j\neq i}}^N
		\F^{\mathrm{sing}}(x_k,x_j)
		-
		\frac1N
		\sum_{\substack{j=3\\j\neq i}}^N
		\F^{\mathrm{sing}}(x_i,x_j)
		\right|
		\lesssim |x_{ik}|.
	\end{align}
\end{lem}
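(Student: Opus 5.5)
The four estimates all rest on the same two inputs: the background separation \eqref{q40}, $|x_{ij}|\ge \frac12\eta_*$ for $i,j\ge3$, and the mesoscopic control \eqref{q38} of $\mu_{N,\eta}$ for the background subsystem. The latter gives $\langle x\rangle^\gamma\mu_{[3,N],\eta}(x)\lesssim \U(\eta)\lesssim 1$ for all $\eta\in[\eta_*,\eta_*^{1/3}]$, using \eqref{A-mu-bound} and \eqref{triangleU}. For the pair particles $x_1,x_2$, the bootstrap \eqref{a4} gives $|x_{kj}|\ge 2\eta_*$. Hence for every $i\in\{1,\dots,N\}$ the background points satisfy $|x_i-x_j|\gtrsim\eta_*$ for $j\ge 3$, $j\ne i$.

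\textbf{Step 1: \eqref{a10'}.} I will split the sum dyadically into shells $|x_{ij}|\sim 2^k\eta_*$ for $2^k\eta_*\le 1$, plus the region $|x_{ij}|>1$. On each shell, I use $\mathbf 1_{|z|\le r}\lesssim r^d\Phi^r(z)$ as in Lemma~\ref{lem:empsum}. This bounds $\frac1N\#\{j:|x_{ij}|\le r\}$ by $r^d\mu_{[3,N],r}(x_i)\lesssim r^d$ for $r\in[\eta_*,\eta_*^{1/3}]$. For larger $r$, the count is trivially $\le 1$, and monotonicity \eqref{Mee} also helps. The shell then contributes $(2^k\eta_*)^{d-d-1}=(2^k\eta_*)^{-1}$. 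Summing the geometric series gives $\lesssim\eta_*^{-1}$, and the far region contributes $O(1)$.

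\textbf{Step 2: \eqref{a10}.} Here I write the sum through the mollified background. The near-field part uses $\F_{<\sqrt2\eta_*}$, and the far part is $\nabla\F_{\eta_*}*\mu_{[3,N],\eta_*}(x_i)$ minus the diagonal term. This is exactly the decomposition from Step 2 of Lemma~\ref{lem31} for $s=d-1$. The far part splits as $\nabla\F_{\eta_*}*\tilde\mu_{N,\eta_*}+\nabla\F_{\eta_*}*\mu$. The second piece is bounded by \eqref{227}. The first is handled by the dyadic decomposition \eqref{z11} with $n_0\sim\log_2\eta_*^{-1/2}$, giving $\sum_k\M(2^k\eta_*)+\log N\,\M(\eta_*^{1/2})+\U$. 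By \eqref{q38}, $\M(2^k\eta_*)\lesssim 2^{-k}e^{C\sqrt{k}}$, so this sum is $O(1)$, and $\log N\,\M(\eta_*^{1/2})\lesssim \log N\,\eta_*^{1/2}\ll1$. For the near-field part, I apply Lemma~\ref{lem:gradient-Fless-coulomb}: $|\nabla_x\F_{<\eta}|\lesssim \eta^2+|x-y|^{-d}\min(1,\eta/|x-y|)^2$. Combined with the separation and the shell counting of Step 1, this yields $\frac1N\sum_j\eta_*^2|x_{ij}|^{-d-2}\lesssim 1$. The diagonal term is $\frac1N|\nabla\F_{\sqrt2\eta_*}(x_i,x_i)|\lesssim N^{-1}\eta_*^{-d}=1$ by \eqref{proF0}.

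\textbf{Step 3: \eqref{a9} and \eqref{a9'}.} Both are Lipschitz bounds for the background field $G(x):=\frac1N\sum_{j\ge3}\F(x,x_j)$, evaluated at points that stay at distance $\gtrsim\eta_*$ from all background particles. For \eqref{a9}, the segment $[x_1,x_2]$ has length $\le2\varepsilon\eta_*$ and lies within $\varepsilon\eta_*$ of $x_2$, so every point $z$ on it satisfies $|z-x_j|\ge\frac32\eta_*$. The mean value theorem then gives $|G(x_1)-G(x_2)|\le|x_{12}|\sup_z|\nabla G(z)|$. I bound $\nabla G(z)$ exactly as in Step 2, since that argument only uses the separation of the evaluation point and never that it is a particle. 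For \eqref{a9'}, the segment from $x_k$ to $x_i$ may pass close to other background particles, so I split into two cases. If $|x_{ik}|\ge c\eta_*$, I bound each term separately: the sum at $x_i$ is handled by Step 2 plus $|x_{ki}|\ge 2\eta_*$. \emph{This case is the main obstacle}, because a naive bound $|G(x_k)|+|G(x_i)|\lesssim 1+|\log\eta_*|$ is not $\lesssim|x_{ik}|$. Instead, I will subtract the smooth mollified parts, using $\F_{\eta_*}*\mu_{[3,N],\eta_*}$, whose gradient is bounded by the argument of Step 2, so its increment is $\lesssim|x_{ik}|$. The near-field remainder $\frac1N\sum_j|\F_{<\sqrt2\eta_*}(\cdot,x_j)|$ is $\lesssim\eta_*\lesssim|x_{ik}|$ by \eqref{proF1} together with the shell counting. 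The removed self-term $j=i$ contributes $\frac1N|x_{ki}|^{-(d-1)}\lesssim N^{-1}\eta_*^{1-d}=\eta_*\lesssim|x_{ik}|$. The case $|x_{ik}|<c\eta_*$ is excluded by the bootstrap \eqref{a4}.
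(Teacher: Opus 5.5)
Your proposal is correct and follows essentially the paper's argument. You use the same split into the mollified background field (controlled by the dyadic bound \eqref{z11} together with \eqref{q38}) and the truncated near-field part (controlled by \eqref{proF1}, \eqref{proF1-gradient-coulomb} and the separation), and for \eqref{a9'} you handle the self-terms and use $|x_{ik}|\ge 2\eta_*$ exactly as the paper does; the only differences are cosmetic: you count particles in dyadic shells instead of using the comparison \eqref{a100}, and you bound the diagonal gradient term by $O(1)$ via \eqref{proF0}, which is enough.
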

\begin{proof}
	We first prove \eqref{a10'}. By \eqref{convolutiondom}, for every $p>d$,
	\begin{equation}\label{a100}
			\frac{1}{(\eta_*+|z|)^p}
		\sim
		\int
		\frac{1}{(\eta_*+|z-y|)^p}
		\Phi^{\eta_*}(y)\,dy .
	\end{equation}
	Hence, using \eqref{q38}, we obtain
	\begin{align}\label{a12}
		\frac1N
		\sum_{j=3}^N
		\frac{1}{(\eta_*+|x-x_j|)^p}
		&\lesssim
		\int_{\mathbb R^d}
		\frac{1}{(\eta_*+|x-y|)^p}
		\mu_{[3,N],\eta_*}^t(y)\,dy
		\lesssim
		\eta_*^{d-p}.
	\end{align}
	Taking $p=d+1$, and using the separation estimates \eqref{a4} and \eqref{q40},
	gives \eqref{a10'}.
	
	We next prove \eqref{a10}. Decompose
	$
	\F^{\mathrm{sing}}
	=
	\F^{\mathrm{sing}}_{2\eta_*}
	+
	\F^{\mathrm{sing}}_{<2\eta_*}.
	$
	For $i=3,\dots,N$, we write
	\begin{align*}
		\frac1N
		\Bigl|
		\sum_{\substack{j=3\\j\neq i}}^N
		\nabla_{x_i}\F^{\mathrm{sing}}(x_i,x_j)
		\Bigr|&\le
		\frac1N
		\Bigl|
		\sum_{j=3}^N
		\nabla_{x_i}\F^{\mathrm{sing}}_{2\eta_*}(x_i,x_j)
		\Bigr|
		+
		\frac1N
		\Bigl|
		\nabla_x\F^{\mathrm{sing}}_{2\eta_*}(x_i,x_i)
		\Bigr|
		\\
		&\qquad\quad
		+
		\frac1N
		\Bigl|
		\sum_{\substack{j=3\\j\neq i}}^N
		\nabla_{x_i}\F^{\mathrm{sing}}_{<2\eta_*}(x_i,x_j)
		\Bigr|.
	\end{align*}
	The first term is controlled by
	$$
	\frac1N
	\Bigl|
	\sum_{j=3}^N
	\nabla_{x_i}\F^{\mathrm{sing}}_{2\eta_*}(x_i,x_j)
	\Bigr|
	\lesssim
	\bigl\|
	\nabla_x\F^{\mathrm{sing}}_{\sqrt3\eta_*}
	*
	\mu_{[3,N],\eta_*}^t
	\bigr\|_{L^\infty}.
	$$
	As in the proof of \eqref{z11}, we get
	\begin{align*}
		\bigl\|
		\nabla_x\F^{\mathrm{sing}}_{\sqrt3\eta_*}
		*
		\mu_{[3,N],\eta_*}^t
		\bigr\|_{L^\infty}
		&\lesssim
		\sum_{\ell=0}^{n_0-1}
		\M(X_{[3,N]}^t,\mu^t,2^\ell\eta_*)
		+
		\bigl|\log(2^{n_0}\eta_*)\bigr|
		\M(X_{[3,N]}^t,\mu^t,2^{n_0}\eta_*)
		\\
		&\quad
		+
		\bigl|\log(2^{n_0}\eta_*)\bigr|
		2^{2n_0}\eta_*^2\A(\mu^t).
	\end{align*}
	Choosing
	$
	n_0:=
	\bigl\lfloor \log_2(\eta_*^{-1/10})\bigr\rfloor
	$
	and using \eqref{q38}, we obtain
	\begin{equation}\label{a11}
		\bigl\|
		\nabla_x\F^{\mathrm{sing}}_{\sqrt3\eta_*}
		*
		\mu_{[3,N],\eta_*}^t
		\bigr\|_{L^\infty}
		\lesssim 1 .
	\end{equation}
	The self-interaction term satisfies, by \eqref{assumpF2'},
	$$
	\frac1N
	\sup_x
	\bigl|
	\nabla_x\F^{\mathrm{sing}}_{2\eta_*}(x,x)
	\bigr|
	\lesssim
	\frac{1}{N\eta_*^{d-1}}
	\lesssim \eta_* .
	$$
	Finally, by \eqref{proF1-gradient-coulomb} with $\eta=2\eta_*$,
	\begin{align*}
		\frac1N
		\Bigl|
		\sum_{\substack{j=3\\j\neq i}}^N
		\nabla_{x_i}\F^{\mathrm{sing}}_{<2\eta_*}(x_i,x_j)
		\Bigr|
		&\lesssim
		\eta_*^2
		+
		\frac1N
		\sum_{\substack{j=3\\j\neq i}}^N
		\frac{\eta_*^2}{|x_{ij}|^{d+2}} .
	\end{align*}
	Taking $p=d+2$ in \eqref{a12} gives
	$
	\frac1N
	\sum_{\substack{j=3\\j\neq i}}^N
	\frac{\eta_*^2}{|x_{ij}|^{d+2}}
	\lesssim 1 .
	$
	Thus \eqref{a10} follows.
	
	We now prove \eqref{a9}. The regular part is Lipschitz by \eqref{assumpFre}, hence
	$$
	\left|
	\frac1N\sum_{j=3}^N
	\F^{\mathrm{reg}}(x_1,x_j)
	-
	\frac1N\sum_{j=3}^N
	\F^{\mathrm{reg}}(x_2,x_j)
	\right|
	\lesssim |x_{12}|.
	$$
	For the singular part, we again write
	$\F^{\mathrm{sing}}=\F^{\mathrm{sing}}_{2\eta_*}
	+\F^{\mathrm{sing}}_{<2\eta_*}$. By \eqref{a11},
	$$
	\left|
	\frac1N\sum_{j=3}^N
	\F^{\mathrm{sing}}_{2\eta_*}(x_1,x_j)
	-
	\frac1N\sum_{j=3}^N
	\F^{\mathrm{sing}}_{2\eta_*}(x_2,x_j)
	\right|
	\lesssim |x_{12}|.
	$$
	For the truncated part, \eqref{proF1-gradient-coulomb}, \eqref{a12} with
	$p=d+2$, and the separation \eqref{a4} imply
	\begin{align*}
		&\left|
		\frac1N\sum_{j=3}^N
		\F^{\mathrm{sing}}_{<2\eta_*}(x_1,x_j)
		-
		\frac1N\sum_{j=3}^N
		\F^{\mathrm{sing}}_{<2\eta_*}(x_2,x_j)
		\right|
\lesssim
		|x_{12}|
		\left(
		\eta_*^2
		+
		\frac1N
	\sum_{k=1,2}	\sum_{j=3}^N
		\frac{\eta_*^2}{|x_{kj}|^{d+2}}
		\right)
		\lesssim |x_{12}|.
	\end{align*}
	This proves \eqref{a9}.\\
	It remains to prove \eqref{a9'}. Fix $k=1,2$ and $i=3,\dots,N$. Decomposing again into the smoothed and truncated parts gives
	\begin{align*}
		\left|
		\frac1N
		\sum_{\substack{j=3\\j\neq i}}^N
		\F^{\mathrm{sing}}(x_k,x_j)
		-
		\frac1N
		\sum_{\substack{j=3\\j\neq i}}^N
		\F^{\mathrm{sing}}(x_i,x_j)
		\right|
	&\le
		\left|
		\frac1N
		\sum_{\substack{j=3\\j\neq i}}^N
		\F^{\mathrm{sing}}_{2\eta_*}(x_k,x_j)
		-
		\frac1N
		\sum_{\substack{j=3\\j\neq i}}^N
		\F^{\mathrm{sing}}_{2\eta_*}(x_i,x_j)
		\right|
		\\
		&\quad
		+
		\frac1N
		\sum_{\substack{j=3\\j\neq i}}^N
		\left|
		\F^{\mathrm{sing}}_{<2\eta_*}(x_k,x_j)
		\right|
		+
		\frac1N
		\sum_{\substack{j=3\\j\neq i}}^N
		\left|
		\F^{\mathrm{sing}}_{<2\eta_*}(x_i,x_j)
		\right| .
	\end{align*}
	The smoothed term is bounded by \eqref{a11}:
	$$
	|x_{ik}|
	\bigl\|
	\nabla_x\F^{\mathrm{sing}}_{\sqrt3\eta_*}
	*
	\mu_{[3,N],\eta_*}^t
	\bigr\|_{L^\infty}+
	\frac1N
	\left(
	\left|
	\F_{2\eta_*}^{\mathrm{sing}}(x_k,x_i)
	\right|
	+
	\left|
	\F_{2\eta_*}^{\mathrm{sing}}(x_i,x_i)
	\right|
	\right)
	\lesssim |x_{ik}|.
	$$
	For the truncated terms, \eqref{proF1} gives
	\begin{align*}
		&\frac1N
		\sum_{\ell\in\{i,k\}}
		\sum_{\substack{j=3\\j\neq i}}^N
		\left|
		\F^{\mathrm{sing}}_{<2\eta_*}(x_\ell,x_j)
		\right|\lesssim
		\eta_*^2
		+
		\frac1N
		\sum_{\ell\in\{i,k\}}
		\sum_{\substack{j=3\\j\neq i}}^N
		\frac{1}{|x_{\ell j}|^{d-1}}
		\min\!\Bigl(1,\frac{\eta_*}{|x_{\ell j}|}\Bigr)^2 .
	\end{align*}
	Using \eqref{q38} and the convolution estimate above, this last term is
	bounded by $C\eta_*$. Since $i=3,\dots,N$ and $k=1,2$, the bootstrap separation
	\eqref{a4} implies $\eta_*\lesssim |x_{ik}|$. Therefore the truncated
	contribution is bounded by $C|x_{ik}|$, and \eqref{a9'} follows.
\end{proof}

\begin{lem}[Background force estimates in the super-Coulomb case]
	\label{lem:background-force-supercoulomb}
	Assume $s>d-1$. Suppose that the bootstrap assumptions \eqref{a5}--\eqref{a6}
	hold on the physical time interval $[0,\eta_*^{s+1-d}T_0]$, and that the
	background subsystem satisfies \eqref{q41}--\eqref{q40p}. Then, for every
$t\in[0,\eta_*^{s+1-d}T_0]$,  the following
	estimates hold.\\
	First, for every $i=1,\dots,N$,
	\begin{align}\label{a13}
		\frac1N\sum_{\substack{j=3\\j\neq i}}^N \frac{1}{|x_{ij}|^{s+1}}\lesssim\eta_{*}^{d-s-1}.
	\end{align}
	Consequently,
	\begin{equation}\label{a14}
		\biggl|\frac1N\sum_{j=3}^N \F(x_{1},x_j)-\frac1N\sum_{j=3}^N  \F(x_{2},x_j)\biggr|
		\lesssim  \eta_{*}^{d-s-1}|x_{12}|, 
	\end{equation}
	and 
	\begin{align}\label{a15}
		\left|\frac1N\sum_{\substack{j=3\\j\neq i}}^N  \F^{\mathrm{sing}}(x_{k},x_j)-\frac1N\sum_{\substack{j=3\\j\neq i}}^N  \F^{\mathrm{sing}}(x_{i},x_j)\right| \lesssim \eta_{*}^{d-s-1}|x_{ik}|
	\end{align}
	for $k=1,2$, $i=3,..,N$.
\end{lem}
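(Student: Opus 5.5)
The plan is to mirror the proof of Lemma~\ref{lem:background-force-coulomb}, replacing the Coulomb exponent by $s$. The super-Coulomb scaling makes two steps simpler: the dyadic refinement of \eqref{z11} is no longer needed, since a loss $\eta_*^{d-s-1}$ is now allowed.

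\textbf{Step 1: estimate \eqref{a13}.} I would use the heat-kernel domination \eqref{a100} with $p=s+1>d$. This gives
\[
\frac1N\sum_{j=3}^N \frac{1}{(\eta_*+|x-x_j|)^{s+1}} \lesssim \int \frac{\mu^{t}_{[3,N],\eta_*}(y)}{(\eta_*+|x-y|)^{s+1}}\,dy \lesssim \eta_*^{d-s-1}.
\]
The last inequality holds because \eqref{q41} at $\eta=\eta_*$ and \eqref{triangleU} bound $\langle\cdot\rangle^\gamma\mu_{[3,N],\eta_*}$ by a constant. To replace $\eta_*+|x_{ij}|$ by $|x_{ij}|$, I would invoke the separation bounds: \eqref{q40p} handles background pairs and the bootstrap \eqref{a6} handles pairs involving $k=1,2$. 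In each case $|x_{ij}|\gtrsim\eta_*$, so $|x_{ij}|\sim\eta_*+|x_{ij}|$. For $s\ge d+1$ the cutoff kernel is only evaluated at distances $\ge\eta_*/2$, so it does not interfere.

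\textbf{Step 2: estimate \eqref{a14}.} The regular part is Lipschitz by \eqref{assumpFre} and contributes $\lesssim|x_{12}|$. For the singular part, I would use the mean value theorem along the segment $[x_1,x_2]$. Since $|x_{12}|\le 2\varepsilon\eta_*$ and both points lie at distance $\ge 2\eta_*$ from every $x_j$, $j\ge3$, every point $z$ of the segment satisfies $|z-x_j|\gtrsim|x_{kj}|$. Then \eqref{assumpFsing} with $m_1=1$ gives
\[
|\F^{\mathrm{sing}}(x_1,x_j)-\F^{\mathrm{sing}}(x_2,x_j)| \lesssim |x_{12}|\bigl(|x_{1j}|^{-s-1}+1\bigr).
\]
Summing over $j$ and applying \eqref{a13} yields $\eta_*^{d-s-1}|x_{12}|$, where I absorb $|x_{12}|$ using $\eta_*^{d-s-1}\ge1$.

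\textbf{Step 3: estimate \eqref{a15}.} Fix $k\in\{1,2\}$ and $i\ge3$; by \eqref{a6}, $|x_{ik}|\ge2\eta_*$. I would split the sum over $j$ into far and near indices.
\begin{itemize}
\item For $j$ with $|x_{ij}|\ge 2|x_{ik}|$, the segment $[x_i,x_k]$ stays at distance $\gtrsim|x_{ij}|$ from $x_j$. The mean value theorem then bounds this part by $|x_{ik}|$ times \eqref{a13}.
\item For the near indices $|x_{ij}|<2|x_{ik}|$, I would bound each term directly by \eqref{assumpFsing}, which gives $|x_{\ell j}|^{-s}$ for $\ell=i,k$. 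On this set, $|x_{\ell j}|^{-s}\lesssim|x_{ik}|\,|x_{\ell j}|^{-s-1}$ for $\ell=i$; for $\ell=k$ one uses $|x_{kj}|\le 3|x_{ik}|$, which follows from the triangle inequality. Summing with \eqref{a13} again gives $\eta_*^{d-s-1}|x_{ik}|$.
\end{itemize}

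\textbf{Main obstacle.} The delicate points are twofold. First, I must check that \eqref{a13} holds with the self-term $j=i$ excluded and uniformly for $i=1,2$; this is where the bootstrap \eqref{a6} is essential. Second, in the regime $s\ge d+1$ the pointwise bounds \eqref{assumpFsing} apply to the uncut kernel. This is acceptable because all distances involved are $\gtrsim\eta_*$, so the cutoff at scale $\eta_*$ is inactive.
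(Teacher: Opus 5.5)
Your proposal is correct and follows the paper's proof. Estimate \eqref{a13} comes from \eqref{a100} with $p=s+1$, \eqref{q41}, and the separation bounds \eqref{a6} and \eqref{q40p}; \eqref{a14}--\eqref{a15} come from a mean-value versus pointwise case split, which the paper packages as the single two-point bound \eqref{two-point-force-bound-phys}. The only thing you left implicit is the $+1$ from \eqref{assumpFsing} on your near set, and it is harmless: either $|x_{ik}|\ge1$, or every near distance is $\lesssim1$, so that $1\lesssim|x_{\ell j}|^{-s}$.
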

\begin{proof}
	We first prove \eqref{a13}. Taking $p=s+1>d$ and using \eqref{q41}, \eqref{a100}, we obtain, uniformly in
	$x\in\mathbb R^d$ and $t\in[0,\eta_*^{s+1-d}T_0]$,
	\begin{align}\label{super-convolution-bound-phys}
		\frac1N
		\sum_{j=3}^N
		\frac{1}{(\eta_*+|x-x_j^t|)^{s+1}}
		\lesssim
		\int_{\mathbb R^d}
		\frac{1}{(\eta_*+|x-y|)^{s+1}}
		\mu_{[3,N],\eta_*}^t(y)\,dy
	\lesssim
		\eta_*^{d-s-1}.
	\end{align}
	Using the separation estimates \eqref{a6} and \eqref{q40p}, we may
	replace $\eta_*+|x_\ell^t-x_j^t|$ by $|x_\ell^t-x_j^t|$ in the relevant sums,
	for $\ell=i$ with $i=1,\dots,N$. This
	proves \eqref{a13}.
	
	We next use the elementary estimate
	\begin{align}\label{two-point-force-bound-phys}
		|\F^{\mathrm{sing}}(x,y)-\F^{\mathrm{sing}}(x',y)|
		\lesssim
		|x-x'|
		\left(
		1+\frac{1}{|x-y|^{s+1}}
		+\frac{1}{|x'-y|^{s+1}}
		\right),
	\end{align}
	which follows from \eqref{assumpFsing}--\eqref{assumpFre}. Indeed, when
	$|x-x'|$ is smaller than the relevant distances, this follows from the
	mean-value theorem and the derivative bound on $\F^{\mathrm{sing}}$. In the complementary
	case, the pointwise bound on $\F$ gives the same estimate.
	
	Applying \eqref{two-point-force-bound-phys} with $x=x_1$,
	$x'=x_2$, and $y=x_j$, we get
\begin{align*}
	\biggl|\frac1N\sum_{j=3}^N \F(x_{1},x_j)-\frac1N\sum_{j=3}^N  \F(x_{2},x_j)\biggr|&\lesssim |x_{12}|+	\biggl|\frac1N\sum_{j=3}^N \F^{\mathrm{sing}}(x_{1},x_j)-\frac1N\sum_{j=3}^N  \F^{\mathrm{sing}}(x_{2},x_j)\biggr|\\&\lesssim 
	|x_{12}|+	\frac1N\sum_{j=3}^N|x_{12}|\left(\frac{1}{|x_{1j}|^{s+1}}+\frac{1}{|x_{2j}|^{s+1}}+1\right).
\end{align*}
	By the estimate already proved, the sum is bounded by
	$C\eta_*^{d-s-1}$. Since $s>d-1$ and $\eta_*\le1$, we also have
	$\eta_*^{d-s-1}\ge1$. Therefore
	$$
	\left|
	\frac1N\sum_{j=3}^N \F(x_1,x_j)
	-
	\frac1N\sum_{j=3}^N \F(x_2,x_j)
	\right|
	\lesssim
	\eta_*^{d-s-1}|x_{12}|.
	$$
	This proves \eqref{a14}.
	
	Finally, we prove \eqref{a15}. Fix $k=1,2$ and $i=3,\dots,N$. Applying the
	same two-point estimate to the singular part only, we obtain
	\begin{align*}
		&\left|
		\frac1N
		\sum_{\substack{j=3\\j\neq i}}^N
		\F^{\mathrm{sing}}(x_k,x_j)
		-
		\frac1N
		\sum_{\substack{j=3\\j\neq i}}^N
		\F^{\mathrm{sing}}(x_i,x_j)
		\right|\lesssim
		|x_{ik}|
		\left[
		1+
		\frac1N
		\sum_{\substack{j=3\\j\neq i}}^N
		\left(
		\frac{1}{|x_{kj}|^{s+1}}
		+
		\frac{1}{|x_{ij}|^{s+1}}
		\right)
		\right].
	\end{align*}
	The sums are bounded by $C\eta_*^{d-s-1}$ by \eqref{a13}. Hence
	$$
	\left|
	\frac1N
	\sum_{\substack{j=3\\j\neq i}}^N
	\F^{\mathrm{sing}}(x_k,x_j)
	-
	\frac1N
	\sum_{\substack{j=3\\j\neq i}}^N
	\F^{\mathrm{sing}}(x_i,x_j)
	\right|
	\lesssim
	\eta_*^{d-s-1}|x_{ik}|.
	$$
	This proves \eqref{a15}.
\end{proof}

\appendix

\noindent\section{Strong and weak solutions of the ODE system}\label{app1}
\subsection{Strong solutions of the ODE system}\label{app1-Strong}

We record the elementary local well-posedness fact used in the proofs of
Theorems~\ref{th1}, \ref{th2}, and~\ref{th3'}.
\

\begin{lem}[Maximal classical collision-free solution]\label{lem:local-classical-ode} 	Assume that $\F$ satisfies \eqref{assumpFre}--\eqref{assumpF2'} and is smooth away from the diagonal.
	Assume that the initial configuration is pairwise distinct, namely
	$$
	x_i^0\neq x_j^0,
	\qquad i\neq j.
	$$
	Then the particle system \eqref{ode} admits a unique classical solution on a
	maximal collision-free interval
	$
	[0,T_{\mathrm{coll}}^N),
	$
where the time $T_{\mathrm{coll}}^N=T_{\mathrm{coll}}^N(X_N^0)$ may depend on
$N$ and on the initial configuration. Moreover, either
$T_{\mathrm{coll}}^N=\infty$, or
\begin{equation}\label{maximal-collision}
	\liminf_{t\uparrow T_{\mathrm{coll}}^N}
	\min_{i\neq j}|x_i(t)-x_j(t)|=0.
\end{equation}
Finally, on every compact subinterval
$
[0,T']\subset [0,T_{\mathrm{coll}}^N),
$
one has
$$
\inf_{0\le t\le T'}\min_{i\neq j}|x_i(t)-x_j(t)|>0 .
$$
\end{lem}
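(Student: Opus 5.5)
The plan is to apply the Cauchy--Lipschitz theorem on the open configuration set
\[
\mathcal O:=\{X=(x_1,\dots,x_N)\in(\mathbb R^d)^N:\ x_i\neq x_j \ \text{for } i\neq j\},
\]
with vector field $V(X)_i:=\frac1N\sum_{j\neq i}\F(x_i,x_j)$. Since $\F$ is smooth away from the diagonal, $V$ is $C^1$ on $\mathcal O$, hence locally Lipschitz there. The initial configuration $X_N^0$ lies in $\mathcal O$, so Picard--Lindelöf gives a unique local classical solution. The standard extension argument then produces a unique maximal solution on an interval $[0,T^N_{\mathrm{coll}})$ that remains in $\mathcal O$; uniqueness holds among classical solutions staying in $\mathcal O$.

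The real content is the blow-up alternative. If $T^N_{\mathrm{coll}}<\infty$, the solution must leave every compact subset of $\mathcal O$. We will show that it cannot escape to infinity in finite time, so that it must approach $\partial\mathcal O$. Suppose, for contradiction, that
\[
\liminf_{t\uparrow T^N_{\mathrm{coll}}}\min_{i\neq j}|x_i-x_j|=:2\delta>0 .
\]
Then for $t$ close to $T^N_{\mathrm{coll}}$ all pairwise distances are at least $\delta$. By \eqref{assumpFsing} with $m_1=m_2=0$, the singular part satisfies $|\F^{\mathrm{sing}}(x_i,x_j)|\lesssim \delta^{-s}+1$. By \eqref{assumpFre}, the regular part has linear growth: $|\F^{\mathrm{reg}}(x_i,x_j)|\lesssim 1+|x_i|+|x_j|$. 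Hence
\[
\frac{d}{dt}\max_i|x_i|\lesssim 1+\delta^{-s}+\max_i|x_i|,
\]
and Grönwall's lemma bounds $\max_i|x_i|$ on $[0,T^N_{\mathrm{coll}})$.

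It follows that the trajectory stays in the compact set
\[
\{X\in\mathcal O:\ |X|\le R,\ \min_{i\neq j}|x_i-x_j|\ge\delta\}
\]
for $t$ near $T^N_{\mathrm{coll}}$. This contradicts maximality, since a solution remaining in a compact subset of the domain of a locally Lipschitz field extends beyond a finite endpoint. Thus \eqref{maximal-collision} holds whenever $T^N_{\mathrm{coll}}<\infty$.

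Finally, on any compact subinterval $[0,T']\subset[0,T^N_{\mathrm{coll}})$, the map $t\mapsto\min_{i\neq j}|x_i(t)-x_j(t)|$ is continuous and positive, because the solution lies in $\mathcal O$. It therefore attains a positive minimum, which gives the last claim. The only mildly delicate point is the replacement of $\liminf$ by a uniform lower bound near $T^N_{\mathrm{coll}}$ in the contradiction argument. This is handled by first using Grönwall to establish boundedness of the trajectory, and then observing that the compact-exit criterion only requires the trajectory to stay away from $\partial\mathcal O$ along some sequence of times. Alternatively, one can use that $\liminf>0$ forces the distances to stay bounded below on some final interval, since the velocities are then bounded.
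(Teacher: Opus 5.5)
Your proposal is correct and follows essentially the same route as the paper: Cauchy--Lipschitz on the open collision-free configuration set, then ruling out a finite $T_{\mathrm{coll}}^N$ with positive $\liminf$ by using the linear growth of $\F^{\mathrm{reg}}$ and the bound $|\F^{\mathrm{sing}}|\lesssim\delta^{-s}+1$ to confine the trajectory to a compact subset of that set. The ``delicate point'' you flag at the end is not an issue, since $\liminf_{t\uparrow T_{\mathrm{coll}}^N}\min_{i\neq j}|x_i-x_j|=2\delta>0$ already gives $\min_{i\neq j}|x_i-x_j|\ge\delta$ on a final time interval by the definition of $\liminf$, which is exactly what you used earlier in the argument.
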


\begin{proof}
	Let
	$
	\Omega_N
	:=
	\Bigl\{
	X=(x_1,\dots,x_N)\in(\mathbb R^d)^N:
	x_i\neq x_j \text{ for all } i\neq j
	\Bigr\}.
	$
	This is an open subset of $(\mathbb R^d)^N$. For $X\in\Omega_N$, define
	$$
	B_i(X)
	:=
	\frac1N
	\sum_{\substack{j=1\\j\neq i}}^N
	\F(x_i,x_j),
	\qquad i=1,\dots,N.
	$$
	Then \eqref{ode} can be written as
	$
	\dot X=B(X).
	$
	Since $\F$ is smooth away from the diagonal, the map
	$$
	B:\Omega_N\to(\mathbb R^d)^N
	$$
	is locally Lipschitz. Indeed, if $K\Subset\Omega_N$ is compact, then there
	exists $\delta_K>0$ such that
	$$
	|x_i-x_j|\ge \delta_K,
	\qquad X\in K,\quad i\neq j.
	$$
	On the set $\{|x-y|\geq\delta_K\}$, the first derivatives of $\F$
	are bounded. Hence $B$ is Lipschitz on $K$.
	
	Since the initial configuration is collision-free, $X_N^0\in\Omega_N$.
	Therefore the Cauchy--Lipschitz theorem gives a unique classical solution as
	long as the trajectory remains in $\Omega_N$. We denote by
	$[0,T_{\mathrm{coll}}^N)$ the maximal such interval.
	
	If $T_{\mathrm{coll}}^N<\infty$ and
$
		\liminf_{t\uparrow T_{\mathrm{coll}}^N}
		\min_{i\neq j}|x_i(t)-x_j(t)|>0,$
	then the trajectory remains a positive distance away from the diagonal. By
	the at-most-linear growth of the force, the particles also remain bounded on
	$[0,T_{\mathrm{coll}}^N)$. Hence $X_N(t)$ remains in a compact subset of
	$\Omega_N$, where $B$ is Lipschitz. The Cauchy--Lipschitz theorem then extends
	the solution beyond $T_{\mathrm{coll}}^N$, contradicting maximality. Thus
	\eqref{maximal-collision} holds.
	
	The last assertion follows from the continuity of the finitely many functions
	$t\mapsto |x_i(t)-x_j(t)|$ on compact subintervals of
	$[0,T_{\mathrm{coll}}^N)$.
\end{proof}
\subsection{Weak solutions of the ODE system in the two-dimensional Coulomb case}
\label{app1-weak}

In this subsection, we introduce a weak notion of particle solution for the
one- and two-dimensional Coulomb cases appearing in
Theorem~\ref{th1-Coulomb}. The particle ODE system \eqref{ode} may lead to
collisions in finite time. Although one may sometimes continue the particle
trajectories after the first collision time, such a continuation is not
canonical in general and may depend on the chosen approximation. We therefore
formulate the continuation at the level of empirical measures.

The main point is that, thanks to the almost anti-symmetric structure of the
singular interaction, the singularity can be handled in a weak, symmetrized
form. This gives compactness of regularized empirical measures and hence weak
continuations beyond possible collision times. We do not claim uniqueness of
these weak continuations for fixed $N$.

Throughout this subsection, we are in the Coulomb case
$$
s=d-1,\qquad d=1,2.
$$
In particular,
\begin{equation}\label{coulomb-weak-bound}
	|\F^{\mathrm{sing}}(x,y)|\,|x-y|
	\lesssim 1,
	\qquad
	0<|x-y|\le 1 .
\end{equation}
The vector field is not covered by the DiPerna--Lions theory, since in
general
$
\operatorname{div}_x\F(x,y)\notin L^\infty .
$
The key point is instead the cancellation coming from the almost
anti-symmetry of $\F^{\mathrm{sing}}$.

Indeed, for any smooth test function $\phi$, one formally has
\begin{align}\label{weak-symmetrization}
	\sum_{i\neq j}
	\nabla\phi(x_i)\cdot\F^{\mathrm{sing}}(x_i,x_j)
	&=
	\frac12
	\sum_{i\neq j}
	\bigl(
	\nabla\phi(x_i)-\nabla\phi(x_j)
	\bigr)
	\cdot
	\F^{\mathrm{sing}}(x_i,x_j)
	\notag\\
	&\quad
	+
	\frac12
	\sum_{i\neq j}
	\nabla\phi(x_i)\cdot
	\bigl(
	\F^{\mathrm{sing}}(x_i,x_j)
	+
	\F^{\mathrm{sing}}(x_j,x_i)
	\bigr).
\end{align}
The first term is bounded by \eqref{coulomb-weak-bound}, since
$$
|\nabla\phi(x_i)-\nabla\phi(x_j)|
\le
\|\nabla^2\phi\|_{L^\infty}|x_i-x_j|.
$$
The second term is controlled by the almost anti-symmetry assumption
\eqref{assumpF2}. This cancellation is the basic mechanism allowing us to pass
to the limit in the regularized dynamics.

We use the regularized dynamics introduced in the proof of
Theorem~\ref{th1-Coulomb}; see \eqref{weak-reg-ODE-cou} and
\eqref{weak-Fsing-kappa-cou}. Thus, for $\kappa\in(0,N^{-100d}]$,
$X_N^{\kappa,t}$ denotes the corresponding global classical solution and
$$
\mu_N^{\kappa,t}
=
\frac1N\sum_{i=1}^N\delta_{x_i^\kappa(t)},
$$
denotes the associated empirical measure.
\begin{defi}[Weak particle continuation by vanishing regularization]
	\label{def-weak-particle-solution}
	Let $T>0$. A narrowly continuous curve
	$$
	\mu_N^t\in C\bigl([0,T];\mathcal P(\mathbb R^d)\bigr)
	$$
	is called a weak particle continuation of \eqref{ode} on $[0,T]$ if there
	exist a sequence $\kappa_n\downarrow0$ and regularized particle solutions
	$X_N^{\kappa_n,t}$ of \eqref{weak-reg-ODE-cou} such that, for every
	$\phi\in W^{2,\infty}(\mathbb R^d)$,
	\begin{equation}\label{weak-conv-muN}
		\sup_{t\in[0,T]}
		\left|
		\int_{\mathbb R^d}\phi(x)\,d\mu_N^{\kappa_n,t}(x)
		-
		\int_{\mathbb R^d}\phi(x)\,d\mu_N^t(x)
		\right|
		\longrightarrow 0 .
	\end{equation}
	Equivalently,
	\begin{equation}\label{weak-conv-particle-observable}
		\sup_{t\in[0,T]}
		\left|
		\frac1N\sum_{i=1}^N\phi(x_i^{\kappa_n}(t))
		-
		\int_{\mathbb R^d}\phi(x)\,d\mu_N^t(x)
		\right|
		\longrightarrow 0 .
	\end{equation}
	Here narrow continuity means continuity with respect to the weak topology of
	probability measures, or equivalently continuity against bounded continuous
	test functions.
\end{defi}

The definition does not require the existence of pointwise limiting
trajectories $x_i(t)$ after collisions. If no collision occurs on $[0,T]$,
then the weak continuation is simply the empirical measure associated with
the classical ODE solution:
$$
\mu_N^t
=
\frac1N\sum_{i=1}^N\delta_{x_i(t)} .
$$
After collisions, however, the labels of the particles may no longer be
canonically determined, and the empirical measure is the natural object to
continue.

\begin{prop}[Existence of weak particle continuations]
	\label{prop:weak-continuation-existence}
	Assume that $d=1$ or $d=2$, $s=d-1$, and that $\F$ satisfies the assumptions
	used in Theorem~\ref{th1-Coulomb}. Let $X_N^0$ be a collision-free initial
	configuration. Then, for every $T>0$, there exists a weak particle
	continuation $\mu_N^t$ of \eqref{ode} on $[0,T]$ in the sense of
	Definition~\ref{def-weak-particle-solution}. Such a continuation is obtained
	as a subsequential limit of the regularized dynamics
	\eqref{weak-reg-ODE-cou} as $\kappa\downarrow0$.
\end{prop}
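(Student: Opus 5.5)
The argument is a compactness argument applied to the regularized empirical measures $\mu_N^{\kappa,t}$, with $N$ fixed and $\kappa\downarrow0$. Two bounds, both uniform in $\kappa$, are needed: a uniform moment bound, giving tightness, and a uniform time-equicontinuity estimate against $W^{2,\infty}$ test functions. A diagonal Arzelà--Ascoli argument then produces the limit.

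\textbf{Moment bound.} Test the regularized dynamics \eqref{weak-reg-ODE-cou} against $\phi(x)=\langle x\rangle^2$. Write $\F^{\mathrm{sing},\kappa}$ as in \eqref{weak-Fsing-kappa-cou} and use the symmetrization \eqref{weak-symmetrization}. The antisymmetric part is bounded by
\[
\|\nabla^2\phi\|_{L^\infty}\,|x_i-x_j|\,|\F^{\mathrm{sing},\kappa}(x_i,x_j)|.
\]
By \eqref{coulomb-weak-bound}, this is $O(1)$ uniformly in $\kappa$. For $|x_i-x_j|\gtrsim\kappa$ this follows from the mollification. For $|x_i-x_j|\lesssim\kappa$ one uses the crude bound $|\F^{\mathrm{sing},\kappa}|\lesssim\kappa^{-s}=\kappa^{-(d-1)}$ for $d=2$; for $d=1$ the kernel $\F^{\mathrm{sing}}$ is bounded.

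The symmetric part is controlled through \eqref{assumpF2}, whose right-hand side $|x-y|^{-(s-1)}+1$ is bounded since $s\le1$. The gradient factor $\nabla\phi(x_i)$ grows at most linearly, so it is absorbed by the moment itself. The regular part is handled by \eqref{assumpFre}, again with at most linear growth.

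Together these give $\frac{d}{dt}\frac1N\sum_i\langle x_i^\kappa\rangle^2\lesssim 1+\frac1N\sum_i\langle x_i^\kappa\rangle^2$. Grönwall's inequality then yields a second-moment bound on $[0,T]$ that is uniform in $\kappa$, and hence tightness.

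\textbf{Equicontinuity.} For $\phi\in W^{2,\infty}$, the same symmetrized computation gives
\[
\Bigl|\tfrac{d}{dt}\int\phi\,d\mu_N^{\kappa,t}\Bigr|\lesssim \|\phi\|_{W^{2,\infty}}\Bigl(1+\tfrac1N\textstyle\sum_i|x_i^\kappa|\Bigr),
\]
which is bounded uniformly in $\kappa$. So $t\mapsto\int\phi\,d\mu_N^{\kappa,t}$ is Lipschitz with a constant proportional to $\|\phi\|_{W^{2,\infty}}$. Pick a countable set $\{\phi_k\}\subset C_c^\infty$ that is dense in $C_0$. Arzelà--Ascoli combined with a diagonal extraction gives $\kappa_n\downarrow0$ along which $\int\phi_k\,d\mu_N^{\kappa_n,t}$ converges uniformly in $t$ for every $k$.

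\textbf{Identifying the limit.} Tightness identifies a narrowly continuous limit $\mu_N^t\in\mathcal P(\mathbb R^d)$. To upgrade to all $\phi\in W^{2,\infty}$ and obtain \eqref{weak-conv-muN}, split the test function with a cutoff. On a large ball, approximate $\phi$ uniformly by elements of the countable family; outside the ball, use the uniform moment bound to make the tail mass small. Note that $\phi$ is bounded but need not decay, so the tail control must come from tightness rather than from $\phi$. Narrow continuity of $\mu_N^t$ follows from the uniform equicontinuity.

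The main obstacle is the uniform-in-$\kappa$ bound on the symmetrized singular term at scales $|x_i-x_j|\lesssim\kappa$. The plan is to verify that the mollified kernel satisfies \eqref{coulomb-weak-bound} and \eqref{assumpF2} with constants independent of $\kappa$, which the paper already asserts ("up to replacing $C$ by $2C$"). This is exactly where the restriction $d\le2$, $s=d-1\le1$, is used.
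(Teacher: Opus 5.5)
Your proposal is correct and follows the paper's proof essentially step for step. The common steps are: a uniform-in-$\kappa$ moment bound from the symmetrization \eqref{weak-symmetrization} and Grönwall; a uniform Lipschitz-in-time bound on $t\mapsto\int\phi\,d\mu_N^{\kappa,t}$ proportional to $\|\phi\|_{W^{2,\infty}}$; Arzelà--Ascoli with a diagonal extraction over a countable family in $C_c^\infty$; and a cutoff-plus-tightness argument to reach all of $W^{2,\infty}$.

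The differences are minor:
\begin{itemize}
\item You use $\langle x\rangle^2$ where the paper uses $\langle x\rangle$; both close.
\item For $d=1$ the right-hand side of \eqref{assumpF2} is bounded only after invoking \eqref{assumpFsing} at distances $|x-y|\ge1$. This is harmless, since any linear growth is absorbed by the moment anyway.
\item Because \eqref{weak-Fsing-kappa-cou} mollifies only the second variable, the uniform almost-antisymmetry of $\F^{\mathrm{sing},\kappa}$ that you defer to is most directly obtained from \eqref{assumpF3c} with $m=0$ and the evenness of $\Phi^\kappa$, rather than from \eqref{assumpF2} itself.
\end{itemize}
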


\begin{proof}
	For every fixed $\kappa>0$, the regularized vector field in
	\eqref{weak-reg-ODE-cou} is smooth and has at most linear growth.
	Hence, the regularized system has a unique global classical solution
	$X_N^{\kappa,t}$ on $[0,T]$.	Set $
		\mu_N^{\kappa,t}
		:=
		\frac1N\sum_{i=1}^N\delta_{x_i^\kappa(t)}.$	\\
	\medskip
	\noindent
	\textbf{Step 1. Uniform moment bound.}
	Define $
		M_1^\kappa(t)
		:=
		\int_{\mathbb R^d}\langle x\rangle
		\,d\mu_N^{\kappa,t}(x)
		=
		\frac1N\sum_{i=1}^N\langle x_i^\kappa(t)\rangle.$
	Differentiating along the flow, using \eqref{assumpFre} for the regular part and the regularized version of the symmetrization \eqref{weak-symmetrization} for the singular part, we obtain \begin{equation} \frac{d}{dt}M_1^\kappa(t) \leq C\bigl(1+M_1^\kappa(t)\bigr), \end{equation} where $C$ is independent of $\kappa$. Therefore, by Grönwall's inequality,
	\begin{equation}\label{uniform-first-moment-weak}
		\sup_{\kappa>0}
		\sup_{0\leq t\leq T}
		\int_{\mathbb R^d}\langle x\rangle
		\,d\mu_N^{\kappa,t}(x)
		\leq
		C_T.
	\end{equation}
	In particular, the family
	$\{\mu_N^{\kappa,t}\}_{\kappa>0,\;t\in[0,T]}$ is uniformly tight.
	
	\medskip
	\noindent
	\textbf{Step 2. Compactness for a countable determining family.}
	Let
$\{\phi_q\}_{q\geq1}
		\subset C_c^\infty(\mathbb R^d)$
	be a countable convergence-determining family for
	$\mathcal P(\mathbb R^d)$, chosen dense in $C_c(\mathbb R^d)$
	for uniform convergence on compact sets.
	
	For every $q\geq1$, set
	$	I_q^\kappa(t)
		:=
		\int_{\mathbb R^d}\phi_q(x)
		\,d\mu_N^{\kappa,t}(x).$
	Arguing as in \eqref{uniform-first-moment-weak}, using the symmetrization
	\eqref{weak-symmetrization}, we obtain
	\begin{equation}\label{countable-observable-equicont}
		\sup_{\kappa>0}
		\left\|
		I_q^\kappa
		\right\|_{W^{1,\infty}(0,T)}
		\leq
		C_T\|\phi_q\|_{W^{2,\infty}},
		\qquad q\geq1.
	\end{equation}
	By Arzelà--Ascoli and a diagonal extraction, there exist a sequence
	$\kappa_n\downarrow0$ and continuous functions $I_q:[0,T]\to\mathbb R$
	such that
$\sup_{t\in[0,T]}
		\left|
		I_q^{\kappa_n}(t)-I_q(t)
		\right|
		\longrightarrow0 $
	for every $q\geq1$.
	
	The uniform tightness \eqref{uniform-first-moment-weak} and the fact
	that $\{\phi_q\}_{q\geq1}$ is convergence determining show that there
	exists a unique narrowly continuous curve 
$\mu_N\in C\bigl([0,T];\mathcal P(\mathbb R^d)\bigr)$
	such that $
		I_q(t)
		=
		\int_{\mathbb R^d}\phi_q(x)\,d\mu_N^t(x)$
	for every $q\geq1$ and $t\in[0,T]$.
	
	\medskip
	\noindent
	\textbf{Step 3. Extension to all $W^{2,\infty}$ test functions.}
Let $\phi\in W^{2,\infty}(\mathbb R^d)$ and choose $\chi_R\in C_c^\infty(\mathbb R^d)$ such that $\chi_R=1$ on $B_R$. For every $\varepsilon>0$, choose $q$ such that $ \|\phi\chi_R-\phi_q\|_{L^\infty} \leq\varepsilon. $ Using \eqref{uniform-first-moment-weak}, we obtain
	\begin{align*}
		&
		\sup_{t\in[0,T]}
		\left|
		\int_{\mathbb R^d}\phi\,d\mu_N^{\kappa_n,t}
		-
		\int_{\mathbb R^d}\phi\,d\mu_N^t
		\right|\leq
		2\varepsilon
		+
		\sup_{t\in[0,T]}
		\left|
		\int_{\mathbb R^d}\phi_q
		\,d\bigl(
		\mu_N^{\kappa_n,t}-\mu_N^t
		\bigr)
		\right|
		+
		\frac{2C_T}{R}\|\phi\|_{L^\infty}.
	\end{align*}
	Letting first $n\to\infty$, then $\varepsilon\to0$, and finally
	$R\to\infty$, we obtain
	\begin{equation}
		\sup_{t\in[0,T]}
		\left|
		\int_{\mathbb R^d}\phi\,d\mu_N^{\kappa_n,t}
		-
		\int_{\mathbb R^d}\phi\,d\mu_N^t
		\right|
		\longrightarrow0.
	\end{equation}
	Thus \eqref{weak-conv-muN} holds for every
	$\phi\in W^{2,\infty}(\mathbb R^d)$.
	
	The curve $\mu_N^t$ is therefore a weak particle continuation in
	the sense of Definition~\ref{def-weak-particle-solution}. By
	construction, it is selected by the vanishing-regularization
	sequence $\kappa_n\downarrow0$. No uniqueness with respect to the
	choice of regularization sequence is asserted.
\end{proof}

We now record the distributional identity associated with the selected
vanishing-regularization continuation. This identity is not used as an
independent definition of a weak solution: the continuation
$\mu_N^t$ is defined by Definition~\ref{def-weak-particle-solution} through
the convergence of the regularized particle systems.

Let $\mu_N^t$ be obtained along a sequence $\kappa_n\to0$. For every 
	$\psi\in
	C^1\bigl([0,T];W^{2,\infty}(\mathbb R^d)\bigr)$
compactly supported in space, the regularized empirical measure satisfies
\begin{align}\label{weak-form-muNkappa}
	&\int_{\mathbb R^d}\psi(T,x)\,d\mu_N^{\kappa,T}(x)
	-
	\int_{\mathbb R^d}\psi(0,x)\,d\mu_N^{\kappa,0}(x)
	-
	\int_0^T
	\int_{\mathbb R^d}
	\partial_t\psi(t,x)\,d\mu_N^{\kappa,t}(x)\,dt
	\notag
	\\
	&=
	\int_0^T
	\int_{\mathbb R^d}
	\nabla\psi(t,x)\cdot
\left[	\bigl(\F^{\mathrm{reg}}*\mu_N^{\kappa,t}\bigr)(x)-\frac{1}{N}\F^{\mathrm{reg}}(x,x)\right]
	\,d\mu_N^{\kappa,t}(x)\,dt
	+
	\mathcal S_N^\kappa[\psi],
\end{align}
where
\begin{align}\label{def-singular-functional-kappa}
	\mathcal S_N^\kappa[\psi]
	&:=
	\frac1{2N^2}
	\int_0^T
	\sum_{\substack{i,j=1\\ i\neq j}}^N
	\Biggl\{
	\bigl(
	\nabla\psi(t,x_i^\kappa(t))
	-
	\nabla\psi(t,x_j^\kappa(t))
	\bigr)
	\cdot
	\F^{\mathrm{sing},\kappa}
	\bigl(x_i^\kappa(t),x_j^\kappa(t)\bigr)
	\notag
	\\
	&\qquad\qquad\qquad
	+
	\nabla\psi(t,x_i^\kappa(t))
	\cdot
	\Bigl[
	\F^{\mathrm{sing},\kappa}
	\bigl(x_i^\kappa(t),x_j^\kappa(t)\bigr)
	+
	\F^{\mathrm{sing},\kappa}
	\bigl(x_j^\kappa(t),x_i^\kappa(t)\bigr)
	\Bigr]
	\Biggr\}\,dt .
\end{align}
The symmetrized estimates above imply
\begin{equation}\label{singular-functional-uniform-bound}
	\left|
	\mathcal S_N^\kappa[\psi]
	\right|
	\leq
	C_T
	\|\psi\|_{L^\infty(0,T;W^{2,\infty}(\mathbb R^d))},
\end{equation}
where $C_T$ is independent of $\kappa$. In particular, the limiting
functional defined below is linear and bounded on the test class.

Passing to the limit in the nonsingular terms of
\eqref{weak-form-muNkappa}, we define the singular interaction functional
associated with the selected continuation by
\begin{align}\label{def-singular-functional}
	\mathcal S_N[\psi]
	&:=
	\int_{\mathbb R^d}\psi(T,x)\,d\mu_N^T(x)
	-
	\int_{\mathbb R^d}\psi(0,x)\,d\mu_N^0(x)
	-
	\int_0^T
	\int_{\mathbb R^d}
	\partial_t\psi(t,x)\,d\mu_N^t(x)\,dt
	\notag
	\\
	&\quad
	-
	\int_0^T
	\int_{\mathbb R^d}
	\nabla\psi(t,x)\cdot
	\bigl(\F^{\mathrm{reg}}*\mu_N^t\bigr)(x)
	\,d\mu_N^t(x)\,dt .
\end{align}
Equivalently,
\begin{equation}\label{singular-functional-limit}
	\mathcal S_N[\psi]
	=
	\lim_{n\to\infty}
	\mathcal S_N^{\kappa_n}[\psi].
\end{equation}
Therefore,
\begin{equation}\label{singular-functional-bound}
	|\mathcal S_N[\psi]|
	\leq
	C_T
	\|\psi\|_{L^\infty(0,T;W^{2,\infty}(\mathbb R^d))}.
\end{equation}
The selected continuation consequently satisfies the distributional identity
\begin{align}\label{weak-form-muN}
	&\int_{\mathbb R^d}\psi(T,x)\,d\mu_N^T(x)
	-
	\int_{\mathbb R^d}\psi(0,x)\,d\mu_N^0(x)
	-
	\int_0^T
	\int_{\mathbb R^d}
	\partial_t\psi(t,x)\,d\mu_N^t(x)\,dt
	\notag
	\\
	&=
	\int_0^T
	\int_{\mathbb R^d}
	\nabla\psi(t,x)\cdot
\left[	\bigl(\F^{\mathrm{reg}}*\mu_N^{t}\bigr)(x)-\frac{1}{N}\F^{\mathrm{reg}}(x,x)\right]
	\,d\mu_N^t(x)\,dt
	+
	\mathcal S_N[\psi].
\end{align}
We emphasize that \eqref{weak-form-muN} is a consequence of the
selection-by-regularization procedure and is not, by itself, an independent
characterization of $\mu_N^t$. At the present level of generality, the
singular interaction cannot necessarily be expressed directly in terms of
$\mu_N^t$ after collisions. The functional $\mathcal S_N$ records the limit
of the total symmetrized singular interaction along the selected
regularization sequence.\\
Moreover, if the limiting particle dynamics is collision-free on $[0,T]$,
namely if
$$
\delta_T
:=
\inf_{0\le t\le T}\inf_{i\neq j}|x_i(t)-x_j(t)|
>0,
$$
then the weak continuation coincides with the classical empirical measure
$$
\mu_N^t=\frac1N\sum_{i=1}^N\delta_{x_i(t)}.
$$
In this case the singular functional reduces to the usual singular-force
contribution:
\begin{align}\label{singular-functional-classical}
	\mathcal S_N[\psi]
	=
	\int_0^T
	\int_{\mathbb R^d}
	\nabla\psi(t,x)
	\cdot
	\bigl(\F^{\mathrm{sing}}*\mu_N^t\bigr)(x)
	\,d\mu_N^t(x)\,dt,
\end{align}
where the convolution is understood with the diagonal terms removed.
Equivalently, the same quantity can be written in the symmetrized form
\eqref{weak-symmetrization}. Indeed, the uniform separation $\delta_T>0$
keeps all particle pairs away from the singularity, so the regularized forces
converge uniformly to the unregularized singular force along the trajectories.
\begin{remark}[Non-uniqueness at fixed $N$ and collision rules]
	\label{rem-fixed-N-nonunique}
	At this level of generality, we do not claim that the weak continuation
	$\mu_N^t$ is unique for fixed $N$. The symmetrized estimates above control the
	singular interaction and yield compactness, but they do not compare two
	different regularized particle flows after collisions. In particular,
	different vanishing regularizations may in principle select different weak
	continuations.
	
	This issue is related to the absence of a prescribed collision rule at the
	particle level. For instance, in the classical two-dimensional Euler
	point-vortex system, the interaction is Hamiltonian and given by the rotated
	Coulomb field. In the same-sign case, the logarithmic Hamiltonian is conserved,
	which prevents collisions; see, for example,
	\cite{MarchioroPulvirenti1994}. This mechanism is special to the Hamiltonian
	point-vortex structure and is not available in the present setting.
	
	In this work, we do not impose any collision rule. Instead, we regularize the
	dynamics and pass to subsequential limits of the empirical measures. This
	yields weak particle continuations beyond possible collision times, but does
	not imply uniqueness of the continuation for fixed $N$. The proof of
	Theorem~\ref{th1-Coulomb} shows that every weak continuation selected by this
	vanishing-regularization procedure satisfies the same quantitative estimate of convergence
	to the unique mean-field solution $\mu^t$.
\end{remark}

\section{Proof of short-time existence for the mean-field equation}\label{app:existence}

\begin{lem}[Local existence for the mean-field equation]
	\label{1stpde}
	Let $\gamma>d$ and assume that $s\leq d-1$. Suppose that the force
	$\F$ satisfies \eqref{assumpFre}-\eqref{coudet}, and let $\mu_0$ satisfy
	\begin{equation}\label{def:bracket-mu}
		[\mu_0]
		:=
		\sum_{0\leq|\beta|\leq3}
		\left\|
		\langle x\rangle^{\gamma+1}
		\partial_x^\beta\mu_0
		\right\|_{L^\infty(\mathbb R^d)}
		<\infty.
	\end{equation}
	There exists a constant $c_0>0$, depending only on $d$, $\gamma$,
	$s$, and the constants in \eqref{assumpF2G}, such that, for every $0<T\leq T_*,$ $T_*:=c_0/(1+[\mu_0]),$
	the equation
	\begin{equation}\label{mflimit-local}
		\partial_t\mu
		=
		-\operatorname{div}\bigl((\F*\mu)\mu\bigr),
		\qquad
		\mu|_{t=0}=\mu_0,
	\end{equation}
has a unique solution on $[0,T_*]$ such that \begin{equation}\label{local-solution-regularity} \mu\in C\bigl([0,T_*];W^{2,\infty}(\mathbb R^d)\bigr) \cap L^\infty\bigl([0,T_*];W^{3,\infty}(\mathbb R^d)\bigr), \end{equation} and \begin{equation}\label{local-solution-estimate} [\mu]_{T_*} := \sup_{0\leq t\leq T_*} \sum_{0\leq|\beta|\leq3} \left\| \langle x\rangle^{\gamma+1} \partial_x^\beta\mu^t \right\|_{L^\infty(\mathbb R^d)} \leq 2[\mu_0]. \end{equation} Uniqueness holds in the class of solutions satisfying \eqref{local-solution-regularity} for which the quantity $[\mu]_{T_*}$ is finite.
\end{lem}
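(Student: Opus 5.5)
We construct the solution by a Picard-type fixed point on the velocity field, working along characteristics in weighted $W^{3,\infty}$ norms. Fix $T\le T_*$ and consider the closed set
\[
X_T:=\Bigl\{\nu\in C([0,T];W^{2,\infty})\cap L^\infty([0,T];W^{3,\infty}):\ [\nu]_{T}\le 2[\mu_0],\ \nu|_{t=0}=\mu_0\Bigr\}.
\]
Given $\nu\in X_T$, set $u:=\F*\nu$ and let $\mu=\Lambda(\nu)$ solve the linear continuity equation $\partial_t\mu+\operatorname{div}(u\mu)=0$, $\mu(0)=\mu_0$. Using characteristics, $\mu$ is written through the flow of $u$ and the Jacobian factor $\exp(-\int\operatorname{div}u)$.

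\textbf{Step 1: velocity bounds.} We show $\|\langle\cdot\rangle^{-1}u\|_{L^\infty}+\sum_{n=1}^{3}\|\nabla^n u\|_{L^\infty}\lesssim [\nu]$.
\begin{itemize}
\item The regular part $\F^{\mathrm{reg}}$ is handled directly by \eqref{assumpFre}, using the decay $\langle y\rangle^{-\gamma-1}$ with $\gamma>d$.
\item For the singular part we apply \eqref{assumpF2G}, which moves derivatives onto $\nu$: $\nabla^n(\F^{\mathrm{sing}}*\nu)=\F^{\mathrm{sing}}*\nabla^n\nu$ plus integrals of $(1+|x-y|^{-s'})|\nabla^k\nu|(y)$ with $s'<d$. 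This is exactly as in \eqref{w3} and \eqref{227} with $\mathcal C(\eta_*)=1$.
\item $\F^{\mathrm{sing}}*\nabla^3\nu$ is bounded since $s\le d-1<d$ gives local integrability.
\item The divergence $\operatorname{div}u$ needs only one derivative on the kernel side. In the case $s=d-1$ we control it via \eqref{coudet}, together with \eqref{assumpF2G} applied to the derivatives of $\operatorname{div}u$.
\end{itemize}
This yields $\|\operatorname{div}u\|_{W^{3,\infty}}$, or at least $W^{2,\infty}$ together with the $\nabla^3\nu$ term through \eqref{assumpF2G}.

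\textbf{Step 2: weighted estimates for the transport.} Differentiate the equation $\beta$ times, $|\beta|\le3$. Then $\partial^\beta\mu$ satisfies a transport equation
\[
\partial_t\partial^\beta\mu+u\cdot\nabla\partial^\beta\mu=\text{lower-order terms},
\]
where the lower-order terms are products of $\nabla^{k}u$ ($k\le 3$), $\nabla^{k}\operatorname{div}u$ and $\partial^{\beta'}\mu$ with $|\beta'|\le|\beta|$. The top-order term is $\partial^\beta\operatorname{div}u\,\mu$, which requires $\nabla^3\operatorname{div}u$, i.e. four derivatives of $u$. Using \eqref{assumpF2G} with $m=4$ moves all four onto $\nu$ except for the $\F^{\mathrm{sing}}*\nabla^4\nu$ piece. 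To avoid it, we write $\operatorname{div}(\F^{\mathrm{sing}}*\partial^\beta\nu)$, which is controlled by \eqref{coudet} (for $s=d-1$) or by direct integrability (for $s<d-1$) in terms of $\|\langle\cdot\rangle^{\gamma_0}\partial^\beta\nu\|_{L^\infty}$.

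Multiply by the weight $\langle x\rangle^{\gamma+1}$. Since $|u|\lesssim[\nu]\langle x\rangle$, the weight's transport generates $|u\cdot\nabla\langle x\rangle^{\gamma+1}|\lesssim[\nu]\langle x\rangle^{\gamma+1}$. Evaluating along characteristics (or at a maximum point as in Lemma~\ref{lem2.1}) gives
\[
\frac{d}{dt}[\mu(t)]\le C[\nu]_T\,[\mu(t)],\qquad [\mu]_T\le [\mu_0]e^{C[\nu]_TT}\le 2[\mu_0]
\]
whenever $T\le c_0/(1+[\mu_0])$. Continuity in time with values in $W^{2,\infty}$ follows from the equation, since $\partial_t\mu$ is bounded in $W^{2,\infty}$. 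Hence $\Lambda$ maps $X_T$ into itself.

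\textbf{Step 3: contraction and uniqueness.} We measure $\Lambda(\nu_1)-\Lambda(\nu_2)$ in the weaker norm $\sup_t\|\langle\cdot\rangle^{\gamma+1}(\cdot)\|_{W^{1,\infty}}$. The difference satisfies a transport equation driven by $\F*(\nu_1-\nu_2)$, which is bounded in $W^{1,\infty}$ (with the same divergence trick for the $\mu\operatorname{div}$ term) by the weighted $W^{1,\infty}$ norm of $\nu_1-\nu_2$. A Gronwall argument gives a Lipschitz constant $CT[\mu_0]<1$ for small $c_0$. Since $X_T$ is closed under this weaker metric, by weak-$*$ lower semicontinuity of the $W^{3,\infty}$ bound, the Banach fixed point theorem yields the solution. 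The same difference estimate gives uniqueness in the stated class.

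The main obstacle is the top-order term $\nabla^3\operatorname{div}(\F*\nu)\,\mu$ in the Coulomb case $s=d-1$, where the kernel derivative is borderline non-integrable. It must be routed through \eqref{coudet} and \eqref{assumpF2G} so that at most three derivatives fall on $\nu$, rather than a logarithmically divergent bound.
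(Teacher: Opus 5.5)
Your proposal is correct and is essentially the paper's argument. It runs a fixed point for the linearized continuity equation in the ball $[\nu]_T\le 2[\mu_0]$, with the coefficient bounds of \eqref{z4} coming from \eqref{assumpF2G} and \eqref{coudet}, followed by a contraction in a lower-order weighted norm. Your treatment of $\nabla^3\operatorname{div}(\F*\nu)$ is exactly what \eqref{z4} uses implicitly: \eqref{assumpF2G} with $m=4$, then \eqref{assumpF2G} with $m=1$ applied to $\partial^\beta\nu$ to recognize $\operatorname{div}_x(\F^{\mathrm{sing}}*\partial^\beta\nu)$, then \eqref{coudet}. The only differences are that you argue along characteristics rather than with the paper's cut-off maximum principle and difference quotients, and that you contract in weighted $W^{1,\infty}$ rather than $W^{2,\infty}$.

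There is one point to patch, which the paper's \eqref{z3} also passes over: add the constraint $\int\nu^t=\int\mu_0$ for all $t$ to $X_T$; it is preserved by $\Lambda$ and closed in your metric. Without it, a linearly growing part of $\F^{\mathrm{reg}}$ (e.g.\ an external field $V(x)$) makes $\F*(\nu_1-\nu_2)$ grow like $\langle x\rangle\,|\int(\nu_1-\nu_2)|$, contrary to your claim in Step 3 that it is bounded in $W^{1,\infty}$. The forcing term $(\F*(\nu_1-\nu_2))\cdot\nabla\Lambda(\nu_2)$ then needs one more power of $\langle x\rangle$ than $[\Lambda(\nu_2)]_T$ provides. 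With equal masses, \eqref{assumpFre} gives $|\F^{\mathrm{reg}}*(\nu_1-\nu_2)|\lesssim\int|y|\,|\nu_1-\nu_2|(y)\,dy$, which is bounded by the weighted norm since $\gamma>d$.
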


\begin{proof}
\textbf{Step 1: Mapping estimates.} By \eqref{coudet} and \eqref{assumpF2G}, we bound for any $\omega_0\in L^\infty(\mathbb{R}^d)$:
\begin{equation}\label{z4}
  \sum_{n=0}^{3}\|\nabla_x^n \operatorname{div} \F *\omega_0\|_{L^\infty}
  +\|\langle x\,\mathbf{1}_{n=0}\rangle^{-1}\nabla_x^n \F *\omega_0\|_{L^\infty}
  \lesssim [\omega_0].
\end{equation}
\medskip
\textbf{Step 2: Fixed-point argument.}
Define $\mathbf{S}: V_{T}\to L^\infty(\mathbb{R}^d\times [0,T])$ by $\mathbf{S}(\omega)=\mu$ where
$\partial_t \mu= -\operatorname{div}\bigl((\F *\omega)\mu\bigr)$,
with
\[
  V_T:=\Bigl\{\omega\in L^\infty(\mathbb{R}^d\times [0,T]):
    [\omega]_T\leq 2[\mu_0],\;\omega|_{t=0}=\mu_0\Bigr\}.
\]
Since $\F *\omega\in L^\infty([0,T],\mathrm{Lip}(\mathbb{R}^d))$ and
$\langle x\rangle^{-1} \F *\omega\in L^\infty([0,T]\times\mathbb{R}^d)$,
standard transport theory ensures $\mathbf{S}$ is well-defined.

We prove, for $\omega_j\in V_T$, $\mu^j=\mathbf{S}(\omega_j)$, $j=1,2$:
\begin{align}
  [\mu^1]_T &\leq [\mu_0]+C_0\, T\, [\mu^1]_T\,[\omega_1]_T,
  \label{z2}
  \\
  [\mu^1-\mu^2]_T &\leq C_0\,T\,
    \bigl([\omega_1]_T+[\omega_2]_T+[\mu^1]_T+[\mu^2]_T\bigr)
    \bigl([\omega_1-\omega_2]_T+[\mu^1-\mu^2]_T\bigr).
  \label{z3}
\end{align}

\emph{Proof of \eqref{z2}.}
Set $\tilde{\mu}=\langle x\rangle^{\gamma+1}\mu^1$ and let $\chi_R$ be a cutoff with
$\mathbf{1}_{B_R}\leq \chi_R\leq \mathbf{1}_{B_{2R}}$,
$|\nabla\chi_R|\lesssim R^{-1}\,\mathbf{1}_{B_{2R}}\sim \langle x\rangle^{-1}\,\mathbf{1}_{B_{2R}}$.
Then
\[
  \partial_t(\tilde{\mu}\chi_R)
  =- \F *\omega^1\,\nabla_x(\tilde{\mu}\chi_R)
  - \bigl(\operatorname{div}\F *\omega^1\,\chi_R
    -\F *\omega^1\,\nabla_x\chi_R
    -(\gamma+1)\F *\omega^1\cdot x\langle x\rangle^{-2}\chi_R\bigr)\,\tilde{\mu}.
\]
By the maximum principle and letting $R\to \infty$:
\begin{align}\label{z1}
  \|\langle \cdot\rangle^{\gamma+1}\mu^1\|_{L^\infty(\mathbb{R}^d\times [0,T])}
  &\leq \|\langle \cdot\rangle^{\gamma+1}\mu_0\|_{L^\infty}
    +C\,T\,[\mu^1]_T\,[\omega_1]_T.
\end{align}
The same argument applies to derivatives $\partial^\beta_x\mu^1$ for $1\leq |\beta|\leq 2$.
For the third derivatives, we use a difference quotient $\delta^\varepsilon_i f(x):=f(x)-f(x-\varepsilon\,e_i)$
with $\varepsilon\in(0,1)$.
Setting $\overline{\mu}=\langle x\rangle^{\gamma+1}\delta^\varepsilon_i \partial_x^\beta\mu^1$ with $|\beta|=2$,
applying the maximum principle, estimating the commutator by \eqref{z4}, and letting $\varepsilon\to 0$:
\[
  \|\langle \cdot\rangle^{\gamma+1}\partial_{x_i} \partial_x^\beta\mu^1\|_{L^\infty(\mathbb{R}^d\times [0,T])}
  \leq \|\langle \cdot\rangle^{\gamma+1}\partial_{x_i} \partial_x^\beta\mu_0\|_{L^\infty}
    +C\,T\,[\mu^1]_T\,[\omega_1]_T.
\]
Summing over all derivatives proves \eqref{z2}.

\emph{Proof of \eqref{z3}.}
For $\mu=\mu^1-\mu^2$ and $\omega=\omega_1-\omega_2$:
\begin{equation}
	\label{difference-equation-fixed-point}
	 \partial_t \mu
	= -\operatorname{div}\bigl(\F *\omega_1\,\mu\bigr)
	- \operatorname{div}\bigl((\F *\omega)\,\mu_2\bigr).
\end{equation}
We differentiate \eqref{difference-equation-fixed-point} only up to
order two. For every multi-index $\beta$ with $|\beta|\leq2$, the
highest derivative of $\mu^2$ appearing in $\partial_x^\beta
	\operatorname{div}\bigl((\F*\omega)\mu^2\bigr)$
has order at most three and is therefore controlled by $[\mu^2]_T$.
Applying the same weighted maximum-principle and commutator argument
as above, together with the corresponding estimates following from
\eqref{z4}, yields
\begin{align*}
	[\mu]_{T,2}
	\leq
	C_0T
	\bigl(
	[\omega_1]_T+[\omega_2]_T
	+[\mu^1]_T+[\mu^2]_T
	\bigr)
	\bigl(
	[\omega]_{T,2}+[\mu]_{T,2}
	\bigr).
\end{align*}
This proves \eqref{z3}.

\medskip
\noindent
Choosing $T=\frac{1}{100C_0[\mu_0]}$, the Banach fixed-point theorem guarantees
a unique fixed point $\mu\in V_T$, completing the proof.
\end{proof}

\begin{proof}[Proof of Lemma~\ref{Le-localwp}]
	Introduce the rescaled unknown
		$\widetilde\mu(t,x)
		:=
		\mu(\eta_*^{s+1-d}t,x).$
	Then $\widetilde\mu$ satisfies
	\begin{equation}\label{mflimitsupb}
		\partial_t\widetilde\mu
		=
		-\operatorname{div}
		\bigl(
		(G_{\eta_*}*\widetilde\mu)\widetilde\mu
		\bigr),
		\qquad
		\widetilde\mu|_{t=0}=\mu^0,
	\end{equation}
	where
	\begin{equation}\label{defG-rescaled}
		G_{\eta_*}(x,y)
		:=
		\eta_*^{s+1-d}\F_{\eta_*}(x,y).
	\end{equation}
	
	We decompose
	\begin{equation}
		G_{\eta_*}
		=
		G_{\eta_*}^{\mathrm{sing}}
		+
		G_{\eta_*}^{\mathrm{reg}},
	\end{equation}
	where
	\begin{equation}
		G_{\eta_*}^{\mathrm{sing}}
		:=
		\eta_*^{s+1-d}\F_{\eta_*}^{\mathrm{sing}},
		\qquad
		G_{\eta_*}^{\mathrm{reg}}
		:=
		\eta_*^{s+1-d}\F^{\mathrm{reg}}.
	\end{equation}
	By \eqref{assumpFre}--\eqref{coudet}, the singular cancellation
	estimates and the bounds for the regular part yield
	\begin{align}\label{uniform-G-mapping}
		&
		\left\|
		\langle x\rangle^{-1}
		G_{\eta_*}*\omega
		\right\|_{L^\infty}
		+
		\sum_{1\leq|\alpha|\leq3}
		\left\|
		\partial_x^\alpha
		(G_{\eta_*}*\omega)
		\right\|_{L^\infty}
		+
		\sum_{|\alpha|\leq3}
		\left\|
		\partial_x^\alpha
		\operatorname{div}
		(G_{\eta_*}*\omega)
		\right\|_{L^\infty}
		\leq
		C\A(\omega),
	\end{align}
	where $C$ is independent of $\eta_*$.
	
	We may therefore apply the fixed-point argument from the proof of
	Lemma~\ref{1stpde} to \eqref{mflimitsupb}. It gives a time 
		$\tau=	\frac{c_0}{1+\A(\mu^0)},$
	independent of $\eta_*$, such that \eqref{mflimitsupb} has a unique
	solution on $[0,\tau]$ satisfying
$\sup_{0\leq t\leq\tau}
		\A(\widetilde\mu^t)
		\leq
		2\A(\mu^0).$
	Returning to the original time variable, $\mu$ exists on
$0\leq t\leq
		\tau\eta_*^{s+1-d}.$
	This proves the result.
\end{proof}

\bibliographystyle{alpha}
\bibliography{Master.bib}

\end{document}